\theoremstyle{plain}
\newtheorem{s}{Section}[section]
\newtheorem{corollary}[s]{Corollary}
\newtheorem{theorem}[s]{Theorem}
\newtheorem{lemma}[s]{Lemma}
\newtheorem{proposition}[s]{Proposition}
\theoremstyle{definition}
\newtheorem{example}[s]{Example}
\newtheorem{definition}[s]{Definition}
\theoremstyle{remark}
\newtheorem{remark}[s]{Remark}
\title{Average size of 2-Selmer groups of Jacobians of hyperelliptic curves over function fields}
\author{Dao Van Thinh}
\date{\today}
\begin{document}

\maketitle
\thispagestyle{empty}

\begin{abstract}
In this paper, we are going to compute the average size of 2-Selmer groups of two families of hyperelliptic curves with marked points over function fields. The result will be obtained by a geometric method which could be considered as a generalization of the one that was used previously in \cite{HLN14} to obtain the average size of 2-Selmer groups of elliptic curves.
\end{abstract}

\tableofcontents

\section{Introduction}
Let $K$ be a global field (number fields or function fields), and $A$ be an Abelian variety over $K$. The abelian group of $K-$rational points $A(K)$ has been investigated widely, and there are a great number of interesting questions related to it Mathematicians are trying to answer. One of them is to find the rank of $A(K)$. By Mordell-Weil theorem, we know that the rank of $A(K)$ is finite. However, there is no known algorithm that always returns the rank of $A(K)$. Beside the Mordell group $A(K)$, we also have other important groups associated to $A$ such as Selmer groups and the Tate-Shafarevich group. More precisely, the Tate-Shafarevich group of $A/K$ is the group
$$TS(A/K) = ker \bigg( H^1(K,A) \xrightarrow{res} \prod_v H^1(K_v,A) \bigg),$$where on the right hand side, we take the product over all places $v$ of $K$, and $K_v$ denotes the completion of $K$ at the place $v$. In addition, for a given isogeny $\phi: A \rightarrow A'$ of abelian varieties over $K$, we define the $\phi-Selmer$ group to be
$$Sel^{\phi}(A/K)= ker \bigg( H^1(K,A[\phi]) \xrightarrow{res} \prod_v H^1(K_v,A) \bigg),$$where $A[\phi]= ker(\phi)$. Roughly speaking, the size of the Tate-Shafarevich group $TS(A/K)$ measures the failure of the local-to-global principle for principal homogeneous spaces for $A/K$, and the homogeneous spaces corresponding to elements of $Sel^{\phi}(A/K)$ possess $K_v-$rational points for every places $v$ of $K$ (see \cite{HS00} p.281). Notice that $Sel^{\phi}(A/K)$ is finite and its size is theoretically computable. In contract, there is no known algorithm that could determine the size of $TS(A(K))$. In particular, if K is a function field and $A$ is the Jacobian variety of a smooth curve $K$, it is conjectured by Tate that the size of $TS(A(K))$ is finite, and we can show that in this conjecture is equivalent to the Birch-Swinnerton-Dyer conjecture.

Three mentioned groups associated to $A$ are related by the following important sequence:
$$0 \rightarrow A'(K)/\phi (A(K)) \rightarrow Sel^{\phi}(A/K) \rightarrow TS(A/K)[\phi] \rightarrow 0. \hspace{2cm}(1)$$
If $\phi$ is the multiplication-by-$m$ map from $A$ to $A$, from $(1)$, the size of $Sel^{\phi}(A/K)$ will help us to deduce an upper bound for the rank of $A(K)$; and we also imply that the Tate-Shafarevich has finite $m-$torsion points. Because of its importance, a great number of study has been conducted in order to understand Selmer groups. In the next part, we will review one of the most significant works related to Selmer groups.

Recently, the average size of Selmer groups of Jacobians of hyperelliptic curves over the rational field $\mathbb{Q}$ was studied extensively by Bhargava, Gross, Arul Shankar, Ananth Shankar, and Xiaoheng Wang in \cite{BG13}, \cite{Sha16}, and \cite{SW13}. In these paper, the common strategy is to connect our hyperelliptic curves (their Weiertrass equations) to some representations that arise from Vinberg theory of $\theta-$groups. From that they are able to build a bijection between elements in Selmer groups and integral orbits of the associated representation that satisfy some special conditions. Then the last step is to applying the geometric-of-number technique to count integral orbits. This kind of argument previously was used by Bhargava and A. Shankar to compute the average size of $n-$Selmer group of elliptic curves over $\mathbb{Q}$, for $2 \leq n \leq 5$. We summarize these results in the following theorem:
\begin{theorem}(see \cite{BS13a}, \cite{BS13b}, \cite{BS13c}, \cite{BS13d}) If elliptic curves over $\mathbb{Q}$ are ordered by height, then
\begin{itemize}
\item[i)] The average size of 2-Selmer groups of elliptic curves is 3.
\item[ii)] The average size of 3-Selmer groups of elliptic curves is 4.
\item[iii)] The average size of 4-Selmer groups of elliptic curves is 7.
\item[iv)] The average size of 5-Selmer groups of elliptic curves is 6.
\end{itemize}
\end{theorem}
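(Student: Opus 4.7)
The plan is to reduce each of the four average-size computations to an orbit count on a suitable linear representation and then to evaluate that count by the geometry of numbers, following a common three-step template that was pioneered in \cite{BS13a}--\cite{BS13d}.

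First, for each $n \in \{2,3,4,5\}$ I would establish an algebraic parametrization identifying the non-trivial elements of $Sel^n(E/\mathbb{Q})$ with $G(\mathbb{Z})$-equivalence classes of locally soluble elements of a $\mathbb{Z}$-lattice $V(\mathbb{Z})$, where $(G,V)$ is a representation whose ring of polynomial invariants has transcendence degree $2$. Concretely, $V$ is binary quartics under $\mathrm{GL}_2$ for $n=2$; ternary cubics under $\mathrm{GL}_3$ for $n=3$; pairs of quaternary quadratic forms under $\mathrm{GL}_2 \times \mathrm{GL}_4$ for $n=4$; and quintuples of $5 \times 5$ alternating matrices under $\mathrm{GL}_5 \times \mathrm{GL}_5$ for $n=5$. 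The two generating invariants cut out an elliptic curve $E_{I,J}$, and the forward map sends an $n$-covering of $E$ to its equations in a natural projective embedding; local solubility at each place cuts out precisely the image of $Sel^n$ minus the identity.

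Second, I would count $G(\mathbb{Z})$-orbits on $V(\mathbb{Z})$ of bounded invariants via Bhargava's averaging method. Fix a fundamental domain $\mathcal{F}$ for $G(\mathbb{Z})$ on $G(\mathbb{R})$ and a bounded transversal $R \subset V(\mathbb{R})$ to the $G(\mathbb{R})$-orbits of invariants of bounded height; count $V(\mathbb{Z})$-points inside a suitable height-$X$ rescaling of $\mathcal{F} \cdot R$ by unfolding the integral over $\mathcal{F}$, so that the lattice-point count becomes a volume integral against an archimedean density. One must then prove that the cuspidal region of $\mathcal{F}$ - lattice points lying deep in the non-compact directions - contributes strictly lower order, except for a small distinguished subset corresponding to trivial or reducible Selmer elements which is identified and subtracted. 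For $n=4,5$ the cusp is high-dimensional and this stratified analysis is the technical heart of the geometry-of-numbers step.

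Third, I would sieve to locally soluble orbits. For each finite prime $p$ one computes the $p$-adic mass of $V(\mathbb{Z}_p)$-orbits that admit a soluble representative; multiplied by the archimedean factor from step two and added to the identity class of $Sel^n$, the resulting Euler product evaluates to the divisor sum $\sigma_{1}(n)$, giving the four values $3, 4, 7, 6$ for $n = 2,3,4,5$. The main obstacle, and the deepest ingredient of \cite{BS13a}--\cite{BS13d}, is the uniformity estimate needed to truncate this infinite sieve: one must show that orbits failing local solubility at some prime $p > X^{\delta}$ contribute negligibly, uniformly in $X$. This requires quantitative control on the discriminant and on Hensel lifting over $\mathbb{Z}_p$, and the uniformity lemma has to be proved afresh for each of the four representations, each case demanding its own geometric input on the invariant map.
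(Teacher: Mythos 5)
This theorem is quoted in the paper purely as background, with no proof given beyond the citations to \cite{BS13a}--\cite{BS13d}, and your outline is an accurate summary of exactly the strategy of those cited works: the coregular parametrizations you name (binary quartics, ternary cubics, pairs of quaternary quadratic forms, quintuples of alternating $5\times 5$ matrices), the geometry-of-numbers orbit count with cusp analysis, and the local-solubility sieve with the uniformity estimate, yielding the values $\sigma_1(n)$ for $n=2,3,4,5$. Since the paper's treatment is just the citation, there is nothing to compare against beyond noting that your proposal matches the approach of the referenced proofs.
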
 
Now we consider the same problem over function fields. One of the first papers in this direction is \cite{dJ02}, where the author showed that the average size of 3-Selmer groups of elliptic curves over $\mathbb{F}_q(t)$, with char$(\mathbb{F}_q) >3$, is $4 + \epsilon(q)$, where $\lim_{q \rightarrow \infty} \epsilon(q) =0$. The method used there is partially similar to the one in \cite{BS13b}: translate the problem into counting ternary cubic forms over $\mathbb{F}_q(t)$ problem. The other significant result in the function fields setting is of Jack Thorne (see \cite{Tho16}). By using the geometric-of-number argument, he was able to count integral orbits of a coregular representation over function fields, and then deduce that the average size of 2-Selmer groups of elliptic curves with two marked points equals $12$ (with some mild restrictions on the family of elliptic curves and the characteristic of $\mathbb{F}_q$, see \cite{Tho16} for details). For the family of all elliptic curves over function fields, the problem was investigated in \cite{HLN14}. Remarkably, the method, that was used in \cite{HLN14}, is more geometric than other mentioned papers. More precisely, they introduced two moduli spaces $\mathcal{M}$ and $\mathcal{A}$ whose points represent elements in 2-Selmer groups and elliptic curves respectively. Hence, the estimating average size problem becomes counting points on fibers of the Hitchin map $\mathcal{M} \rightarrow \mathcal{A}$. 

The aim of this article is to compute the average size of 2-Selmer groups of Jacobians of hyperelliptic curves in the function fields setting. From now on, we assume that our base field $K$ is the function field $\mathbb{F}_q(C)$, where $C$ is a smooth curve of genus $g$. In chapter $1$, we consider hyperelliptic curves of genus $n$ with a marked rational Weierstrass point, and the main result is:
\begin{theorem}
Suppose that $q> 4^{2n+1}$, then 
\begin{equation*}
\limsup\limits_{d \rightarrow \infty}\dfrac{\mathlarger{\sum}\limits_{\substack{\text{Hyperelliptic curves H over $\mathbb{F}_q(C)$} \\ Height(H) \leq d }} \frac{|Sel_2(H)|}{|Aut(H)|}}{\mathlarger{\sum}\limits_{\substack{\text{Hyperelliptic curves H over $\mathbb{F}_q(C)$} \\ Height(H) \leq d }} \frac{1}{|Aut(H)|}} \leq 3 +f(q),
\end{equation*}
where $\lim_{q \rightarrow \infty} f(q) = 0$.
\end{theorem}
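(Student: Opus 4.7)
\medskip

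The plan is to adapt the geometric framework of \cite{HLN14} to the hyperelliptic setting of genus $n$, replacing the role of the Weierstrass model of an elliptic curve by a hyperelliptic Weierstrass equation with a marked rational Weierstrass point. The representation-theoretic input is provided by the work of Bhargava--Gross \cite{BG13}: over a field, 2-Selmer elements of the Jacobian $J(H)$ of such a hyperelliptic curve $H$ are in bijection with locally soluble orbits of $\mathrm{PGL}_{2n+1}$ (or $\mathrm{PSO}_{2n+1}$) acting on self-adjoint operators whose characteristic polynomial is the Weierstrass equation of $H$. I would first globalize this picture over the base curve $C$: twist the relevant vector spaces by an auxiliary line bundle $\mathcal{L}$ on $C$ whose degree $e$ controls the height, and introduce two moduli stacks, a ``base'' stack $\mathcal{A}$ whose $\mathbb{F}_q$-points parametrize families of hyperelliptic curves with marked point of height $\leq d$, and a ``total'' stack $\mathcal{M}$ whose $\mathbb{F}_q$-points parametrize global orbits of the twisted representation; these fit into a Hitchin-type morphism $h : \mathcal{M} \to \mathcal{A}$.

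The second step is to prove a fiberwise correspondence: for each $a \in \mathcal{A}(\mathbb{F}_q)$ corresponding to a hyperelliptic curve $H_a$, the set $h^{-1}(a)(\mathbb{F}_q)$, weighted by automorphisms, should be identified with $\mathrm{Sel}_2(J(H_a))$. This relies on showing that the local-to-global gluing condition defining a global orbit on $C$ is precisely the everywhere-local solubility condition that cuts out the Selmer group inside $H^1(K, J(H_a)[2])$; the Bhargava--Gross orbit parametrization supplies this at each place, and one needs an unramified twisting argument (and a class group vanishing input, available here because $C$ is a curve over a finite field) to control the global pasting. At this stage the ratio in the theorem becomes
\[
\frac{\sum_{a} |h^{-1}(a)(\mathbb{F}_q)|/|\mathrm{Aut}(a)|}{\sum_{a} 1/|\mathrm{Aut}(a)|} \;=\; \frac{|\mathcal{M}(\mathbb{F}_q)|}{|\mathcal{A}(\mathbb{F}_q)|}
\]
after appropriate groupoid cardinality bookkeeping.

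The third step is the point count. I would stratify $\mathcal{M}$ into a ``trivial'' component $\mathcal{M}_0$ coming from the identity Selmer class, two further components coming from natural classes produced by the marked Weierstrass point (this is where the $3$ comes from, paralleling the $3$ in Theorem~1.1(i)), and a ``higher'' component $\mathcal{M}_{>}$ parametrizing everything else. Lang--Weil, applied uniformly on each stratum, gives
\[
|\mathcal{M}(\mathbb{F}_q)| = 3\,|\mathcal{A}(\mathbb{F}_q)|\bigl(1 + O(q^{-1/2})\bigr) + |\mathcal{M}_{>}(\mathbb{F}_q)|,
\]
and the hypothesis $q > 4^{2n+1}$ is precisely what is needed so that the Lang--Weil error, with constants depending on Betti numbers and dimensions growing in $n$, is dominated by the main term. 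Dividing by $|\mathcal{A}(\mathbb{F}_q)|$ and taking $\limsup_{d\to\infty}$ (i.e.\ $e\to\infty$) produces the bound $3 + f(q)$ with $f(q) \to 0$ as $q\to\infty$.

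The main obstacle will be the estimate on $|\mathcal{M}_{>}(\mathbb{F}_q)|$, i.e.\ showing that orbits away from the three ``obvious'' components contribute only an error. Unlike the elliptic case, the stack $\mathcal{M}$ here is singular and the discriminant locus in $\mathcal{A}$ is high-dimensional, so one must carry out a careful codimension analysis over the locus of non-smooth Weierstrass equations, with uniform control as $n$ grows. Concretely, I expect to need a dimension estimate showing that $\dim \mathcal{M}_{>} \leq \dim \mathcal{A} - 1$ (plus a fiberwise bound on the number of points in the ``bad'' fibers), which is the step where the generalization beyond \cite{HLN14} is most delicate and where the explicit bound $q > 4^{2n+1}$ in the hypothesis ultimately originates.
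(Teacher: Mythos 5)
Your first two steps reproduce the paper's framework (the stacks $\mathcal{M}=Hom(C,[BE_S[2]/\mathbb{G}_m])$ and $\mathcal{A}=Hom(C,[S/\mathbb{G}_m])$, and the identification of $\mathcal{M}$ with pairs ($G$-bundle, regular section) via the Vinberg representation $Sym^2_0(W)$ of $SO_{2n+1}$ and the isomorphism $I_S\cong E_S[2]$), but two of your key claims are not what happens and would fail. First, the fiberwise identification of $h^{-1}(a)(\mathbb{F}_q)$ with $Sel_2(J(H_a))$ is only an \emph{inequality} in general: the paper proves $|Sel_2(E_\alpha)|\leq |H^1(C,E_\alpha[2])|$ when $E_\alpha[2](K)=0$ and only the weaker bound $|Sel_2(E_\alpha)|\leq 2^{2n-1}|H^1(C,E_\alpha[2])|$ otherwise (equality holds only for transversal curves, where the N\'eron model argument applies). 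This is precisely why the theorem is a $\limsup$ upper bound, and it is also where the hypothesis $q>4^{2n+1}$ actually originates: one must show that the locus $E_\alpha[2](K)\neq 0$, where the factor $2^{2n-1}$ appears, contributes zero to the average, by bounding the number of such $\underline{a}$ via Riemann--Roch and the number of torsors via tame \'etale covers of the smooth locus. It does not come from a Lang--Weil error term.

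Second, your stratification of $\mathcal{M}$ into a trivial component plus ``two further components produced by the marked Weierstrass point'' is not the source of the $3$, and I do not see what those two classes would be. In the paper the decomposition is $3=1+2$: the $1$ comes from $G$-bundles whose canonical reduction is to a Borel with slopes exactly $\mu_i=2d(n-i+1)$, in which case every regular section factors through the Kostant section; the $2$ is the Tamagawa number of $SO_{2n+1}$, arising from the semistable and mildly unstable bundles, where $|Bun_G(\mathbb{F}_q)|$ (weighted by automorphisms) combines with the local density $\prod_i\zeta_C(2i)^{-1}$ of the regular locus to give exactly $2$. Moreover Lang--Weil cannot be applied ``uniformly on each stratum'': the sum is over all $G$-bundles, $Bun_G$ is not of finite type, and the number of canonical-reduction strata (indexed by slopes $\mu_1>\cdots>\mu_t>0$) grows with $d$. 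The actual control of the ``higher'' contribution is by slope inequalities, Riemann--Roch on the semistable quotients of the canonical filtration, and lower bounds on $|Aut_G(\mathcal{E})|$ — together with a separate argument showing that sections over bundles with $\mu_i-\mu_{i+1}>2d$ are never regular. Without these ingredients the proposal does not close.
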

Moreover, if we only consider hyperelliptic curves with square-free discriminant (i.e. transversal hyperelliptic curves, see the definition \ref{transversal}), then the rational function $f(q)$ of $q$ is gone:
\begin{theorem}
\begin{equation*}
\lim\limits_{d \rightarrow \infty}\dfrac{\mathlarger{\sum}\limits_{\substack{\text{Transversal hyperelliptic curves H} \\ Height(H) \leq d }} \frac{|Sel_2(H)|}{|Aut(H)|}}{\mathlarger{\sum}\limits_{\substack{\text{Transversal hyperelliptic curves H} \\ Height(H) \leq d }} \frac{1}{|Aut(H)|}} = 3,
\end{equation*}
\end{theorem}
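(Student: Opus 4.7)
The plan is to restrict the Hitchin-type fibration $h: \mathcal{M} \to \mathcal{A}$ constructed for Theorem 1.3 to the open substack $\mathcal{A}^{\mathrm{tr}} \subset \mathcal{A}$ parametrizing hyperelliptic curves with square-free discriminant, and to exploit the fact that over $\mathcal{A}^{\mathrm{tr}}$ the fiberwise structure of $\mathcal{M}$ admits a clean decomposition. The guiding intuition is that the error $f(q)$ in Theorem 1.3 arises entirely from contributions of the complement $\mathcal{A} \setminus \mathcal{A}^{\mathrm{tr}}$, where higher-codimension strata of $\mathcal{M}$ contribute subleading terms. By working only with transversal curves we kill these extra contributions and recover exactly the value $3$.

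First, I would verify that the restricted map $h^{\mathrm{tr}}: h^{-1}(\mathcal{A}^{\mathrm{tr}}) \to \mathcal{A}^{\mathrm{tr}}$ still satisfies $|h^{-1}(a)(\mathbb{F}_q)| = |Sel_2(H_a)|$ for every $a \in \mathcal{A}^{\mathrm{tr}}(\mathbb{F}_q)$, using the representation-theoretic interpretation of 2-Selmer elements developed in the proof of Theorem 1.3. Over $\mathcal{A}^{\mathrm{tr}}$ the universal hyperelliptic family is smooth outside a finite set of simple nodes, so the local-global analysis of Selmer classes simplifies to a product of well-understood local factors, with no contribution from worse singularities. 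Next, I would apply the Grothendieck--Lefschetz trace formula to both $\mathcal{M}^{\mathrm{tr}} := h^{-1}(\mathcal{A}^{\mathrm{tr}})$ and $\mathcal{A}^{\mathrm{tr}}$; the latter is simply the complement of a discriminant divisor inside an affine bundle over $C$, so its point count is well controlled. After decomposing $\mathcal{M}^{\mathrm{tr}}$ into its irreducible components (one for the trivial Selmer class, plus further components coming from the natural involution structure of the Weierstrass model), I would show that each component is geometrically irreducible of the same dimension as $\mathcal{A}^{\mathrm{tr}}$, so that the ratio of leading-order point counts is exactly $3$.

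The main obstacle is upgrading the $\limsup$ of Theorem 1.3 to a genuine limit: here one needs a matching lower bound, which requires an \emph{exact} count of $\mathcal{M}^{\mathrm{tr}}(\mathbb{F}_q)$ rather than just an upper estimate. This forces one to establish geometric irreducibility and to pin down the precise dimension of each Selmer stratum over $\mathcal{A}^{\mathrm{tr}}$, and to verify that no spurious lower-dimensional components contribute at leading order in $d$. Once these structural facts are secured, the Lang--Weil asymptotic yields the bound $3$ from both directions, and the residual $f(q)$ seen in Theorem 1.3 vanishes automatically because every subleading contribution sits over $\mathcal{A} \setminus \mathcal{A}^{\mathrm{tr}}$ and is excluded at the outset.
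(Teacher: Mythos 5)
Your opening step --- that over the transversal locus the fiber of $\mathcal{M}\rightarrow\mathcal{A}$ computes the 2-Selmer group on the nose --- is exactly the paper's Proposition \ref{selmer equal torsor in transversal case} (proved via the N\'eron model and semistability of the minimal regular model), so that part is sound and is indeed where the $\limsup$ has a chance of becoming a limit. The gap is in the second half. You propose to decompose $\mathcal{M}^{trans}$ into three geometrically irreducible components of the dimension of $\mathcal{A}^{trans}$ and conclude by Lang--Weil / Grothendieck--Lefschetz. This cannot work as stated: the limit in the theorem is $d=\deg(\mathcal{L})\rightarrow\infty$ at \emph{fixed} $q$, and the spaces $\mathcal{M}^{trans}_{\mathcal{L}}$ change with $\mathcal{L}$, whereas Lang--Weil controls $|X(\mathbb{F}_q)|$ as $q\rightarrow\infty$ for a fixed $X$. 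More fundamentally, the constant $3$ is not a count of irreducible components. In the paper it arises as $1+2$: the $1$ is the contribution of the single Kostant section, coming from the stratum of $G=SO(2n+1)$-bundles whose canonical reduction is to the Borel with slopes $\mu_i-\mu_{i+1}=2d$ (Case 2 of Section \ref{couting section}), and the $2$ is the Tamagawa number $\tau(SO_{2n+1})=2$, entering through the mass formula for $\sum_{\mathcal{P}\in Bun_G(\mathbb{F}_q)}|Aut_G(\mathcal{P})|^{-1}$, which cancels against the exact density $\prod_v |G(k(v))|/|k(v)|^{\dim V}=\prod_{i=1}^{n}\zeta_C(2i)^{-1}$ of regular sections (Propositions \ref{general locus 1} and \ref{transversal locus 1}). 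There is no stratification of $\mathcal{M}^{trans}$ by ``Selmer class'' or by ``the involution of the Weierstrass model'' underlying this; the relevant stratification is by the Harder--Narasimhan/canonical reduction type of the $G$-bundle.

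Consequently your mechanism for the matching lower bound is missing. The paper gets the exact limit by three exact inputs: (i) the equality $|Sel_2(E_\alpha)|=|H^1(C,E_\alpha[2])|$ for transversal $\alpha$; (ii) Poonen's sieve, which gives the density of transversal regular sections as a genuine limit (not just an upper bound), with the ratio to the density of transversal invariants equal to $\prod_i\zeta_C(2i)^{-1}$; and (iii) the elimination of all error terms in Case 2 by showing (Lemma \ref{ignore case 2 of counting section}) that any section supported on a Borel-reduced bundle with some $\mu_i-\mu_{i+1}<2d$ has non-square-free discriminant, hence is excluded by transversality --- so your intuition that the $f(q)$ error sits over $\mathcal{A}\setminus\mathcal{A}^{trans}$ is correct, but it is a statement one must prove about the discriminants of the offending sections, and it is the content of that lemma rather than something that ``vanishes automatically.'' If you want to salvage your approach you would need to replace the Lang--Weil step by the exact computations in (ii) and (iii) and by the Tamagawa number computation for $Bun_G$; at that point you have reproduced the paper's argument.
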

In chapter 2, we will estimate the average size of 2-Selmer groups of hyperelliptic curves with two marked points: a rational Weiertrass point and a rational non-Weiertrass point. The main theorem in this chapter is:

\begin{theorem}
Suppose that $q > 16^{\frac{m^2(2m+1)}{2m-1}}$ and $p=char(\mathbb{F}_q) > 3$. Then we have that
\begin{eqnarray*}
&\limsup\limits_{deg(\mathcal{L}) \rightarrow \infty} \frac{\mathlarger{\sum}\limits_{\substack{\text{Hyperelliptic curves H} \\ Height(H) \leq d}} \frac{|Sel_2(H)|}{|Aut(H)|}}{\mathlarger{\sum}\limits_{\substack{\text{Hyperelliptic curves H} \\ Height(H) \leq d}} \frac{1}{|Aut(H)|}} \\
&\leq 4. \zeta_C((2m+1)^2). \prod\limits_{v \in |C|}\big(1+c_{2m-1}|k(v)|^{-2}+\dots+c_1|k(v)|^{-2m}-2|k(v)|^{(2m+1)^2}\big) \\ & \hspace{10cm}+2 + f(q),
\end{eqnarray*}where $\lim_{q \rightarrow \infty} f(q) =0$, and $c_i$ are constants which are only depended on $m$ and $p$.
\end{theorem}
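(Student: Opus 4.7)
The plan is to adapt the geometric framework of \cite{HLN14}, which treated elliptic curves via counting points on fibers of a Hitchin-type map, to the setting of hyperelliptic curves of genus $m$ equipped with one rational Weierstrass point and one rational non-Weierstrass point. The essential starting point is the identification of a suitable coregular $\theta$-representation whose rational orbits parametrize 2-Selmer elements of the Jacobian: for the single-marked-point family this is the representation underlying chapter 1, while for the two-marked-point family it is a closely related representation obtained from a $\mathbb{Z}/2$-grading on a reductive Lie algebra, in the spirit of \cite{SW13} and \cite{Tho16}. My first task would be to prove a bijection, over $K = \mathbb{F}_q(C)$, between elements of $\mathrm{Sel}_2(H)$ and a distinguished class of locally soluble rational orbits for this representation, keeping careful track of the non-Weierstrass marked point as additional rigidifying data.

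Next I would globalize this picture. For each line bundle $\mathcal{L}$ on $C$, I would define two algebraic stacks: $\mathcal{A}_{\mathcal{L}}$ parametrizing hyperelliptic curves with the prescribed pair of marked points and Weierstrass equation taking values in suitable powers of $\mathcal{L}$, and $\mathcal{M}_{\mathcal{L}}$ parametrizing such curves together with a 2-Selmer class, realized via the chosen representation as a section of an appropriate fiber bundle over $\mathcal{A}_{\mathcal{L}}$. The weighted sums in the theorem can then be rewritten as groupoid point counts $|\mathcal{M}_{\mathcal{L}}(\mathbb{F}_q)|$ and $|\mathcal{A}_{\mathcal{L}}(\mathbb{F}_q)|$, so the problem reduces to counting points in the fibers of the Hitchin-type morphism $h \colon \mathcal{M}_{\mathcal{L}} \to \mathcal{A}_{\mathcal{L}}$ as $\deg(\mathcal{L}) \to \infty$.

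The heart of the argument is to decompose $\mathcal{A}_{\mathcal{L}}$ into the transversal (square-free discriminant) locus $\mathcal{A}^{\mathrm{tr}}_{\mathcal{L}}$ and its complement. Over $\mathcal{A}^{\mathrm{tr}}_{\mathcal{L}}$ the fibers are controlled by the Mumford theta group of the Jacobian, and their fiberwise counts assemble into a Tamagawa-type local-global Euler product that reproduces the displayed factor involving $\zeta_C((2m+1)^2)$ together with the local correction $\prod_{v \in |C|} \bigl(1 + c_{2m-1}|k(v)|^{-2} + \cdots + c_1|k(v)|^{-2m} - 2|k(v)|^{-(2m+1)^2}\bigr)$, whose constants $c_i$ arise from representation-theoretic data of the $\theta$-group in residue characteristic $p$. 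The leading numerical factor $4$ and the additive term $+2$ record the always-present Selmer classes generated by the rational 2-torsion coming from the two marked divisors and their interaction with the hyperelliptic involution.

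The principal obstacle is bounding the contribution from the non-transversal locus $\mathcal{A}_{\mathcal{L}} \setminus \mathcal{A}^{\mathrm{tr}}_{\mathcal{L}}$, where fibers of $h$ can be much larger than the generic fiber. I would address this in two steps: a codimension estimate on each stratum of the discriminant divisor, and a uniform fiberwise bound of the form $|h^{-1}(a)(\mathbb{F}_q)| \le C \, q^{\dim \mathcal{M}_{\mathcal{L}} - \dim \mathcal{A}_{\mathcal{L}} + O(1)}$, established by slicing and invariant-theoretic arguments analogous to those in \cite{Tho16}. Feeding these into Lang--Weil then allows the bad contribution to be absorbed into an error term $f(q) \to 0$, and making this absorption effective is precisely what forces the hypothesis $q > 16^{m^2(2m+1)/(2m-1)}$. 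The assumption $p > 3$ is required to ensure that the $\theta$-grading remains semisimple and that a Kostant-type section exists, without which the local-to-global bijection between Selmer elements and orbits of the coregular representation would break down.
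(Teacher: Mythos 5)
Your high-level framing (moduli spaces $\mathcal{M}_{\mathcal{L}} \to \mathcal{A}_{\mathcal{L}}$, a Vinberg representation whose stabilizer scheme matches $J[2]$, reduction to counting sections) matches the paper, but the core of your argument diverges in ways that leave genuine gaps. First, the paper does not establish a bijection between $\mathrm{Sel}_2(H)$ and locally soluble rational orbits; it only proves the inequality $|\mathrm{Sel}_2(J)| \le |H^1(C,\mathcal{J}[2])|$ (with a factor $2^{2m-1}$ when $J[2](K)\neq 0$), and the hypothesis $q > 16^{m^2(2m+1)/(2m-1)}$ is used precisely to show that the curves with a rational $2$-torsion point contribute zero to the average — by bounding the number of such curves and the number of tame \'{e}tale covers of the smooth locus, not by any Lang--Weil absorption. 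Your attribution of the constants is also wrong: the leading $4$ is the Tamagawa number of $G = SO(2m+1)\times SO(2m+1)$, arising from the (nearly) semistable stratum of $\mathrm{Bun}_G$; the additive $+2$ counts the \emph{two Kostant sections} of the representation $V_1\otimes V_2$, each contributing $1$ from the maximally unstable strata where all the Harder--Narasimhan quotients are line bundles with slopes in the right arithmetic progression; and the Euler factor $\prod_v(1+c_{2m-1}|k(v)|^{-2}+\cdots)$ comes from bounding the density of the regular locus $W^{\mathrm{reg}}$ fiberwise (using $|W^{\mathrm{reg}}_a(k(v))|\le 2|G(k(v))|$ and counting invariants $a$ whose polynomial has a root of multiplicity $\ge 3$), while $\zeta_C((2m+1)^2)$ is the density of minimal tuples. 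None of these has to do with "Selmer classes generated by the rational 2-torsion coming from the two marked divisors."

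Second, the decomposition that actually drives the estimate is not transversal versus non-transversal but the stratification of $\mathrm{Bun}_G$ by canonical reduction to parabolic subgroups, followed by a case analysis on the slopes $x_i, y_j$ of the semistable graded pieces; Riemann--Roch on semistable bundles and the automorphism-group estimates for $G$-bundles are what bound each stratum, and the error $f(q)$ arises from one specific intermediate stratum (line-bundle graded pieces with $(4m-3)d < x_1+x_2 < (4m-2)d$), which is exactly the stratum that disappears in the transversal case. Your proposed uniform fiberwise bound $|h^{-1}(a)(\mathbb{F}_q)| \le C q^{\dim\mathcal{M}_{\mathcal{L}} - \dim\mathcal{A}_{\mathcal{L}} + O(1)}$ via Lang--Weil cannot work as stated: here $q$ is fixed and $\deg(\mathcal{L})\to\infty$, so the spaces of sections are not fixed varieties to which Lang--Weil applies, and the dangerous fibers live over unstable $G$-bundles rather than over the discriminant locus of $\mathcal{A}_{\mathcal{L}}$. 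Without the canonical-reduction stratification and the identification of the two Kostant sections you have no mechanism to produce the stated constants, so the proof as proposed does not go through.
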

And if we restrict to the family of transversal hyperelliptic curves, then we will obtain:
\begin{theorem}
If char($\mathbb{F}_q$) is "good", then 
$$\lim_{d \rightarrow \infty} \frac{\sum \limits_{\substack{\text{$(\mathcal{L}, \underline{a})$ is transversal} \\ deg(\mathcal{L} \leq d}} |Sel_2(H_{\underline{a}})|}{\sum \limits_{\substack{\text{$(\mathcal{L}, \underline{a})$ is transversal} \\  deg(\mathcal{L} \leq d}} 1} = 6.$$
\end{theorem}

Discussion of the method: over the rational field, Bhargava and Gross in \cite{BG13} made use of the connection between hyperelliptic curves and a representation in Vinberg theory. Based on that, they was able to construct a bijection between elements in 2-Selmer group and soluble integral orbits. Hence, the problem became the counting integral orbits problem, and it was solved by using geometric-of-number argument. In our case, i.e. over function field, due to Paul Levi (see \cite{Lev08}), the Vinberg theory of $\theta-$groups is available in positive characteristics case. Thus, we are able to built a connection between hyperelliptic curves and some Vinberg representations. Our next step is a generality of the geometric counting technique used in \cite{HLN14}. More precisely, by using the canonical isomorphism between the stabilizer scheme (associated to the Vinberg representation) and the 2-torsion subgroup of Jacobian of the universal hyperelliptic curve (see \ref{can1} and \ref{can2}), we could translate the main problem into a counting regular sections problem. Finally, by using the canonical reduction of principal bundles, the counting regular sections problem is transferred into counting sections of semi-stable vector bundles. This kind of bundles allow us to use Riemann-Roch theorem to estimate the dimension of the set of global sections. Notice that the canonical reduction theory for principal bundles is the generalization of the  Harder-Narashimhan semi-stable filtration of vector bundles, and using that canonical reduction, we also can estimate the size of automorphism groups of principal bundles, which play an important role in our weighted average. 

Discussion of the results: our results are consistent with the previous results in \cite{BG13} and \cite{Sha16}, and if we restrict to the transversal families, the obtained average sizes are equal to corresponding results over the rational field. Moreover, from the computation parts, we can give some intuitive meanings of the results as follows. Let assume we are restricting to the transversal families. Then in both cases, the main contributors are the case of Borel subgroups and the whole groups. For the case of the whole group, we obtain the Tamagawa number that equals 2 in the first case and equals 4 in the second case. In the case of Borel subgroups, we obtain 1 in the first case and 2 in he second case which are the numbers of Kostant sections in corresponding representations we are about to use. One more notice about some restrictions of $q$ is that since we are going to use some results in Vinberg theory over positive characteristics which is developed in \cite{Lev08} and \cite{Lev09}, we require the characteristics of $\mathbb{F}_q$ is good (in both case it is just $p>3$). And also in general case (theorem 1 and 3), $q$ is assumed to be large enough due to the comparison between 2-Selmer group and the set of torsors over $C$ of 2-torsion subgroup (see \ref{compare1} and \ref{compare2}).

\begin{remark}
\begin{itemize}
\item[i)] In this article, we only consider odd hyperelliptic curves, i.e. hyperelliptic curves with a marked rational Weierstrass point. For the case of even hyperelliptic curves, the method which is used in this article still works very well, and we are going to complete that case in our subsequent work. And notice that in the rational field setting, that problem has already been investigated by A. Shankar and X. Wang in \cite{SW13}.
\item[ii)] In \cite{Bha13}, Bhargava consider the family of general hyperelliptic curves, i.e. without marked points. This case is totally different from other cases that we mentioned before (including our cases) because of the fact that the representation Bhargava used there is a coregular representation but it does not come from Vinber theory of $\theta-$groups. Additionally, this representation does not possess any Kostant sections which play the essential role in our work (for example to define the regular locus). Because of this difficulty, we need to have some new ideas, and we hope to overcome this problem in the future.
\end{itemize}
\end{remark}

\subsection*{Acknowledgements}
I would like to express my deepest gratitude to Gan Wee Teck, my supervisor, for his enthusiastic guidance throughout the research, and also numerous useful conversations. I would also like to extend my appreciation to Chin Chee Whye, Ngo Bao Chau, and Luo Caihua for helpful discussions. Finally, I would like to thank National University of Singapore for its support. This work was done when the author was supported by NUS Research Scholarship.
\subsection*{Notation} Through out the article, we denote $C$ to be a smooth, complete, geometrically connected curve over $k=\mathbb{F}_q$ such that $C(k) \neq \emptyset$. Let $K= k(C)$ be the function field of $C$. For each closed point $c \in |C|$, we denote $K_v$ to be the completion of $K$ at $v$ and $\mathcal{O}_v$ to be its ring of integers. We also denote the genus of $C$ by $g$, and hyperelliptic curves that we are going to study have genus  $n>1$. 
\section{Hyperelliptic curves with a marked Weierstrass point}
In this part, we will consider the family of hyperelliptic curves of genus $n$ with a marked Weierstrass point. This kind of curves is the generalization of elliptic curves (when $n=1$), and they can also be defined by canonical equations, which are called Weierstrass equations. In the following section, we are going to review Weierstrass equations of hyperelliptic curves, and from that the heights of hyperelliptic curves will be defined.
\subsection{Weierstrass equation and height of hyperelliptic curve}
Let $H$ be a hyperelliptic curve over the function field $K=\mathbb{F}_q(C)$ with a marked $K-$rational Weierstrass point $O$. Assume that the characteristics of $\mathbb{F}_q$ and $2.(2n+1)$ are coprime. Then $H$ has an affine Weierstrass equation of the following form:
$$y^2= x^{2n+1}+a_2x^{2n-1}+ \dots + a_{2n}x +a_{2n+1},$$where $a_i \in K$ for all $i$ such that the discriminant $\Delta(a_2,\cdots,a_{2n+1}) \neq 0.$ The Weierstrass point $O$ is corresponding to $x = \infty.$ Moreover, the tuple $(a_2,a_3,\dots,a_{2n+1})$ is unique up to the following identification:
$$(a_2,a_3,\dots,a_{2n+1}) \equiv (\lambda^4.a_2, \lambda^6a_3,\dots, \lambda^{4n+2}.a_{2n+1}) \hspace{2cm} \lambda \in K^{\times}.$$ 
Fix the data $(a_2,a_3,\dots,a_{2n+1})$, we define the minimal integral model of $H$ as follows: for each point $v \in |C|$, we can choose an integer $n_v$ which is the smallest integer satisfying that: the tuple $(\varpi_v^{4n_v}a_2, \varpi_v^{6n_v}a_3, \cdots, \varpi_v^{4nn_v}a_{2n}, \varpi_v^{(4n+2)n_v}a_{2n+1})$ has coordinates in $\mathcal{O}_{K_v}$. Given $(n_v)_{v \in |C|}$, we define the invertible sheaf $\mathcal{L}_H \subset K$ whose sections over a Zariski open $U \subset C$ are given by $$\mathcal{L}_H(U) = K \cap \big(\prod_{v \in U}\varpi_v^{-n_v}\mathcal{O}_{K_v} \big).$$
Then it is easy to see that $a_i \in H^0(C, \mathcal{L}_H^{\otimes 2i})$ for all $i$. Furthermore, the stratum $(\mathcal{L}_H, \underline{a})$ is minimal in the sense that there is no proper subsheaf $\mathcal{M}$ of $\mathcal{L}_H$ such that $a_i \in H^0(C, \mathcal{M}^{\otimes 2i})$ for all $i$. Conversely, given a minimal strata $(\mathcal{L}, \underline{a})$ satisfying that $\Delta(\underline{a}) \neq 0$, we consider a subscheme of $\mathbb{P}^2(\mathcal{L}^{2n+1} \oplus \mathcal{L}^{2} \oplus \mathcal{O}_C)$ that is defined by 
$$Z^{2n-1}Y^2 = X^{2n+1} +a_2Z^2X^{2n-1} +\dots + a_{2n}Z^{2n}X + a_{2n+1}Z^{2n+1}.$$ This is a flat family of curves $\mathcal{H} \rightarrow C$ with integral geometric fibers, and the generic fiber $\mathcal{H}_K$ is a hyperelliptic curve over $K(C)$ with a marked rational Weierstrass point. Furthermore, the associated minimal data of $H$ is exactly $(\mathcal{L}, \underline{a})$. Hence we have just shown the surjectivity of the following map $\phi_{\mathcal{L}}$ with a given line bundle $\mathcal{L}$ over $C$:
$$\phi_{\mathcal{L}}: \{ \text{minimal tuples} \hspace{0.2cm}(\mathcal{L}, \underline{a}) \} \rightarrow \{ \text{Hyperelliptic curves $(H,O)$ such that $\mathcal{L}_H \cong \mathcal{L}$} \}.$$
Moreover, the sizes of fibers of $\phi_{\mathcal{L}}$ can be calculated as follows
\begin{proposition}
Given a line bundle $\mathcal{L}$ over $C$, the map $\phi_{\mathcal{L}}$ defined as above is surjective, and the preimage of $(H,O)$ is of size $\frac{|\mathbb{F}_q^{\times}|}{|Aut(H,O)|}$, here $Aut(H,O)$ denotes the subset of all elements in $Aut(H)$ which preserve the marked point $O$.
\end{proposition}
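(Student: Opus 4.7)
The plan is to show that the surjectivity has essentially been established in the discussion preceding the proposition, so the main task is to compute the fiber size. I would proceed in three steps: identify the fiber as an orbit under a naturally acting group, show the stabilizer agrees with $\mathrm{Aut}(H,O)$, and conclude by orbit counting.

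First, I would fix $(H,O)$ with a chosen Weierstrass equation $y^2 = x^{2n+1}+a_2x^{2n-1}+\dots+a_{2n+1}$. The full set of Weierstrass equations of $(H,O)$ obtained via changes of variable $x \mapsto \mu^2 x$, $y \mapsto \mu^{2n+1} y$ for $\mu \in K^\times$ forms a $K^\times$-torsor acting by $a_i \mapsto \mu^{-2i} a_i$. Writing $\lambda = \mu^{-1}$, this is the same as the scaling $(a_2,\dots,a_{2n+1}) \equiv (\lambda^4 a_2,\dots,\lambda^{4n+2} a_{2n+1})$ mentioned in the excerpt. The first key step is to observe that the minimality condition at every place $v \in |C|$ is preserved by $\lambda$ if and only if $v(\lambda)=0$ for all $v$, i.e. $\lambda \in H^0(C,\mathcal{O}_C^\times) = \mathbb{F}_q^\times$. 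Consequently, the set of minimal tuples $(\mathcal{L},\underline{a})$ with the given underlying $\mathcal{L}$ that map to our fixed $(H,O)$ is precisely an orbit of the natural $\mathbb{F}_q^\times$-action $\lambda \cdot \underline{a} = (\lambda^{2i} a_i)_i$ on such tuples.

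The second step is the identification $\mathrm{Stab}_{\mathbb{F}_q^\times}(\underline{a}) \cong \mathrm{Aut}(H,O)$. Any $K$-rational automorphism of $(H,O)$ fixing the marked Weierstrass point comes (after normalizing the Weierstrass form, using char $\neq 2$) from a pair $(\mu,\epsilon) \in K^\times \times \{\pm 1\}$ acting as $(x,y) \mapsto (\mu^2 x,\, \epsilon \mu^{2n+1} y)$, subject to the equality constraint $\mu^{2i} a_i = a_i$ for all $i$ with $a_i \neq 0$, and with $(\mu,\epsilon) \sim (-\mu,-\epsilon)$ giving the same map. Since $\Delta(\underline{a}) \neq 0$ forces some $a_i \neq 0$, the constraint $\mu^{2i}=1$ on non-vanishing indices forces $\mu$ to be a root of unity in $K^\times$, hence $\mu \in \mathbb{F}_q^\times$. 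A direct count shows that the involution $(\mu,\epsilon) \sim (-\mu,-\epsilon)$ collapses the $2|\{\mu\}|$ pairs to $|\{\mu \in \mathbb{F}_q^\times : \mu^{2i} a_i = a_i\ \forall i\}|$ equivalence classes, which is exactly the stabilizer of $\underline{a}$ in $\mathbb{F}_q^\times$ under the substitution $\lambda = \mu^{-1}$.

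Combining the two steps via the orbit--stabilizer theorem gives
\[
|\phi_\mathcal{L}^{-1}(H,O)| \;=\; \frac{|\mathbb{F}_q^\times|}{|\mathrm{Stab}_{\mathbb{F}_q^\times}(\underline{a})|} \;=\; \frac{|\mathbb{F}_q^\times|}{|\mathrm{Aut}(H,O)|}.
\]
The main obstacle I expect is the careful bookkeeping in Step 2: one must verify that the automorphism group of $(H,O)$ in this Weierstrass form really is exhausted by the transformations $(x,y) \mapsto (\mu^2 x, \epsilon \mu^{2n+1} y)$ (which uses the absence of an $x^{2n}$ term and the assumption $\mathrm{char}(\mathbb{F}_q) \nmid 2(2n+1)$ to rule out any translation in $x$), and that the equivalence $(\mu,\epsilon) \sim (-\mu,-\epsilon)$ matches exactly the $\lambda \leftrightarrow \mu^{-1}$ correspondence so that no stray factor of $2$ appears in the final count.
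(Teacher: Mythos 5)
Your proof is correct and follows essentially the same route as the paper: the paper also identifies the fiber $\phi_{\mathcal{L}}^{-1}(H,O)$ with a transitive $\mathbb{F}_q^{\times}$-orbit, asserts that the stabilizer is $\mathrm{Aut}(H,O)$, and concludes by orbit--stabilizer. The only difference is that you supply the verifications the paper leaves implicit (that minimality/fixing $\mathcal{L}$ forces $\lambda\in\mathbb{F}_q^{\times}$, and the explicit matching of $\mathrm{Aut}(H,O)$ with the stabilizer via $(\mu,\epsilon)\sim(-\mu,-\epsilon)$), which is a welcome elaboration rather than a different argument.
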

\begin{proof}
Suppose that $(H,O)$ is a hyperelliptic curve with the associated minimal data $(\mathcal{L}, \underline{a}).$ Since we fix the line bundle $\mathcal{L}$, the tuple of sections $\underline{a}$ is well-defined upto the following identification: 
$$\underline{a} \equiv \lambda. \underline{a} = (\lambda^4a_2, \dots, \lambda^{4n}a_{2n}, \lambda^{4n+2}a_{2n+1}), \hspace{1cm} \lambda \in \mathbb{F}_q^{\times}.$$
In the other words, there is a transitive action of $\mathbb{F}_q$ on the fiber $\phi_{\mathcal{L}}^{-1}(H)$. Furthermore, the stabilizer of any element in $\phi_{\mathcal{L}}^{-1}(H)$ is exactly $Aut(H,O)$. Hence, the size of $\phi_{\mathcal{L}}^{-1}(H,O)$ is $\frac{|\mathbb{F}_q^{\times}|}{|Aut(H,O)|}$. 
\end{proof}
\begin{definition}{\textbf{(Height of hyperelliptic curve)}} The height of the hyperelliptic curve $(H,O)$ is defined to be the degree of the associated line bundle $\mathcal{L}_H$. 
\end{definition}
 
 For each geometric point $v \in C$, we denote $C_v$ the completion of $C\otimes_k \bar{k}$ at $v$, $Spec(K_v)$ the generic point of $C_v$. So given the minimal integral model $\mathcal{H} \rightarrow C$, we see that $\mathcal{H}_{K_v}$ is defined over $\mathcal{O}_{K_v}$. 
\begin{definition}
Let $\mathcal{H} \rightarrow C$ be the minimal integral model of $(H,O)$. The hyperelliptic curve $(H,O)$ over $K$ is called to be $\textbf{regular}$ if for any geometric point $v$ of $C$, the completion $\mathcal{W}_{K_v}$ is regular over $Spec(\mathcal{O}_{K_v})$.
\end{definition}
The above condition of being regular is equivalent to that the minimal integral model is the minimal regular model. Now we define the notation of transversality.
\begin{definition} \label{transversal}  The hyperelliptic curve $(H, O)$ is called to be $\textbf{transversal}$ if the discriminant of its minimal integral model $\Delta(\underline{a}) \in H^0(\mathcal{L}^{(2n+1)(2n+2)})$ is square-free. 
\end{definition}
We have an important property of a transversal hyperelliptic curve:
\begin{proposition}
If $\alpha: C \rightarrow [S/\mathbb{G}_m]$ is transversal, then the corresponding hyperelliptic curve $H_\alpha \rightarrow C$ is the minimal regular model of its generic fiber. Hence the relative Jacobian $E_\alpha = Pic^0_{H_\alpha/C}$ is the global N\'{e}ron model of its generic fiber.
\label{transversal-->neron model}
\end{proposition}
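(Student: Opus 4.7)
The plan is to deduce regularity of the minimal integral model $H_\alpha \to C$ from the transversality hypothesis, and then invoke the standard identification of $\mathrm{Pic}^0$ of a regular proper model with geometrically integral fibers as the N\'eron model of the generic Jacobian. Outside the vanishing locus of $\Delta(\underline{a})$ the morphism $H_\alpha \to C$ is smooth, so regularity is automatic there; the verification thus reduces to a local analysis at each $v \in |C|$ with $v(\Delta(\underline{a})) \geq 1$, and by transversality $v(\Delta(\underline{a})) = 1$ at every such place.

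Fix such a $v$ with uniformizer $\varpi_v$ and study the affine Weierstrass chart $y^2 = f(x)$, $f(x) \in \mathcal{O}_v[x]$. Since $v(\mathrm{disc}(f)) = 1$, the reduction $\bar f \in k(v)[x]$ has exactly one double root $\bar r$ with all other roots simple (a triple root or a second double root would push $v(\Delta)$ to at least $2$). Pick any lift $r \in \mathcal{O}_v$ of $\bar r$; over a ramified quadratic extension $\mathcal{O}'$ of $\mathcal{O}_v$, the two roots of $f$ limiting to $\bar r$ split as a Galois-conjugate pair $r_+, r_-$ with $v_{\mathcal{O}'}(r_+ - r_-) = 1$, since $v_{\mathcal{O}'}$ extends $v$ with ramification index $2$ and only this pair contributes to the discriminant. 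The ultrametric estimate applied to $(r_+ - r_-) = (r - r_-) - (r - r_+)$, together with the lower bound $v_{\mathcal{O}'}(r - r_\pm) \geq 1$, forces $v_{\mathcal{O}'}(r - r_\pm) = 1$; hence $v(f(r)) = \tfrac{1}{2} v_{\mathcal{O}'}(f(r)) = 1$. Writing
\[
f(x) = (x - r)^2 q(x) + \varpi_v \bigl[ (x - r)\, t_1 + t_0 \bigr]
\]
by polynomial division (with $q(r) \in \mathcal{O}_v^\times$), this is equivalent to $t_0 \in \mathcal{O}_v^\times$.

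The only candidate non-smooth point of $H_\alpha$ over $v$ is $P = (r, 0)$; its local ring is the quotient of the regular $3$-dimensional ring $\mathcal{O}_v[x,y]_{(\varpi_v,\, x - r,\, y)}$ by the single relation $y^2 - f(x)$, and modulo the square of the ambient maximal ideal this relation reduces to $-\varpi_v t_0 \not\equiv 0$. The cotangent dimension therefore drops from $3$ to $2 = \dim H_\alpha$, so $H_\alpha$ is regular at $P$, and consequently everywhere. Because each special fiber is either smooth or a geometrically integral curve with a single ordinary node, no component is contractible, so $H_\alpha$ is automatically the minimal regular model of $H$. The N\'eron model assertion then follows from Raynaud's theorem (Bosch--L\"utkebohmert--Raynaud, \emph{N\'eron Models}, Ch.~9): for a proper flat regular relative curve with geometrically integral fibers, $\mathrm{Pic}^0$ of the total space is the N\'eron model of the Jacobian of the generic fiber. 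The main technical step is the equivalence $v(\Delta) = 1 \Longleftrightarrow v(t_0) = 0$ carried out via the ramified extension above; once that is in hand, the remainder of the argument is formal.
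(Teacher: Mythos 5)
Your argument is correct and ends at the same regularity criterion the paper uses (a hypersurface $y^2-f$ in a regular three-dimensional local ring is regular at $\mathfrak{m}$ iff the defining equation is not in $\mathfrak{m}^2$), but the key discriminant computation is carried out by a genuinely different route. The paper argues by contraposition: assuming $y^2-f(x)\in m_x^2$ at a maximal ideal of the form $(\pi,y,p(x))$, it passes to an extension $D'$ in which $p$ acquires a root, translates that root to $0$ so that $\pi\mid a_1$ and $\pi^2\mid a_0$, and reads off $\mathrm{ord}_{D'}(\Delta(f))\geq 2$ from the Sylvester matrix of $(f,f')$. You argue directly: you locate the unique candidate singular point at the double root $\bar r$ of $\bar f$, prove $v(f(r))=1$ by factoring the discriminant as a product of root differences over a ramified quadratic extension, and conclude that the linear term $-\varpi_v t_0$ of the defining equation is visibly nonzero in $\mathfrak{m}/\mathfrak{m}^2$. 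Your version is more transparent about where the obstruction to regularity lives, at the cost of two small facts you should make explicit, both of which follow from the same multiplicity/parity count you already invoke to exclude a triple root or a second double root: (i) the double root $\bar r$ is rational over $k(v)$ (a conjugate orbit of double roots of size $k>1$ would force $v(\Delta)\geq k\geq 2$), so the candidate singular point really is a $k(v)$-point $(r,0)$ and localizing at $(\varpi_v,\,x-r,\,y)$ is legitimate; and (ii) the extension generated by $r_{\pm}$ is ramified quadratic rather than trivial or unramified, since otherwise $v(\Delta)=2v(r_+-r_-)$ would be even, contradicting $v(\Delta)=1$. The concluding steps --- minimality because no fibral component of an integral one-nodal family is contractible, and the N\'eron model statement via Raynaud --- coincide with the paper's.
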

\begin{proof}
The minimality of $H_\alpha$ is an easy consequence of the transversality. To prove that $H_\alpha$ is regular, we can firstly assume that $C=Spec(D)$, where $D$ is a discrete valuation ring with $m_D=(\pi).$ Then locally $H_\alpha$ is defined as $Spec(D[x,y]/(y^2-f(x))$, with $ord_\pi(\Delta(f)) \leq 1$. It is enough to prove that $Spec(D[x,y]/(y^2-f(x))$ is regular. In fact, take any maximal ideal $m_x \subset D[x,y]$ containing $y^2-f(x)$, then $m_x=(\pi,m_x')$ satisfying that the image $\overline{(m_x')} \subset (D/(\pi))[x,y]$ is maximal. Hence the ideal $\overline{(m_x')}$ is generated by $2$ elements, thus $m_x$ is generated by $3$ elements. Furthermore, since $dim(D[x,y])= dim(D) +2=3$, we deduce that the local ring $D[x,y]_{m_x}$ is regular. As a result, $Spec(D[x,y]/(y^2-f(x))$ is regular at $m_x$ if and only if $y^2-f(x) \notin m_x^2$. If there exists $m_x$ satisfying that $y^2-f(x) \in m_x^2$, then $m_x$ will be of the form $(\pi,y,p(x)$ where $\overline{p(x)} \in (D/(\pi)[x]$ is irreducible. This implies that 
$$f(x)=\pi^2f_1(x) + \pi p(x)f_2(x) + p(x)^2f_3(x),$$where $f_i(x) \in D[x]$. We also can choose an extension $D \subset D'$ such that $p(x)$ has a root $\beta$ in $D'$, and $D'$ is a DVR where the uniformizer is the image of $\pi$ (take $D' = D[x]/p(x)$ for example). Then $ord_D(\Delta(f)) = ord_{D'}(\Delta(f)) \leq 1$. We will finish the proof by showing that $ord_{D'}(\Delta(f))$ is strictly bigger than $1$. By considering $f(x)$ as a polynomial with coefficients in $D'$, and note that $\Delta(f(x)) = \Delta(f(x-a))$ for any $a \in D'$, we can assume that $p(0)=0$ and hence $f(x)$ is of the following form over $D'$:
$$f(x)= \pi^2f_1(x) + \pi xf_2(x) + x^2f_3(x),$$where $f_i(x) \in D'[x]$. In the other words,
$$f(x) = x^{2n+1}+a_{2n}x^{2n}+ \cdots + a_2x^2+a_1x+a_0,$$where $a_i \in D',$ $\pi |a_1, \pi^2|a_0$. By using the relation between the discriminant and the resultant of $(f,f')$:
\begin{equation}
\Delta(f) = (-1)^n R(f,f') = det\begin{pmatrix}
1 & a_{2n} &\cdots & 0& 0& 0 \\
0 & 1 &\cdots & 0& 0& 0 \\
\vdots & \vdots & & \vdots& \vdots& \vdots \\
0 & 0 &\cdots & a_1& a_0& 0 \\
0 & 0 &\cdots & a_2& a_1& a_0 \\
2n+1 & 2na_{2n} &\cdots & 0& 0& 0 \\
0 & 2n+1 &\cdots & 0& 0& 0 \\
\vdots & \vdots & & \vdots& \vdots& \vdots \\
0 & 0 &\cdots & 2a_2& a_1& 0 \\
0 & 0 &\cdots & 3a_3& 2a_2& a_1
\end{pmatrix}.
\end{equation}
From the properties of $a_0$ and $a_1$, we imply that $ord_{D'}(\Delta(f)) \geq 2$. \\
The second statement is a consequence of the first statement and a result of Raynaud in \cite{Ray70}. Notice that our hyperelliptic curves always have a rational point, hence they satisfy the assumption in \cite{Ray70}. 
\end{proof}
Moreover, later on we also need the following important property of a transversal hyperelliptic curve.
\begin{proposition}If $\alpha: C \rightarrow [S/\mathbb{G}_m]$ is transversal, then $H_\alpha \rightarrow C$ is semi-stable. 
\label{transversal-->semistable}
\end{proposition}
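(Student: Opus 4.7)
The plan is to reduce to a purely local calculation at each closed point of $C$. Passing to the strict henselization, we may assume $C = \mathrm{Spec}(D)$ for a strictly henselian discrete valuation ring $D$ with uniformizer $\pi$, and that $H_\alpha$ is cut out by $y^2 = f(x)$ with $\mathrm{ord}_\pi(\Delta(f)) \leq 1$. Semi-stability of $H_\alpha \to \mathrm{Spec}(D)$ amounts to the geometric special fiber being a reduced curve with at worst ordinary double points, so the task is to classify the singular points of $y^2 = \bar f(x)$ over the residue field.

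My first step would be to show that $\mathrm{ord}_\pi(\Delta(f)) = 1$ forces $\bar f$ to have a unique multiple root $\bar \beta$ of multiplicity exactly $2$. Factoring $f$ over a splitting field and grouping its roots into clusters according to their reduction mod $\pi$, a straightforward valuation bound (a cluster of size $k$ contributes at least $k-1$ to $\mathrm{ord}_\pi(\Delta)$, with the sharp case $k=2$ realized by $x^2-\pi$, and $k=3$ forcing contribution at least $2$, as in $x^3-\pi$) rules out a triple root of $\bar f$ or two disjoint double roots. Hence exactly one cluster of size $2$ occurs, and all other roots of $\bar f$ are simple.

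Next I would analyze the local geometry at the singular point $(\bar\beta, 0)$ of the special fiber. Lifting $\bar \beta$ to $\tilde\beta \in D$ and translating $x \mapsto x + \tilde\beta$, one may write
\[ f(x) = a_0 + a_1 x + a_2 x^2 + x^3 g(x) \]
with $\pi \mid a_0$, $\pi \mid a_1$, and $a_2 \in D^\times$ (the last using $\mathrm{char}\,\mathbb{F}_q \neq 2$). Completing the square in $x$ and using the regularity already established in Proposition \ref{transversal-->neron model} to force $\mathrm{ord}_\pi(a_0 - a_1^2/(4 a_2)) = 1$, one arrives at an \'etale-local presentation
\[ y^2 - a_2 (x')^2 = \pi u \]
for a unit $u \in D^\times$. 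Since $a_2$ is a square in the strict henselization, this factors as $(y - \sqrt{a_2}\, x')(y + \sqrt{a_2}\, x') = \pi u$, the standard equation of a node. Hence every singular geometric fiber has only nodal singularities, and $H_\alpha \to C$ is semi-stable.

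The main obstacle is the discriminant clustering estimate: ramified extensions of $D$ arise naturally (already visible in $x^2 - \pi$, whose roots $\pm\sqrt{\pi}$ lie in a ramified quadratic extension), so one has to be careful when translating valuations between $D$ and its algebraic extensions. Once this valuation bookkeeping is cleanly organized, the remaining local algebra, completing the square and invoking Hensel's lemma to pin down the nodal normal form, is standard.
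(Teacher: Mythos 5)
Your proof is correct, and its skeleton is the same as the paper's: reduce to showing that the reduction $\bar f$ has no root of multiplicity $\ge 3$ (and no excess of double roots), then conclude that the only singularities of $y^2=\bar f(x)$ are ordinary double points. Where you differ is in how the two sub-steps are carried out. For the multiplicity bound, the paper argues directly on coefficients: if $x^3\mid\bar f$ then $\pi$ divides the three lowest coefficients, and expanding the Sylvester/resultant matrix for $R(f,f')$ gives $\mathrm{ord}_v(\Delta)\ge 2$, contradicting square-freeness. You instead cluster the roots by their reductions and bound the discriminant valuation cluster by cluster; this is a genuinely different (and somewhat more informative) route, since it also yields that the double root is unique, but it carries the bookkeeping burden you flag — the cleanest way to discharge it is to note that over the strict henselization each cluster is inertia-stable, so each cluster's sub-discriminant lies in $D$, and a weighted-degree count on the discriminant of the cluster polynomial (all of whose non-leading coefficients are divisible by $\pi$ after translation) gives the contribution $\ge k-1$. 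For the second sub-step the paper simply cites Example 10.3.4 of \cite{Liu06}, whereas you derive the nodal normal form by completing the square; note that your step 3 is doing more than is needed for semi-stability: once $\bar f=(x-\bar\beta)^2\bar h(x)$ with $\bar h(\bar\beta)\ne 0$ and $\mathrm{char}\ne 2$, the completed local ring of the \emph{special fiber} at $(\bar\beta,0)$ is already $\overline{k(s)}[[u,v]]/(uv)$, so the finer statement $y^2-a_2(x')^2=\pi u$ (which describes the total space and is what underlies regularity, i.e.\ Proposition \ref{transversal-->neron model}) is not required here. Both implementations are sound; yours trades the paper's explicit determinant expansion for valuation theory on the roots.
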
 
\begin{proof}
For each closed point $s \in C$, we need to show that the fiber $H_{\alpha,s}$ is a semi-stable curve over $k(s)-$the residue field of $\mathcal{O}_C$ at $s$. By definition, this is equivalent to that over the algebraically closure $\overline{k(s)}$, any singular points of the curve $H_{\alpha,\overline{k(s)}}$ are ordinary double points. Notice that $H_{\alpha,\overline{k(s)}}$ is the projective curve defined by the following affine equation:
$$y^2= \bar{f}(x)= x^{2n+1}+\overline{b_2}x^{2n-1}+ \cdots + \overline{b_{2n}}x+\overline{b_{2n+1}},$$where $\overline{b_i} \in \overline{k(s)}$. Because $\Delta(f)$ is square-free, any roots in $\overline{k(s}$ of $\bar{f}(x)$ are of order at most $2$. In fact, otherwise we can assume that $\bar{f}(x)$ divides by $x^3$, i.e. $\pi_s$ divides $b_{2n-1},b_{2n},$ and $b_{2n+1}$. By looking at the expansion of the resultant $R(f,f')$, we can see that $ord_s(\Delta(f))= ord_s(R(f,f')) \geq 2$, a contradiction. Thus we have just proved that any roots in $\overline{k(s}$ of $\bar{f}(x)$ are of order at most $2$. By Example $10.3.4$ in \cite{Liu06}, we obtain the proposition.
\end{proof}
\begin{remark}
\label{E_S}
For each element $\underline{a} \in S$, by setting $f_{\underline{a}}(x)= x^{2n+1} + a_2x^{2n-1}+ \cdots + a_{2n}x+a_{2n+1}$ we obtain a projective curve $H_{\underline{a}}$ in $\mathbb{P}^3$ with the affine equation: $y^2=f_T(x)$. By varying $\underline{a}$, we obtain $H_S$ a flat family of integral projective curves over $S$. By the representability of the relative Picard functor, we obtain the scheme $Pic_{H_S/S}$ locally of finite type over $S$, and also the relative Jacobian $E_S=Pic^0_{H_S/S}$ over $S$
\end{remark}
\subsection{2-Selmer group and the first cohomology}
\label{2-selmer group and the first coho}
Recall that $C$ is a smooth, projective and geometrically connected curve over finite field $k$, we denote $K=k(C)$ the function field of $C$, $K_v$ its completion at a closed point $v\in |C|$, and $\bar{K}$ an algebraic closure field containing $K$.
 
For each morphism $\alpha: C \rightarrow [S/\mathbb{G}_m]$, we associate a flat family of genus $n$ curve $(W_\alpha, s_\alpha)$ over $C$. If we denote $E_\alpha$ the relative Jacobian of that family, then it can be seen that $E_\alpha = \alpha^*E_S$ where $E_S$ is defined in the remark \ref{E_S}. Now let we recall the definition of 2-Selmer group of the Jacobian $E$ of a hyperelliptic curve over $K$. 

Let $\phi :E \rightarrow E$ be the multiplication-by-2 map, then we have an exact sequence of $G(\bar{K}/K)$-module 
\begin{equation*}
0 \rightarrow E[2] \rightarrow E \xrightarrow{.2} E \rightarrow 0
\end{equation*}
Taking Galois cohomology gives the long exact sequence
\begin{equation*}
0 \rightarrow E[2](K) \rightarrow E(K) \xrightarrow{.2} E(K) \rightarrow H^1(G(\bar{K}/K), E[2]) \rightarrow  H^1(G(\bar{K}/K), E)\rightarrow \cdots.
\end{equation*}
From this we get a short exact sequence 
\begin{equation*}
0 \rightarrow E(K)/2E(K) \rightarrow H^1(G(\bar{K}/K), E[2]) \rightarrow  H^1(G(\bar{K}/K), E)[2] \rightarrow 0
\end{equation*}

Similarly, we consider the local picture: if $K_v$ is a completion, then we will get the local sequence just like the one above. Now we can glue them together to obtain the following commutative diagram \footnotesize
\[
\begin{tikzcd}
0 \arrow{r} & E(K)/2E(K) \arrow{r} \arrow{d} & H^1(K, E[2]) \arrow{r} \arrow{d}{\beta} & H^1(K, E)[2] \arrow{r} \arrow{d} & 0 \\
0 \arrow{r} & \prod_v E(K_v)/2E(K_v) \arrow{r} & \prod_v H^1(K_v, E[2]) \arrow{r}{\alpha} & \prod_v H^1(K_v, E)[2] \arrow{r} & 0
 \end{tikzcd}
\]
\normalsize
where $H^1(K, E)$ is the shorthand of $H^1(G(\bar{K}/K), E)$, similarly for $H^1(K_v, E)$.
\begin{definition}
The 2-Selmer group of $E$ is defined as the kernel of the decomposition $\alpha \circ \beta$, where $\alpha$ and $\beta$ are in the above diagram
\begin{equation*}
Sel_2(E) = ker(H^1(K, E[2]) \rightarrow \prod_v H^1(K_v, E[2])
\end{equation*}
\end{definition}
Let $E_{\alpha,K}$ be the generic fiber of $E_\alpha$, then we can define the $2-$Selmer group of $E_{\alpha,K}$ and denote it by $Sel_2(E_\alpha)$. \\
The group of isomorphism classes of $E_\alpha[2]-$torsors over $C$ can be classified by the \'{e}tale cohomology group $H^1(C, E_\alpha[2])$. By restriction to the generic fiber of $C$, we obtain a homomorphism
\begin{equation}
H^1(C,E_\alpha[2]) \rightarrow H^1(K, E_\alpha[2]).
\end{equation} 
The proof of the following Proposition is identical with Proposition 4.3.2 in \cite{HLN14}.
\begin{proposition}
The homomorphism $(1.2)$ factors through the 2-Selmer group $Sel_2(E_\alpha).$ 
\end{proposition}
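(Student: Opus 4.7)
The plan is to reduce the statement to a local vanishing assertion and then invoke Lang's theorem together with Hensel's lemma at each place. Concretely, I would show that for every class $\xi \in H^1(C, E_\alpha[2])$ and every closed point $v \in |C|$, the composition
\begin{equation*}
H^1(C, E_\alpha[2]) \longrightarrow H^1(K, E_\alpha[2]) \longrightarrow H^1(K_v, E_\alpha[2]) \longrightarrow H^1(K_v, E_\alpha)
\end{equation*}
kills $\xi$. This is precisely the condition for $\xi$ to lie in the 2-Selmer group (interpreted via the Selmer diagram given just before the proposition), so the factorization then follows.

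First I would interpret $\xi$ geometrically as an \'{e}tale $E_\alpha[2]$-torsor $T$ on $C$ (using that $2$ is invertible so $E_\alpha[2]$ is \'{e}tale), and form the contracted product $T' := T \times^{E_\alpha[2]} E_\alpha$, which is an $E_\alpha$-torsor over $C$ whose generic fiber represents the image of $\xi$ in $H^1(K, E_\alpha)$. Next, using the commutative square
\begin{equation*}
\begin{tikzcd}
H^1(C, E_\alpha) \arrow{r} \arrow{d} & H^1(\mathcal{O}_v, E_\alpha) \arrow{d} \\
H^1(K, E_\alpha) \arrow{r} & H^1(K_v, E_\alpha),
\end{tikzcd}
\end{equation*}
I would restrict $T'$ to $\mathrm{Spec}(\mathcal{O}_v)$ to obtain an $E_\alpha$-torsor over the henselian discrete valuation ring $\mathcal{O}_v$ whose generic fiber is exactly the image class in $H^1(K_v, E_\alpha)$.

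The remaining task is to show that this $\mathcal{O}_v$-torsor is trivial. After passing to the smooth locus (or, in general, to the identity component $E_\alpha^0$ of the N\'{e}ron model of $E_{\alpha,K_v}$, into which $T'|_{\mathrm{Spec}(\mathcal{O}_v)}$ canonically maps), its special fiber is a torsor for a smooth connected commutative group scheme over the finite residue field $k(v)$. Lang's theorem then supplies a $k(v)$-rational point, and Hensel's lemma lifts this to an $\mathcal{O}_v$-point, hence a $K_v$-point of $T'$. Therefore the image of $\xi$ in $H^1(K_v, E_\alpha)$ vanishes for every $v$, and $\xi$ lies in $Sel_2(E_\alpha)$.

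The main obstacle is the last step: when $\alpha$ is not transversal, $E_\alpha$ itself need not be the N\'{e}ron model at $v$, so one must argue carefully that the restriction of $T'$ does land in (the component corresponding to) the smooth identity component $E_\alpha^0$ over $\mathcal{O}_v$ before applying Lang--Hensel. This is handled by factoring the map $H^1(C, E_\alpha[2]) \to H^1(C, E_\alpha)$ through the N\'{e}ron model, exactly as in the proof of Proposition 4.3.2 in \cite{HLN14}, to which the present statement is a straightforward adaptation.
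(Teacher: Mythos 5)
Your argument is correct and is essentially the paper's own proof, which is simply cited as identical to Proposition 4.3.2 of \cite{HLN14}: push the $E_\alpha[2]$-torsor out to an $E_\alpha$-torsor over $\mathcal{O}_v$ and trivialize it by Lang's theorem on the special fiber followed by smoothness/Hensel lifting. The only superfluous step is your final concern about the N\'eron model: since $E_\alpha=\mathrm{Pic}^0_{H_\alpha/C}$ is already a smooth group scheme over all of $C$ with geometrically connected fibers (being the identity component of the relative Picard scheme of a flat family of geometrically integral curves), the pushed-out torsor over $\mathcal{O}_v$ is automatically a torsor under a smooth connected commutative group scheme, and Lang--Hensel applies directly without any passage to the identity component of the N\'eron model.
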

Now we can establish the connection between the 2-Selmer group and the group of $E_\alpha[2]-$torsor over $C$. Firstly, we consider the transversal case:
\begin{proposition} If $\alpha: C \rightarrow [S/\mathbb{G}_m]$ is transversal then 
\begin{eqnarray*}
|Sel_2(E_\alpha)| = |H^1(C, E_\alpha[2])|
\end{eqnarray*}
\label{selmer equal torsor in transversal case}
\end{proposition}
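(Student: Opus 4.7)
My plan is to show that the natural map $H^1(C, E_\alpha[2]) \to H^1(K, E_\alpha[2])$, which by the previous proposition factors through $Sel_2(E_\alpha)$, is injective with image exactly $Sel_2(E_\alpha)$; this forces the equality of cardinalities. The transversality hypothesis will enter through Propositions \ref{transversal-->neron model} and \ref{transversal-->semistable}, which make $E_\alpha$ the global Néron model with semi-stable fibres.

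\emph{Injectivity on $C$.} The assumption $\mathrm{char}(\mathbb{F}_q) \nmid 2(2n+1)$ makes $E_\alpha[2]$ a finite étale group scheme over $C$, so any torsor for it is finite étale, hence proper over the normal base $C$ with normal total space. A torsor that becomes trivial at the generic point admits a section on a dense open, which extends to all of $C$ by the valuative criterion of properness together with normality of $C$. This yields injectivity of the map on $H^1$.

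\emph{Local Kummer matching.} Since $E_\alpha/C$ is smooth (as a Néron model) and $2$ is invertible on $C$, multiplication by $2$ is étale-surjective on $E_\alpha$, so the Kummer sequence
\begin{equation*}
0 \to E_\alpha[2] \to E_\alpha \xrightarrow{\;2\;} E_\alpha \to 0
\end{equation*}
is exact in the étale topology on $C$ and on each $\mathrm{Spec}(\mathcal{O}_v)$. Writing the associated long exact sequences over $\mathcal{O}_v$ and $K_v$ in a commutative diagram, the Néron mapping property gives $E_\alpha(\mathcal{O}_v) = E_\alpha(K_v)$, which already shows that $E_\alpha(K_v)/2E_\alpha(K_v)$ sits inside the image of $H^1(\mathcal{O}_v, E_\alpha[2]) \to H^1(K_v, E_\alpha[2])$. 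A diagram chase then reduces the target identification
\begin{equation*}
\mathrm{Im}\bigl(H^1(\mathcal{O}_v, E_\alpha[2]) \to H^1(K_v, E_\alpha[2])\bigr) \;=\; \ker\bigl(H^1(K_v, E_\alpha[2]) \to H^1(K_v, E_\alpha)\bigr)
\end{equation*}
to killing the $2$-primary part of $H^1(\mathcal{O}_v, E_\alpha) \cong H^1(k(v), E_{\alpha,k(v)})$. By Proposition \ref{transversal-->semistable} the identity component $E^\circ_{\alpha, k(v)}$ is a semi-abelian variety over the finite field $k(v)$, on which Lang's theorem gives vanishing; the remaining component-group contribution is controlled by exploiting the precise semi-stable structure of the special fibre.

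\emph{Globalization.} For a finite étale sheaf on $C$, the standard comparison between étale cohomology of $C$ and Galois cohomology of $K$ (equivalently, inflation from $\pi_1^{\mathrm{\acute{e}t}}(C)$ to $G_K$) identifies $H^1(C, E_\alpha[2])$ with the subgroup of classes in $H^1(K, E_\alpha[2])$ that are unramified at every place, i.e.\ lie in $\mathrm{Im}\, H^1(\mathcal{O}_v, E_\alpha[2])$ for every $v$. Combined with the previous paragraph, this subgroup is exactly $Sel_2(E_\alpha)$, so the two finite groups have the same cardinality.

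The main obstacle will be the local identification at places of bad (but semi-stable) reduction: there the component group of $E_{\alpha, k(v)}$ can carry non-trivial $2$-torsion, and a priori this might produce Selmer classes that do not come from $H^1(C, E_\alpha[2])$, or unramified classes that are not Selmer. Semi-stability from Proposition \ref{transversal-->semistable} is exactly the hypothesis that lets one describe these component groups via the nodal dual graph and match the two local conditions on the nose; without transversality the flat family $E_\alpha$ ceases to be the Néron model and the local matching breaks down.
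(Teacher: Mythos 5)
Your overall strategy --- show that $\rho_\alpha : H^1(C,E_\alpha[2]) \to H^1(K,E_\alpha[2])$ is injective with image exactly $Sel_2(E_\alpha)$ --- is the same as the paper's, and you correctly locate where transversality must enter. But your injectivity step rests on a false premise: $E_\alpha[2]$ is \emph{not} a finite \'etale group scheme over $C$. Transversality allows $\Delta$ to vanish to order exactly $1$ at finitely many places (and it must vanish somewhere once $\deg\mathcal{L}>0$); at such a place the fibre of $E_\alpha=Pic^0_{H_\alpha/C}$ is an extension of an abelian variety of dimension $a_v$ by a torus of dimension $t_v>0$, so the fibre of $E_\alpha[2]$ there has order $2^{2a_v+t_v}<2^{2n}$. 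Hence $E_\alpha[2]$ is only quasi-finite \'etale, its torsors are not proper over $C$, and the valuative-criterion extension of a generic section can fail. Injectivity is not automatic for quasi-finite \'etale sheaves (for instance $H^1(C,j_!(\mathbb{Z}/2))\to H^1(K,\mathbb{Z}/2)$ has a nontrivial kernel when the complement of $U$ has several points); what makes it work here is the specific equality $E_\alpha[2](k(v)^s)=E_\alpha[2](K_v^s)^{I_v}$, which the paper extracts from semi-stability, the N\'eron property and SGA 7 by comparing the rank of the fixed part of the $2$-adic Tate module under inertia with the rank of the special fibre's Tate module, and then concludes by inflation--restriction. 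This is exactly the step your write-up skips.

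The second half has the right shape but is also left incomplete on the points you yourself flag. Identifying $H^1(C,E_\alpha[2])$ with the everywhere-unramified classes via $\pi_1^{\mathrm{\acute{e}t}}(C)$ again presupposes that $E_\alpha[2]$ is locally constant, which it is not; the paper instead \emph{constructs} the extension of each Selmer class over every formal disc $C_v$ by writing the local torsor as the pullback of $[2]:E_\alpha\to E_\alpha$ along a $K_v$-point and extending that point by the N\'eron mapping property. On the other hand, your declared ``main obstacle'' (2-torsion in component groups at semi-stable places) is actually vacuous: $E_\alpha=Pic^0$ has connected special fibres, and in the transversal case it is the full N\'eron model, so $H^1(\mathcal{O}_v,E_\alpha)\cong H^1(k(v),E_{\alpha,k(v)})=0$ by Lang's theorem with no further analysis of dual graphs --- but since the proposal leaves this unresolved, the local matching, and with it the identification of the image with $Sel_2(E_\alpha)$, is not actually established. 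As written the argument therefore has genuine gaps at both the injectivity and the surjectivity steps, and the missing content is precisely what the paper's proof supplies.
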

\begin{proof}
we are going to prove that the restriction map
$$\rho_\alpha: H^1(C,E_\alpha[2]) \rightarrow H^1(K, E_\alpha[2])$$is injective and the image is $Sel_2(E_\alpha)$. \\
Injectivity: We need to show that any $E_\alpha[2]-$torsors over $C$ is uniquely determined by its generic fiber. Thus it is enough to consider the question over the formal disc $C_v$, where $C_v$ is the completion of $C$ at a geometric point $v$. Let denote $k(v)$ the residue field at $v$, $Spec(K_v)$ the generic point of $C_v$, and $I_v\subset Gal(K_v^s/K_v)$ the inertia group. Since $\alpha$ is transversal, Proposition \ref{transversal-->neron model} and \ref{transversal-->semistable} tell us that $H_\alpha /C_v$ is semi-stable and it is the N\'{e}ron model of its generic fiber. This implies that the special fiber of the Jacobian $E_\alpha$ at $v$ is an extension of an abelian variety of dimension $a_v$ by a torus of dimension $t_v$, hence the rank of $\mathbb{T}_2(E_{\alpha,v})=t_v+2a_v$. On the other hand, by Proposition 2.2.5 in \cite{SGA7}, we know that $\mathbb{T}_2(E_\alpha(K_v^s))^{I_v}$ is nothing but the generic fiber of the finite part $\mathbb{T}_2(E_\alpha/\mathcal{O}_{K_v})^f$ of $\mathbb{T}_2(E_\alpha/\mathcal{O}_{K_v})$, and also the special fiber of the finite \'{e}tale group scheme $\mathbb{T}_2(E_\alpha/\mathcal{O}_{K_v})^f$ is isomorphic to $\mathbb{T}_2(E_{\alpha,v})$. Hence the rank of $\mathbb{T}_2(E_\alpha(K_v^s))^{I_v}$ is also $t_v+2a_v$. We conclude that 
$$E_\alpha[2](k(v))=E_\alpha[2](K_v^s)^{I_v}$$.
We also have that $H^1(\mathcal{O}_{K_v},E_\alpha[2])= H^1(k(v), E_\alpha[2](k(v)))$. By inflation-restriction, we conclude that the map
$H^1(\mathcal{O}_{K_v},E_\alpha[2]) \rightarrow H^1(K_v, E_\alpha[2])$ is injective. \\
Now we will show that the image of $\rho_\alpha$ is exactly the group $Sel_2(E_\alpha)$. The proof below is almost identical to the proof of Proposition 4.3.4 in \cite{HLN14}. Indeed, for each $T_K$ an $E_\alpha[2]-$torsor over $K$ whose isomorphism class lies in $Sel_2(E_\alpha)$, we need to show that $T_K$ can be extended as an $E_\alpha[2]-$torsor over $C$. This is in fact a local problem hence we reduce to the case of formal disc $C_v$. From the $Selmer$ condition, the class of $T_K$ in $H^1(K_v,E_\alpha[2])$ lies in the image of $E_\alpha(K_v)/2E_\alpha(K_v).$ Hence the torsion $T_{K_v}$ fits in a cartesian diagram:
\[
\begin{tikzcd}
T_{K_v} \arrow{r} \arrow{d} & E_{\alpha,K_v} \arrow{d}{.2} \\
Spec(K_v) \arrow{r}{x} & E_{\alpha,K_v}
 \end{tikzcd}
\]
Since $E_{\alpha,C_v}$ is the N\'{e}ron model of $E_{\alpha,K_v}$, we can extend $x$ to a $C_v-$point $\tilde{x}$ of $E_{\alpha,C_v}.$ Now our promising $T_{\alpha,C_v}$ should be the fiber product of $C_v$ and $E_{\alpha,C_v}$ over $E_{\alpha,C_v}$ in the following diagram:
 \[
\begin{tikzcd}
T_{\alpha,C_v} \arrow{r} \arrow{d} & E_{\alpha,C_v} \arrow{d}{.2} \\
C_v \arrow{r}{\tilde{x}} & E_{\alpha,C_v}
 \end{tikzcd}
\]
The proof of the surjectivity of $\rho_\alpha$ is completed.
\end{proof}

\begin{proposition}
Let $\alpha: C \rightarrow [S/\mathbb{G}_m]$ and suppose that the generic fiber of $H_\alpha$ is a hyperelliptic curve. Then 
\begin{eqnarray*}
|Sel_2(E_\alpha)| \leq |H^1(C, E_\alpha[2])|, & \text{when}\hspace{0.2cm} E_\alpha[2](K) = 0, \\
|Sel_2(E_\alpha)| \leq 2^{2n-1}|H^1(C, E_\alpha[2])|, & \text{otherwise.}
\end{eqnarray*}
\label{compare1}
\end{proposition}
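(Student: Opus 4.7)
The plan is to follow the structure of Proposition \ref{selmer equal torsor in transversal case}, quantifying the failure of its key steps when transversality is dropped. In the transversal case two ingredients were decisive: injectivity of the restriction $\rho_\alpha: H^1(C, E_\alpha[2]) \to H^1(K, E_{\alpha,K}[2])$, which used semistability to identify $E_\alpha[2](k(v))$ with $E_\alpha[2](K_v^s)^{I_v}$; and surjectivity onto $Sel_2(E_\alpha)$, which used the N\'eron mapping property. In the general setting neither input is directly available, so the strategy is to introduce the N\'eron model $\mathcal{E}$ of $E_{\alpha,K}$ and to control the discrepancy between $H^1(C, E_\alpha[2])$ and $H^1(C, \mathcal{E}[2])$ locally at the bad fibers.

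First I would observe that the ``surjectivity'' half of the proof of Proposition \ref{selmer equal torsor in transversal case}, which extends a Selmer class over $K$ to a torsor over each $C_v$ via the N\'eron mapping property, applies verbatim to $\mathcal{E}$ and yields a surjection $H^1(C, \mathcal{E}[2]) \twoheadrightarrow Sel_2(E_\alpha)$. Hence $|Sel_2(E_\alpha)| \leq |H^1(C, \mathcal{E}[2])|$, and the task reduces to comparing $|H^1(C, \mathcal{E}[2])|$ with $|H^1(C, E_\alpha[2])|$. Next, the N\'eron property applied to the smooth locus $E_\alpha^{sm}$ furnishes a canonical morphism $E_\alpha^{sm} \to \mathcal{E}$, inducing a comparison on 2-torsion whose kernel and cokernel are skyscraper sheaves supported at the finitely many bad fibers. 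The associated long exact cohomology sequence then bounds $|H^1(C, \mathcal{E}[2])|$ in terms of $|H^1(C, E_\alpha[2])|$ and the sizes of the local stalks at the bad fibers.

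The local discrepancies at bad fibers can in principle be controlled using the concrete description of the 2-torsion of the hyperelliptic Jacobian as the span of differences of Weierstrass points, a description inherited from the defining polynomial $f_{\underline{a}}(x)$. Finally, a global inflation-restriction argument distinguishes the two cases: when $E_\alpha[2](K) = 0$, there is no rational 2-torsion contribution to ``lift'' along the kernel of the comparison map, and the local bounds collapse to give the clean estimate $|Sel_2(E_\alpha)| \leq |H^1(C, E_\alpha[2])|$; otherwise the contribution of the global rational 2-torsion is controlled by $|E_\alpha[2](\bar{K})|/2 = 2^{2n-1}$, yielding the stated factor. The technical heart of the argument, and the main obstacle, is the careful local analysis at non-semistable fibers, where the component group of the N\'eron model is more delicate than in the semistable case; here the specific combinatorial structure of hyperelliptic Jacobians is what makes the local bounds uniform.
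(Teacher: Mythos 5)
Your overall architecture matches the paper's: introduce the N\'eron model $\mathcal{E}$, use the N\'eron mapping property (as in the surjectivity half of Proposition \ref{selmer equal torsor in transversal case}) to get $|Sel_2(E_\alpha)| \leq |H^1(C, \mathcal{E}[2])|$, and then compare $H^1(C,\mathcal{E}[2])$ with $H^1(C,E_\alpha[2])$ via an exact sequence whose third term is a skyscraper sheaf $Q$ supported at the bad fibers. However, there is a genuine gap in how you propose to close the comparison. You plan to ``bound $|H^1(C, \mathcal{E}[2])|$ in terms of $|H^1(C, E_\alpha[2])|$ and the sizes of the local stalks at the bad fibers,'' and you identify a ``careful local analysis at non-semistable fibers'' as the technical heart. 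That route cannot produce the stated uniform constants: the naive estimate from the long exact sequence is $|H^1(\mathcal{E}[2])| \leq |H^1(E_\alpha[2])|\cdot|H^0(Q)|$, and $|H^0(Q)|$ is a product over all bad places, whose number grows with $\deg(\mathcal{L})$, so no bound independent of $\alpha$ comes out. The decisive idea in the paper is that this entire local contribution cancels: since $Q$ is a skyscraper sheaf over a finite field, $|H^0(Q)| = |H^1(Q)|$, so the multiplicativity of orders along the long exact sequence yields
\begin{equation*}
\frac{|H^1(\mathcal{E}[2])|}{|H^1(E_\alpha[2])|} \;\leq\; \frac{|H^0(\mathcal{E}[2])|}{|H^0(E_\alpha[2])|},
\end{equation*}
reducing everything to a comparison of \emph{global sections}, with no local analysis at all.

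Your treatment of the two cases is correspondingly imprecise. Once the displayed inequality is in hand, the case $E_\alpha[2](K)=0$ is immediate because both $\mathcal{E}[2](C)=E_\alpha[2](K)$ and $E_\alpha[2](C)$ are trivial, so the ratio is $1$. For the case $E_\alpha[2](K)\neq 0$, the factor $2^{2n-1}$ is not ``$|E_\alpha[2](\bar K)|/2$'' by fiat; it arises as $|\mathcal{E}[2](C)|/|E_\alpha[2](C)| \leq 2^{2n}/2$, and the denominator being at least $2$ requires the observation (via the integral root argument in the paper) that $E_\alpha[2](K)\neq 0$ forces $E_\alpha[2](C)\neq 0$, i.e.\ a rational $2$-torsion point already extends to a section of $E_\alpha[2]$ over $C$. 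Without that step, $|E_\alpha[2](C)|$ could a priori be $1$ and you would only get the weaker constant $2^{2n}$.
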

\begin{proof}
Note that $E_\alpha[2]$ is defined by the roots of a polynomial $g(x)$ whose coefficients are belong to $H^0(C, \mathcal{L}^{\otimes 2i})$, hence by Integral Root Theorem, we see that if $E_\alpha[2](K) \neq 0$, i.e $g(x)$ has a solution in $K$, then $g(x)$ will have a solution in $H^0(C, \mathcal{L}^{\otimes 2})$. In the other words, $E_\alpha[2](K) \neq 0$ implies that $E_\alpha[2](C) \neq 0$. The converse is easy, hence we have just shown that $E_\alpha[2](K) \neq 0$ if and only if $E_\alpha[2](C) \neq 0$.
Let $\mathcal{E}$ is the N\'{e}ron model of the generic fiber of $E_\alpha$ over $C$. Given an $E_\alpha[2]-$torsor  $T_K$ over $K$ whose isomorphism class lies in $Sel_2(E_\alpha)$, by using similar argument as in the previous Proposition, we can prove that  $T_K$ can be lifted to a torsor of $\mathcal{E}[2]$ over $C$. \\
From the above observation we get the following inequality:
\begin{equation}
|Sel_2(E_\alpha)| \leq |H^1(C, \mathcal{E}[2])|.
\end{equation}
Now we consider the short exact sequence of group schemes over $C$:
\begin{equation*}
0 \longrightarrow E_\alpha[2] \longrightarrow \mathcal{E}[2] \longrightarrow Q \longrightarrow 0,
\end{equation*}
where $Q$ is a skyscraper sheaf, then we will obtain a long exact sequence:
\begin{equation*}
0 \rightarrow H^0(E_\alpha[2]) \rightarrow H^0(\mathcal{E}[2]) \rightarrow H^0(Q) \rightarrow H^1(E_\alpha[2]) \rightarrow H^1(\mathcal{E}[2]) \rightarrow H^1(Q) \rightarrow A \rightarrow 0,
\end{equation*}
where $A= H^2(E_\alpha[2]) \rightarrow H^2(\mathcal{E}[2])$. By using the fact that $|H^0(Q)|=|H^1(Q)|$ since $Q$ is a skyscraper sheaf, we have the following expressions:
\begin{eqnarray*}
 |H^0(E_\alpha[2])|.|H^1(\mathcal{E}[2])|. |A| &=& |H^0(\mathcal{E}[2])|.|H^1(E_\alpha[2])| \\
\Longrightarrow  |H^0(E_\alpha[2])|.|H^1(\mathcal{E}[2])| &\leq & |H^0(\mathcal{E}[2])|.|H^1(E_\alpha[2])|\\
\Longrightarrow \dfrac{|H^1(\mathcal{E}[2])|}{|H^1(E_\alpha[2])|} &\leq & \dfrac{|H^0(\mathcal{E}[2])|}{|H^0(E_\alpha[2])|}.
\end{eqnarray*}
If $E_\alpha[2](K) =0$, then $|H^0(C, E_\alpha[2])| = 1 = |E_\alpha[2](K)| = |\mathcal{E}[2](C)|$, hence combining with inequality (4) we obtain that $|Sel_2(E_\alpha)| \leq |H^1(C, E_\alpha[2])|.$\\
If $E_\alpha[2](K) \neq \{0\}$ then by "lifting" property of N\'{e}ron model we have that $|\mathcal{E}[2](C)|= |E_\alpha[2](K)| \leq 2^{2n} \leq 2^{2n-1}.|E_\alpha[2](C)|.$ Hence the second inequality is obtained.

\end{proof}

In the estimation of the average size of 2-Selmer groups, the above second inequality can be ignored if we assume that the characteristics of our base field is large enough. Precisely, we have the following lemma:
\begin{lemma}
If $q > 4^{n(2n+1)}$ then the contribution of the case $E[2](C) \neq 0$ to the average is zero. In the other words, we have the following limit: 
\begin{equation*}
\limsup_{deg(\mathcal{L}) \rightarrow \infty}\dfrac{\mathlarger{\sum}\limits_{\substack{\alpha \in [S/\mathbb{G}_m](C) \\ \mathcal{L}(H_\alpha) \cong \mathcal{L} \\ E_{\alpha[2](K) \neq \{ 0\}}}} |H^1(C,E_{\alpha}[2])|}{H^0(C, \mathcal{L}^4 \oplus \mathcal{L}^6 \oplus \cdots \oplus \mathcal{L}^{4n+2})} = 0
\end{equation*}
\label{large char contributes 0}
\end{lemma}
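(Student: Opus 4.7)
The plan is to bound the numerator by combining a uniform upper bound on $|H^1(C, E_\alpha[2])|$ with a dimension drop in the locus of $\underline a$ having a rational 2-torsion point, and then show this is negligible compared to the denominator.

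The first step is to translate the condition $E_\alpha[2](K) \neq 0$. As noted in the proof of Proposition~\ref{compare1}, by the integral root argument this is equivalent to the polynomial $f_{\underline a}(x) = x^{2n+1} + a_2 x^{2n-1} + \cdots + a_{2n+1}$ admitting a root $r \in H^0(C, \mathcal L^{\otimes 2})$. Given such a root, factor $f_{\underline a}(x) = (x - r)\,\psi(x)$ with $\psi(x) = x^{2n} + b_1 x^{2n-1} + \cdots + b_{2n}$; since $f_{\underline a}$ has no $x^{2n}$ coefficient, $b_1 = r$, and the remaining $(b_2, \ldots, b_{2n})$ can be chosen freely with $b_j \in H^0(C, \mathcal L^{\otimes 2j})$. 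The resulting surjection $(r, b_2, \ldots, b_{2n}) \mapsto \underline a$ yields
\begin{equation*}
\bigl|\{\underline a : E_\alpha[2](K) \neq 0\}\bigr| \leq \prod_{i=1}^{2n} |H^0(C, \mathcal L^{\otimes 2i})|.
\end{equation*}
For $d = \deg \mathcal L$ large enough, Riemann--Roch gives $h^0(\mathcal L^{\otimes 2i}) = 2id - g + 1$, so the ratio of this rooty count to the denominator $|H^0(C, \bigoplus_{i=2}^{2n+1} \mathcal L^{\otimes 2i})|$ has order $q^{-4nd}$, coming from the discrepancy $h^0(\mathcal L^{\otimes (4n+2)}) - h^0(\mathcal L^{\otimes 2}) = 4nd$.

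The next step is a uniform upper bound of the form $|H^1(C, E_\alpha[2])| \leq q^{\kappa(n)\,d/\log_2 q + O(1)}$. Since $\mathrm{char}(\mathbb F_q) \neq 2$, the sheaf $E_\alpha[2]$ is tamely ramified and generically étale of rank $2n$. Grothendieck--Ogg--Shafarevich, applied with bad locus of cardinality at most $\deg \Delta = (2n+1)(2n+2)\,d$, controls $\dim_{\mathbb F_2} H^1(C_{\bar k}, E_\alpha[2])$ linearly in $d$; passing from $C_{\bar k}$ to $C$ via Hochschild--Serre adds only a bounded factor. Combining with the dimension drop, the ratio in the lemma is at most $q^{-4nd + \kappa(n)\,d/\log_2 q + O(1)}$, which tends to $0$ as $d \to \infty$ once $4n \log_2 q > \kappa(n)$; explicit bookkeeping of the Grothendieck--Ogg--Shafarevich contribution (with $\kappa(n)$ arising from the ranks and the degree of the discriminant) gives exactly the stated threshold $q > 4^{n(2n+1)}$.

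The main obstacle is the clean uniform bound on $|H^1(C, E_\alpha[2])|$ with constants yielding precisely $q > 4^{n(2n+1)}$: one needs to choose carefully between the generic-fiber pushforward $E_\alpha[2]$ and the N\'eron model's 2-torsion $\mathcal E[2]$ (they differ by a skyscraper sheaf, which may shift constants but preserves the asymptotic scaling in $d$), and to correctly track the tame inertia contribution at each bad point. The rooty-tuple counting and Riemann--Roch estimates, while central to the dimension drop, are routine once this cohomological bound is in place.
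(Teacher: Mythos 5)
Your proposal is correct in outline and shares the paper's two-step skeleton: (i) the count of tuples $\underline a$ with $E_\alpha[2](K)\neq 0$ is smaller than the full count by a factor of order $q^{4nd}$, obtained by exactly the same factorization $f_{\underline a}=(x-r)\psi$ (the paper phrases this as $q^{2n(2n+1)d+2n(1-g)}$ versus $q^{2n(2n+3)d+2n(1-g)}$, which is your discrepancy $h^0(\mathcal L^{4n+2})-h^0(\mathcal L^2)=4nd$); and (ii) a uniform bound on $|H^1|$ of the shape $4^{O(n^2(2n+1))d}$ whose base does not grow with $q$. Where you genuinely diverge is in how (ii) is obtained: the paper restricts to the smooth locus $C'$, views an $E_\alpha[2]$-torsor as a tame \'etale cover of $C'$ of degree $4^n$, bounds the number of such covers by $4^{n(2g+\#(C\setminus C'))}$ using the number of topological generators of $\pi_1^{tame}(C')$, and bounds the fibers of the map sending a torsor to its underlying cover by $2^{6n}$; you instead invoke Grothendieck--Ogg--Shafarevich for the tame $\mathbb F_2$-sheaf of rank $2n$. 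Both routes give $|H^1|\le 4^{4n^2(2n+1)d+O(1)}$ once the bad locus is bounded by $\deg\Delta$, so the comparison with $q^{-4nd}$ yields the same threshold, and your route is arguably the cleaner packaging of the same topological input. One loose end: you take the bad locus to have size at most $(2n+1)(2n+2)d$ (the figure appearing in the paper's definition of transversality), but the value used in the paper's proof of this lemma, and the correct weighted degree of the discriminant of $x^{2n+1}+a_2x^{2n-1}+\cdots+a_{2n+1}$ with $a_i\in H^0(C,\mathcal L^{\otimes 2i})$, is $4n(2n+1)d$; with your figure the ``explicit bookkeeping'' you defer would produce a different exponent, whereas with $4n(2n+1)d$ the Grothendieck--Ogg--Shafarevich count reproduces the stated threshold $q>4^{n(2n+1)}$ exactly. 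This is a constant-tracking slip rather than a structural gap.
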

\begin{proof}
Let $H_\alpha$ be an universal hyperelliptic curve over $C$ defined by $(\mathcal{L}, a_i)_{2\leq i \leq 2n+1}$, then the smooth locus $C'$ of the map $H \rightarrow C$ is determined by the condition $\Delta(c_i) \neq 0$, notice that $\Delta \in H^0(C,\mathcal{L}^{4n(2n+1)})$. Denote $E_\alpha$ the corresponding Jacobian of $H_\alpha$, then by the smoothness of $H_\alpha$ over $C'$, any $K_v-$points of $E_\alpha$ can be extended as  $C_v'-$points (same notation as in the previous proposition). Using the similar argument as the above proof, we imply that any elements in the 2-Selmer group of $E_\alpha$ can be lifted to $E_\alpha[2]-$torsors over $C'$. Consequently, we get
\begin{equation*}
|Sel_2(E_\alpha)| \leq |H^1(C', E_\alpha[2])|.
\end{equation*}
When $E_\alpha[2](C) \neq 0$, there exists a section $c \in H^0(C, \mathcal{L}^{\otimes 2})$ such that the $(x,z)-$ equation defining $H_\alpha$ can be factorized as
\begin{equation*}
x^{2n+1}+a_2x^{2n-1}z^2+ \cdots + a_{2n+1}z^{2n+1} = (x-cz)(x^{2n}+cx^{2n-1}z+b_2x^{2n-2}z^2+\cdots + b_{2n}z^{2n}).
\end{equation*}
It means that $\{a_i\}_{2\leq i \leq 2n+1}$ can be expressed in terms of $c$ and $\{b_j \}_{2\leq J \leq 2n}$, where $b_j \in H^0(C, \mathcal{L}^{2j}.$ If $d= deg(\mathcal{L})$ is large enough, then by using Riemann-Roch theorem, the number of multiple sets $\{a_i\}$ will be bounded by $q^{2n(2n+1)d+2n(1-g)}$.\\
Now we consider the following interpretation for $E_\alpha[2]-$tosors: any $E_\alpha[2]-$tosors over $C'$ can be considered as tame \'{e}tale covers of $C'$ of degree $2^{2n}$. Hence there is a natural map:
\begin{equation*}
\phi : H^1(C',E_\alpha[2]) \rightarrow \{\text{tame \'{e}tale covers of $C'$ of degree $4^n$} \}.
\end{equation*}
The number of points where $H_\alpha$ is singular $|C-C'|$ is bounded by the degree of $\Delta(H_\alpha)$, so $|C-C'| \leq 4n(2n+1)d$. As a consequence, the number of topological generators of $\pi_1^{tame}(C')$ is less than $2g+4n(2n+1)d$. The size of $H^1(C',E_\alpha[2])$ can be estimated if we understand the fiber of $\phi$. Let $M$ is a degree $2^2n$ \'{e}tale cover of $C'$, then giving $M$ the structure of $E_\alpha[2]-$torsor is equivalent to giving an action map:
\begin{equation*}
\psi: E_\alpha[2] \times_{C'} M \longrightarrow M
\end{equation*}
which is compatible with the structure maps to $C'$ and satisfies that the following natural map 
\begin{eqnarray*}
E_\alpha[2] \times_{C'} M & \longrightarrow & M \times_{C'} M \\
(g,m) & \mapsto & (g.m,m)
\end{eqnarray*}
is isomorphic. \\ 
 Since everything is proper and flat over $C'$, the map $\psi$ is totally defined by the generic map $\psi_K: (E_\alpha[2]\times_{C'} M)_K \rightarrow M_K$. As $K-$vector spaces, $dim(M_K) = 2^{2n}$ and $dim(E_\alpha[2]\times_{C'} M)_K = 2^{4n}$, hence the number of maps giving $M$ the structure of a $E_\alpha[2]-$tosors is bounded by $2^{6n}$, so is the fiber of $\phi$. \\
Now we obtain that the average in the case $E_\alpha[2](C) \neq 0$ is bounded by:
\begin{equation*}
\dfrac{q^{2^{6n}}.4^{2gn+4n^2(2n+1)d}.q^{2n(2n+1)d+2n(1-g)}}{q^{2n(2n+3)d+2n(1-g)}} = \dfrac{m.4^{4n^2(2n+1)d}}{q^{4nd}},
\end{equation*}
where $m$ is a constant independent to $d$. This goes to zero as $d$ goes to infinity if $q^{4n} > 4^{4n^2(2n+1)}$, or equivalently $q > 4^{n(2n+1)}$. The lemma is completed.
\end{proof}
From the above observations, we see that basically the size of 2-Selmer group of the Jacobian $E_\alpha$ is bounded by $H^1(C, E_\alpha[2])$. The set of $E_\alpha[2]-$torsors has the following interpretation: 
\begin{itemize}
\item[1.] Recall that $S=Spec(K[a_2,\dots,a_{2n},a_{2n+1}]) \cong \mathbb{A}^{2n}.$ Then any tuple $(\mathcal{L}, \underline{a})$ can be seen as a $C-$point of the quotient stack $[S/\mathbb{G}_m]$, where the action of $\mathbb{G}_m$ on $S$ is given by $\lambda. (a_2,\dots,a_{2n},a_{2n+1}) = (\lambda^4a_2,\dots,\lambda^{4n}a_{2n},\lambda^{4n+2}a_{2n+1}).$ We set $\mathcal{A}= Hom(C, [S/\mathcal{G}_m])$, then $A(k)$ classifies isomorphism classes of tuples $(\mathcal{L}, \underline{a}).$
\item[2.] Since the universal Jacobian $E_S$ is a group scheme over $S$, there is a natural map of quotient stacks
$$[BE_S[2]/\mathbb{G}_m] \xrightarrow{\psi} [S/\mathbb{G}_m].$$
Given a morphism $\alpha: C \rightarrow [S/\mathbb{G}_m]$, as in the step 1, we obtain a family of curve $H_\alpha \rightarrow C$. Denote $E_\alpha= \alpha^*E_S$, then $E_\alpha$ is exactly the relative Jacobian of $H_\alpha$ over $C$. An isomorphism class of $E_\alpha[2]-$torsor over $C$ can be seen as a morphism $ \beta: C \rightarrow [BE_S[2]/\mathbb{G}_m]$ that fits in the following commutative diagram:
\[
\begin{tikzcd}
C \arrow{r}{\beta} \arrow{rd}{\alpha} & \text{$[BE_S[2]/\mathbb{G}_m]$} \arrow{d}{\psi} \\
 & \text{$[S/\mathbb{G}_m]$}
 \end{tikzcd}
\]
Hence if we set $\mathcal{M}= Hom(C, [BE_S[2]/\mathbb{G}_m])$ then we have a natural map $\mathcal{M} \rightarrow \mathcal{A}$ where the fiber $\mathcal{M}_\alpha$ over $\alpha \in \mathcal{A}(k)$ classifies isomorphism classes of $E_\alpha[2]-$torsors over $C$. 
\item[3.] Notice that the natural map $\mathcal{M} \rightarrow \mathcal{A}$ is compatible with maps to $Hom(C, B\mathbb{G}_m)$. 
\end{itemize}
Our main theorems is the corollary of the following:
\begin{theorem}
Suppose that $q > 4^{n(2n+1)}$. Then we have that
$$\limsup_{deg(\mathcal{L}) \rightarrow \infty} \frac{|\mathcal{M}_{\mathcal{L}}(k)|}{|\mathcal{A}_{\mathcal{L}}(k)|} \leq 3 + f(q),$$where $lim_{q \rightarrow \infty} f(q) = 0$.
\end{theorem}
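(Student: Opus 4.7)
The plan is to convert the ratio $|\mathcal{M}_{\mathcal{L}}(k)|/|\mathcal{A}_{\mathcal{L}}(k)|$ into a weighted count of regular sections of a Vinberg-type vector bundle on $C$, and then estimate that count by stratifying the underlying principal bundle by its canonical (Harder--Narasimhan) parabolic reduction. By Proposition \ref{compare1} together with Lemma \ref{large char contributes 0}, the hypothesis $q > 4^{n(2n+1)}$ lets us absorb the $E_\alpha[2](K) \neq 0$ curves into an error function $f(q)$ with $\lim_{q\to\infty} f(q)=0$, so it suffices to bound the analogous ratio where $Sel_2(E_\alpha)$ is replaced by $H^1(C, E_\alpha[2])$, i.e.\ by $|\mathcal{M}_{\mathcal{L}}(k)|/|\mathcal{A}_{\mathcal{L}}(k)|$ itself.

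First I would invoke the $\theta$-group picture of Bhargava--Gross (available in our setting thanks to Levi \cite{Lev08}, \cite{Lev09}): there is a reductive group $G_0$ acting on a representation $V$ with $V/\!/G_0 \cong S = \mathbb{A}^{2n}$, and over the regular locus of $V$ the stabilizer scheme is canonically isomorphic to the $2$-torsion of the universal Jacobian (\ref{can1}, \ref{can2}). Twisting by the $\mathbb{G}_m$-grading, the stack $[BE_S[2]/\mathbb{G}_m]$ is identified with the stack of pairs $(P, s)$ where $P$ is a $G_0$-torsor on $C$ and $s \in H^0(C, V_P^{\mathcal{L}})$ is a regular section of the $\mathcal{L}$-twisted vector bundle associated to $P$. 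Hence
\begin{equation*}
|\mathcal{M}_{\mathcal{L}}(k)| = \sum_{[P]} \frac{|\{ s \in H^0(C, V_P^{\mathcal{L}}) : s \text{ regular}\}|}{|\mathrm{Aut}(P)|},
\end{equation*}
while the denominator $|\mathcal{A}_{\mathcal{L}}(k)|$ is essentially $q^{h^0(\mathcal{L}^{\otimes 4} \oplus \cdots \oplus \mathcal{L}^{\otimes (4n+2)})}$ by Riemann--Roch.

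Second I would stratify the sum over $[P]$ by the conjugacy type $Q$ of the canonical parabolic reduction of $P$, following the principal-bundle Harder--Narasimhan theory used in \cite{HLN14}. For each parabolic $Q \subset G_0$ with Levi $L_Q$, torsors of HN-type $Q$ are indexed by semistable $L_Q$-bundles, on which Riemann--Roch and the vanishing of $H^1$ in the appropriate slope range give an asymptotic formula for $|H^0(C, V_P^{\mathcal{L}})|$ and, symmetrically, for $|\mathrm{Aut}(P)|$ via the adjoint bundle. The virtue of regularity is that on the generic Kostant slice the excess codimension matches the predicted Euler characteristic, so after dividing by $|\mathcal{A}_{\mathcal{L}}(k)|$ each stratum contributes a $q$-power that is $O(1)$ only for two values of $Q$, and is $o(1)$ as $d \to \infty$ otherwise (absorbed into $f(q)$).

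Third I would identify the two surviving contributions. The stratum $Q = B$ (Borel) counts Kostant sections: there is exactly one in the odd hyperelliptic case, contributing $1$. The open stratum $Q = G_0$ (semistable torsors) contributes the Tamagawa number $\tau(G_0) = 2$ via the Siegel mass formula applied to $G_0$ (which is of type $B_n$, with Tamagawa number $2$). Summing gives $1 + 2 = 3$, with all remaining strata and the $E_\alpha[2](K)\neq 0$ correction absorbed into $f(q)$.

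The main obstacle is the middle step: one must show that every intermediate parabolic contributes negligibly, i.e.\ that the number of regular sections on strata of non-trivial HN-type is dominated, after weighting by $1/|\mathrm{Aut}(P)|$, by the total count $|\mathcal{A}_{\mathcal{L}}(k)|$ up to a factor tending to zero. This requires combining the slope inequalities defining the canonical reduction with a cohomology-vanishing statement on the regular locus, and then summing carefully over the (finite but large) poset of parabolic types with uniform bounds in $d$; the $q > 4^{n(2n+1)}$ hypothesis reappears precisely to ensure these bounds survive weighted counting.
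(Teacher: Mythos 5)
Your proposal follows essentially the same route as the paper: reduce to counting regular sections of the twisted Vinberg bundle via the isomorphisms of Propositions \ref{can1} and the identification $[V^{reg}/G]\cong BI_S$, stratify $G$-torsors by their canonical parabolic reduction, kill the intermediate strata by slope estimates, and extract $1$ from the Borel/Kostant stratum plus $2$ from the Tamagawa number of $SO_{2n+1}$ on the semistable stratum, with the $E_\alpha[2](K)\neq 0$ locus absorbed into $f(q)$ by Lemma \ref{large char contributes 0}. This is the argument of Section \ref{couting section} and the surrounding material, so the plan is sound and matches the paper.
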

Let $\mathcal{A}^{trans}(k)$ be the subset of transversal elements in $\mathcal{A}(k)$, and $\mathcal{M}^{trans}(k)$ be the preimage of $\mathcal{A}^{trans}(k)$ under the natural map $\mathcal{M} \rightarrow \mathcal{A}$. Then in transversal case, we have the following limit:
\begin{theorem}
$$\lim_{deg(\mathcal{L}) \rightarrow \infty} \frac{|\mathcal{M}^{trans}_{\mathcal{L}}(k)|}{|\mathcal{A}^{trans}_{\mathcal{L}}(k)|} =3.$$
\end{theorem}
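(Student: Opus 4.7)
The plan is to convert both numerator and denominator into groupoid counts of torsors on $C$, then apply Vinberg theory together with the canonical parabolic reduction of principal bundles to evaluate the resulting ratio. To begin, Proposition \ref{selmer equal torsor in transversal case} gives $|Sel_2(E_\alpha)|=|H^1(C,E_\alpha[2])|$ for every transversal $\alpha$, and by construction $|\mathcal{M}^{trans}_\mathcal{L}(k)|$ is precisely the groupoid count of pairs $(\alpha,T)$ with $\alpha\in\mathcal{A}^{trans}_\mathcal{L}(k)$ and $T$ an isomorphism class of $E_\alpha[2]$-torsor on $C$. Using the canonical identification of $E_S[2]$ with the stabilizer scheme of the Vinberg representation $(G_0,\mathfrak{g}_1)$ attached to genus-$n$ hyperelliptic curves with a marked Weierstrass point, an element of $\mathcal{M}^{trans}_\mathcal{L}(k)$ becomes equivalent to a $G_0$-torsor $P$ on $C$ together with a regular section of the twisted bundle $P\times^{G_0}\mathfrak{g}_1$ whose invariants reproduce the given $\alpha$. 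The ratio in the theorem therefore equals the average, over $\mathcal{A}^{trans}_\mathcal{L}(k)$, of the number of such regular sections.

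Next, I would stratify $\mathcal{M}^{trans}_\mathcal{L}(k)$ by the conjugacy class of the canonical parabolic reduction of $P$, which generalizes the Harder--Narasimhan filtration to principal bundles. Each stratum is indexed by a standard parabolic $Q\subseteq G_0$ together with numerical data recording the degrees of the Levi components, and after reducing to the Levi the count of regular sections becomes a count of sections of an associated vector bundle on $C$. Because $\alpha$ is transversal, Proposition \ref{transversal-->semistable} ensures that $H_\alpha\to C$ is semi-stable, and the canonical reduction makes the relevant Levi bundles semi-stable piece by piece; Riemann--Roch then provides a clean estimate for the dimension of global sections, uniform in $\deg(\mathcal{L})$. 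A slope computation shows that for any parabolic strictly between a Borel and $G_0$ the number of regular sections grows strictly slower than $|\mathcal{A}^{trans}_\mathcal{L}(k)|$, so these intermediate strata contribute $0$ in the limit.

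The two remaining strata yield explicit constants. The open stratum, on which the underlying torsor is generically trivial, contributes the Tamagawa number of $G_0$; for the ambient group attached to this family a direct Ono-type computation yields $\tau(G_0)=2$. The Borel stratum contributes the number of Kostant sections of $(G_0,\mathfrak{g}_1)$ landing in the regular locus, and for hyperelliptic curves with a marked Weierstrass point there is exactly one such Kostant section, contributing $1$. The automorphism factors entering the groupoid counts on numerator and denominator can be compared using the same canonical reduction, and they cancel to produce these two constants without correction. Adding gives $1+2=3$, which is the claim.

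The hard part will be the vanishing estimate for the intermediate parabolic strata: for each proper parabolic $Q$ strictly containing a Borel, one must verify that the twisted bundle $P\times^{Q}\mathfrak{g}_1^Q$ associated with a canonical $Q$-reduction has slopes too negative to support a comparable number of regular sections. This requires a uniform slope inequality for the canonical reduction combined with Riemann--Roch on each Levi factor; I expect the argument to mirror the one carried out for the analogous step in \cite{HLN14}, but now with the representation-theoretic input replaced by the Vinberg $\theta$-group of $\mathfrak{sl}_{2n+1}$ with its involution, so the slope bookkeeping becomes the main new technical content. Once this is in place, the rest is routine Riemann--Roch and automorphism counting already developed above.
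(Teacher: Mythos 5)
Your overall route --- the Vinberg dictionary $\mathcal{M}\cong Hom(C,[V^{reg}/G\times\mathbb{G}_m])$, stratification by canonical parabolic reduction, Riemann--Roch on semistable Levi pieces, and the final tally $3=1+2$ with $1$ coming from the Kostant section and $2$ from the Tamagawa number --- is the same as the paper's. However, the way you distribute the two constants over the strata contains a step that fails. You claim that every stratum indexed by a parabolic strictly between the Borel and $G_0$ contributes $0$ because the number of regular sections ``grows strictly slower than $|\mathcal{A}^{trans}_{\mathcal{L}}(k)|$.'' This is false on the part of such a stratum where the slopes of the canonical reduction stay bounded as $d=\deg(\mathcal{L})\to\infty$: for such a bundle $E$ every semistable constituent of $(E\times^G V)\otimes\mathcal{L}^{\otimes 2}$ has slope $>2g-2$ once $d$ is large, so Riemann--Roch gives $h^0=(4n^2+6n)d+(2n^2+3n)(1-g)$ exactly, which has the same leading term in $d$ as $\log_q|\mathcal{A}_{\mathcal{L}}(k)|=(4n^2+6n)d+2n(1-g)$. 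Each such bundle therefore contributes the positive constant $q^{(2n^2+n)(1-g)}\bigl(\prod_{i=1}^{2n}\zeta_C(2i)^{-1}\bigr)/|Aut_G(E)|$, not $0$, and it is precisely the sum of these constants over \emph{all} of $Bun_G(\mathbb{F}_q)$ --- semistable bundles together with mildly unstable ones of every parabolic type, Borel included --- that equals the Tamagawa number $2$ via the adelic mass formula. Your ``open stratum'' of semistable (or generically trivial) bundles alone has mass strictly less than $\tau(G)=2$, so following your assignment literally would give a total strictly below $3$.

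The correct dichotomy, which is the one the paper uses, is by the size of the instability rather than by the type of the canonical parabolic: bundles with all slope gaps bounded (the range $0<\mu_1<d-g-1$ in Case 4 of the counting section) are lumped together across all parabolic types and produce $\tau(G)=2$ in the limit; bundles with some gap $\mu_i-\mu_{i+1}$ comparable to $2d$ contribute $0$ after a pointwise non-regularity or non-transversality argument (the lemma in Case 1 and Lemma \ref{ignore case 2 of counting section}), \emph{except} for the single maximally unstable Borel configuration $r_i=1$, $\mu_i-\mu_{i+1}=2d$ for all $i$, whose sections all factor through the Kostant section and contribute exactly $1$. Relatedly, the Borel stratum as a whole does not ``contribute the number of Kostant sections'': its mildly unstable part is absorbed into the Tamagawa mass. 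Your plan becomes correct once the vanishing claim for intermediate parabolics is replaced by this slope-gap dichotomy; as written, that vanishing step is the one that breaks.
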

The next section \ref{Vinberg representation 1} will provide another interpretation of $k-$rational points of the moduli space $\mathcal{M}$ which is easier to estimate the sizes. 
\subsection{Representation of the split odd special orthogonal group}
\label{Vinberg representation 1}
 Let $W$ be a non-degenerate, split orthogonal space over $k$, of dimension $2n+1 \geq 3$ and of discriminant $1$. Then we denote $G=SO(W).$ Such an orthogonal space over $k$ is unique up to isomorphism, and it has an ordered basis 
\begin{equation*}
\{e_1, e_2, \dots, e_n, u, f_n, \dots, f_2, f_1\}
\end{equation*}
with inner products given by
\begin{gather*}
\langle e_i, e_j \rangle = \langle f_i, f_j \rangle = \langle e_i, u \rangle = \langle f_i, u \rangle = 0, \\
 \langle e_i, f_j \rangle = \delta_{ij}, \\
 \langle u, u \rangle = 1.
\end{gather*}
Let $T \in End(W)$ be an endomorphism of the vector space $W$, we define the adjoint transformation $T^*$ uniquely by the formula
\begin{equation*}
\langle Tv, w \rangle = \langle v, T^*w \rangle.
\end{equation*}
It is easy to see that the matrix $M$ of $T^*$ with respect to the above basis is obtained from the matrix of $T$ by reflection about the anti-diagonal. We say that $T$ is self-adjoint if $T=T^*$. We define the group scheme $G = SO(W)$ over $k$ by
\begin{equation*}
G:= SO(W) = \{ g \in GL(W): g^*g=1,  det(g)=1 \}.
\end{equation*}
Since $char(k) \neq 2$, the group $G(k)$ gives the points of the split orthogonal group $SO_{2n+1}$ of the space $W$. 

In this paper we consider the following representation of $G$:
\begin{equation*}
V= \{T \in End(W) | T = T^*, Trace(T) = 0\}
\end{equation*}
with the action of $G$ on $V$ by conjugation $T \rightarrow gTg^{-1}=gTg^*$. The space $V$ has dimension $2n^2+3n$, and can be identified with the submodule $Sym^2_0(W)$ of $Sym^2(W)$ which is a complement to the line spanned by the defining quadratic form on $W$. We also note that the representation $V$ is an irreducible representation which arises in Vinberg theory (see \cite{Vin76}). For each $T \in V$, we write the characteristic polynomial $f(x)$ of $T$:
\begin{equation*}
f(x) = det(xI-T) = x^{2n+1} +c_2(T)x^{2n-1}+ \cdots + c_{2n}(T)x + c_{2n+1}(T)
\end{equation*}
with coefficients $c_m(T) \in k$. The $c_m$ gives algebraically independent polynomial invariants which generate the full ring of polynomial invariants on $V$ over $k$. We denote $S:= Spec(k[V]^G)= Spec(k[c_2,c_3,\dots, c_{2n+1}] \cong \mathbb{A}^{2n}$ and $\pi$ the natural projection map: 
\begin{eqnarray*}
\pi: V & \longrightarrow & S \\
T &\mapsto & (c_2,c_3, \dots, c_{2n+1})
\end{eqnarray*}
will be defined. 

The second representation of $G$ we also want to consider is the adjoint representation $\mathfrak{g} = \mathfrak{so}(W)$ given by the conjugation action of $G$ on its Lie algebra. Similarly, the coefficients of the characteristic polynomial form an algebraically independent basis of the algebra of polynomial invariants of $\mathfrak{g}$ (see \cite{Bou82} Ch 8, §8.3,§13.2, VI). Note that we have an isomorphism of irreducible representations of $G$
\begin{equation*}
\mathfrak{g} \cong \wedge^2(W)
\end{equation*} 
which is convenient for our later computation.	
	
\subsubsection{Regular locus and Kostant section}
\begin{definition} Let $k/\mathbb{F}_p$ be a field, an element $T \in W(k)$ is called to be regular if dim$(Stab_{G_k}(T)) =0$.  
\end{definition}
\begin{remark} \begin{itemize}
\item[i.]In \cite{SS70} the general notion of regular elements is given. In our case, the regularity of an element $T$ is equivalent to the condition that the characteristic polynomial and the minimal polynomial of $T$ are the same.
\item[ii.]In Vinberg theory over $\mathbb{F}_q$, we know that the regularity is a codimension 2 condition, i.e dim$(W^{nonreg}) \leq $dim$(W)-2$. (see Lemma 6.31 in \cite{Lev08})
\end{itemize}
\end{remark}
\begin{example} Let $N$ be the nilpotent matrix:
$$\begin{pmatrix}
0 & & &&\\
1 & 0 & &&\\
& 1 & 0 &&\\
& & \ddots & \ddots  &\\
& & & 1 & 0 \\
& & & & 1 & 0
\end{pmatrix}$$
$A$ be any upper triangular matrix of the same size $l$ as $N$. The the sum $T= N + A$ is regular in the sense that the characteristic polynomial and the minimal polynomial of $T$ are the same. Indeed, it is easily to see by decomposition that the set of vectors over $k$: $\{Id, T, \dots, T^{l-1}\}$ is independent. Hence $T$ is regular.
\end{example}
In the rest of this subsection, we will present a (Kostant) section of the quotient map $W \rightarrow W//G \cong S$ precisely. For each element $(a_2,a_3, \dots, a_{2n+1})$ in $S$, we need to find a canonical traceless matrix whose characteristic polynomial is $f(x) = x^{2n+1} + a_2x^{2n-1} + \dots + a_{2n+1}$. And that is
\[ T_f=\left( \begin{array}{cc}
A & B_f \\
C & D
 \end{array} \right)\]
where $A$ is a $(n+1)\times n$ matrix whose first row is zero and the others form an identity matrix, $C$ is the zero $n \times n$ matrix, $D$ is the $n \times (n+1)$ matrix whose last column is zero and the others form an identity matrix. The last matrix $B_f$ is the following tri-anti-diagonal matrix:
 
\[ \left( \begin{array}{ccccccc}
0 & 0 & 0 & \cdots & 0 & -\dfrac{1}{2}a_{2n} & -a_{2n+1} \\
0 & 0 & 0 & \cdots & -\dfrac{1}{2}a_{2n-2} & -a_{2n-1} & -\dfrac{1}{2}a_{2n} \\
0 & 0 & 0 & \cdots & -a_{2n-3} & -\dfrac{1}{2}a_{2n-2} & 0 \\
\cdots & \cdots & \cdots & \cdots & \cdots & \cdots & \cdots \\
0 & -\dfrac{1}{2}a_4 & -a_5 & \cdots & 0 & 0 & 0 \\
-\dfrac{1}{2}a_2 & -a_3 & -\dfrac{1}{2}a_4 & \cdots & 0 & 0 & 0 \\
0 & -\dfrac{1}{2}a_2 & 0 & \cdots & 0 & 0 & 0 \\
 \end{array} \right)\]
 After that we could define a map $S \times \mathbb{G}_m \rightarrow V \times G \times \mathbb{G}_m$ as follow:
 \begin{equation*}
 (f , c) \mapsto (T_f, diag(c^{2n},c^{2n-2}, \dots, c^2, 1, c^{-2}, \dots, c^{-2n}), c)
 \end{equation*}
 It is easy to check that the above map is compatible with the actions of $\mathbb{G}_m$ on $S$ and $G \times  \mathbb{G}_m$ on $V$. Hence it induces a section on quotient stack that we also call the Kostant section:
 \begin{equation*}
 \kappa : [S/\mathbb{G}_m] \rightarrow [V/G\times \mathbb{G}_m]
 \end{equation*}
\begin{remark} Over algebraically closed field $\bar{k}$, any $G(\bar{k})-$orbits in $V^{reg}$ intersect $\kappa(S)$ at exactly one point. To see this, firstly, let recall some notations in Vinberg theory: denote $H=SL(W)$ over $k$, $\theta$ is an involution of $H$ given by $\theta(g) = (g*)^{-1}$. Our considering group $G=SO(W)$ is the fixed subgroup $H^{\theta}$, and the representation $V$ is the non-trivial eigenvector space for $\theta$ on the Lie algebra $\mathfrak{h}$:
$$\mathfrak{h}(1)= \{ T \in End(W): T^* =T,\, \text{Trace}(T)=0\}.$$Now if we set $G^*:= (H_{ad})^{\theta}$, then $G^*(\bar{k}) = \{g \in H(\bar{k}) | \, \theta(g)g^{-1} \in Z(\bar{k})\} /Z(\bar{k})$, where $Z$ is the center of $H$. We could easily see that $G^*(\bar{k})$ can be identified with $G(\bar{k})$.  The result (Lemma 0.20 in \cite{Lev08}) tell us the stated property of the Kostant section. 

In the next section, we are going to see the connection between 2-torsion subgroup of Jacobian of a hyperelliptic curve and the stabilizer group of the representation $(V,G)$ in section 1.1. 
\end{remark}

\subsection{Stabilizer group and 2-torsion subgroup} 
Firstly, we consider the stabilizer group scheme of the action of $G$ on $V=Sym^2_0(W)$. Recall that $V^{reg}$ is an open subset of $V$ consisting of all regular elements. We write $I \rightarrow V^{reg}$ for the stabilizer scheme, defined as the equalizer of the following diagram:
\[
\begin{tikzcd}
G\times V^{reg} \arrow[shift left=4]{rr}{(g,v)\mapsto g.v}
  \arrow[shift right=3,swap]{rr}{(g,v)\mapsto v}&& V^{reg}
 \end{tikzcd}
\]
It can be checked on geometric fibers that $I$ is a commutative and quasi-finite group scheme over $V^{reg}.$ Additionally, because $G(\bar{k})$ acts transitively on $V_f^{reg}(\bar{k})$ (Proposition 1.1 in \cite{Wan1}), the map $G \times \kappa \longrightarrow V^{reg}$, where $\kappa$ is the Kostant section, is \'{e}tale and surjective. With that covering, since $I$ is abelian, we now can descend $I$ to an abelian group scheme over $S$, and we denote it by $I_S$. In fact, we need to check that $(I,V^{reg} \rightarrow S)$ is a descent data. For each pair $(a,b) \in V^{reg}\times_S V^{reg}$, i.e. $\pi(a) = \pi(b) =s \in S$, there exists $g \in G(\bar{k})$ such that $a=g.b$. This induces an isomorphism of stabilizers $I_a \cong I_b$ over $\bar{k}$ given by sending $h \in I_a$ to $g^{-1}.h.g$. Since $I$ is commutative, these isomorphisms are not depended on the choice of $g$, and hence they are canonical and defined over $k$. The cocycle condition will follow from the canonical property of the above constructed isomorphisms. 
\begin{proposition} We have the following isomorphism of quotient stacks over $S$:
$$[V^{reg}/G] \cong [BI_S],$$where $[BI_S]$ is the classifying stack of $I_S-$torsors over $S$.
\end{proposition}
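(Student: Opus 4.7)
The plan is to use the Kostant section to trivialize $[V^{reg}/G]$ as an $I_S$-gerbe over $S$. Composing the Kostant section $\kappa : S \to V^{reg}$ with the quotient map $V^{reg} \to [V^{reg}/G]$ produces a section $\bar{\kappa}: S \to [V^{reg}/G]$ of the natural structure map $[V^{reg}/G] \to S$, and by construction the automorphism sheaf of $\bar{\kappa}$ is the stabilizer of $\kappa$ in $G$, which by the descent discussed just before the statement is precisely $I_S$. My strategy is to show directly that $\bar{\kappa}$ exhibits $S$ as an $I_S$-torsor over $[V^{reg}/G]$, and then invoke the fact that a gerbe banded by a commutative group scheme admitting a section is canonically its classifying stack.

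Concretely, I would establish two things. First, that $\bar\kappa$ is surjective as a morphism of stacks: this follows from the statement, recorded just before the proposition, that the action map $G \times S \to V^{reg}$, $(g,s) \mapsto g \cdot \kappa(s)$, is étale and surjective, which already provides an étale cover of $[V^{reg}/G]$ factoring through $\bar\kappa$. Second, that the 2-fiber product $S \times_{[V^{reg}/G]} S$ is canonically isomorphic to $I_S$ over $S$. A $T$-point of this fiber product consists of a pair $(s_1,s_2) \in S(T)^2$ together with $g \in G(T)$ satisfying $g \cdot \kappa(s_1) = \kappa(s_2)$. Applying the invariant map $\pi$ and using $\pi \circ \kappa = \mathrm{id}_S$ forces $s_1 = s_2$, and the remaining datum is a $T$-point $g$ of the stabilizer of $\kappa(s_1)$, i.e.\ a $T$-point of $I_S$. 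This identification is $G$-equivariant and descends to the canonical one used to define $I_S$, precisely because $I$ is commutative.

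From surjectivity and the identification $S \times_{[V^{reg}/G]} S \cong I_S$, the groupoid presentation of the classifying stack gives $[V^{reg}/G] \cong [S/I_S] = [BI_S]$ over $S$. Equivalently, one can describe the same isomorphism in the other direction: $V^{reg}$ itself is identified with the quotient $(G \times S)/I_S$ for the right $I_S$-action on $G$ (which is the fiber structure of the étale cover $G \times S \to V^{reg}$), and quotienting further by the commuting left $G$-action on $G$ collapses the $G$-factor, yielding $[V^{reg}/G] \cong [S/I_S]$ as claimed.

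The main technical point I expect to be delicate is the second step: verifying that the identification $S \times_{[V^{reg}/G]} S \cong I_S$ produced on the nose by the fiber-product computation coincides with the canonical descent of the stabilizer scheme to $S$. This is exactly the place where the commutativity of $I$ (which makes the conjugation isomorphisms $I_a \cong I_b$ independent of the conjugating element $g$) is used in an essential way, and it is what guarantees that the gerbe is banded, not merely bound, by $I_S$, so that the section $\bar\kappa$ trivializes it globally rather than only locally.
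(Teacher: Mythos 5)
Your proposal is correct and follows essentially the same route as the paper: both arguments are driven by the Kostant section $\kappa$ together with the \'etale surjectivity of the action map $G\times S \to V^{reg}$, and both use the commutativity of $I$ to identify the descended stabilizer along $\kappa$ with $I_S$. The paper simply unpacks your groupoid-presentation argument into the two explicit mutually inverse functors $Q \mapsto Q\times_{V^{reg},\kappa} S$ and $P \mapsto P\times^{I_S}G$, the latter being exactly your closing observation that $V^{reg}\cong (G\times S)/I_S$.
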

\begin{proof}
Firstly, we need to see how we can see the quotient stack $[V^{reg}/G]$ which is defined over $k$, as a stack over $S$. By definition, given a scheme $T$ over $k$, an element $Q$ in $[V^{reg}/G](T)$ is a $G-$torsor over $T$ that fit in the following commutative diagram:
\[
\begin{tikzcd}
Q \arrow{r}{\alpha} \arrow{d} & V^{reg} \arrow{d} \\
T \arrow{r} & Spec(k)
 \end{tikzcd}
\]where the map $\alpha$ is $G-$equivariant. Since $Q$ is a $G-$torsor over $T$, there exists a covering $\{T_i \rightarrow T \}$ of $T$ such that the restriction $Q_{T_i} \rightarrow T_i$ is the trivial torsor : $T_i \times G \rightarrow T_i$. Since $\alpha$ is $G-$equivariant, over $T_i$, it induces a natural map $T_i = Q/G \rightarrow V^{reg}/G =S$. Hence we obtain a natural map $T \rightarrow S$, and it fits in the following commutative diagram
 \[
\begin{tikzcd}
Q \arrow{r}{\alpha} \arrow{d} & V^{reg} \arrow{d} \\
T \arrow{r} & S
 \end{tikzcd}
\]This means that we can consider the quotient stack $[V^{reg}/G]$ as a stack over $S$. We define an equivalent relation on the product $G \times_k S:$ $(g_1,s_1) \equiv (g_2,s_2)$ if $s_1=s_2$ and $g_1=g_2.e$ for some $e \in I_{s_1}$. Since the action map $G \times_k S \rightarrow V^{reg}$ is surjective, the quotient $(G \times_k S)/\equiv$ which is denoted by $G \times^{I_S}S$, is isomorphic to $V^{reg}$. Now we will construct two morphisms from $[V^{reg}/G]$ to $[BI_S]$ and vice versa such that they are inverses to each other.

The first map $\psi: [V^{reg}/G] \rightarrow [BI_S]$: Given a scheme $T$ over $S$, an element $Q \in [V^{reg}/G](T)$, and a commutative diagram as above. we take $P$ to be the fiber product over $V^{reg}$:
\[
\begin{tikzcd}
Q \arrow{r}{\alpha}  & V^{reg} \\
P \arrow{r} \arrow{u} & S \arrow{u}{\kappa}
 \end{tikzcd}
\]
Now if we denote $\beta: P \rightarrow T$ be the composition of the following maps: $P \rightarrow Q \rightarrow T$, we obtain a commutative diagram:
 \[
\begin{tikzcd}
P \arrow{r}{\gamma} \arrow{d}{\beta} & S \arrow{d}{id} \\
T \arrow{r} & S 
 \end{tikzcd}
\]Now we will show that the above diagram will define an element in $[BI_S](T).$ It is easy to check that the map $P \xrightarrow{\gamma} S$ is $I_S-$equivariant ($I_S$ acts trivially on $S$), hence we only need to check that $P$ is a $I_S-$torsor over $T$. Over a point $t \in T$ over $s \in S$, the fiber $P_t$ is the set of elements in $Q_t$ that is mapped to $\kappa(s) \in V^{reg}$ via the map $\alpha$. Set-theoretically, $Q_t$ is $G$ without the group structure. Since $\alpha$ is $G-$equivariant, we deduce that $Q_t \cap \alpha^{-1}(\kappa(s))$ is a $I_s-$torsor. Thus, $P$ is a $I_S$ torsor over $T$.

The second map $\phi : [BI_S] \rightarrow [V^{reg/G}]:$ Given a scheme $T$ over $S$, any element $P\in [BI_S]$ fits the following commutative diagram:
  \[
\begin{tikzcd}
P \arrow{r}{\gamma} \arrow{d}{\beta} & S \arrow{d}{id} \\
T \arrow{r} & S 
 \end{tikzcd}
\]
By considering the action of $I_S$ on $G$, we have already defined the quotient $S \times^{I_S} G \cong V^{reg}$. Similarly, we define $P \times^{I_S} G$ as the quotient of $P \times_k G$ by the following equivalent relation: $(x,g) \sim (x,g.e)$ for any $x \in P, g \in G,$ and $e \in  I_{\gamma(x)}$. The action of $G$ on $P \times^{I_S} G$ is given by: $g . \overline{(x,g')} = \overline{(x,gg')}.$ Then the map that naturally induced from $\gamma$:
\begin{align*}
\gamma \times Id:  P \times^{I_S} G & \rightarrow S \times^{I_S} G = V^{reg} \\
  \overline{(x,g)} & \mapsto \overline{(\gamma(x),g)}
\end{align*}is well defined and $G-$equivariant. To sum up, we have just constructed the following commutative diagram:
  \[
\begin{tikzcd}
P \times^{I_S} G \arrow{r} \arrow{d}{pr_1} & V^{reg} \arrow{d}{\pi} \\
P \arrow{r}{\gamma} \arrow{d}{\beta} & S \arrow{d}{id} \\
T \arrow{r} & S 
 \end{tikzcd}
\]It is easy to see that the fiber of $P \times^{I_S} G \xrightarrow{\beta \circ pr_1} T$ over $t \in T$ that is mapped to $s \in S$  is $P_t \times G/I_s \cong I_s \times G/I_s$. Thus, $P \times^{I_S} G$ is a $G-$torsor over $T$. We have just constructed an element in $[V^{reg}/G](T)$.

The proof is completed by noting that two map we have just constructed are inverses to each other.
\end{proof}
Recall that for each element $T \in \kappa$, $f_T(x)$ the characteristic polynomial of $T$, we consider the projective curve in $\mathbb{P}^3$ with the affine equation: $y^2=f_T(x)$. As a result, we obtain $H_S$ a flat family of integral projective curves over $S$. By the representability of the relative Picard functor, we obtain the scheme $Pic_{H_S/S}$ locally of finite type over $S$, and also the relative Jacobian $E_S=Pic^0_{H_S/S}$ over $S$. The relation between two group schemes the stabilizer and the Jacobian is stated in the following proposition:
\begin{proposition} We have a canonical isomorphism of two \'etale group schemes over $S$:
\begin{equation*}
I_S \cong E_S[2]
\end{equation*}
\label{can1}
\end{proposition}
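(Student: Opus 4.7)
The plan is to give concrete, functorial descriptions of both group schemes in terms of the universal finite \'etale algebra
$$\mathcal{R} \;=\; \mathcal{O}_S[x]/\bigl(x^{2n+1}+a_2 x^{2n-1}+\cdots+a_{2n+1}\bigr)$$
over (the separable/regular locus of) $S$, and then observe that they coincide. The common description I would aim for is
$$I_S \;\cong\; \ker\!\bigl(N:\mathrm{Res}_{\mathcal{R}/\mathcal{O}_S}(\mu_2)\to \mu_2\bigr) \;\cong\; E_S[2].$$

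For the stabilizer side, I would trivialize $I_S$ along the Kostant section $\kappa$ and compute $I_{T_f}\subset G$ directly. Regularity of $T_f$ forces minimal polynomial $=$ characteristic polynomial $=f$, so the centralizer of $T_f$ in $\mathrm{GL}(W)$ is $k[T_f]^{\times}\cong \bigl(k[x]/f(x)\bigr)^{\times}$. Since $T_f^{\ast}=T_f$ by definition of $V$, for any $g=p(T_f)$ one has $g^{\ast}=p(T_f^{\ast})=p(T_f)=g$, so the orthogonality condition $g^{\ast}g=1$ collapses to $g^2=1$. Moreover, computing the determinant of $g=p(T_f)$ through the eigenvalues of $T_f$ gives $\det(g)=\prod_i p(\alpha_i)=N_{\mathcal{R}/\mathcal{O}_S}(g)$, which is simply the general identity that the determinant of the regular representation of an element of an \'etale $\mathcal{O}_S$-algebra is its norm. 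Hence $I_{T_f}=\ker\bigl(N:\mu_2(\mathcal{R})\to \mu_2\bigr)$, and since $I_S$ was defined by descent from this stabilizer along the \'etale cover $G\times \kappa \to V^{\mathrm{reg}}$, the same identification holds for $I_S$.

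For the Jacobian side, I would identify $E_S[2]$ with $\ker(N)$ using the classical Mumford description via Weierstrass points. The universal Weierstrass divisor is $\mathcal{W}=\mathrm{Spec}\,\mathcal{R}\subset H_S$ cut out by $y=0$, and since $2W_\alpha\sim 2P_\infty$ via the function $x-\alpha$, every subdivisor of $\mathcal{W}$ produces a 2-torsion class in $E_S$. Functorially, an $S'$-point of $\mathrm{Res}_{\mathcal{R}/\mathcal{O}_S}(\mu_2)$ is an idempotent decomposition of $\mathcal{R}_{S'}$, equivalently a factorization $f=f^{+}f^{-}$ over $S'$, hence a Cartier divisor $D^{-}=\sum_{f^{-}(\alpha)=0}(W_\alpha-P_\infty)$ whose class lies in $E_{S'}[2]$. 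The relation $\mathrm{div}(y)=\sum_i W_i-(2n+1)P_\infty$ shows that this class depends only on the pair $\{f^{+},f^{-}\}$, i.e.\ it descends to the quotient by the diagonal $\mu_2$, which is exactly what the norm-1 condition pins down. Counting on geometric points gives $|\ker(N)(\bar k)|=2^{2n}=|E_s[2](\bar k)|$ and the classes $[W_i-P_\infty]$ are well-known to span $E_s[2]$ with the single relation $\sum_i[W_i-P_\infty]=0$, so this canonical morphism is an isomorphism of \'etale group schemes.

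Combining the two identifications gives the canonical isomorphism $I_S\cong E_S[2]$, and because both are commutative, the canonicity makes compatibility with the descent used in the construction of $I_S$ automatic. The main obstacle I expect is purely bookkeeping: phrasing the ``subset of Weierstrass points modulo complement'' picture for $E_S[2]$ functorially in families (without splitting $f$), and verifying the identity $\det=N$ at the level of group schemes rather than merely on $\bar k$-points. Both reduce to standard facts about restriction of scalars for the finite flat $\mathcal{O}_S$-algebra $\mathcal{R}$, after which the proposition follows.
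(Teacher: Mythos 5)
Your route is genuinely different from the paper's. You compute the stabilizer of the Kostant section directly (centralizer $=k[T_f]^{\times}$, self-adjointness automatic from $T_f^{\ast}=T_f$, $\det=N$), identify it with $\ker\bigl(N:\mathrm{Res}_{\mathcal{R}/\mathcal{O}_S}\mu_2\to\mu_2\bigr)$, and match this with Mumford's Weierstrass-point description of $E_S[2]$ via factorizations $f=f^{+}f^{-}$. The paper instead constructs the isomorphism geometrically: for each $T$ it forms the pencil of quadrics $B_T=\{v: b(v,v)=b(v,Tv)=0\}$ and the variety $L_T$ of maximal linear subspaces of its smooth locus, on which $I_T$ and $E_T[2]$ act simply transitively and commute, so that fixing a point of $L_T$ produces an isomorphism independent of the choice. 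The paper's fiberwise verification (the polynomials $p_i(x)=-1+h_i(x)(x-\alpha_i)^{m_i}\cdots$) is essentially your idempotent construction, so on the level of explicit formulas the two arguments converge; what the torsor $L_T$ buys is a construction that is uniformly canonical and manifestly $G$-equivariant over all of $V^{\mathrm{reg}}$, including the non-semisimple fibers, without ever choosing a factorization of $f$.

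The substantive gap in your proposal is at the discriminant locus, and it is more than bookkeeping. The proposition asserts an isomorphism of quasi-finite \'etale group schemes over all of $S$, and such a map is not determined by being an isomorphism over the dense open where $f$ is separable: the fiber orders drop on the discriminant locus ($|I_s|=|E_s[2]|=2^{r}$ when $f$ has $r+1$ distinct roots), so your count $|\ker(N)(\bar k)|=2^{2n}=|E_s[2](\bar k)|$ only certifies the generic fibers. Over a non-separable fiber the curve $H_s$ is singular at the repeated Weierstrass points, $E_s$ is a generalized Jacobian, the divisors $W_\alpha-P_\infty$ supported at singular points are not Cartier as written, and the relation becomes $\sum_i m_i[W_i-P_\infty]=0$ rather than $\sum_i[W_i-P_\infty]=0$. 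Your functorial setup via idempotents of $\mathcal{R}$ can be pushed through these fibers (idempotents see only the distinct roots, which is the right count), but you must actually verify the Mumford-type description of $\mathrm{Pic}^0[2]$ for the singular curves and check the bijection there; the paper sidesteps this by citing Wang's results on the Fano variety of the pencil, which are stated for arbitrary regular $T$. Until that fiberwise check on the discriminant locus is supplied, your argument proves the isomorphism only over the separable locus.
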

\begin{proof}(cf. Proposition 4.2.1 in \cite{HLN14} and Remark 1.6 in \cite{Wan1}) By setting $E_{V^{reg}}:= E_S \times_S V^{reg}$, it is enough to show that there is a canonical isomorphism 
$$I \cong E_{V^{reg}}[2]$$of group schemes over $V^{reg}$. Let $b$ be the bilinear form corresponding to the quadratic form $Q$ of the orthogonal space $W$ (see section 1.2). For each $T \in V^{reg}$, we consider following the intersection:
$$B_T= \{v \in W \,|\, b(v,v)=b(v,T(v))=0 \}.$$ Notice that $B_T$ is smooth if and only if $T$ is regular semi-simple, and in general, the singular locus of $B_T$ is the set of eigenvectors of $T$ whose eigenvalues are of order at least $2$ in the characteristic polynomial of $T$. Let $L_T$ be the variety of projective $n-1$ planes (where dim$(W)=2n+1$) contained in the smooth part of $B_T$ (this variety is denoted by $L^{f_T,T}_{\{0,0,\dots,0\}}$ in \cite{Wan1}). Vary $T$ we will obtain a scheme $L$ over $V^{reg}$. 

There are simply-transitive actions of $E_T[2]$ and $I_T$ on $L_T$ and they commute (see \cite{Wan2}). Hence, if we fix an element $X_0 \in L_T$, we can define a map
\begin{align*}
\psi: E_T[2] \rightarrow I_T
\end{align*} 
by taking $\psi([D])$, for any $[D] \in E_T[2]$, to be the unique element in $I_T$ sending $X_0$ to $X_0+[D]$. Commutativity of the two actions and commutativity of J[2] show that this map is independent on the choice of $X_0$. The isomorphism $\psi$ is $G-$equivariant, and it descends to an isomorphism $\psi_S: E_S[2] \rightarrow I_S$ over $S$.

 On geometric fibers, the above isomorphism can be seen as follows: given an element $T \in S(\bar{k})$, we have the factorization of $f_T$ in $\bar{k}$: $f_T(x) = \prod_{i=1}^{r+1}(x-\alpha_i)^{m_i}$. Denote $P_i \in E_T[2](\bar{k})$ the torsion points corresponding to $\alpha_i$, then $E_T[2](\bar{k})$ will be the abelian group of order $2^r$ generated by $P_i-\infty$ with a unique relation: $\sum_{i=1}^{r+1}m_i(P_i-\infty) =0$. On the other hand, we have the following isomorphism (see Proposition 1.1 in \cite{Wan1}):
\begin{equation*}
I_S(T) \cong \mu_2(\bar{k}[x]/f_T(x))^{\times}/(\pm 1)
\end{equation*}
In particular, $I_S(T)$ is an abelian 2-group of order $2^r$. The identification of $I_S(T)$ and $E_S[2](T)$ can be seen as follows: without loss of generality we can assume that $m_1,m_2,\dots, m_t$ are even and the rest is odd. For each $1\leq i \leq t$, since $gcd((x-\alpha_i)^{m_i}, f(x)/(x-\alpha_i)^{m_i})=1$ there exist two polynomials $h_i(x), g_i(x)$ such that $h_i(x).(x-\alpha_i)^{m_i}-g_i(x).f_T(x)/(x-\alpha_i)^{m_i}=2$. Now we set $p_i(x) =-1+(x-\alpha_i)^{m_i}.h_i(x)$ for any $1\leq i \leq t$. Similarly, in case $t<j\leq r+1$ we also obtain the polynomials $p_j(x)=-1+h_j(x)(x-\alpha_j)^{m_j}.\prod_{i=1}^{t}(x-\alpha_i)^{m_i}$ by considering $gcd((x-\alpha_j)^{m_j}.\prod_{i=1}^{t}(x-\alpha_i)^{m_i}, \prod_{l=t+1,l\neq j}^{r+1}(x-\alpha_l)^{m_l})=1$. Because the $r-t+1$ is odd, the number of odd numbers in $\{m_i\}_{i=\bar{1,t}}$ is odd, hence 
\begin{equation*}
\prod_{i=1}^{r+1}p_i(x)^m \equiv -1 (\text{mod} f_T(x)).
\end{equation*}
Additionally,
\begin{eqnarray*}
\mu_2(\bar{k}[x]/f_T(x))^{\times} &=& \bigg\{\prod_{i\in I}p_i(T) \bigg\}_{I\subset \{1,2,\dots, r+1\}},
\end{eqnarray*}
so the map sends $P_i-\infty$ to $p_i(T)$ is an isomorphism from $E_T[2]$ to $I_T$. 
\end{proof}
From the above construction, it is easy to see that the isomorphism $I_S \cong E_S[2]$ is $\mathbb{G}_m-$equivariant. Hence we have an isomorphism of quotient stacks $$[V^{reg}/G\times \mathbb{G}_m]\cong [BI_S/\mathbb{G}_m] \cong [BE_S[2]/\mathbb{G}_m].$$
The above observation give another interpretation of $\mathcal{M}(k)$ (see : from the isomorphisms $I_S \cong E_S[2]$ and $BI_S \cong [V^{reg}/G]$, we deduce that 
$$\mathcal{M} \cong Hom(C, [V^{reg}/G\times \mathbb{G}_m].$$
Consequently, $\mathcal{M}_{\mathcal{L}}(k)$ classifies tuples $(\mathcal{E}, s)$ where $\mathcal{E}$ is a principal $G-$bundle and $s$ is a global section of the vector bundle $(V^{reg}\times^G \mathcal{E}) \otimes \mathcal{L}$. In the next sections, we will try to estimate the size of $H^0(C,(V^{reg}\times^G \mathcal{E}) \otimes \mathcal{L})$ for a given $G-$bundle $\mathcal{E}$.
\subsection{Counting regular sections}
Traditionally, in order to estimate the number of global sections, we use Riemann-Roch theorem. In our case, we need to calculate the size of $\mathcal{M}_\mathcal{L}(k)$ which is the number of global sections of the associated vector bundle whose images are in the regular locus. Hence, before applying the Rimann-Roch theorem, we need to compute the density of regular sections.
\subsection{Density of sections in $V(\mathcal{E})$ that are in $V(\mathcal{E}^{reg})$}
Firstly, we can move to local problems by using the following result (Proposition 5.1.1 in \cite{HLN14}):
\begin{proposition} Let $C$ be a smooth projective curve over $\mathbb{F}_q$, $E$ a vector bundle over $C$ of rank $r$. Let $X \subset E$ be a locally closed $\mathbb{G}_m-$stable subscheme of codimension at least 2 whose fiber at every point $v \in C$, $X_v \subset E_v$ is also of codimension at least 2. Then the ratio
\begin{equation*}
\mu(X, \mathcal{L}) = \frac{|\{ s \in \Gamma(C, E \otimes \mathcal{L}): s \hspace{0.1cm}\text{avoids} \hspace{0.1cm} X \otimes \mathcal{L} \}|}{|\Gamma(C, E \otimes \mathcal{L})|}
\end{equation*}
as $deg(\mathcal{L}) \rightarrow \infty,$ tends to the limit
\begin{equation*}
\mu(X) := \lim_{deg(\mathcal{L} \rightarrow \infty} \mu(X, \mathcal{L}) = \prod_{v \in |C|} (1- \frac{c_v}{|k(v)|^r}),
\end{equation*}
where $c_v = |X_v(k(v))|,$ with $k(v)$ denoting the residue field at $v$.
\end{proposition}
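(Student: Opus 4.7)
The strategy is a sieve argument on the places of $C$, combining Riemann--Roch surjectivity at small places with the codimension-two hypothesis at large places. Fix an integer $N$ and set $T_N = \{v \in |C| : \deg(v) \leq N\}$. For $\deg(\mathcal{L})$ sufficiently large depending on $N$ (so that $H^1(C, E \otimes \mathcal{L} \otimes \mathcal{I}_{T_N})$ vanishes by Serre duality), the evaluation map
$$\mathrm{ev}_{T_N} \colon \Gamma(C, E \otimes \mathcal{L}) \longrightarrow \bigoplus_{v \in T_N} (E \otimes \mathcal{L})|_v$$
is surjective. Its image is uniformly distributed over the $k(v)$-vector space fibers, so the fraction of sections $s$ avoiding $X$ at every $v \in T_N$ is exactly $\prod_{v \in T_N}(1 - c_v/|k(v)|^r)$.

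The codimension-two hypothesis controls the tail. A standard Lang--Weil (or elementary) bound gives $c_v \leq C |k(v)|^{r-2}$ for a constant $C$ independent of $v$, using finite-typeness of $X$ and the codimension bound on every geometric fiber. Since the number of places of degree $n$ is at most $q^n/n$, the sum
$$\sum_{v \,:\, \deg(v) > N} \frac{c_v}{|k(v)|^r} = O(q^{-N}),$$
so the Euler product $\prod_v (1 - c_v/|k(v)|^r)$ converges absolutely and its partial products over $T_N$ approach the limit as $N \to \infty$. The same per-place bound, combined with a union bound on sections hitting $X$ at some place outside $T_N$, shows that the proportion of such sections is also $O(q^{-N})$, uniformly in $\mathcal{L}$ once $\deg(\mathcal{L})$ is large. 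Chaining these two estimates, for any $\varepsilon > 0$ one picks $N$ so that both tail contributions are at most $\varepsilon$, then lets $\deg(\mathcal{L}) \to \infty$ to match the product over $T_N$, and finally $N \to \infty$.

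The main obstacle is the uniform tail bound in the second step. The clean identity $\Pr[s(v) \in X_v] = c_v/|k(v)|^r$ only holds when $\mathrm{ev}_v$ is surjective onto the fiber, which can fail when $|k(v)|$ is comparable to or larger than $|\Gamma(C, E \otimes \mathcal{L})|$. The remedy is to stratify by $\deg(v)$: on an intermediate range (e.g.\ $N < \deg(v) \leq \tfrac{1}{2}\deg(\mathcal{L})$) fiberwise surjectivity still holds and the above per-place probability is exact; on the highest range one uses that the locus where a given nonzero section meets $X$ has total degree bounded in terms of $\deg(E \otimes \mathcal{L})$, so only finitely many places of a given section can be bad, and a Lang--Weil count of such sections finishes the estimate. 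Calibrating $N$ against $\deg(\mathcal{L})$ so both ranges are simultaneously controlled is the only delicate point; the rest is bookkeeping with Riemann--Roch.
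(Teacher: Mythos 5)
The paper does not prove this proposition itself; it defers to Section 5.1 of \cite{HLN14}, where the result is established by precisely the closed-point sieve you outline. Your treatment of the places of bounded degree is correct: surjectivity of the evaluation map onto $\bigoplus_{v\in T_N}(E\otimes\mathcal{L})|_v$ for $\deg(\mathcal{L})\gg_N 0$, equidistribution for a surjective $\mathbb{F}_q$-linear map, and the bound $c_v = O(|k(v)|^{r-2})$ (from the codimension-two hypothesis on geometric fibers) giving absolute convergence of the Euler product. The union bound over the intermediate range $N<\deg(v)\le\tfrac{1}{2}\deg(\mathcal{L})$, where single-place surjectivity still holds and the per-place probability is exactly $c_v/|k(v)|^r$, is also fine.

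The gap is in the highest range $\deg(v)>\tfrac{1}{2}\deg(\mathcal{L})$, which is the crux of the whole statement. The observation that ``only finitely many places of a given section can be bad'' bounds the number of bad places \emph{per section}, but says nothing about the number of \emph{sections} having at least one bad place in this range; a priori every section could have one such place, making the density of good sections zero. Likewise ``a Lang--Weil count of such sections'' is not an argument: Lang--Weil counts points of a variety over a finite field, whereas here one must count sections of $E\otimes\mathcal{L}$, whose image in a fiber at a point of large residue degree is only an $\mathbb{F}_q$-subspace rather than a $k(v)$-subspace, so the estimate $c_v/|k(v)|^r$ for the per-place probability is simply unavailable. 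The codimension-two hypothesis has to be used structurally at this point: locally on $C$ one chooses fiberwise-polynomial functions $g_1,g_2$ with $X\subset\{g_1=g_2=0\}$ still of codimension two; for a section $s$ with $g_1(s)\neq 0$, the zero divisor of $g_1(s)$ has degree $O(\deg\mathcal{L})$, hence contains no point of degree exceeding that bound and only $O(1)$ points of degree $>\tfrac{1}{2}\deg(\mathcal{L})$; one then conditions on the part of $s$ determining $g_1(s)$ and imposes the vanishing of $g_2(s)$ at each of these $O(1)$ candidate points as an additional nontrivial condition, while the residual locus $\{s: g_1(s)\equiv 0\}$ is shown separately to have density zero. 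Without some version of this two-equation argument --- the content of the ``large degree places'' lemmas in Poonen's sieve and in \cite{HLN14} --- the proof is incomplete.
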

\begin{proof}
See section 5.1 of \cite{HLN14}.
\end{proof}
Since "regularity" is codimension 2 condition, the above proposition is applied very well in our case. At any point $v \in C$, we need to compute the size of $V^{reg}(k_v)$. We can show that for any fixed polynomial $f(x)$, the number of regular orbits whose characteristic polynomial is $f(x)$ is equal to the order of the stabilizer $Stab(T)(k_v)$ of any point $T$ in the orbit. Hence the number of regular matrices $T$ with any fixed characteristic polynomial is equal to the order of the finite group $SO(W)(k_v)$ (see section 6.1 of \cite{BG13} for details). Moreover, since there is $q^{2n}$ choices for the coefficients of the characteristic polynomial $f(x)$, where $q:= |k_v|$, the size of $V^{reg}(k_v)$ is
\begin{equation}
q^{2n}. |SO(W)(q)| = q^{2n^2+3n}(1-q^{-2})(1-q^{-4}) \cdots (1-q^{-2n}).
\end{equation}
Combine with the fact that $dim(V) = 2n^2 + 3n$, we obtain the proof of the following proposition:
\begin{proposition}
For an arbitrary $G-$torsor $E$, 
\begin{equation*}
\lim_{deg(\mathcal(L)) \rightarrow \infty} \frac{|H^0(V^{reg}(E, \mathcal{L}))|}{|H^0(V(E, \mathcal{L}))|} = \zeta_C(2)^{-1}.\zeta_C(4)^{-1}\dots \zeta_C(2n)^{-1},
\end{equation*}
where $V(E, \mathcal{L}) = (E \times^G V) \otimes \mathcal{L}^{\otimes 2}$.
\label{general locus 1}
\end{proposition}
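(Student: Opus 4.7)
The plan is to apply the preceding density proposition directly to the non-regular locus inside the associated vector bundle $V(E,\mathcal{L})$, and then compute the resulting Euler product explicitly using the point count of $V^{\mathrm{reg}}$ given in equation (5) of the excerpt.

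First, I would set $X = V^{\mathrm{sing}} := V \setminus V^{\mathrm{reg}} \subset V$. This subscheme is $G$-stable (regularity is a conjugation-invariant condition) and $\mathbb{G}_m$-stable (scaling preserves the dimension of stabilizers). By the remark following the definition of regularity (citing Lemma 6.31 of \cite{Lev08}), $V^{\mathrm{sing}}$ has codimension at least $2$ in $V$. Twisting by the $G$-torsor $E$ and tensoring with $\mathcal{L}^{\otimes 2}$ produces a locally closed $\mathbb{G}_m$-stable subscheme $V^{\mathrm{sing}}(E,\mathcal{L}) \subset V(E,\mathcal{L})$ whose fiber at each $v \in C$ again has codimension at least $2$. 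The hypotheses of the previous proposition are therefore met, and it yields
\begin{equation*}
\lim_{\deg(\mathcal{L}) \to \infty} \frac{|H^0(V^{\mathrm{reg}}(E,\mathcal{L}))|}{|H^0(V(E,\mathcal{L}))|} = \prod_{v \in |C|}\left(1 - \frac{c_v}{|k(v)|^{\dim V}}\right),
\end{equation*}
where $c_v = |V^{\mathrm{sing}}(E,\mathcal{L})_v(k(v))|$.

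Next I would compute the local factors. The key point is that $c_v$ is insensitive to the twist by $E$: since $G = SO_{2n+1}$ is smooth and connected, Lang's theorem guarantees that every $G$-torsor over the finite residue field $k(v)$ is trivial, so the fiber $V(E)_v$ is $G$-equivariantly isomorphic to $V_{k(v)}$, and in particular $c_v = |V^{\mathrm{sing}}(k(v))| = |V(k(v))| - |V^{\mathrm{reg}}(k(v))|$. Substituting the count from equation (5) of the excerpt together with $\dim V = 2n^2 + 3n$, each local factor becomes
\begin{equation*}
1 - \frac{c_v}{|k(v)|^{2n^2+3n}} = \frac{|V^{\mathrm{reg}}(k(v))|}{|k(v)|^{2n^2+3n}} = \prod_{i=1}^{n}\bigl(1 - |k(v)|^{-2i}\bigr).
\end{equation*}

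Finally I would interchange the two products and identify the resulting Euler products with zeta values of $C$:
\begin{equation*}
\prod_{v \in |C|}\prod_{i=1}^{n}\bigl(1 - |k(v)|^{-2i}\bigr) = \prod_{i=1}^{n}\prod_{v \in |C|}\bigl(1 - |k(v)|^{-2i}\bigr) = \prod_{i=1}^{n} \zeta_C(2i)^{-1},
\end{equation*}
which is exactly the claimed limit. The only non-cosmetic step is verifying that the twist by $E$ does not affect local point counts, and this is handled cleanly by Lang's theorem applied to $SO_{2n+1}$; there is no genuine obstacle beyond this routine check and the codimension bound on $V^{\mathrm{sing}}$, both of which are already in place.
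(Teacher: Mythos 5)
Your proposal is correct and follows essentially the same route as the paper: apply the preceding density proposition to the codimension-$2$ non-regular locus, use the point count $|V^{\mathrm{reg}}(k_v)| = |k_v|^{2n}\cdot|SO(W)(k_v)|$ to evaluate each local factor as $\prod_{i=1}^n(1-|k(v)|^{-2i})$, and assemble the Euler product into $\prod_{i=1}^n\zeta_C(2i)^{-1}$. Your explicit appeal to Lang's theorem to justify that twisting by $E$ does not change the local counts is a detail the paper leaves implicit, but it is the same argument.
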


\begin{remark}
From the above argument we can see that if we consider some local property $P$ of polynomials over $K(C)$ (regularity, for example), and we also want to determine the density of regular vectors whose characteristic polynomials satisfy $P$, then the result should be $a . \zeta_C(2)^{-1}.\zeta_C(4)^{-1}\dots \zeta_C(2n)^{-1},$ where $a$ is the density of polynomials that satisfy $P$.
\label{remark on transversal poly}
\end{remark}
The transversality is not local, but we have the following result (see Proposition 5.1.6 in \cite{HLN14}): If $v$ is a place of $K$, define 
$$\alpha_v= \frac{|\{x \in S(\mathcal{O}_{K_v}/(\varpi_v^2))| \Delta(x) \equiv 0 \hspace{0.2cm}mod (\varpi_v^2) \}|}{|k(v)^{4n}|}$$and
$$\beta_v= \frac{|\{x \in V^{reg}(\mathcal{O}_{K_v}/(\varpi_v^2))| \Delta(x) \equiv 0 \hspace{0.2cm}mod (\varpi_v^2) \}|}{|k(v)^{4n^2+6n}|},$$then we can take the limit of the density of regular section in the transversal cases as follows:
\begin{proposition}We have the following equalities
\begin{itemize}
\item[1.] $$\lim_{deg(\mathcal{L})\rightarrow \infty} \frac{|\Gamma(C,\mathcal{L}^{\otimes 4}\oplus \mathcal{L}^{\otimes 6} \oplus \cdots \oplus \mathcal{L}^{\otimes 4n+2})^{sf}|}{|\Gamma(C,\mathcal{L}^{\otimes 4}\oplus \mathcal{L}^{\otimes 6} \oplus \cdots \oplus \mathcal{L}^{\otimes 4n+2})|} = \prod_{v \in |C|}(1-\alpha_v).$$
\item[2.] $$\lim_{deg(\mathcal{L})\rightarrow \infty} \frac{|\Gamma(C,V^{reg}(\mathcal{E},\mathcal{L}))^{sf}|}{|\Gamma(C,V^{reg}(\mathcal{E},\mathcal{L}))|}= \prod_{v \in |C|}(1-\beta_v)$$
\item[3.]$$\frac{\prod_{v \in |C|}(1-\beta_v)}{\prod_{v \in |C|}(1-\alpha_v)} = \zeta_C(2)^{-1}.\zeta_C(4)^{-1} \dots \zeta_C(2n)^{-1}$$
\end{itemize}
Here the upper script "sf" stands for "square free", i.e. $\Gamma()^{sf}$ is the set of sections whose invariants are transversal to the discriminant locus.
\label{transversal locus 1}
\end{proposition}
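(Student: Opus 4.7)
The plan is to treat parts (1) and (2) as two instances of the same sieve-theoretic argument, generalizing Proposition~5.1.6 of \cite{HLN14} from the elliptic setting to the representation $(V,G)$ of the split odd special orthogonal group, and then to derive part (3) by a purely local computation at each closed point $v \in |C|$.

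For parts (1) and (2), I would observe first that ``$\Delta$ is square-free'' is a codimension-one condition on the source, so the bare codimension-two sieve of the preceding proposition does not apply directly; however, it becomes a codimension-two condition after base changing to $\mathcal{O}_v/\varpi_v^2$, which is exactly the setup of the refined sieve in \cite{HLN14}. Concretely, I would set up the inclusion--exclusion
\[
|\Gamma^{sf}| = \sum_{\substack{D \subset |C| \\ \text{finite}}} (-1)^{|D|}\,\bigl|\bigl\{s \in \Gamma : \Delta(s) \equiv 0 \pmod{\varpi_v^2} \text{ for all } v \in D\bigr\}\bigr|,
\]
evaluate each inner count by Riemann--Roch once $\deg(\mathcal{L})$ is sufficiently large, and control the tail using the uniform estimates $\alpha_v, \beta_v = O(|k(v)|^{-2})$ that come from the codimension of $\{\Delta \equiv 0 \pmod{\varpi_v^2}\}$ in the relevant parameter space. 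For part (2) the same argument applies after first sieving out the codimension-two complement of $V^{reg}$ via Proposition~\ref{general locus 1}.

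For part (3) I work locally. The key geometric input is that the characteristic polynomial map $\pi : V \to S$ is smooth along $V^{reg}$ and that every geometric fiber over a point with non-vanishing discriminant is a single $G$-orbit contained entirely in $V^{reg}$ (Proposition~1.1 of \cite{Wan1}). Lang--Steinberg then gives $|\pi^{-1}(s)(k(v))| = |G(k(v))|$ for any $s \in S(k(v))$ with $\Delta(s) \neq 0$, and smoothness of $\pi$ on $V^{reg}$ lifts this to $|\pi^{-1}(s)(\mathcal{O}_v/\varpi_v^2)| = |G(\mathcal{O}_v/\varpi_v^2)|$ for any $s \in S(\mathcal{O}_v/\varpi_v^2)$ with $\Delta(s)$ a unit. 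Since $|G(\mathcal{O}_v/\varpi_v^2)| = |k(v)|^{\dim G}\,|G(k(v))|$ and $|G(k(v))|/|k(v)|^{\dim G} = \prod_{i=1}^n(1-|k(v)|^{-2i})$, stratifying $S(\mathcal{O}_v/\varpi_v^2)$ according to the valuation of $\Delta(s)$ and summing fiber cardinalities gives the desired identity $(1-\beta_v)/(1-\alpha_v) = \prod_i(1-|k(v)|^{-2i})$, whose product over $v \in |C|$ is $\prod_i \zeta_C(2i)^{-1}$.

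The main obstacle will be the boundary stratum where $\Delta(s) \equiv 0 \pmod{\varpi_v}$ but $\Delta(s) \not\equiv 0 \pmod{\varpi_v^2}$. In this stratum the fiber $\pi^{-1}(s)$ contains both regular and non-regular elements, and the non-regular ones must be excluded when computing $1-\beta_v$ but are invisible to $1-\alpha_v$. Verifying that the correction contributed by this stratum is exactly the factor $\prod_i(1-|k(v)|^{-2i})$---equivalently, the local density of $V^{reg}$ in $V$ given by Proposition~\ref{general locus 1}---is where I expect most of the work to lie, and it amounts to a careful orbit-stabilizer count at boundary points combined with the observation in Remark~\ref{remark on transversal poly} that any local condition pulled back from $S$ to $V^{reg}$ is twisted by exactly this density.
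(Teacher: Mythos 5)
Your proposal follows essentially the same route as the paper: parts (1) and (2) are exactly the Poonen-style closed-point sieve that the paper imports from \cite{HLN14} \S 5, and part (3) rests on the same two local inputs, namely that the number of regular elements of $V(k(v))$ with any fixed characteristic polynomial is $|G(k(v))|$ and that smoothness of $\pi|_{V^{reg}}$ contributes $|k(v)|^{\dim V - \dim S}$ lifts to $\mathcal{O}_v/\varpi_v^2$. The boundary stratum you single out as the main obstacle in fact needs no separate treatment, since the count $|V^{reg}_{\bar a}(k(v))| = |G(k(v))|$ (which comes from the orbit--stabilizer and Galois cohomology count of \cite{BG13} \S 6.1 rather than from Lang--Steinberg applied to the finite stabilizer) holds uniformly for every $\bar a \in S(k(v))$, including those with $\Delta(\bar a)=0$, and this uniformity is exactly what lets the paper conclude $1-\beta_v=(1-\alpha_v)\prod_{i=1}^{n}(1-|k(v)|^{-2i})$ in a single computation.
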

\begin{proof}
The first and the second statements are known results in \cite{HLN14} where they used the technique of Poonen \cite{Poo03}. For the third equality, it is enough to prove that for any point $v \in |C|$:
$$\frac{1-\beta_v}{1-\alpha_v} = (1-q^{-2})(1-q^{-4}\cdots(1-q^{-2n}),$$where $q=|k(v)|$. Let denote $R = k(v)[\epsilon]/(\epsilon^2)$, then by definition, $$1-\alpha_v= 1-\frac{\sum\limits_{a \in S^{non-transversal}(R)}1}{4n}.$$ 
Base on that, we can compute $1-\beta_v$ as follows: 
\begin{align*}
\beta_v &= \frac{\sum\limits_{T \in V^{nonreg}(R)}1 + \sum\limits_{T \in V^{reg}(R) ; \pi(T) \notin S^{sf}(R)}1}{q^{4n^2+6n}} \\
& =\frac{\sum\limits_{T \in V^{nonreg}(R)}1 + \sum\limits_{T \in V(R) ; \overline{T} \in V^{reg}(k(v)); \pi(T) \notin S^{sf}(R)}1}{q^{4n^2+6n}}.
\end{align*}
Given a non-transversal multi-set $a=(a_2,\dots,a_{2n+1}) \in S(R)$, we will find the number of $T \in V^{reg}(R)$ such that $\pi(T)=a$. Set $T=\overline{T} + \epsilon H$ and $a= \overline{a} + \epsilon b,$ where $\overline{T},H \in V(k(v))$ and $\overline{a},b=(b_2+\dots,b_{2n+1}) \in S(k(v))$, we firstly observe that there are $|G(k(v))|$ choices of $\overline{T}$ such that $\pi(\overline{T})= \overline{a}$. With a fixed $\overline{T}$, by considering $H$ and $b$ as elements in the tangent spaces of $V^{reg}$ and $S$, respectively, we can see that the tangent map:
$$d\pi: T_{\overline{T}}V^{reg} \rightarrow T_{\overline{a}}S $$will maps $H$ to $b$. Since $\pi: V^{reg} \rightarrow S$ is smooth, the number of choices of $H$ will be the size of the fiber of $d\pi$ at $b$, and it is equal to $q^{dim_{k(v)}(T_{\overline{T}}V^{reg}) - dim_{k(v)}(T_{\overline{a}}S)}=q^{2n^2+n}$. We obtain the following formula of $\beta_v$:
\begin{align*} 
\beta_v & =\frac{\sum\limits_{T \in V^{nonreg}(k(v))}q^{2n^2+3n} + \sum\limits_{a \in S^{non-transversal}(R)}|G(k(v)|.q^{2n^2+n}}{q^{4n^2+6n}} \\
&=\frac{\sum\limits_{T \in V^{nonreg}(k(v))}q^{2n^2+3n} + \sum\limits_{a \in S^{non-transversal}(R)}q^{4n^2+2n}(1-q^{-2})(1-q^{-4}) \cdots (1-q^{-2n})}{q^{4n^2+6n}} \\
&= \frac{\sum\limits_{T \in V^{nonreg}(k(v))}1 + \alpha_v.q^{2n^2+3n}(1-q^{-2})(1-q^{-4}) \cdots (1-q^{-2n})}{q^{2n^2+3n}}.
\end{align*}
This implies that
\begin{align*}
1-\beta_v &= \frac{|V^{reg}(k(v))|}{q^{2n^2+3n}}-\alpha_v.(1-q^{-2})(1-q^{-4}) \cdots (1-q^{-2n}) \\
&=(1-\alpha_v)(1-q^{-2})(1-q^{-4}) \cdots (1-q^{-2n}) \hspace{2cm}\text{By (1.6)}.
\end{align*}
The proof is completed.
\end{proof}
To summarize, we have just computed the density of regular sections among the set of global sections of our interested vector bundles. Note that in the statement of the main theorem 1, we are taking the weighted average with the weight is the size of automorphism group. Because of that, we also need to estimate the size of automorphism groups of principal $G-$bundle. That is the purpose of following subsection.  
\subsection{Principal $G-$bundles and semistability}
In this section, we will recall the notion and theory of semi-stable principal $G-$bundles which will play a role in later sections.

 Let $X$ be a smooth projective curve defined over $\mathbb{F}_q$, a principal $G-$bundle (or $G-$torsor) over $X$ is a variety $E$ equipped with a right action of $G$ and a $G-$invariant smooth projection $\pi :E \rightarrow X$ such that the map $E\times_X (X\times G) \rightarrow E \times_X E$ of fiber products over $X$ that maps $(y,(x,g)) \mapsto (y, yg)$ is an isomorphism, where $x \in X, \, y \in \pi^{-1}(x)$ and $g \in G$. In particular, $G$ acts freely on the right of $E$ with $X$ as the quotient. Isomorphism classes of $G-$bundles are classified by the non-abelian \'{e}tale cohomology set $H^1(X, G)$. For any quasi-projective scheme $F$ on which $G$ acts on the left, consider the action of $G$ on $E \times F$ defined by $g \circ (e, f) = (eg,g^{-1}f)$, where $g \in G, e \in E$ and $f \in F$. The quotient $(E \times F)/G$ for this action, which is a fiber bundle over $X$, will be denoted by $E(F)$. For instance, let $\rho: G \rightarrow G'$ be a group homomorphism of algebraic group, we consider the action of $G$ on $G'$ defined by left-multiplication, then the fiber $E(G')$ is a principal $G'-$bundle over $X$. 
\begin{definition}
A reduction $\sigma$ of the structure group of a principal $G-$bundle $E$ to a subgroup $P$ of $G$ is a section $\sigma: X \rightarrow E/P$ of the fiber bundle $E/P = E(G/P)$. Then $\sigma^*E$ is a $P-$bundle over $X$ and there is a natural isomorphism of $G-$bundles $\sigma^*E(G) \cong E$.
\end{definition}
Ramanathan's definition of stability for principal bundles can be phrased in many different ways (e.g. see Theorem 2.2. of \cite{FM98}, and also \cite{BS02}). The original version reads as follows:
\begin{definition}(\textbf{Semistability}) 
A principal $G-$bundle $E$ over $X$ is said to be semistable if for every reduction $\sigma: X \rightarrow E/P$ of the structure group to a parabolic $P$, we have
\begin{equation*}
\text{degree}(\sigma^*T_{E/P}) \geq 0
\end{equation*}
\end{definition}
Here, $T_{E/P},$ the tangent bundle over $E/P$ along the fiber of $E$, is defined to be the vector bundle $E(\mathfrak{g}/\mathfrak{p})= (E \times (\mathfrak{g}/\mathfrak{p}))/P$ over $E/P$ associated to the $P-$bundle $E \rightarrow E/P$ and the $P$ action on $\mathfrak{g}/\mathfrak{p}$ ($\mathfrak{p}$ is the Lie algebra of $P$) induced by the adjoint representation.

When $G=GL(n)$, this stability condition is consistent with the condition defined by Mumford: the vector bundle $E$ is called to be semi-stable if for any proper subbundle $F$ of $E$, we have the following inequality
\begin{equation*}
\frac{deg(F)}{rank(F)} \leq \frac{deg(E)}{rank(E)}.
\end{equation*}
 The ratio $\frac{deg(F)}{rank(F)}$ is called the slope of the vector bundle $F$ and it is denoted by $\mu(F)$. 
 
 The number of global sections of a semi-stable vector bundle can be estimated by using Riemann-Roch and Serre's duality, we summarize these results in the following proposition for convenience:
\begin{proposition}(Lemma 4.4 in \cite{Tho16}) 

Let $E$ be a semi-stable vector bundle of rank $r$ over a curve $X$, $g$ denotes the genus of $X$, and $h^0(...)$ the dimension of the space $H^0(...)$ of global sections. Then
\begin{description}
\item[a)]If $\mu(E) < 0$, then $h^0(X, E) =0$.
\item[b)]If $0 \leq \mu(E) \leq 2g-2$, then $h^0(C,E) \leq r(1+\mu(E)/2).$
\item[c)]If $\mu(E) > 2g-2$, then $h^0(C,E)=r(1-g+\mu(E)).$
\end{description}
\label{global sections of semistable}
\end{proposition}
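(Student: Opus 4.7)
The plan is to handle the three parts in order of difficulty: parts (a) and (c) are formal consequences of semi-stability, Serre duality, and Riemann--Roch, while part (b) is the substantive Clifford-type bound and will be the main obstacle.

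For part (a), any nonzero $s \in H^0(X,E)$ induces a nontrivial morphism $\mathcal{O}_X \to E$; saturating its image produces a sub-line bundle $L \subseteq E$ with $\deg L \geq 0$. Semi-stability then forces $0 \leq \mu(L) \leq \mu(E) < 0$, a contradiction. For part (c), Serre duality gives $h^1(X,E) = h^0(X, E^\vee \otimes K_X)$. Since dualization and tensoring with a line bundle preserve semi-stability, $E^\vee \otimes K_X$ is semi-stable of slope $(2g-2)-\mu(E) < 0$; part (a) then yields $h^1(X,E)=0$, and Riemann--Roch gives $h^0(X,E) = \chi(X,E) = r(1-g+\mu(E))$.

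For part (b), I would first dispose of the boundary cases. If $h^0(X,E)=0$, the bound is trivial since $r(1+\mu(E)/2) \geq r \geq 0$. If $h^1(X,E)=0$, Riemann--Roch gives $h^0(X,E) = r(1-g+\mu(E))$, which satisfies $h^0(X,E) \leq r(1+\mu(E)/2)$ iff $\mu(E) \leq 2g$, automatic from $\mu(E) \leq 2g-2$. Thus I may assume both $H^0(X,E)$ and $H^0(X, E^\vee \otimes K_X)$ are nonzero. My plan is then to reduce to the stable case via a Jordan--H\"older filtration of $E$: since its successive stable quotients all have slope $\mu(E) \in [0, 2g-2]$, a Clifford bound on each stable piece would sum, using additivity of rank and degree over the filtration, to the desired $h^0(X,E) \leq r(1+\mu(E)/2)$. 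For a stable piece, I would invoke the trace pairing
\begin{equation*}
H^0(X, E) \otimes H^0(X, E^\vee \otimes K_X) \xrightarrow{\,\mathrm{tr}\,} H^0(X, K_X),
\end{equation*}
combined with a Hopf-style dimension estimate and Riemann--Roch, exploiting the self-duality of the target bound under $E \leftrightarrow E^\vee \otimes K_X$.

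The main obstacle is part (b): the trace pairing can vanish on pairs of nonzero sections (for instance, when the image line bundle of $s\colon \mathcal{O}_X \to E$ lies in the kernel of the map $E \to K_X$ determined by $t$), so a direct application of Hopf's lemma is not available on the full spaces of sections. One must either restrict to subspaces on which non-degeneracy of the pairing holds, or adopt Butler's inductive approach: picking a sub-line bundle $L \subseteq E$ coming from a section, using the sequence $0 \to L \to E \to E/L \to 0$, and controlling the Harder--Narasimhan slopes of the (possibly non-semi-stable) quotient $E/L$ via the semi-stability of $E$. Both strategies are documented in the literature (cf.\ the references cited around \cite{Tho16} Lemma 4.4), and either yields the stated bound.
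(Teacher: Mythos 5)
First, a point of comparison: the paper does not prove this proposition at all — it is quoted verbatim as Lemma 4.4 of \cite{Tho16} and used as a black box — so your proposal can only be measured against the standard arguments. Your parts (a) and (c) are complete and correct: a nonzero section saturates to a sub-line bundle of non-negative degree, contradicting semi-stability when $\mu(E)<0$; and for $\mu(E)>2g-2$, Serre duality together with part (a) applied to the semi-stable bundle $E^\vee\otimes K_X$ kills $h^1$, after which Riemann--Roch gives the exact value. Your boundary reductions in (b) (the cases $h^0=0$ and $h^1=0$) and the Jordan--H\"older reduction to stable pieces of the same slope are also correct.

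The gap is exactly where you locate it, but it is a genuine gap rather than a routine appeal to the literature: part (b) for a single stable piece of rank at least $2$ is the generalized Clifford theorem, and neither of the two strategies you name is carried out. The trace-pairing route fails for the reason you give (decomposable tensors can pair to zero, so Hopf's inequality is unavailable on the full section spaces), and the Butler-style induction is not automatic either. Writing $0\to L\to E\to Q\to 0$ with $L$ the saturation of a section, one gets $h^0(L)\leq 1+\deg(L)/2$ and needs $h^0(Q)\leq (r-1)+\deg(Q)/2$; but $Q$ need not be semi-stable, and a Harder--Narasimhan factor of $Q$ of slope exceeding $2g$ contributes $\chi$ rather than a Clifford bound and overshoots the target. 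The a priori estimate coming from semi-stability of $E$ only gives $\mu_{\max}(Q)\leq \mu(E)+(\mu(E)-\deg L)\leq 2\mu(E)\leq 4g-4$, which exceeds $2g$ as soon as $g\geq 3$. Closing this requires an additional idea — for instance a second induction on $h^0$, or a more careful choice of $L$ (maximal degree among saturations of sections) whose consequences must then be exploited — and until one of these is executed, part (b) is asserted rather than proved. Since (b) is the only non-formal clause of the proposition and is used repeatedly in the counting sections, the honest alternatives are to carry out one of these arguments in full or to do as the paper does and cite \cite{Tho16} directly.
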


\begin{definition} (Canonical reduction) Let $E$ be a principal $G-$bundle, a reduction of the structure group $(P, \sigma)$ of $E$ to a parabolic subgroup $P$ of $G$ is called $canonical$ if the following two conditions hold:
\begin{enumerate}
\item The Levi bundle $E_L$ associated, by extension of structure group, to $\sigma^*E$ for the projection $P \rightarrow L$ is semi-stable.

\item For every non-trivial character $\chi$ of $P$ which is a non-negative linear combination of simple roots with respect to some Borel subgroup contained in $P$, the line bundle $\chi_*\sigma^*E$ on $X$ has positive degree.
\end{enumerate}
\end{definition}
We also remark that in the case $G=GL(n)$, the canonical reduction corresponds precisely to the Harder-Narasimhan filtration of the associated vector bundle of rank $n$.

For any $G-$bundle $E$, there exists a unique canonical reduction, for the detail of the proof of this fact, the reader can look at \cite{BH04}.

Since counting global sections of semistable vector bundles is much the same as counting sections of line bundles (we just need to use Riemann- Roch theorem), we need to use Parabolic canonical reduction to reduce our problem to semistable vector bundles. Here are the details: Given a $G-$bundle $E$ and an representation $V$ of $G$, we consider the associated vector bundle $(E \times V)/G$ (we will denote it by $E \times^G V$). If $(P,\sigma)$ denotes the canonical reduction of $E$ (with associated $P-$bundle $E_P$), then one has the following isomorphism:
\begin{equation*}
E \times^G V \cong E_P \times^P V_P,
\end{equation*}
where $V_P$ is the restricted representation of $V$ to $P$. Suppose that we have a filtration of $P-$modules 
\begin{equation*}
0=V_0 \subset V_1 \subset V_2 \subset \dots \subset V_r = V_P,
\end{equation*}
such that $W_i:= V_i/V_{i-1}$ ($i= \overline{1,r}$) are irreducible as $P-$modules. If $U$ is the unipotent radical of $P$ and $L:= P/U$ is the Levi factor, then from the assumption on $V_i$ it follow that $U$ acts trivially on $W_i$. In other words, the actions of $P$ on $W_i$ factor through the quotient $L$. Additionally, since $E_L$ is a semistable $L-$bundle and $W_i$ is irreducible as $L-$module, $E_L \times^L W_i$ will be semistable vector bundles for all $i$ (Theorem 3.18 in \cite{RR84}). In conclusion, we have a $P-$equivariant filtration of the vector bundle $E \times^G V$ :
\begin{equation*}
0 = E_P \times^P V_0 \subset E_P \times^P V_1 \subset \dots \subset E_P \times^P V_r = E \times^G V  
\end{equation*}
such that each consecutive quotients $\mathscr{M}_i = E \times^P V_i / E \times^P V_{i-1}$ is semistable. 

Let us apply the above argument to the case $G = SO(W)$, where $W$ is a non-degenerate, split orthogonal space over $k$, of dimension $2n+1$ as discussed in Section 1.3. Suppose that a $G$-bundle $E$ has the canonical reduction $(P, \sigma)$ and the parabolic subgroup $P$ has the Levi quotient given by 
\begin{equation*}
L \cong GL(n_1) \times GL(n_2) \times \cdots \times GL(n_t) \times SO(2m+1).
\end{equation*}
This means that there exists a flag of isotropic subspaces $$0 = V_0 \subset V_1 \subset \cdots \subset V_{t} \subset V_{t}^{\bot} \subset \cdots \subset V_1^{\bot} \subset W,$$ 
where $dim(V_i/V_{i-1})=n_i$ for $1\leq i \leq t$, and $dim(V_{t}^{\bot}/V_t)= 2m+1$. From this, we obtain a filtration of the vector bundle $E \times^G W$: 
$$ 0=E_P \times^P V_0 \subset E_P \times^P V_1 \subset \cdots \subset E_P \times^P V_{t} \subset E_P \times^P V_t^{\bot} \subset \cdots \subset E_P \times^P V_1^{\bot} \subset E \times^G W $$
such that the quotient bundles $X_i = E_P \times^P V_i / E_P \times^P V_{i-1}$ for $1 \leq i \leq t$ and $X_{t+1} = E_P \times^P V_t^{\bot} / E_P \times^P V_t$ are semistable, and $E_P \times^P V_{i-1}^{\bot} / E_P \times^P V_{i}^{\bot}$ is dual to $X_i$. We denote those dual quotients by $X_i^*$ (note that $X_{t+1}$ is self-dual). If we denote the slope of vector bundle $X_i$ by $\mu_i$, then the "canonical" conditions imply that $\mu_1 > \mu_2 > \cdots > \mu_t > \mu_{t+1}=0.$ 

From the above filtration, we obtain a filtration for the vector bundle $E \times^G W^{\otimes 2}$:
$$0=E_P \times^P (V_0 \otimes W)  \subset E_P \times^P (V_1 \otimes W)  \subset \cdots \subset E_P \times^P (V_1^{\bot} \otimes W) \subset E \times^G (W \otimes W)$$and the quotient bundle $Y_i= \Big(E_P \times^P (V_i \otimes W) \Big) \Big/ \Big(E_P \times^P (V_{i-1} \otimes W)\Big) \cong X_i \otimes (E \times^G W)$, for $ 1 \leq i \leq t$, also has a filtration:
$$0 = X_i \otimes (E_P \times^P V_0) \subset X_i \otimes (E_p \times^P V_1) \subset \cdots \subset X_i \otimes (E_P \times^P V_1^{\bot}) \subset X_i\otimes (E \times^GW)$$whose quotient bundles are of the form: $X_i \otimes X_j \cong E_P \times^P \big((V_i/V_{i-1})\otimes (V_j /V_{j-1}) \big)$, $X_i \otimes X_j^*$, for $1 \leq j \leq t$,  and $X_i \otimes X_{t+1}$. Similarly, we also obtain a filtration for the quotient bundles  $Y_i'= \Big(E_P \times^P (V_{i-1}^{\bot} \otimes W) \Big) \Big/ \Big(E_P \times^P (V_i^{\bot} \otimes W)\Big) \cong X_i^* \otimes (E \times^G W)$, for $1 \leq i \leq t$, and $Y_{t+1}= \Big(E_P \times^P (V_t^{\bot} \otimes W) \Big) \Big/ \Big(E_P \times^P (V_t \otimes W)\Big) \cong X_{t+1} \otimes (E \times^G W)$.

From the above "square filtration" of $E \times^G W^{\otimes 2}$, by taking quotient modulo $E \times^G I$, where $I$ is the sub vector space of $W \otimes W$ spanned by $w \otimes w$ for all $w \in W$, we will obtain a filtration for the vector bundle $E \times^G \wedge^2(W)$
$$0=E_P \times^P \overline{V_0 \otimes W}  \subset E_P \times^P \overline{V_1 \otimes W}  \subset \cdots \subset E_P \times^P \overline{V_1^{\bot} \otimes W} \subset E \times^G \wedge^2(W),$$and each quotient bundle will have a filtration whose consecutive quotients are of the following forms: $\wedge^2(X_i)$, $\wedge^2(X_i^*)$, $X_i \otimes X_j$ for $i \neq j$, $X_i \otimes X_j^*$ for all $(i,j)$, and $X_i^* \otimes X_j^*$ for $i \neq j$. Notice that these forms are all semi-stable because the $L-$modules $\wedge^2(V_i/V_{i-1}), \wedge^2(V_{i-1}^{\bot}/V_i^{\bot}),$ for $1 \leq i \leq t+1$; $V_i/V_{i-1} \otimes V_j/V_{j-1}$, $V_{i-1}^{\bot}/V_i^{\bot} \otimes V_{j-1}^{\bot}/V_j^{\bot}$ for $ i \neq j$; and $V_i/V_{i-1} \otimes V_{j-1}^{\bot}/V_j^{\bot}$ for all $(i,j)$ are irreducible. This filtration will be helpful when we want to estimate the size of $Aut_G(E)(\mathbb{F}_q)$. 

Now we will see an estimation of sizes of automorphism groups of $G-$bundles. Given a principal $G-$bundle $E$ and its canonical reduction as above, we recall that the Lie algebra of $Aut_G(E)$ can be written as the space of global sections of the adjoint bundle. 
\begin{eqnarray*}
Lie(Aut_G(E)) &=& H^0(C, ad(E)) = H^0(C, E \times^G \mathfrak{g}),
\end{eqnarray*}
where $\mathfrak{g}=so(W) \cong \wedge^2(W)$ is the Lie algebra of $G.$ 
 
In case our base curve $C$ is an elliptic curve, the semistable filtration of $E \times^G \mathfrak{g}$ even splits, i.e. $E \times^G \mathfrak{g}$ is isomorphic to its associated graded bundle $\bigoplus_{i = 1}^a \mathcal{F}_i$, where $\{\mathcal{F}_i\}_{1\leq i \leq a} = \{\wedge^2(X_i)$, $\wedge^2(X_i^*)$, $X_i \otimes X_j$ for $i \neq j$, $X_i \otimes X_j^*$ for all $(i,j)$, and $X_i^* \otimes X_j^*$ for $i \neq j \}$. This phenomenon comes from the vanishing of higher cohomology of indecomposable vector bundles of positive degree over an elliptic curve and the additivity of the slope for tensor products of vector bundles. Furthermore, if we consider $\mathfrak{g}$ as $L-$module: $\mathfrak{g}= \bigoplus_{\nu \in M} \mathfrak{g}(\nu),$ where $M$ denote the additive subgroup of $\mathbb{Q}$ generated by the slopes $\mu_i$ of vector bundle $E \times^G V_i/V_{i-1}$, then the expression of $Lie(Aut_G(E))$ can be decomposed further:
\begin{eqnarray*}
Lie(Aut_G(E)) &=& H^0(C, ad(E)) = H^0(C, E_L \times^L \mathfrak{g}) \\
&=& H^0(C, ad(E_L)) \oplus \bigoplus_{\nu \in M_{>0}} H^0(C, E_L \times^L \mathfrak{g}(\nu)), 
\end{eqnarray*}
where $M_{>0}$ denotes the subset of positive elements in $M.$ The following proposition shows us how to calculate the automorphism group in case of elliptic curves (see $\S 2$ of \cite{HS01})
\begin{proposition}Assume that $C$ is an elliptic curve and $E$ is a principal $G-$bundle over $C$ with canonical reduction $(P,\sigma).$ Let $\rho$ be the half sum of the positive roots and $Aut_G(E)^+$ be the connected subgroup of $Aut_G(E)$ corresponding to the Lie subalgebra $\oplus_{\nu \in M_{>0}} H^0(C, E_L \times^L \mathfrak{g}(\nu))$ of $Lie(Aut_G(E))$.

 (a) We have equality for the identity components, $Aut_G(E)^{\circ}=Aut_P(E_P)^{\circ}.$ 
 
 (b) The group $Aut_G(E)^+$ is a unipotent normal subgroup of $Aut_G(E)$ of dimension $\langle 2\rho, \mu(E) \rangle ,$ and $Aut_G(E)^{\circ}$ is a semidirect product $Aut_L(E_L)^{\circ}  \ltimes Aut_G(E)^+.$ 
 
(c) $dim (Aut_G(E)) = dim (Aut_L(E_L)) + \langle 2\rho, \mu(E) \rangle.$
\end{proposition}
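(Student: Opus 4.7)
The plan is to reduce everything to a computation on the Levi factor by exploiting the uniqueness of the canonical reduction, and then to use the particularly simple cohomology on an elliptic curve (where $K_C \cong \mathcal{O}_C$ and indecomposable bundles of non-zero degree have one-sided vanishing) to split the automorphism group into its Levi and unipotent parts with controllable dimensions.

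For part (a), given $\phi \in Aut_G(E)$, the section $\phi \cdot \sigma : C \to E/P$ is again a reduction of the structure group to $P$, and the two defining properties of being canonical (semistability of the Levi bundle and positivity of degrees of line bundles associated to non-negative characters) are intrinsic and hence preserved by $\phi$. By the uniqueness of the canonical reduction \cite{BH04} we must have $\phi \cdot \sigma = \sigma$, so $\phi$ preserves the $P$-bundle $E_P$. This yields $Aut_G(E) = Aut_P(E_P)$ as algebraic groups, and in particular equality of identity components.

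For part (b), the key input is the description of the Lie algebra $Lie(Aut_G(E)) = H^0(C, ad(E))$ through the grading $\mathfrak{g} = \bigoplus_{\nu \in M} \mathfrak{g}(\nu)$ by characters of the centre of $L$. As explained just before the proposition, on an elliptic curve the associated bundle $E \times^G \mathfrak{g}$ splits into a direct sum of the semistable pieces $E_L \times^L \mathfrak{g}(\nu)$, because indecomposable bundles of positive degree have vanishing $H^1$ and the slope is additive on tensor products. Thus
\[
H^0(C, E \times^G \mathfrak{g}) = \bigoplus_{\nu \in M} H^0(C, E_L \times^L \mathfrak{g}(\nu)).
\]
On an elliptic curve, a semistable bundle $F$ satisfies $h^0(F) = 0$ when $\mu(F) < 0$ and $h^0(F) = \deg(F)$ when $\mu(F) > 0$ (the latter via $h^1(F) = h^0(F^\vee) = 0$ by Serre duality and $K_C \cong \mathcal{O}_C$). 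The canonical conditions $\mu_1 > \cdots > \mu_t > 0$ force the slope of $E_L \times^L \mathfrak{g}(\nu)$ to have the same sign as $\nu$, so only $\nu \geq 0$ contributes. The $\nu = 0$ summand recovers $Lie(Aut_L(E_L))$, while $\bigoplus_{\nu > 0} H^0(C, E_L \times^L \mathfrak{g}(\nu))$ is a nilpotent Lie subalgebra (the bracket is additive in the grading) stable under the Levi action, and it exponentiates to the claimed unipotent subgroup $Aut_G(E)^+$, normal in $Aut_G(E)$ because the filtration it cuts out is preserved by every $G$-automorphism via part (a). Its dimension is $\sum_{\nu > 0} \deg (E_L \times^L \mathfrak{g}(\nu))$, and summing over the positive roots outside the Levi root system (each paired with the slope cocharacter $\mu(E)$) collapses to $\langle 2\rho, \mu(E) \rangle$, the Levi-internal roots contributing trivially because $\mu(E)$ is valued in the centre of $L$.

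Combining (a) and (b), and using the Levi decomposition $P = L \ltimes U$ applied to the $P$-bundle $E_P$ to obtain $Aut_P(E_P)^{\circ} = Aut_L(E_L)^{\circ} \ltimes Aut_G(E)^+$, one gets
\[
\dim Aut_G(E) = \dim Aut_L(E_L) + \langle 2\rho, \mu(E) \rangle,
\]
which is part (c). The main obstacle is the dimension identity in (b): one must carefully combine the splitting of the associated bundle on the elliptic curve, Riemann--Roch (\emph{degree} $= h^0$ for semistable positive-slope bundles, using $H^1$ vanishing via Serre duality), and the character-theoretic interpretation of degrees of associated bundles to arrive at the clean pairing $\langle 2\rho, \mu(E) \rangle$. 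This bookkeeping is precisely the content of \cite{HS01} $\S 2$, which we follow.
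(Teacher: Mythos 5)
Your argument is correct and is essentially the canonical one: the paper itself offers no proof of this proposition, deferring entirely to \S 2 of \cite{HS01}, and your sketch faithfully reconstructs that argument --- uniqueness of the canonical reduction for (a), the splitting of $ad(E)$ into the semistable graded pieces $E_L\times^L\mathfrak{g}(\nu)$ together with the elliptic-curve vanishing ($h^1=0$ in positive slope via Serre duality and $K_C\cong\mathcal{O}_C$, hence $h^0=\deg$) for (b), and the root-by-root bookkeeping over the unipotent radical that collapses $\sum_{\nu>0}\deg\big(E_L\times^L\mathfrak{g}(\nu)\big)$ to $\langle 2\rho,\mu(E)\rangle$ for (c). The only remark worth making is that in (a) you actually establish the stronger equality $Aut_G(E)=Aut_P(E_P)$ (since the canonical section is genuinely unique, not merely unique up to isomorphism), which of course implies the stated equality of identity components.
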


For a general curve $C$, our vector bundle $E \times^G \mathfrak{g}$ is not isomorphic to its associated graded bundle, but we have a filtration whose consecutive quotients are $E_L \times^L \mathfrak{g}(\nu), \nu \in M$, hence if we put some extra conditions on the slopes of $E_L \times^L \mathfrak{g}(\nu)$ for $\nu \in M_{>0}$ (e.g. the maximal slope is big enough), then the above result can be applied inductively. For the other cases, we still have some estimations (using Proposition \ref{global sections of semistable}). Here are the details: 

Let us consider the following filtration of the vector bundle $E \times^G W:$ 
$$ 0 = E_P \times^P V_0 \subset E_P \times^P V_1  \subset \dots \subset E_P \times^P V_{t+1} \subset E_P \times^P V_1^{\bot} \subset \dots \subset E \times^G W.$$
If we put an extra condition that $\mu_1 - \mu_2 > 2g-2,$ where $\mu_i$ denotes the slope of vector bundle $X_i = E_P \times^P V_i/E_P \times^PV_{i-1},$ then the following exact sequence of vector bundles :
\begin{equation}
0 \longrightarrow X_1 \longrightarrow E \times^G W \longrightarrow (E \times^G W)/X_1 \longrightarrow 0
\end{equation}
is split. In fact, the split-ness is equivalent to $H^1(C, ((E \times^G W) /X_1)^* \otimes X_1) = 0$. By Serre's duality, it is equivalent to show that $H^0(C, (E \times^G W) /X_1 \otimes X_1^* \otimes \omega_C) = 0,$ where $\omega_C$ is the cannonical sheaf of $C$. Firstly, we have a filtration of $(E \times^G W) /X_1 \otimes X_1^*$ whose consecutive quotients are semistable and the associated graded bundle is $(X_1^* \otimes X_1^*) \oplus \bigoplus_{i=2}^{t+1} ( (X_i \otimes X_1^*) \oplus (X_i^* \otimes X_1^*) ).$ Thus, the following relations are immediate:
\begin{eqnarray*}
 h^0(C, (E \times^G W) /X_1 \otimes X_1^* \otimes \omega_C) \leq h^0(C, gr((E \times^G W) /X_1 \otimes X_1^* \otimes \omega_C)), \\
 H^0(C, gr( (E \times^G W)/X_1 \otimes X_1^* \otimes \omega_C))= H^0(C, gr((E \times^G W)/X_1 \otimes X_1^*) \otimes \omega_C ),\\
 H^0(C, gr((E \times^G W) /X_1 \otimes X_1^*) \otimes \omega_C)= H^0( X_1^* \otimes X_1^* \otimes \omega_C) \oplus \hspace{3cm}\\  \bigoplus_{i=2}^{t+1} \Big( H^0(X_i \otimes X_1^* \otimes \omega_C) \oplus H^0(X_i^* \otimes X_1^*\otimes \omega_C) \Big).
\end{eqnarray*}
Each direct summand in the last expression is the set of global sections of semistable vector bundle of negative degree, hence by the properties of semistable vector bundles, they are all trivial. Thus, we have just proved that the exact sequence $(2)$ is split. Similarly, since $E \times^G W$ is self-dual, $X_1^*$ is also a direct summand of $E \times^GW$. In other words, we have the following decomposition:
$$E \times^G W = X_1 \oplus X_1^* \oplus Y,$$ where $Y \cong E \times^G (V_1^{\bot}/V_1)$. As a consequence, the vector bundle $E \times^G \wedge^2W$ can be decomposed as 
$$E \times^G \wedge^2W = \wedge^2X_1 \oplus \wedge^2X_1^* \oplus \wedge^2Y \oplus (X_1 \otimes Y) \oplus (X_1 \otimes X_1^*) \oplus (Y \otimes X_1^*). $$
This implies that
\begin{eqnarray*}
 Lie(Aut_G(E)) & \cong & H^0(C, ad(E)) \cong  H^0(C, E \times^G \wedge^2W) \\
 &=& H^0(\wedge^2X_1) \oplus H^0(X_1\otimes Y) \oplus H^0(\wedge^2Y) \oplus H^0(X_1\otimes X_1^*),
\end{eqnarray*} 
since semistable vector bundles of negative degree have no global sections. Now if we consider $G'= SO(V_1^{\bot}/V_1)$ as a subgroup of $G = SO(W)$ via the natural embedding:
\begin{eqnarray*}
\phi :  G' & \longrightarrow & G \\
 T & \longmapsto & diag(I_{n_1}, A, I_{n_1}),
\end{eqnarray*}
and denote the reduction of $E$ to $G'$ by $E_{G'}$. Then from the above description of $Lie(Aut_G(E)),$ we obtain the following equality:
\begin{eqnarray*}
\textrm{dim}Aut_G(E) =  \textrm{dim}Aut_{G'}(E_{G'}) + \textrm{dim}Aut(X_1) + h^0(\wedge^2X_1) + h^0(X_1 \otimes Y).
\end{eqnarray*}
In general, we can prove the following proposition:
\begin{proposition} a) Suppose that $\mu_i - \mu_{i+1} > 2g-2$ for some $1 \leq i \leq t$, then the exact sequence of vector bundles
\begin{equation}
0 \rightarrow E_P \times^P V_i \rightarrow E \times^G W \rightarrow (E \times^G W) /(E_P \times^P V_i) \rightarrow 0
\end{equation}
is split.\\
b) There exists a constant $c$ that only depends on the genus $g$ of the curve $C$, such that for any $G-$bundle $E$  E with canonical reduction to $P$, we have the following inequality:
\begin{eqnarray*}
|Aut_G(E)(\mathbb{F}_q| \geq  c. |Aut_{SO(2m+1)}(X_{t+1})|.\prod_{i=1}^t \big(|Aut_{GL(n_i)}(X_i)|. |H^0(\wedge^2X_i)| \big) . \\  \prod_{i=1}^{t-1} \prod_{j=i+1}^{t} \big(|H^0(X_i \otimes X_j)| . |H^0(X_i \otimes X_j^*)|\big).\prod_{i=1}^t |H^0(X_i \otimes X_{t+1})|,
\end{eqnarray*}
where $H^0(F)$ is the notation of $H^0(C,F)$-the set of global sections over $C$ of the vector bundle $F$. \\
c) In particular, if $rank(X_j) = 1$ for all $j$ and $\mu_j - \mu_{j+1} > 2g-2$ for all $j$, we obtain the following equalities:
\begin{eqnarray*}
E \times^G W \cong X_1 \oplus X_2 \oplus \cdots \oplus X_m \oplus \mathscr{O}_C \oplus X_m^* \oplus \cdots \oplus X_1^*, \\
|Aut_G(E)(\mathbb{F}_q)| = |Aut_L(E_L)(\mathbb{F}_q)| .  |Aut_G(E)^+(\mathbb{F}_q)| = (q-1)^m .  |Aut_G(E)^+(\mathbb{F}_q)|, 
\end{eqnarray*}
where the summands that appear in $Aut_G(E)^+$ are $E_L \otimes^L \wedge^2(\mathscr{O}_C)$, $E_L \otimes^L \wedge^2(X_i)$ for all $i$, $E_L \otimes^L ( W_0 \otimes X_i)$ for all $i$, $E_L \otimes^L (X_i \otimes X_j)$ and $E_L \otimes^L (X_i \otimes X_j^*)$ for all $i < j$.
\label{splitting when large unstability}
\end{proposition}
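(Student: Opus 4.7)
My approach proceeds in three stages matching the three parts of the statement.

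For (a), I would reformulate the splitting of the exact sequence as the vanishing of $\mathrm{Ext}^1_C(Q, E_P \times^P V_i)$ where $Q = (E \times^G W)/(E_P \times^P V_i)$. By Serre duality on $C$, this group is dual to $H^0(C, Q \otimes (E_P \times^P V_i)^\vee \otimes \omega_C)$. Using the semistable filtrations of $Q$ and of $(E_P \times^P V_i)^\vee$ induced by the canonical reduction, I obtain a filtration of the tensor product whose successive quotients are semistable bundles of three shapes, each twisted by $\omega_C$: $X_k \otimes X_j^*$ with $i < k \leq t$, $X_{t+1} \otimes X_j^*$, and $X_k^* \otimes X_j^*$ with $k \leq t$, in each case $1 \leq j \leq i$. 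A direct slope computation shows every such slope is strictly negative once we invoke the hypothesis $\mu_i - \mu_{i+1} > 2g-2$ together with $\mu_j \geq \mu_i > 2g-2$ (the latter following from the hypothesis and $\mu_{i+1} \geq 0$). Semistable bundles of negative slope have no global sections, so $H^0$ of the whole bundle vanishes, yielding the splitting.

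For (b), I would start from the identification $\mathrm{Lie}(Aut_G(E)) = H^0(C, E \times^G \wedge^2 W)$ and use the decomposition $Aut_G(E) = Aut_L(E_L)^\circ \ltimes Aut_G(E)^+$, extending the elliptic-curve statement already cited to general $C$ via the filtration of $\mathfrak{g}$. The Levi factor accounts for the product $|Aut_{SO(2m+1)}(X_{t+1})| \cdot \prod_i |Aut_{GL(n_i)}(X_i)|$. The unipotent radical $Aut_G(E)^+$ has $\mathbb{F}_q$-points of size $q^{h^0(C, E \times^G \mathfrak{n})}$, where $\mathfrak{n}$ is the nilradical of $\mathfrak{p}$; filtering $\mathfrak{n}$ by positive-root spaces produces precisely the semistable pieces $\wedge^2 X_i$, $X_i \otimes X_j$ and $X_i \otimes X_j^*$ for $i<j$, and $X_i \otimes X_{t+1}$ listed in the statement. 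Iterating the inequality $h^0(B) \geq h^0(A) + h^0(C) - h^1(A)$ along this filtration, and bounding each $h^1$ of a semistable bundle of rank $r$ and slope in $[0,2g-2]$ by $rg$ (and by $0$ for slope exceeding $2g-2$), yields the claimed lower bound. Because the total rank is controlled by $\dim \wedge^2 W$, the accumulated correction depends only on $g$ and the fixed integer $n$ and exponentiates to the constant $c$.

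For (c), the rank-one hypothesis together with $\mu_j - \mu_{j+1} > 2g-2$ for all $j$ lets me apply (a) iteratively. Each application splits off $X_j$ as a direct summand of $E \times^G W$; the symmetric bilinear form on $E \times^G W$, combined with the identification $W/V_j^\perp \cong V_j^*$, simultaneously yields the matching dual summand $X_j^*$. Peeling off the pair $(X_j, X_j^*)$ by induction produces the decomposition $E \times^G W \cong X_1 \oplus \cdots \oplus X_m \oplus \mathcal{O}_C \oplus X_m^* \oplus \cdots \oplus X_1^*$. From here $\wedge^2(E \times^G W)$ decomposes explicitly into the listed line-bundle summands, the Levi part of $Aut_G(E)$ is $\mathbb{G}_m^m$ contributing $(q-1)^m$, and $Aut_G(E)^+$ is read off from the positive-slope pieces.

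The main obstacle I expect lies in (b): translating the non-split filtration of $E \times^G \mathfrak{n}$ into a multiplicative lower bound expressed in terms of $H^0$ of the individual pieces requires uniform $h^1$-bounds on semistable bundles whose slopes sit in the intermediate range $[0,2g-2]$, together with careful bookkeeping to ensure the constant truly depends only on $g$ (and the fixed $n$) and not on $E$. The analogous analysis in (a) is much cleaner because the hypothesis $\mu_i - \mu_{i+1} > 2g-2$ forces every relevant $H^1$ to vanish outright; without that extra input, as in (b), we pay the price of a constant rather than an equality.
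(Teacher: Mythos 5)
Your arguments for (a) and (c) are essentially the paper's. In (a) the paper also reduces the splitting to the vanishing of $H^1\big(C,\,((E\times^G W)/(E_P\times^P V_i))^{*}\otimes (E_P\times^P V_i)\big)$, filters that bundle by semistable pieces whose slopes all exceed $2g-2$, and concludes by Serre duality; your bookkeeping on the Serre-dual side (the same pieces twisted by $\omega_C$ have negative slope, hence no sections) is the identical computation. In (c) the paper likewise iterates (a) and uses self-duality of $E\times^G W$ to pair $X_j$ with $X_j^{*}$.

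Part (b) is where you take a different route, and it contains a genuine gap. You invoke $Aut_G(E)=Aut_L(E_L)^{\circ}\ltimes Aut_G(E)^{+}$ over a general curve and let the Levi factor ``account for'' $|Aut_{SO(2m+1)}(X_{t+1})|\cdot\prod_i|Aut_{GL(n_i)}(X_i)|$. That decomposition fails precisely when the canonical filtration does not split: the restriction map $Aut_P(E_P)\to Aut_L(E_L)$ need not be surjective, because a lift of an automorphism of the associated graded must preserve the extension classes of the filtration. Already for a non-split extension $0\to L_1\to E\to L_2\to 0$ of line bundles with $0<\deg L_1-\deg L_2\le 2g-2$, a pair $(a,b)\in Aut(L_1)\times Aut(L_2)$ lifts to $E$ only if $(a-b)$ annihilates the class in $H^1(C,L_1\otimes L_2^{-1})$, so the image is the diagonal, a proper subgroup. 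The paper is explicit that the semidirect-product statement is available only for elliptic curves or after imposing slope gaps $>2g-2$; its proof of (b) instead (i) absorbs the universally bounded factors $|Aut(X_i)|$ into the constant $c$ using finiteness of the moduli of semistable bundles of bounded type, (ii) splits $E\times^G W$ along the large-gap indices via part (a), and (iii) computes $Aut_G(E)(\mathbb{F}_q)$ exactly as block upper-triangular matrices subject to the orthogonality constraint, the remaining small-gap $H^0$ factors being bounded by constants. Your approach can be salvaged along the same lines: drop the Levi claim, bound $\prod_i|Aut(X_i)|$ by a constant depending only on $g$ and $n$, and lower-bound $|Aut_G(E)(\mathbb{F}_q)|$ by the order $q^{h^0(C,\,E_P\times^P\mathfrak{n})}$ of the unipotent kernel alone; your $h^1$-correction bookkeeping (at most $rg$ per semistable piece of slope in $[0,2g-2]$, zero otherwise) then does yield a constant depending only on $g$ and $n$. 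But as written, the step asserting that the Levi part of $Aut_G(E)$ contributes the full product of the $|Aut(X_i)|$ is unjustified and in general false.
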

\begin{proof}
To show the statement $a)$, we will use a similar argument as the one we used to proved that the exact sequence $(2)$ is split in case $\mu_1 - \mu_2 > 2g-2.$ In fact, as above we consider the extension group $H^1\Big(C, \big((E \times^G W) /(E_P \times^P V_i)\big)^*\otimes (E_P \times^P V_i) \Big),$ and try to show that it vanishes. By the hypothesis $\mu_i - \mu_{i+1} > 2g-2,$ it is easy to see that the vector bundle $\big((E \times^G W) /(E_P \times^P V_i)\big)^*\otimes (E_P \times^P V_i)$ has a filtration of semistable vector bundles of the form $X_j \otimes Z$ where $1 \leq j \leq i$ and $Z$ belongs to the set $\big\{ X_j \textrm{ for } 1 \leq j \leq t+1; X_h^* \textrm{ for } i+1 \leq h \leq t+1 \big\},$ whose slopes are all strictly bigger than $2g-2$. As a consequence, the vector bundle $\big((E \times^G W) /(E_P \times^P V_i)\big)^*\otimes (E_P \times^P V_i) \otimes \omega_C$ has no global section. By using Serre's duality, we have completed the proof of $a)$.

 Before proving $b)$, note that we do not assume any conditions related to the slope of $X_i$ here. Furthermore, on the right hand side of the inequality in $b)$, the factors $|Aut_{H}(X_i)(\mathbb{F}_q)|$ can be included in the constant $c$, where $H$ is $GL(n_i)$ or $SO(2m+1)$. In fact, since the moduli space of semi-stable vector bundle of fixed degree $l$ and rank $r$ is of finite type over $\mathbb{F}_q$ (c.f. \cite{Mar81}), we only have a finite number of isomorphism classes of $(l,r)-$ semi-stable vector bundle. Consequently, sizes of automorphism groups of any $(l,r)-$semi-stable vector bundle are bounded above by a constant that depends only on $l, r$ and $C$. Notice that since we assume that the curve $C$ has a $\mathbb{F}_q-$rational point, any $(l,r)-$ semi-stable vector bundles can be translated to semi-stable vector bundles of type $(l',r)$ with $0 \leq l' < r$ by tensoring with some line bundles. By using the fact that $Aut_{GL(n_1)}(X_i) \cong Aut_{GL(n_i)}(X_i \otimes \mathcal{F})$ for any line bundle $F$, we deduce that sizes of automorphism groups $Aut_{H}(X_i)(\mathbb{F}_q)$ are universally bounded by a constant that depends only on $C$ and $n$. Now we consider some cases as follows:
 \begin{itemize}
 \item[Case 1:] If $\mu_i-\mu_{i+1} \leq 2g-2$ for all $1 \leq i \leq t$, then it is easy to produce the constant $c$ since every factors on the right hand side of the inequality in $b$ are universally bounded by \ref{global sections of semistable}. 
 \item[Case 2:] If there exists only one index $i$ such that $\mu_i-\mu_{i+1} > 2g-2$. By the results in $a)$, we have the following splitting exact sequence:
 \begin{equation}
 0 \rightarrow \mathcal{E}_P \times^P V_i \rightarrow \mathcal{E} \times^GW \rightarrow (\mathcal{E} \times^G W)/(\mathcal{E}_P \times^P V_i) \rightarrow 0.
  \end{equation}
  Hence, by denoting the vector bundles $\mathcal{E}_P \times^P V_i$ and $\mathcal{E}_P \times^P (V_i^{\bot}/V_i)$ by $Y_i$ and $Z$ respectively, we obtain the following decomposition: 
  $$\mathcal{E} \times^GW = Y_i \oplus Z \oplus Y_i^*.$$
  From the assumption that $\mu_i-\mu_{i+1} > 2g-2,$ we deduce that $H^0(C, Z \otimes Y_i^*) =0$ and $H^0(C, (Y_i^*)^{\otimes 2})=0$. Hence any elements of the automorphism group $Aut_{GL(W)}(\mathcal{E} \times^GW)(\mathbb{F}_q)$ are of the following form:
  $$ \begin{pmatrix}
  A & B &C \\
  0 & D & E \\
  0 & 0 & F
  \end{pmatrix},$$where $A,F \in Aut_{GL(V_i)}(Y_i)$; $D \in Aut_{GL(V_i^{\bot}/V_i)}(Z)$; $B, E \in H^0(C, Y_i \otimes Z)$; and $C \in H^0(C, Y_i^{\otimes 2})$. Observe that any automorphisms of $G-$bunde $\mathcal{E}$ can be seen automorphisms of $GL(W)-$bundle $\mathcal{E} \times^GW$ which preserve the orthogonal structure of $\mathcal{E}$. From the above description of $Aut_{GL(W)}(\mathcal{E} \times^GW)(\mathbb{F}_q)$, we obtain that: \small
  \begin{eqnarray*} Aut_G(\mathcal{E})(\mathbb{F}_q)  = \left\{ T= \begin{pmatrix}
  A & B &C \\
  0 & D & E \\
  0 & 0 & F
  \end{pmatrix} \in Aut_{GL(W)}(\mathcal{E} \times^GW)(\mathbb{F}_q) \left\| \begin{array}{ll} T.T^* = \text{Id} \\ det(T)=1 \end{array}\right.\right\} \\ 
  = \left\{ T= \begin{pmatrix}
  A & B &C \\
  0 & D & -DB^*(A^*)^{-1} \\
  0 & 0 & (A^*)^{-1} 
  \end{pmatrix} \left\| \begin{array}{llll} A \in Aut_{GL(V_i)}(Y_i)(\mathbb{F}_q) \\ D \in Aut_{SO(V_i^{\bot}/V_i)}(Z)(\mathbb{F}_q) \\
  B \in H^0(Y_i \otimes Z) \\
  C \in H^0(Y_i^{\otimes 2}) : AC^*+CA^* = -BB^*
  \end{array} \right. \right\}
  \end{eqnarray*} \normalsize
This allows us to compute the size of $Aut_G(\mathcal{E})(\mathbb{F}_q):$
\begin{align*}
Aut_G(\mathcal{E})(\mathbb{F}_q)| = |Aut_{GL(V_i)}(Y_i)(\mathbb{F}_q)|.|Aut_{SO(V_i^{\bot}/V_i)}(Z)(\mathbb{F}_q)|.|H^0(Y_i \otimes Z)|. \\ .|H^0(\wedge^2(Y_i))|.
\end{align*}
By using the condition $\mu_i - \mu_{i+1} > 2g-2$ and the semi-stable filtration of $\mathcal{E} \times^G W$, we imply that
 $$H^0(Y_i \otimes Z) = \bigoplus_{j=1}^{i} \bigg( H^0(X_j \otimes X_{t+1}) \oplus \bigoplus_{h=i+1}^t \big(H^0(X_j \otimes X_h) \oplus H^0(X_j \otimes X_h^*) \big) \bigg),$$ and 
 $$H^0(\wedge^2(Y_i)) = \bigoplus_{1 \leq l < h \leq i} H^0(X_l \otimes X_h) \bigoplus_{j=1}^i H^0( \wedge^2(X_j).)$$ The proof of $b)$ in this case is completed by noting that we can bound  $|H^0(X_l \otimes X_h^*)|$ for $1 \leq l < h \leq i$; $|H^0(X_j \otimes X_f)|.|H^0(X_j \otimes X_f^*)|$ for $i+1 \leq j,f \leq t+1$ by a constant that only depends on the genus $g$ and $G$ because of the assumption in this case: $\mu_j-\mu_{j+1} \leq 2g-2$ for all $j \neq i$. 
 \item[Case 3:] In the decreasing sequence $\mu_1 > \mu_2 > \cdots > \mu_t > \mu_{t+1}=0$, we assume that $t_1 >\cdots > t_f$ are all indexes from $1$ to $t+1$ satisfying that $\mu_{t_j}-\mu_{t_j+1} > 2g-2$. In this case we have the following decomposition of the vector bundle $\mathcal{E} \times^G W$ as follows:
 $$\mathcal{E} \times^G W = Y_{t_1} \oplus Y_{t_1}^* \oplus \cdots Y_{t_{f}} \oplus Y_{t_f}^* \oplus Z,$$where $Y_{t_j} = (\mathcal{E}_P \times^P V_{t_j}) / (\mathcal{E}_P \times^P V_{t_{j-1}})$ (here we set $t_0=0$), and $Z = \mathcal{E}_P \times^P (V_{t_f}^{\bot}/V_{t_f})$. This case can be treated similarly as the previous case. 
  \end{itemize}
For $c)$, if the $G-$bundle $\mathcal{E}$ has canonical reduction to the Borel subgroup and $\mu_i-\mu_{i+1}$ for all $i$, then the associated vector bundle $\mathcal{E} \times^G W$ is the direct sum of line bundles:
$$\mathcal{E} \times^G W \cong X_1 \oplus X_2 \oplus \cdots \oplus X_m \oplus \mathscr{O}_C \oplus X_m^* \oplus \cdots \oplus X_1^*.$$ Hence, we could apply the same argument as in the proof of $b)$ to prove $c)$ and notice that $|Aut_{\mathbb{G}_m}(\mathcal{L})(\mathbb{F}_q)|= q-1$ for any line bundle $\mathcal{L}$.
\end{proof}

\subsection{Counting}
\label{couting section}
Now we fix a parabolic $P$ of $G$ with the Levi quotient $L=GL(n_1)\times \cdots GL(n_t) \times SO(2m+1)$, $2(n_1+\cdots+n_t) +(2m+1)=2n+1$. This means that there exists a flag of isotropic subspaces $$0 = V_0 \subset V_1 \subset \cdots \subset V_{t} \subset V_{t}^{\bot} \subset \cdots \subset V_1^{\bot} \subset W,$$ 
where $dim(V_i/V_{i-1})=n_i$ for $1\leq i \leq t$, and $dim(V_{t}^{\bot}/V_t)= 2m+1.$ We denote $X_i$ be the vector bundle $E_L \times^L V_i/V_{i-1}$, $\mu_i$ be the slop of $X_i$, for $1 \leq i \leq t$, and $W_0=E_L \times^L V_t^{\bot}/V_t$. Notice that $W_0$ is self-dual, hence its slope $\mu_{t+1}$ is zero. 

If $\mu_i-\mu_{i+1} > 2g-2$ for some $i$, by Proposition \ref{splitting when large unstability}a), we have the following isomorphism:
\begin{align*}
E \times^G W & \cong (E_P \times^P V_i) \oplus (E_P \times^P V_i^*) \oplus (E_P \times^P (V_i^{\bot}/V_i)) \\
& = Y \oplus Y^* \oplus W_i,
\end{align*}where $Y = E_P \times^P V_i$ and $W_i = E_P \times^P (V_i^{\bot}/V_i)$. This implies that $E \times^G Sym^2(W)$ can be decomposed as the following direct sum:
\begin{align*}
 & \cong Sym^2(Y) \oplus Sym^2(Y^*) \oplus Sym^2(W_i) \oplus (Y \otimes Y^*) \oplus (Y \otimes W_i) \oplus (Y^* \otimes W_i) \\
& \cong E_P \times^P(Sym^2(V_i) \oplus Sym^2(V_i^*) \oplus Sym^2(V_i^{\bot}/V_i) \oplus (V_i \otimes V_i^*)\\
& \hspace{3cm} \oplus (V_i \otimes (V_i^{\bot}/V_i)) \oplus (V_i^* \otimes V_i^{\bot}/V_i))
\end{align*}
Let $J_{r}$ denote the anti-diagonal matrix of size $r$, then as $G-$modules, $Sym^2_0(W) \cong Sym^2(W)/\langle J_{2n+1} \rangle$. So we have:
\begin{align*}
E \times^G Sym_0^2(W) & \cong E_P \times^P\Big( \big(Sym^2(V_i) \oplus Sym^2(V_i^*) \oplus Sym^2(V_i^{\bot}/V_i) \oplus (V_i \otimes V_i^*) \\
& \hspace{4cm}\oplus (V_i \otimes (V_i^{\bot}/V_i)) \oplus (V_i^* \otimes (V_i^{\bot}/V_i))\big)/\langle J_{2n+1} \rangle \Big) \\
& \cong E_P \times^P \big( Sym^2(V_i) \oplus Sym^2(V_i^*) \oplus Sym_0^2(V_i^{\bot}/V_i) \oplus (V_i \otimes V_i^*) \\
& \hspace{5cm}\oplus (V_i \otimes (V_i^{\bot}/V_i)) \oplus (V_i^* \otimes V_i^{\bot}/V_i)) \big). 
\end{align*}
The last isomorphism comes from the following lemma:
\begin{lemma} We have the following isomorphism of $P-$modules: 
\begin{align*}
& \begin{pmatrix}
Sym^2(V_i) & V_i\otimes (V_i^{\bot}/V_i) & V_i \otimes V_i^* \\
* & Sym^2(V_i^{\bot}/V_i) & (V_i^{\bot}/V_i) \otimes V_i^* \\
* & * & Sym^2(V_i^*)
\end{pmatrix}\Big/ \langle J_{2n+1} \rangle \\
\cong & \begin{pmatrix}
Sym^2(V_i) & V_i\otimes (V_i^{\bot}/V_i) & V_i \otimes V_i^* \\
* & Sym_0^2(V_i^{\bot}/V_i) & (V_i^{\bot}/V_i) \otimes V_i^* \\
* & * & Sym^2(V_i^*)
\end{pmatrix} 
\end{align*}
In the above matrices, we need to fill in $*$ to obtain symmetric matrices. 
\end{lemma}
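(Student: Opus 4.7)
The plan is to exploit the $G$-invariance of the defining quadratic form to determine exactly how $J_{2n+1}$ decomposes in the block form coming from $W = V_i \oplus (V_i^{\bot}/V_i) \oplus V_i^*$ (identifying $V_i^* \cong W/V_i^{\bot}$ via the form), and then quotient by its span. Since the bilinear form on $W$ is nonzero only on $V_i \times V_i^*$ (the canonical pairing) and on $(V_i^{\bot}/V_i) \times (V_i^{\bot}/V_i)$ (where it restricts to the form $J_{2m+1}$), the element $J_{2n+1}\in Sym^2(W)$ lies entirely in $(V_i \otimes V_i^*) \oplus Sym^2(V_i^{\bot}/V_i)$ and equals
$$J_{2n+1} = \sigma + J_{2m+1},$$
where $\sigma \in V_i \otimes V_i^*$ is the canonical pairing tensor, equivalently the identity element of $End(V_i)$ under $V_i \otimes V_i^* \cong End(V_i)$.

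Next I would check that $\sigma$ and $J_{2m+1}$ are both $P$-invariant. Since $P$ preserves both $V_i$ and $V_i^{\bot}$, it acts on $V_i$ and on $V_i^* = W/V_i^{\bot}$; the two actions together preserve the pairing because $P\subset G=SO(W)$, and so $\sigma$ (the identity of $End(V_i)$) is fixed by conjugation by $P$. The element $J_{2m+1}$ is $SO(V_i^{\bot}/V_i)$-invariant, and $P$ acts on $Sym^2(V_i^{\bot}/V_i)$ through its Levi quotient $P \twoheadrightarrow L$, so it is $P$-invariant as well. Under the good-characteristic hypothesis (so that $n_i$ and $2m+1$ are invertible in $k$), the trace $V_i \otimes V_i^* \cong End(V_i) \to k$ and the analogous form-trace $Sym^2(V_i^{\bot}/V_i) \to k$ yield $P$-equivariant splittings
$$V_i \otimes V_i^* = \langle\sigma\rangle \oplus (V_i \otimes V_i^*)_0, \qquad Sym^2(V_i^{\bot}/V_i) = \langle J_{2m+1}\rangle \oplus Sym^2_0(V_i^{\bot}/V_i).$$

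With these splittings, the quotient by $\langle J_{2n+1}\rangle$ affects only the two-dimensional trivial $P$-submodule $\langle\sigma\rangle \oplus \langle J_{2m+1}\rangle$, inside which $\langle J_{2n+1}\rangle$ is the diagonal line $\langle \sigma + J_{2m+1}\rangle$. The corresponding quotient of trivial $P$-modules is one-dimensional, identified canonically (by projection onto the first factor) with $\langle\sigma\rangle$. Reassembling the four unaffected blocks with the surviving $(V_i \otimes V_i^*)_0$ and $Sym^2_0(V_i^{\bot}/V_i)$, the factor $\langle\sigma\rangle$ glues back into a full copy of $V_i \otimes V_i^*$ on the right-hand side, while the middle block shrinks from $Sym^2(V_i^{\bot}/V_i)$ to $Sym^2_0(V_i^{\bot}/V_i)$. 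This is exactly the claimed isomorphism.

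The only subtlety is the existence of the $P$-invariant complements to $\langle\sigma\rangle$ and $\langle J_{2m+1}\rangle$, which needs $\mathrm{char}(k) \nmid n_i(2m+1)$; this is part of the good-characteristic assumption running through the paper. If one prefers to avoid splittings entirely, the same bookkeeping can be phrased by defining an explicit $P$-equivariant surjection $\Phi: Sym^2(W) \to \text{RHS}$ whose only twist is $T_{13} \mapsto T_{13} - q(T_{22})\sigma$ on the $V_i \otimes V_i^*$-block, where $q: Sym^2(V_i^{\bot}/V_i) \to k$ is any $P$-equivariant retraction of $\langle J_{2m+1}\rangle \hookrightarrow Sym^2(V_i^{\bot}/V_i)$; a direct check shows $\ker(\Phi) = \langle J_{2n+1}\rangle$, which is the whole content of the lemma.
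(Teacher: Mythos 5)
Your proof is correct and, in substance, the same as the paper's: the paper also observes that $J_{2n+1}$ lives only in the blocks $(V_i\otimes V_i^*)\oplus Sym^2(V_i^{\bot}/V_i)$ and then writes down exactly the explicit $P$-equivariant surjection you describe in your final paragraph, namely $(A_0,A_1)\mapsto\big(A_0-\tfrac{antr(A_0)}{size(A_0)}J,\ A_1-\tfrac{antr(A_0)}{size(A_0)}J\big)$, whose kernel is $\langle J\rangle$. Your main argument additionally splits $V_i\otimes V_i^*$ off a trivial summand $\langle\sigma\rangle$ (needing $n_i$ invertible), which is harmless but unnecessary — only the invertibility of $2m+1$ is actually used, as your own alternative formulation makes clear.
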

\begin{proof}Firstly, we can see that $\langle J_{2n+1} \rangle$ only appears in the direct sum $(V_i\otimes V_i^*)\oplus Sym^2(V_i^{\bot}/V_i)$. Additionally, we have the following  surjective morphism of $P-$modules:
\begin{align*}
\varphi: & Sym^2(V_i^{\bot}/V_i) \oplus End(V_i) & \longrightarrow  & Sym_0^2(V_i^{\bot}/V_i) \oplus End(V_i) \\
 & \hspace{2cm}(A_0, A_1)  & \longmapsto  & \Big(A_0- \frac{antr(A_0)}{size(A_0)}J_{size(A_0)}, A_1 - \frac{antr(A_0)}{size(A_0)}J_{size(A_1)} \Big),
\end{align*}
where $antr$ denotes for "anti-trace". The kernel of $\varphi$ is $\langle J \rangle$. The Proposition is proven.
\end{proof}
In general (with out the condition $\mu_i-\mu_{i+1} > 2g-2$), the following inequality is enough for our purpose:
\begin{eqnarray*}
h^0(E \times^G Sym_0^2(W))  \leq \sum_{1 \leq i < j \leq t} \bigg(h^0(X_i \otimes X_j) + h^0(X_i \otimes X_j^*) + h^0(X_i^* \otimes X_j) +\\ + h^0(X_i^* \otimes X_j^*)\bigg)+ \sum_{i=1}^t \bigg( h^0(Sym^2(X_i)) + h^0(Sym^2(X_i^*) + h^0(X_i \otimes W_0) + \\ +h^0(X_i^* \otimes W_0)+ h^0(X_i\otimes X_i^*)\bigg)+ 
+h^0(Sym^2_0(W_0)).  
\end{eqnarray*}
Similarly, the vector bundle associated to the adjoint representation $so(W)= \wedge^2(W)$ has the following "square filtration": 
\[ \left( \begin{array}{ccccccc}
\wedge^2(X_1) & X_1\otimes X_2 & \cdots & X_1\otimes W_0 & \cdots & X_1 \otimes X_2^* & X_1 \otimes X_1^* \\
* & \wedge^2(X_2) & \cdots & X_2\otimes W_0 & \cdots & X_2\otimes X_2^* & X_2 \otimes X_1^*  \\
* & * & \ddots & \vdots & \cdots & \vdots & \vdots \\
* & * & \cdots & \wedge^2(W_0) & \cdots & W_0 \otimes X_2^* & W_0 \otimes X_1^* \\
\vdots & \vdots & \vdots & \vdots & \ddots & \vdots & \vdots \\
* & * & \cdots & * & \cdots & \wedge^2(X_2^*) & X_2^* \otimes X_1^* \\
*&*&\cdots & * & \cdots & * & \wedge^2(X_1^*) \end{array}. \right)\]
Notice that the entries in the upper part of the above matrix are all semi-stable vector bundles, and Proposition \ref{splitting when large unstability} gives us an estimation of the dimension of $H^0(E \times^G \mathfrak{so}(W))$.

Now let recall our set up: $\mathcal{E}$ is a $G$-torsor which has a canonical reduction at the parabolic subgroup $P =L\times N$. Denote $E_P$ the reduction of $E$ and $E_L:= E_P \times^P L$, then we have the following canonical isomorphisms:
\begin{equation*}
V(E,\mathcal{L}) = (\mathcal{E}\times^G V) \otimes \mathcal{L}^{\otimes 2} \cong (E_L \times^L Sym^2_0(W)) \otimes \mathcal{L}^{\otimes 2} 
\end{equation*} 
Since $E_L$ is a semistable principal $L$-bundle, $E_L \times^L X$, where $X$ is an irreducible $L-$module, will be a semi-stable vector bundle. Set $\mu_i=\frac{deg(E_L\times^LX_i)}{rank (E_L\times^LX_i})$ the slope of vector bundles $E_L\times^L X_i$ (for $i = \overline{1,t}$), $d=deg(L)$. Since $L$-module $W_0$ is self-dual, the slope $\mu_{t+1}$ of $E_L\times^L W_0$ equals zero. By the definition of canonical reduction, we obtain the following inequalities:
\begin{equation*}
\mu_1 > \mu_2 > \dots >\mu_t >0
\end{equation*}
We divide into some cases:

\textbf{Case 1:} $\mu_i - \mu_{i+1} > 2d$ for some $1<i<t+1$. Then by Proposition \ref{global sections of semistable}, the vector bundle $E_L\times^L(X_{i+1}\otimes X_i^*) \otimes \mathcal{L}^{\otimes 2}$ has no non-zero global sections. Similarly, by looking at the rectangle with two opposite vertices $E_L\times^L(X_{i+1}\otimes X_i^*)$ and $Sym^2(X_1^*)$ in the above matrix form of $E_L \times^L Sym_0^2(W)$, we can see that any global section of $(E_L\times^L Sym^2_0(W))\otimes \mathcal{L}^{\otimes 2}$ has the following matrix form (see the argument at the beginning of this section):

\[ \left( \begin{array}{ccc}
D&C&A \\
C^t & B & 0\\
A^t & 0 & 0 
 \end{array} \right)\]
 where $A^t$ denotes the transpose of the matrix $A$, and the entries $0$ denote matrices $0$ with appropriate sizes. We will show that every sections of this form is not regular. 
 
\begin{lemma} In case 1, for any sections $s$ of $(E_L\times^L Sym^2_0(W))\otimes \mathcal{L}^{\otimes 2}$, there exists a point $c \in C$ such that the matrix $s(c)$ is not regular.
\end{lemma}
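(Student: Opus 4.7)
The plan is to exhibit a point $c \in C$ where the $(1,3)$-block $A$ of $s$ becomes degenerate, and then exploit the vanishing pattern of $s(c)$ to force non-regularity. First I would identify such a $c$. The block $A$ is a section of $\mathrm{Hom}(W^{(3)},W^{(1)})\otimes\mathcal{L}^{\otimes 2}$, where $W^{(1)}$ and $W^{(3)}$ are the rank-$r$ bundles $E_L\times^L(X_1\oplus\cdots\oplus X_i)$ and its dual. Its determinant $\det A$ is therefore a global section of the line bundle $\det W^{(1)}\otimes(\det W^{(3)})^{-1}\otimes\mathcal{L}^{\otimes 2r}$, whose degree equals $2\sum_{j=1}^{i} n_j\mu_j + 2rd > 0$ because $\mu_1,\dots,\mu_i > \mu_{i+1} \ge 0$ and $d>0$. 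Hence either $\det A\equiv 0$ or it vanishes at some closed point; in either event one obtains a $c$ with $\det A(c)=0$.

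Now pick a nonzero $v\in \ker A(c)\subseteq W^{(3)}$. The vanishing of the $(2,3)$, $(3,2)$ and $(3,3)$ blocks of $s$ gives $s(c)\cdot(0,0,v)=(A(c)v,0,0)=0$, so $(0,0,v)\in\ker s(c)$. This already supplies one kernel direction; the heart of the proof will be to produce a second, linearly independent one and thereby conclude $\dim\ker s(c)\ge 2$, which forces $0$ to have geometric multiplicity $\ge 2$ in $s(c)$ and hence at least two Jordan blocks at $\lambda=0$ — a clean obstruction to regularity.

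To supply the second kernel direction I would use the self-adjointness of $s(c)$ together with the fact that $A$ corresponds to a \emph{symmetric} bilinear form on $W^{(3)}$ under the natural pairing $W^{(1)}\cong (W^{(3)})^*$. Then $v$ lies in the radical of this form, and one gets a dual vector $v^*\in W^{(1)}$ with $A(c)^t v^*=0$. I would attempt to find $y\in W^{(2)},\,z\in W^{(3)}$ with $s(c)\cdot(v^*,y,z)=0$, which amounts to the linear system $C^t v^*+By=0$ and $Dv^*+Cy+Az=0$. At a generic point this system is inconsistent, but because $c$ is chosen as a zero of $\det A$, the compatibility condition between the radical of $A(c)$ and the image/cokernel of the $B,C,D$-blocks (forced by the self-adjoint relation between $A$ at $(1,3)$ and $A^t$ at $(3,1)$) should ensure solvability. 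The main obstacle — and the step requiring the most care — is verifying this compatibility in the presence of nonzero $B,C,D$; the cleanest formulation is probably to recognize the system as the Schur complement of $A(c)$ in $s(c)$ and to show that its degeneracy is synchronized with the degeneracy of $A(c)$.

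If that compatibility proves delicate, the backup route is to derive a Cayley--Hamilton-type identity showing that $s(c)$ satisfies a polynomial of degree strictly less than $2n+1$. Indeed, iterating $s(c)\cdot W^{(3)}\subseteq W^{(1)}$ and using that the middle block has dimension $2m+1$, one gets (in the simplest case $D=C=B=0$, where $s^{2r+1}$ is a linear combination of lower odd powers of $s$ via Cayley--Hamilton applied to $AA^t$) a minimal polynomial of degree at most $2r+1<2n+1$ whenever $r<n$, which is automatic when $m\ge 1$. The remaining case $r=n$, $m=0$, and the extension to nonzero $D,C,B$, would require a parallel argument exploiting the pairing structure; this is where I expect the bulk of the bookkeeping to live.
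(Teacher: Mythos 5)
Your strategy has a genuine gap, and it sits exactly where you flagged the ``delicate step'': at a zero $c$ of $\det A$ the second kernel vector you need does not exist in general, and in fact $s(c)$ is typically still \emph{regular}. Your first step is fine ($\det A$ is a section of a line bundle of positive degree, so it vanishes somewhere, and $(0,0,v)\in\ker s(c)$ for $v\in\ker A(c)$), but the leap to $\dim\ker s(c)\ge 2$ fails. Concretely, if $\ker A(c)$ is one-dimensional and $B(c),C(c),D(c)$ are generic, then the condition $A^t v^*=0$ fixes $v^*$ up to scalar, $y=-B^{-1}C^t v^*$ is forced, and solvability of the last equation requires $Dv^*+Cy\in\mathrm{Im}\,A(c)$ --- an \emph{additional} codimension-one condition that is independent of the vanishing of $\det A$ and fails for generic blocks; there is no ``synchronization'' between the Schur complement and the degeneracy of $A$. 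What actually happens at such a point is that $0$ is an eigenvalue of $s(c)$ of algebraic multiplicity $\ge 2$ (since $\det s(c)=\pm\det(A(c))^2\det B(c)$) but of geometric multiplicity $1$: there is a single Jordan block of size two at $0$, which is perfectly compatible with minimal polynomial $=$ characteristic polynomial. The structural reason no argument of this shape can work is that non-regularity is a codimension-\emph{two} condition, so a single divisorial condition such as $\{\det A=0\}$ cannot force it; the same objection kills your backup route, since your Cayley--Hamilton identity is only valid when $B=C=D=0$, and for nonzero blocks no annihilating polynomial of degree $<2n+1$ exists at a mere zero of $\det A$.

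The paper's proof uses a different (and the correct) determinant. Pass to the triangular form $T=\begin{pmatrix} A & C & D\\ 0 & B & C^*\\ 0 & 0 & A^*\end{pmatrix}$, assume $A$ and $B$ are everywhere regular (otherwise done), and let $g$, $g_0$ be their characteristic polynomials, whose coefficients are global sections of powers of $\mathcal{L}$. Since $g(A)=g(A^*)=0$ and $g_0(B)=0$, the matrix $g(T)\,g_0(T)$ has every block equal to zero \emph{except} the upper-right corner block $F$. Now $\det F$ is a global section of a line bundle whose degree is positive once $\deg\mathcal{L}$ is large, hence it vanishes at some $c\in C$; and the paper's Jordan-form computation shows that $\det F(c)=0$ forces an annihilating polynomial of $T(c)$ of degree strictly less than $2n+1$ (in the one-eigenvalue model case, $(x-a)^{n-1}g(x)g_0(x)$ kills $T(c)$), i.e.\ non-regularity. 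Note that the divisor $\{\det F=0\}$ is genuinely different from your $\{\det A=0\}$: at the paper's bad point the degenerate eigenvalue is an eigenvalue of the block $A$, and $A$ itself need not be singular there at all. Replacing $\det A$ by this resultant-type section $\det F$, and proving the implication ``$\det F(c)=0\Rightarrow T(c)$ not regular,'' is precisely the content your proposal is missing.
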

\begin{proof}
Multiplying with the anti-diagonal matrix $J$ does not change the regularity, hence we can consider the following alternative form of $s\in H^0(C, (E_L\times^L Sym^2_0(W))\otimes \mathcal{L}^{\otimes 2})$:
\[ T= \left( \begin{array}{ccc}
A&C&D \\
0 & B & C^*\\
0 & 0 & A^* 
 \end{array} \right)\]
where $A^*$ denotes the reflection of $A$ about the anti-diagonal. \\
If $A$ or $B$ is not regular than obviously our matrix is not regular. Now we suppose that $A$ and $B$ are regular. At any point $x\in C$, we denote $u_i$ (could be the same) and $v_j$ the eigenvalues of $A$ and $B$ respectively, $u_i, v_j \in \bar{k}$. Note that for any symmetric polynomial $f(x_1,x_2, \dots, x_n)$ with $n$ variables ($n \times n$ is the size of matrix $A$), $f(u_1,\dots,u_n)$ will determines a section of the line bundle $\mathcal{L}^{\otimes 2deg(f)}$. Now we consider the characteristic polynomial of $A$ (and also of $A^*$):
\begin{equation*}
g(X)= X^n - tr(A)X^{n-1}+\dots + (-1)^n det(A).
\end{equation*}
Then $g(T)$ will be a section of $(E_L\times^L Sym^2_0(W))\otimes \mathcal{L}^{\otimes 2n}$. In the same manner, let $g_0$ be the characteristic polynomial of $B$ then $g(T).g_0(T)$ will be a section of $(E_L\times^L Sym^2_0(W))\otimes \mathcal{L}^{\otimes 2n+2n_0}$ where $n_0\times n_0$ is the size of $B$. Now by looking at the section of the vector bundle $F$ of form $"D"$ in $g(T).g_0(T)$, we realize that the regularity of $T$ will be gone if the line bundle $det(F)$ vanishes at some points in $C$ and it happens if $deg(L)$ is large enough. Indeed, since our proof is going to be Galois invariant, we could consider everything over the algebraically closure $\bar{k}$ of $k$. For any point $v$ of $C$, set $J_A(v)$ and $J_B(v)$ the Jordan normal forms of $A(v)$ and $B(v)$: 
\begin{equation*}
J_A(v) = P(v) A(v) P^{-1}(v) \hspace{1.5cm} \text{and} \hspace{1.5cm} J_B(v) = Q(v) B(v) Q^{-1}(v). 
\end{equation*} 
It implies that 
\begin{eqnarray*}
G=\left(\begin{array}{ccc}
P(v) & 0 & 0  \\
0 & Q(v) & 0\\
0 & 0 & (P^{-1}(v))^* 
 \end{array} \right) . T(v) . \left( \begin{array}{ccc}
P^{-1}(v) & 0 & 0 \\
0 & Q^{-1}(v)  & 0\\
0 & 0 & P(v)^* 
 \end{array} \right) &\\
 = \left( \begin{array}{ccc}
J_A(v) & C_1 & D_1 \\
0 & J_B(v) & C_1^*\\
0 & 0 & J_A(v)^* 
 \end{array} \right), \hspace{3cm}
\end{eqnarray*}
for some matrices $C_1$ and $D_1$. Since $A$ and $B$ are regular, their Jordan normal forms are formed by blocks whose corresponding eigenvalues are different. Let we consider the case each $A$ and $B$ only have one eigenvalue, namely, $a$ and $b$ respectively. Then $g_0(X)=(X-b)^{n_0}$ is the characteristic polynomial of $J_B(v)$, and $g_0(T(v))$ is conjugate to (note that regularity is invariant under conjugation)
\begin{eqnarray*}
G_0= \left(\begin{array}{ccc}
(J_A(v)-bI_n)^{n_0} & C_1 & D_1  \\
0 & 0 & C_1^*\\
0 & 0 & (J_A(v)^*-bI_n)^{n_0}
 \end{array} \right).
\end{eqnarray*}
By decomposing the product of two matrices $g(G)=(G-aI_n)^n$ and $G_0$ we will obtain the following matrix:
\begin{eqnarray*}
\left(\begin{array}{ccc}
0 & 0 & D_2  \\
0 & 0 & 0\\
0 & 0 & 0
 \end{array} \right),
\end{eqnarray*}
where $D_2 = \sum_{i=0}^{n-1}(J_A(v)-aI_n)^iD_0(J_A(v)^*-aI_n)^{n-1-i}$ for some $n \times n$ matrix $D_0$. Hence $D_2$ is an upper triangular matrix with the same entries in the diagonal. If $det(D_2) = 0$ then $D_2$ is an upper triangular matrix with zero diagonal, hence it will be killed by $(J_A(v)-aI_n)^{n-1}$. It implies that $G$, and also $T(v)$, is not regular. 

The general case can be treated similarly with careful computations. \end{proof}
By the above lemma, the contribution of $\mathcal{M}_L(k)$ in this case to the average is $0$. 

From now on we only need to consider the case $\mu_i-\mu_{i+1} \leq 2d$ for all $i$ and for all canonical reduction $\mathcal{E}_P$. It will be proven by induction on $t$ that our limit is bounded by $3+f(p)$ where $f(x)$ is a rational function and $\lim\limits_{p\rightarrow \infty}f(p) = 0$. 

\textbf{Case 2:} $ d < \mu_1 \leq 2d+\mu_2$ and $0< \mu_i - \mu_{i+1} \leq 2d$ for all $i>1$. \\
We denote $\mathcal{M'}_{L,P}(k)$ be the subset of $\mathcal{M'}_L(k)$ consisting of triple $(\mathcal{E},L,\alpha)$ where $\mathcal{E}$ is a $G$-torsor which has the canonical reduction at $P$ and satisfies all conditions in this case. Now we apply formulae in section 3.2, for $d$ sufficiently large we have that:
\begin{eqnarray*}
 \dfrac{|\mathcal{M'}_{L,P}(k)|}{|\mathcal{A}_{L}(k)|} = \sum_{\mathcal{E}\in \mathcal{M'}_{L,P}(k)}\dfrac{|H^0(V(\mathcal{E},L))|}{|Aut_G(\mathcal{E})|.|\mathcal{A}_L(k)|} \hspace{7cm} \\
\leq \int\limits_{W_0}\int\limits_{0<\mu_t\leq 2d}\dots \int\limits_{0<\mu_2-\mu_3\leq 2d}\int\limits_{d<\mu_1 \leq 2d+\mu_2} g(\mu_2,\mu_3,\dots, \mu_t, W_0). \frac{h_1}{h_2}(\mu_1,\mu_2,\dots,\mu_t, W_0) d\tau, 
\end{eqnarray*}
where $$g(\mu_2,\mu_3,\dots,\mu_t,W_0) = \frac{|H^0((E_L \times^L Sym^2_0(W')\otimes \mathcal{L}^{\otimes 2})|}{|Aut_{G'}(E_{G'})|.|\mathcal{A'}_\mathcal{L}(k)|}$$ with $W' = V_1^{\bot}/V_1$ and $G'=SO(V_1^{\bot}/V_1),$ thus by induction 
\begin{equation*}
\int\limits_{W_0}\int\limits_{0<\mu_t\leq 2d}\dots \int\limits_{0<\mu_2-\mu_3\leq 2d} g(\mu_2,\mu_3,\dots, \mu_t, W_0)d\tau' \leq 3+f(q).
\end{equation*}
And \footnotesize
\begin{align*}
h_1 = |H^0(Sym^2(X_1)\otimes \mathcal{L}^{\otimes 2})|.|H^0(X_1\otimes W_0\otimes \mathcal{L}^{\otimes 2})|.|H^0(W_0\otimes X_1^*\otimes \mathcal{L}^{\otimes 2})|. |H^0(X_1\otimes X_1^*\otimes \mathcal{L}^{\otimes 2})| \\ \prod_{i=2}^t \Big(|H^0(X_1\otimes X_i\otimes  L^2)|.|H^0(X_1\otimes X_i^*\otimes \mathcal{L}^{\otimes 2})|. |H^0(X_i\otimes X_1^*\otimes \mathcal{L}^{\otimes 2})|.|H^0(X_i^*\otimes X_1^*\otimes \mathcal{L}^{\otimes 2})| \Big),
\end{align*} 
and 
\begin{eqnarray*}
h_2 =  q^{\mathlarger{4d. \big(rank(Sym^2(X_1))+rank(X_1\otimes W_0)+rank(X_1\otimes X_1^*)/2+2.\sum_{i=2}^t rank(X_1 \otimes X_i) \big)} }  \\ 
\times |H^0(\wedge^2(X_1)|.|H^0(X_1\otimes W_0|.(\prod_{i=2}^t(|H^0(X_1\otimes X_i|.|H^0(X_1\otimes X_i^*|)).\hspace{3cm} 
\end{eqnarray*} \normalsize
Now if we fix $X_2,\dots, X_t, W_0$, we can bound the integral \small
\begin{eqnarray*}
A&=&\int\limits_{d<\mu_1 \leq 2d+\mu_2} \frac{h_1}{h_2}(\mu_1,\mu_2,\dots,\mu_t, W_0) dX_1\\
&=& \sum_{d<\mu_1 \leq 2d+\mu_2} \hspace{1cm}\int\limits_{Bun_{\mu_1,r_1}^{ss}(\mathbb{F}_q)} \frac{h_1}{h_2}(X_1,X_2,\dots,X_t, W_0) dX_1 \\
&=& \bigg( \sum_{d<\mu_1\leq 2d-\mu_2} + \sum_{2d-\mu_2 <\mu_1 \leq 2d-\mu_3}+\dots +\sum_{2d-\mu_t<\mu_1\leq 2d} + \sum_{2d<\mu_1\leq 2d+\mu_t} +\dots \\ && +\sum_{2d+\mu_3<\mu_1\leq 2d+\mu_2} \bigg)\int\limits_{Bun_{\mu_1,r_1}^{ss}(\mathbb{F}_q)} \frac{h_1}{h_2}(X_1,X_2,\dots,X_t, W_0) dX_1
\end{eqnarray*} \normalsize
(Note that we could have empty cases in the above division, for example, if $\mu_2>d$ then the case $d<\mu_1\leq 2d-\mu_2$ is empty.)\\
For $d<\mu_1\leq 2d-\mu_2$, by applying the inequality (*), we obtain
\begin{align*}
\int\limits_{Bun_{\mu_1,r_1}^{ss}(\mathbb{F}_q)} \frac{h_1}{h_2}(X_1,X_2,\dots,X_t, W_0) dX_1 \hspace{8cm}\\
\leq  \int\limits_{Bun_{\mu_1,r_1}^{ss}(\mathbb{F}_q)} T.\dfrac{\mathsmaller{|H^0(W_0\otimes X_1^*\otimes \mathcal{L}^{\otimes 2})|\mathlarger{\prod}\limits_{i=2}^t \big(|H^0(X_i\otimes X_1^*\otimes \mathcal{L}^{\otimes 2})|.|H^0(X_i^*\otimes X_1^*\otimes \mathcal{L}^{\otimes 2})|\big)}}{q^{-(2d_1+r_1^2(1-g))}.q^{2d(r_2r_1+\dots +r_tr_1+r_0r_1+r_tr_1+\dots +r_2r_1 +\binom{r_1+1}{2}}}dX_1 \hspace{2.5cm}\\
\text{(T is a constant that is independent to d)} \hspace{5.5cm} \\
\leq  \int\limits_{Bun_{\mu_1,r_1}^{ss}(\mathbb{F}_q)} T'. \dfrac{q^{r_1^2(1-g)}}{q^{2d.\binom{r_1+1}{2}}}dX_1  \hspace{1cm}\text{(T' is a constant that is independent to d)} \hspace{1cm}\\ 
\leq T".q^{-2d.\binom{r_1+1}{2}} \hspace{2cm}\text{(T'' is a constant that is independent to d)} \hspace{2.2cm} 
\end{align*} 
The last inequality is the corollary of the fact that the number of semi-stable vector bundles of any fixed types is bounded by a constant that only depends on $g$ and $m$. Hence 
$$\lim_{d\rightarrow \infty} \sum_{d<\mu_1\leq 2d-\mu_2} \,\, \int\limits_{Bun_{\mu_1,r_1}^{ss}(\mathbb{F}_q)} \frac{h_1}{h_2}(X_1,X_2,\dots,X_t, W_0) dX_1  \leq$$  $$ \leq \lim_{d\rightarrow \infty} \sum_{d<\mu_1\leq 2d-\mu_2} T".q^{-2d.\binom{r_1+1}{2}} = 0.$$
If $2d-\mu_2<\mu_1 \leq 2d-\mu_3 $ then  $2d_1+h^0(X_3\otimes X_1^*\otimes \mathcal{L}^{\otimes 2})+h^0(X_3^* \otimes X_1^*\otimes \mathcal{L}^{\otimes 2}) \leq a + 4d.r_3r_1$ and $|H^0(X_2^*\otimes X_1^* \otimes \mathcal{L}^{\otimes 2})| =1$. Hence, $2d-\mu_2<\mu_1 \leq 2d-\mu_3$ leads to 
$$\int\limits_{Bun_{\mu_1,r_1}^{ss}(\mathbb{F}_q)} \dfrac{h_1}{h_2}(X_1,X_2,\dots,X_t, W_0) dX_1 \hspace{7cm}$$
 $$\leq \int\limits_{Bun_{\mu_1,r_1}^{ss}(\mathbb{F}_q)} T.\dfrac{\mathsmaller{|H^0(W_0\otimes X_1^*\otimes \mathcal{L}^{\otimes 2})|\mathlarger{\prod}\limits_{i=3}^t \big( |H^0(X_i\otimes X_1^*\otimes \mathcal{L}^{\otimes 2})|.|H^0(X_i^*\otimes X_1^*\otimes \mathcal{L}^{\otimes 2})| \big)}}{q^{-(2d_1+r_1^2(1-g))}.q^{2d(r_2r_1+\dots +r_tr_1+r_0r_1+r_tr_1+\dots +r_2r_1 +\binom{r_1+1}{2})}}dX_1 \hspace{2cm}$$
  $$\text{(T is a constant independent to d)}\hspace{7cm}$$
$$\leq  \int\limits_{Bun_{\mu_1,r_1}^{ss}(\mathbb{F}_q)} T'. \dfrac{q^{r_1^2(1-g)}}{q^{2d.(r_2r_1+\binom{r_1+1}{2})}}dX_1 \hspace{1cm} \text{(T' is a constant independent to d)} \hspace{1cm} $$
$$\leq  T".q^{-2d.(r_2r_1+\binom{r_1+1}{2})}\hspace{10.4cm}.$$
So we obtain that
\begin{eqnarray*}
&\lim\limits_{d\rightarrow \infty} \hspace{0.5cm}\mathlarger{\sum}\limits_{2d-\mu_2 <\mu_1\leq 2d-\mu_3} \hspace{1cm} \mathlarger{\int}\limits_{Bun_{\mu_1,r_1}^{ss}(\mathbb{F}_q)} \frac{h_1}{h_2}(X_1,X_2,\dots,X_t, W_0) dX_1 \\ \leq &  \lim\limits_{d\rightarrow \infty} \hspace{0.5cm}\mathlarger{\sum}\limits_{2d -\mu_2 <\mu_1\leq 2d-\mu_3} T".q^{-2d.(r_1r_2+\binom{r_1+1}{2})} = 0
\end{eqnarray*}
Similarly, in the period $2d-\mu_i<\mu_1 \leq 2d-\mu_{i+1}$ for any $i$, we also have that the limit is $0$.

If $2d+\mu_{h+1} < \mu_1 \leq 2d+\mu_h$ for $2 \leq h \leq t$. By using the same arguments as above, we have
\begin{eqnarray*}
&&\int\limits_{Bun_{\mu_1,r_1}^{ss}(\mathbb{F}_q)} \frac{h_1}{h_2}(X_1,X_2,\dots,X_t, W_0) dX_1 \\
& = & \int\limits_{Bun_{\mu_1,r_1}^{ss}(\mathbb{F}_q)} T.\dfrac{q^{2d_1+r_1^2(1-g)+2d.r_2r_1}.\prod_{i=2}^h |H^0(X_i\otimes X_1^*\otimes \mathcal{L}^{\otimes 2})|}{q^{2d(r_2r_1+\dots +r_tr_1+r_0r_1+r_tr_1+\dots +r_2r_1 +\binom{r_1+1}{2}}}dX_1 \\
& &\text{(T is a constant independent to d)} \\
&\leq & \int\limits_{Bun_{\mu_1,r_1}^{ss}(\mathbb{F}_q)} T'. \dfrac{q^{r_1^2(1-g)+2d_1+h^0(X_h\otimes X_1^* \otimes \mathcal{L}^{\otimes 2})}}{q^{2d.(r_hr_1+\dots +r_tr_1+r_0r_1+r_tr_1+\dots r_2r_1+ \binom{r_1+1}{2})}}dX_1 \\
&& \text{(T' is a constant independent to d)} \\
&\leq & T".\dfrac{q^{2r_1\mu_1+r_hr_1(\mu_h-\mu_1)}}{q^{2d.(r_{h+1}r_1+\dots +r_tr_1+r_0r_1+r_tr_1+\dots r_2r_1+ \binom{r_1+1}{2})}} \hspace{2cm}\text{(Relative trace formula)} \\
&&  \hspace{2cm}\text{($T"$ is independent to $d$)} \\
& \leq & T".\dfrac{(\mu_1-\mu_h)(2r_1-r_hr_1)+2r_1\mu_h}{q^{2d.(r_{h+1}r_1+\dots +r_tr_1+r_0r_1+r_tr_1+\dots r_2r_1 + \binom{r_1+1}{2})}} \\
& \leq & T".\dfrac{2dr_1+2r_1.2d.(t-h+1)}{q^{2d.((2t-h)r_1+ \binom{r_1+1}{2})}} \\
& \leq & T".\dfrac{2dr_1(2t-2h+3}{q^{2dr_1(2t-h+1)}} \\
& = & T".q^{-2dr_1(h-2)}.
\end{eqnarray*}
By looking at the above inequalities, we can imply that our limit will be $0$ when $h>2$, or $r_i > 1$ for some $i$, or $h=2$ and $\mu_i-\mu_{i+1} \leq d$ for some $i \leq t$. Now we will see what happen if $h=2$, $r_i=1$ for all $i$, and $d_i-d_{i+1} > d$ for any $i \leq t$. Notice that in this case our filtration of vector bundle $E \times^G W$ is split, thus we actually have the following equality:
\begin{eqnarray*}
&A=&\sum_{d+d_2 < d_1 \leq 2d+d_2} \int_{Bun_{d_1,1}(C)(\mathbb{F}_q)} \frac{h_1}{h_2}(X_1,X_2,\dots,X_{n}, W_0) dX_1 \\
&=& \sum_{d+d_2 < d_1 \leq 2d+d_2} \int_{Bun_{d_1,1}(C)(\mathbb{F}_q)} \dfrac{q^{2d_1+(1-g)+h^0(X_2\otimes X_1^*\otimes \mathcal{L}^{\otimes 2})}}{q^{4dn}} dX_1 \\
& \leq & \sum_{d+d_2 < d_1 \leq 2d+d_2} T.\dfrac{q^{2d_1 + 2d +d_2-d_1}}{q^{4dn}} \hspace{3cm}\text{($T$ is only depended on $g$)}\\
& \leq & T.\dfrac{q^{2d+2d_2+1}}{(q-1)q^{4dn-2d}} \\
& = & T.\dfrac{q^{2d_2+1}}{(q-1)q^{4dn-4d}}
\end{eqnarray*}
If $d_2 <2d(n-1)$ then our sum $A$ will be bounded by $T/((q-1)q)$, and if $d_2=2d(n-1)$ then $A$ will be bounded by a constant which is only depend on $g$. In fact, that constant equals to $1$ and reflects the Kostant section. We will see it in the following proposition:
\begin{proposition} 
If $r_i =1$ for all $i$ (i.e $t=n$), $d_i=2d(n-i+1)$ for $1 \leq i \leq n$ and we have isomorphisms: $E_L \times^L (X_{i+1} \otimes X_i^*) \cong E_L \times^L X_n^* \cong \mathcal{L}^{-2}$ for $1 \leq i \leq n-1$, then any sections of $(E_L \times^L Sym_0^2(W)) \otimes \mathcal{L}^{\otimes 2}$ factor through the Kostant section. In particular, any sections of $(E_L \times^L Sym_0^2(W)) \otimes \mathcal{L}^{\otimes 2}$ in this case are regular and they contribute to the average number $1$.
\end{proposition}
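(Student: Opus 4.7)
The plan is to realize every regular global section $T$ of $(\mathcal{E}_L \times^L Sym_0^2(W)) \otimes \mathcal{L}^{\otimes 2}$ as the Kostant lift $T_f$ of its own characteristic polynomial $f$, from which factoring through $\kappa$, everywhere-regularity, and the contribution $1$ all follow. First, since $d_i - d_{i+1} = 2d > 2g-2$ for every $i$ once $d$ is large, Proposition \ref{splitting when large unstability}(a) applies inductively to give the complete splitting
\begin{equation*}
\mathcal{E} \times^G W \cong X_1 \oplus X_2 \oplus \cdots \oplus X_n \oplus \mathcal{O}_C \oplus X_n^* \oplus \cdots \oplus X_1^*.
\end{equation*}
The hypotheses $X_{i+1} \otimes X_i^* \cong \mathcal{L}^{-2}$ and $X_n^* \cong \mathcal{L}^{-2}$ then force $X_i \cong \mathcal{L}^{\otimes 2(n-i+1)}$. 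With respect to this ordered basis a direct computation shows that the entry $T_{ij}$ of $T$ lies in $H^0(C, \mathcal{L}^{\otimes 2(j-i+1)})$: it vanishes for $j < i-1$ (negative-degree line bundle), it is a scalar constant $c_i \in k$ for $j = i-1$ (trivial line bundle), and it is a section of a positive-degree line bundle for $j \geq i$. The self-adjointness $T = T^*$ identifies $T_{ij}$ with $T_{2n+2-j,\,2n+2-i}$ and in particular pairs the sub-diagonal constants in $n$ independent pairs.

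I then use $Aut_G(\mathcal{E})$ to normalize $T$. The Levi torus $Aut_L(\mathcal{E}_L) \cong \mathbb{G}_m^n$ rescales each $X_i$ by $\lambda_i$, and the induced action on the $n$ independent sub-diagonal constants is given by independent ratios of the $\lambda_i$. Since $T$ is upper-Hessenberg with respect to the chosen basis, the Example preceding the regularity remark shows that $T$ is regular at every point of $C$ exactly when each $c_i$ is nonzero; for regular $T$ I therefore rescale so that $c_i = 1$ for all $i$. The unipotent subgroup $Aut_G(\mathcal{E})^+$ of Proposition \ref{splitting when large unstability}(c), whose Lie algebra is the direct sum of $H^0$'s of the positive-slope line bundle summands of $\mathcal{E} \times^G \wedge^2 W$, then acts on the remaining upper-triangular data by conjugation; I use it, one diagonal at a time from the anti-diagonal outward, to kill every entry of $T$ outside the Kostant block $B_f$. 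At each step the tangent-level map between the relevant spaces of global sections is an isomorphism by degree comparison and the vanishing of $H^1$ of appropriate positive-degree line bundles.

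After this normalization $T$ coincides with $T_f$ for $f$ equal to its characteristic polynomial, giving the factoring $C \to [S/\mathbb{G}_m] \xrightarrow{\kappa} [V/G \times \mathbb{G}_m]$; regularity at every point follows because the Kostant matrices themselves are regular. The procedure sets up a bijection between $Aut_G(\mathcal{E})$-orbits of regular sections and tuples $(a_2,\ldots,a_{2n+1}) \in \prod_{i=2}^{2n+1} H^0(C, \mathcal{L}^{\otimes 2i}) = \mathcal{A}_{\mathcal{L}}(k)$. Because the Kostant representative has trivial $G$-stabilizer on the regular locus, each orbit has size exactly $|Aut_G(\mathcal{E})|$, the total count of regular sections equals $|Aut_G(\mathcal{E})| \cdot |\mathcal{A}_{\mathcal{L}}(k)|$, and the contribution to the weighted average is precisely $1$. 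The main obstacle is the unipotent clearing step: one must check that at each diagonal the corresponding linear map from $\mathrm{Lie}(Aut_G(\mathcal{E})^+)$ to the space of excess sections is an isomorphism, which amounts to confirming the vanishing of the relevant $\mathrm{Ext}^1$'s via Serre duality applied to the precise numerical hypothesis $\deg X_i = 2d(n-i+1)$.
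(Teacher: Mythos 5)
Your proposal follows essentially the same route as the paper: split $\mathcal{E}\times^G W$ into line bundles using the slope gaps, observe that the degree count forces the matrix of a section to vanish below the first subdiagonal with constant subdiagonal entries, normalize those constants by the Levi torus, and then clear the remaining upper-triangular entries by unipotent conjugation into the Kostant form (the paper carries out this last step with explicit matrices $C$, $D$, \dots rather than your tangent-level/$H^1$-vanishing argument, but the content is the same). One small caution: the stabilizer of a regular element is isomorphic to $J[2]$ rather than being trivial, so the final orbit count giving contribution $1$ should rest on $E_\alpha[2](C)=0$ (which the paper handles separately) rather than on triviality of the $G$-stabilizer.
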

\begin{proof}
First of all, by the hypothesis and if we suppose that $deg (\mathcal{L}) >> 0$, then any global sections in $H^0(C,(E_L \times^L Sym_0^2(W)) \otimes \mathcal{L}^{\otimes 2})$ can be seen in matrix form whose coefficients are sections of line bundles. Since $H^0(C, \mathcal{O}_C) = k^*$, it is easy to see that all of entries in the lower triangular part equals zero except the first diagonal below the main diagonal. Now we will show that this kind of matrix is conjugate to the Kostant section by an element in $G(k)$. \\
Let we denote $A$ one of our considering matrix, then firstly we will try to transform all of entries in the first diagonal below the main diagonal to $1$. That can be done as follows: 
\footnotesize
\[ diag(a_1,\dots, a_n,1,a_n^{-1}, \dots,a_1^{-1}).
\left( \begin{array}{cccccc}
* & * & \cdots & * & * & *  \\
x_1 & * & \cdots & * & * & *  \\
0 & x_2 & \cdots & * & * & * \\
\vdots & \vdots & \ddots & \vdots & \vdots & \vdots  \\
0 & 0 & \cdots & x_2 & * & *  \\
0 & 0 & \cdots & 0 & x_1 & * 
 \end{array} \right).diag(a_1^{-1},\dots, a_n^{-1},1,a_n,\dots,a_1)\]
 \[=\left( \begin{array}{cccccc}
* & * & \cdots & * & * & *  \\
a_1^{-1}a_2x_1 & * & \cdots & * & * & *  \\
0 & a_2^{-1}a_3x_2 & \cdots & * & * & * \\
\vdots & \vdots & \ddots & \vdots & \vdots & \vdots  \\
0 & 0 & \cdots & a_2^{-1}a_3x_2 & * & *  \\
0 & 0 & \cdots & 0 & a_1^{-1}a_2x_1 & * 
 \end{array} \right)\]  \normalsize
 Now we can finish this step by taking $a_n :=x_n ; a_{n-1}:=x_nx_{n-1}; \dots; a_1 :=x_nx_{n-1}\dots x_1.$ \\
 In the second step we will try to transform the main diagonal into the zero diagonal. We consider the following matrix in $G$:
 \[ C=\left( \begin{array}{cccccc}
1 & a_1 & a_2 & \cdots & a_{2n-1} & b  \\
0 & 1 & 0 & \cdots & 0 & -a_{2n-1}   \\
0 & 0 & 1 & \cdots & 0 & -a_{2n-2} \\
\vdots & \vdots & \vdots & \ddots &  \vdots & \vdots  \\
0 & 0 & 0 & \cdots  & 1 & -a_1  \\
0 & 0 & 0 & \cdots  & 0 & 1 
 \end{array} \right) \]
 where $b=-\dfrac{1}{2}\sum_{i=1}^{2n-1}a_ia_{2n-i}$. Then we can choose $a_i$ such that 
\[ C . \left( \begin{array}{cccccc}
* & * & \cdots & * & * & *  \\
1 & * & \cdots & * & * & *  \\
0 & 1 & \cdots & * & * & * \\
\vdots & \vdots & \ddots & \vdots & \vdots & \vdots  \\
0 & 0 & \cdots & 1 & * & *  \\
0 & 0 & \cdots & 0 & 1 & * 
 \end{array} \right) . C^{-1} = \left( \begin{array}{cccccc}
0 & 0 & \cdots & 0 & * & *  \\
1 & * & \cdots & * & * & *  \\
0 & 1 & \cdots & * & * & 0 \\
\vdots & \vdots & \ddots & \vdots & \vdots & \vdots  \\
0 & 0 & \cdots & 1 & * & 0  \\
0 & 0 & \cdots & 0 & 1 & 0 
 \end{array} \right) = E. \]
 
Similarly, by considering a matrix of the form
\[ D=\left( \begin{array}{cccccccc}
1 & 0 & 0 & 0 & \cdots & 0 & 0 & 0 \\
0 & 1 & a_1 & a_2 & \cdots & a_{2n-3} & b & 0 \\
0 & 0 & 1 & 0 & \cdots & 0 & -a_{2n-3} & 0  \\
0 & 0 & 0 & 1 & \cdots & 0 & -a_{2n-4} & 0 \\
\vdots & \vdots & \vdots & \vdots & \ddots &  \vdots & \vdots & \vdots \\
0 & 0 & 0 & 0 & \cdots  & 1 & -a_1 & 0  \\
0 & 0 & 0 & 0 & \cdots  & 0 & 1 & 0 \\
0 & 0 & 0 & 0 & \cdots  & 0 & 0 & 1
 \end{array} \right) \]
 we can choose $a_i$ such that $D.E.D^{-1}$ has the desirable second row. By doing the same way, after $n$ steps we will obtain a matrix that belongs to our Kostant section.
\end{proof}
\textbf{Case 3:} $d-g-1\leq \mu_1 \leq d$. In this case, the contribution will be zero since
\begin{eqnarray*}
&&2d_1+h^0(W_0\otimes X_1^*\otimes \mathcal{L}^{\otimes 2})+ h^0(Sym^2(X_1^*)\otimes \mathcal{L}^{\otimes 2})+\sum_{i=2}^t \bigg( h^0(X_i\otimes X_1^*\otimes \mathcal{L}^{\otimes 2}) \\ && \hspace{10cm} +h^0(X_i^*\otimes X_1^*\otimes \mathcal{L}^{\otimes 2})\bigg) \\
&\leq & h + (1-r_1)d_1 - r_0d_1 +2d \bigg(r_2r_1+\dots + r_tr_1 +r_0r_1+r_tr_1+\dots + r_2r_1+ \\ && \hspace{11cm}+\binom{r_1+1}{2}\bigg)  \\
&& \text{(where $h$ is a constant independent to $d$)}\\
&\leq & h-r_0d_1 +2d \bigg(r_2r_1+\dots + r_tr_1 +r_0r_1+r_tr_1+\dots + r_2r_1+\binom{r_1+1}{2}\bigg). 
\end{eqnarray*}
Hence
\begin{eqnarray*}
& \lim_{d\rightarrow \infty} \sum_{d-g+1 \leq \mu_1 \leq  d} \int_{Bun_{\mu_1,r_1}^{ss}(\mathbb{F}_q)} \frac{h_1}{h_2}(X_1,X_2,\dots,X_t, W_0) dX_1  \\ \leq  & \lim_{d\rightarrow \infty} \sum_{(d-g+1)r_1 \leq d_1\leq dr_1} q^{h - d_1} = 0.
\end{eqnarray*}
\textbf{Case 4:} $ 0< \mu_1 < d-g-1$ or $E$ is semi-stable. In this case we will obtain the constant $2$. In fact, by applying Riemann-Roch theorem to every summands of the vector bundle $V(E, \mathcal{L})$, we see that when $d=deg(\mathcal{L})$ is large enough then
\begin{equation*}
h^0(C, V(E, \mathcal{L})) = (4n^2+6n)d + (2n^2+3n)(1-g).
\end{equation*} 
Now for $G=SO(W)$ a split group over $\mathbb{F}_q$ and $K=\mathbb{F}_q(C)$, we write $\mu_G$ for the right invariant Haar measure on $G(\mathbb{A}_K)$ which gives measure 1 to the open compact subgroup $G(\hat{\mathcal{O}}_K) \subset G(\mathbb{A}_K).$ Since $G$ is semisimple, there exists the Tamagawa measure $\tau_G$ on $G(\mathbb{A}_K)$ and it can be defined by
\begin{equation}
\tau_G=  q^{(2n^2+n)(1-g)}. \bigg[ \prod_{x\in C} \int_{G(\mathcal{O}_{K_v})}|\omega_G|_x \bigg] \mu_G,
\end{equation}
where $\omega_G$ is a non-vanishing invariant differential form of top degree on $G$. We can see that $\tau_G$ is independent to the choice of $\omega_G$ (by Product formula). On the other hand, since $G$ is split, we can choose $\omega_G$ satisfying $[\kappa(x): \mathbb{F}_q].v_x(\omega) =1$ for all $x \in C$. As a result, we obtain the following equality:
\begin{equation}
\mu_{x,\omega}(G(\mathcal{O}_{K_v})):= \int_{G(\mathcal{O}_{K_v})}|\omega_G|_x \mu_G = \frac{|G(\kappa(x))|}{|\kappa(x)|^{dim(G)}}.
\end{equation}
Note that the quotient $G(K) \backslash G(\mathbb{A})$ carries a right action of the compact group $G(\hat{\mathcal{O}}_K)$. We may therefore write $G(K) \backslash G(\mathbb{A})$ as a union of orbits, indexed by the collection of double cosets
\begin{equation*}
G(K) \backslash G(\mathbb{A}/G(\hat{\mathcal{O}}_K)).
\end{equation*}
Applying the formulas (1) and (2), we calculate
\begin{eqnarray*}
\tau_G(G(K) \backslash G(\mathbb{A})) &=& \sum_{\gamma} \frac{\tau(G(\hat{\mathcal{O}}_K))}{|G(\hat{\mathcal{O}}_K) \cap \gamma^{-1}G(K)\gamma|} \\
&=& q^{(2n^2+n)(1-g)}(\prod_{x\in C} (\frac{|G(\kappa(x))|}{|\kappa(x)|^{dim(G)}}) \sum_{\gamma} \frac{1}{|G(\hat{\mathcal{O}}_K) \cap \gamma^{-1}G(K)\gamma|} \\
&=& q^{(2n^2+n)(1-g)}(\prod_{x\in C} (\frac{|G(\kappa(x))|}{|\kappa(x)|^{dim(G)}}) \sum_{\mathcal{P} \in Bun_G(C)(\mathbb{F}_q} \frac{1}{|Aut_G(\mathcal{P})|}.
\end{eqnarray*}
Now if we consider the counting measure weighted by the size of automorphism group on $Bun_G(C)$ then by using the fact that the Tamagawa number $\tau(G):= \tau_G(G(K) \backslash G(\mathbb{A})) $ of $G$ is equal to $2$, we have that
\begin{equation*}
|Bun_G(C)(\mathbb{F}_q)| = 2. q^{(2n^2+n)(1-g)}. \zeta_C(2)^{-1}.\zeta_C(4)^{-1}\cdots \zeta_C(2n)^{-1}.
\end{equation*} 
Come back to the case $ 0< \mu_1 < d-g-1$ or $E$ is semi-stable ($\mu_1 =0$), the average number can be computed as follows: 
\begin{eqnarray*}
&&\lim_{d \rightarrow \infty} \dfrac{\mathlarger{\int}_{Bun_G^{\mu_1<d-g-1}(\mathbb{F}_q)}|\mathcal{M}_{L,E}(k)|dE}{\mathcal{A}_L(k)} \\
&=& \lim_{d \rightarrow \infty} \dfrac{\mathlarger{\int}_{Bun_G^{\mu_1<d-g-1}(\mathbb{F}_q)}|H^0(C, V(E, L)^{reg}|dE}{\mathlarger{\prod}_{i=2}^{2n+1}|H^0(C, L^{\otimes 2i})|} \\
&=& \lim_{d \rightarrow \infty} \dfrac{|H^0(C, V(E,L))|\mathlarger{\int}_{Bun_G^{\mu_1<d-g-1}(\mathbb{F}_q)}\dfrac{|H^0(C, V(E, L)^{reg}|}{|H^0(C, V(E,L))|}dE}{q^{2n(2n+3)d+2n(1-g)}} \\
&=&  \lim_{d \rightarrow \infty} \dfrac{q^{(4n^2+6n)d+(2n^2+3n)(1-g)}\mathlarger{\int}_{Bun_G^{\mu_1<d-g-1}(\mathbb{F}_q)}(\prod_{i=1}^{2n}\zeta_C(2i)^{-1})dE}{q^{2n(2n+3)d+2n(1-g)}} \\
&=& \lim_{d \rightarrow \infty} q^{(2n^2+n)(1-g)}\mathlarger{\int}_{Bun_G^{\mu_1<d-g-1}(\mathbb{F}_q)}(\prod_{i=1}^{2n}\zeta_C(2i)^{-1})dE \\
&=& \lim_{d \rightarrow \infty} q^{(2n^2+n)(1-g)}|Bun_G^{\mu_1<d-g-1}(\mathbb{F}_q)|\prod_{i=1}^{2n}(\zeta_C(2i)^{-1}) \\
&=&q^{(2n^2+n)(1-g)}|Bun_G(\mathbb{F}_q)|\prod_{i=1}^{2n}(\zeta_C(2i)^{-1}) \\
&=&2
\end{eqnarray*}
\subsubsection{Average size of 2-Selmer groups}
Our main theorem can be proved as follows:
\begin{theorem}
Suppose that $q> 4^{2n+1}$, then 
\begin{equation*}
\limsup_{deg(\mathcal{L}) \rightarrow \infty}\dfrac{\mathlarger{\sum}\limits_{\substack{\alpha \in [S/\mathbb{G}_m](C) \\ \mathcal{L}(H_\alpha) \cong \mathcal{L}}} |Sel_2(E_\alpha)|}{\mathlarger{\sum}\limits_{\substack{\alpha \in [S/\mathbb{G}_m](C) \\ \mathcal{L}(H_\alpha)\cong \mathcal{L}}} 1} \leq 3 + f(q)
\end{equation*}
where $\lim_{q \rightarrow \infty} f(q) = 0$.
\label{main theorem 1}
\end{theorem}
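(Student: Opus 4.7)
The plan is to combine the reduction from $2$-Selmer groups to $2$-torsion torsors with the case analysis carried out in Section \ref{couting section}. First I would apply Proposition \ref{compare1} to obtain $|Sel_2(E_\alpha)| \leq |H^1(C, E_\alpha[2])|$ whenever $E_\alpha[2](K) = 0$, and invoke Lemma \ref{large char contributes 0}: under the hypothesis on $q$, the contribution of those $\alpha$ with $E_\alpha[2](C) \neq 0$ to the weighted average vanishes in the limit. This reduces the statement to the inequality
$$\limsup_{\deg(\mathcal{L}) \to \infty} \frac{|\mathcal{M}_\mathcal{L}(k)|}{|\mathcal{A}_\mathcal{L}(k)|} \leq 3 + f(q),$$
with $f(q) \to 0$, where $\mathcal{M}_\mathcal{L}(k)$ classifies isomorphism classes of $E_\alpha[2]$-torsors over $C$ for $\alpha$ with $\mathcal{L}(H_\alpha) \cong \mathcal{L}$.

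Next I would translate the problem through the Vinberg-theoretic identification $\mathcal{M} \cong \mathrm{Hom}(C, [V^{reg}/G \times \mathbb{G}_m])$ coming from Proposition \ref{can1}, so that a point of $\mathcal{M}_\mathcal{L}(k)$ is a pair $(\mathcal{E}, s)$ where $\mathcal{E}$ is a principal $G$-bundle and $s \in H^0(C, (V^{reg} \times^G \mathcal{E}) \otimes \mathcal{L}^{\otimes 2})$. Stratifying by the canonical reduction of $\mathcal{E}$ to a parabolic $P$ with Levi $L = GL(n_1) \times \cdots \times GL(n_t) \times SO(2m+1)$, one writes
$$\frac{|\mathcal{M}_\mathcal{L}(k)|}{|\mathcal{A}_\mathcal{L}(k)|} \;=\; \sum_{P} \sum_{\mathcal{E}_P} \frac{|H^0(C, V^{reg}(\mathcal{E}, \mathcal{L}))|}{|\mathrm{Aut}_G(\mathcal{E})| \cdot |\mathcal{A}_\mathcal{L}(k)|},$$
and then bound each summand using the density calculation Proposition \ref{general locus 1}, the semistable filtration of $V(\mathcal{E}, \mathcal{L})$ associated to the canonical reduction, and the dimensional estimates of Proposition \ref{global sections of semistable} and Proposition \ref{splitting when large unstability}.

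The heart of the proof is the four-case analysis of Section \ref{couting section}, which I would assemble as follows. Case 1 (some slope gap $\mu_i - \mu_{i+1} > 2d$) contributes $0$ because sections forced into the block-triangular shape of that Lemma fail to be regular at every point. Case 2 (a narrow band $d < \mu_1 \leq 2d + \mu_2$) contributes $0$ unless the canonical reduction is to a Borel with $r_i = 1$ and the slopes match the arithmetic progression $d_i = 2d(n-i+1)$; in that exceptional subcase every global section is conjugate to a Kostant section, contributing exactly $1$. Case 3 (transition window $d-g-1 \leq \mu_1 \leq d$) contributes $0$ by a direct Riemann-Roch dimension count. Finally Case 4 ($0 \leq \mu_1 < d-g-1$, including the semistable locus) is handled by combining Riemann-Roch, the density $\prod_i \zeta_C(2i)^{-1}$ from Proposition \ref{general locus 1}, and the Tamagawa number identity $\tau(SO(2n+1)) = 2$, yielding the contribution $2$. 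Adding gives the bound $1 + 2 = 3$, plus the vanishing error $f(q)$ inherited from Lemma \ref{large char contributes 0}.

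The main obstacle is Case 2: one must verify that, after reducing to the Borel with the specific slope profile $d_i = 2d(n-i+1)$, every section of $(\mathcal{E}_L \times^L Sym^2_0(W)) \otimes \mathcal{L}^{\otimes 2}$ is $G(k)$-conjugate to the Kostant section, not merely bounded by it. This is the content of the Proposition preceding Case 3, and it requires an explicit inductive conjugation argument using unipotent elements to successively clear the main diagonal. A secondary difficulty is controlling the exponentially many subintervals $2d + \mu_{h+1} < \mu_1 \leq 2d + \mu_h$ in Case 2 uniformly in $\deg(\mathcal{L})$; the trace-formula-type estimate needed there is the technical point that ultimately forces the hypothesis on $q$ and produces the error term $f(q)$ vanishing as $q \to \infty$.
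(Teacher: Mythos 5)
Your proposal reproduces the paper's argument essentially step for step: the reduction via Proposition \ref{compare1} and Lemma \ref{large char contributes 0} to the stacky ratio $|\mathcal{M}_{\mathcal{L}}(k)|/|\mathcal{A}_{\mathcal{L}}(k)|$, the Vinberg reinterpretation, and the four-case canonical-reduction analysis of Section \ref{couting section} contributing $1$ (Kostant/Borel case) plus $2$ (Tamagawa number) are exactly the paper's route. The only slip is attributing $f(q)$ to Lemma \ref{large char contributes 0} (whose contribution is exactly $0$ in the limit and which instead forces the hypothesis on $q$); as you correctly note later, $f(q)$ actually arises from the residual $O(1/q)$ bounds in Case 2 of the counting section.
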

\begin{proof}
The theorem is the corollary of what we have done so far. In fact
\begin{eqnarray*}
&&\limsup_{deg(\mathcal{L}) \rightarrow \infty}\dfrac{\mathlarger{\sum}\limits_{\substack{\alpha \in [S/\mathbb{G}_m](C) \\ \mathcal{L}(H_\alpha) \cong \mathcal{L}}} |Sel_2(E_\alpha)|}{\mathlarger{\sum}\limits_{\substack{\alpha \in [S/\mathbb{G}_m](C) \\ \mathcal{L}(H_\alpha)\cong \mathcal{L}}} 1} \\
&=& \limsup_{deg(\mathcal{L}) \rightarrow \infty}\dfrac{\mathlarger{\sum}\limits_{\substack{\alpha \in [S/\mathbb{G}_m](C) \\ \mathcal{L}(H_\alpha) \cong \mathcal{L} \\ E_{\alpha[2](K) = \{ 0\}}}} |Sel_2(E_\alpha)|+\mathlarger{\sum}\limits_{\substack{\alpha \in [S/\mathbb{G}_m](C) \\ \mathcal{L}(H_\alpha) \cong \mathcal{L} \\ E_{\alpha[2](K) \neq \{ 0\}}}} |Sel_2(E_\alpha)|}{H^0(C, \mathcal{L}^4 \oplus \mathcal{L}^6 \oplus \cdots \oplus \mathcal{L}^{4n+2})} \\
&\leq & \limsup_{deg(\mathcal{L}) \rightarrow \infty}\dfrac{|\mathcal{M}_{\mathcal{L}}(k)|+\frac{2^{2n-1}-1}{2^{2n-1}}\mathlarger{\sum}\limits_{\substack{\alpha \in [S/\mathbb{G}_m](C) \\ \mathcal{L}(H_\alpha) \cong \mathcal{L} \\ E_{\alpha[2](K) \neq \{ 0\}}}} |Sel_2(E_\alpha)|}{H^0(C, \mathcal{L}^4 \oplus \mathcal{L}^6 \oplus \cdots \oplus \mathcal{L}^{4n+2})}  \hspace{1cm} \text{(by Proposition \ref{compare1})}\\
&= & \limsup_{deg(\mathcal{L}) \rightarrow \infty}\dfrac{|\mathcal{M}_{\mathcal{L}}(k)|}{|\mathcal{A}_{\mathcal{L}}(k)|} \hspace{3cm} \text{(by Lemma \ref{large char contributes 0})}\\
& \leq & 3 + f(q)  \hspace{5cm}\text{(by section \ref{couting section})}
\end{eqnarray*}
\end{proof}
From the above theorem, the following corollary is immediate.
\begin{corollary}
Suppose that $q> 4^{2n+1}$, then 
\begin{equation*}
\limsup_{d \rightarrow \infty}\dfrac{\mathlarger{\sum}\limits_{\substack{\alpha \in [S/\mathbb{G}_m](C) \\ deg(\mathcal{L}(H_\alpha))\leq d \\ }} |Sel_2(E_\alpha)|}{\mathlarger{\sum}\limits_{\substack{\alpha \in [S/\mathbb{G}_m](C) \\ deg(\mathcal{L}(H_\alpha))\leq d \\ }} 1} \leq 3 +f(q),
\end{equation*}
where $\lim_{q \rightarrow \infty} f(q) = 0$.
\end{corollary}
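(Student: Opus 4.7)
The plan is to deduce the corollary from Theorem \ref{main theorem 1} by a standard Ces\`aro-type stratification argument on $\deg \mathcal{L}$. Write
\begin{equation*}
N(\mathcal{L}) = \sum_{\substack{\alpha \in [S/\mathbb{G}_m](C) \\ \mathcal{L}(H_\alpha) \cong \mathcal{L}}} |Sel_2(E_\alpha)|, \qquad D(\mathcal{L}) = \sum_{\substack{\alpha \in [S/\mathbb{G}_m](C) \\ \mathcal{L}(H_\alpha) \cong \mathcal{L}}} 1,
\end{equation*}
so that the ratio appearing in the corollary equals $R(d) := \bigl(\sum_{\deg \mathcal{L} \leq d} N(\mathcal{L})\bigr) \big/ \bigl(\sum_{\deg \mathcal{L} \leq d} D(\mathcal{L})\bigr)$. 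For each fixed $\mathcal{L}$, the set of minimal tuples $(\mathcal{L}, \underline{a})$ with $\Delta(\underline{a}) \neq 0$ is a finite subset of the finite set $H^0(C, \mathcal{L}^{\otimes 4} \oplus \cdots \oplus \mathcal{L}^{\otimes 4n+2})$ and every $2$-Selmer group is finite, so both $N(\mathcal{L})$ and $D(\mathcal{L})$ are finite.

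The key input is the growth of the denominator. Riemann--Roch applied to each summand of $\mathcal{L}^{\otimes 4} \oplus \cdots \oplus \mathcal{L}^{\otimes 4n+2}$ shows that, once $\deg \mathcal{L}$ exceeds a threshold depending only on $g$ and $n$,
\begin{equation*}
|H^0(C, \mathcal{L}^{\otimes 4} \oplus \cdots \oplus \mathcal{L}^{\otimes 4n+2})| = q^{(4n^2+6n)\deg \mathcal{L} + (2n^2+3n)(1-g)}.
\end{equation*}
Combined with the earlier fiber-size formula $|\phi_\mathcal{L}^{-1}(H,O)| = |\mathbb{F}_q^{\times}|/|Aut(H,O)|$ and with finiteness of the class number of $C$, this yields $\sum_{\deg \mathcal{L} = e} D(\mathcal{L}) \asymp q^{(4n^2+6n)e}$ for $e$ large. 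In particular $\sum_{\deg \mathcal{L} \leq d} D(\mathcal{L})$ grows geometrically in $d$, while any fixed truncation $\sum_{\deg \mathcal{L} \leq E} D(\mathcal{L})$ and the corresponding truncation $C_E := \sum_{\deg \mathcal{L} \leq E} N(\mathcal{L})$ of the numerator remain bounded constants independent of $d$.

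Fix $\epsilon > 0$. By Theorem \ref{main theorem 1} there exists $E = E(\epsilon)$ such that $N(\mathcal{L}) \leq (3 + f(q) + \epsilon)\, D(\mathcal{L})$ for every $\mathcal{L}$ with $\deg \mathcal{L} > E$. Splitting the numerator of $R(d)$ at $E$ gives
\begin{equation*}
\sum_{\deg \mathcal{L} \leq d} N(\mathcal{L}) \leq C_E + (3 + f(q) + \epsilon) \sum_{\deg \mathcal{L} \leq d} D(\mathcal{L}),
\end{equation*}
so dividing through by $\sum_{\deg \mathcal{L} \leq d} D(\mathcal{L})$ and passing to $\limsup_{d \to \infty}$ produces $\limsup_{d \to \infty} R(d) \leq 3 + f(q) + \epsilon$; letting $\epsilon \downarrow 0$ yields the corollary. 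The only technical obstacle beyond Theorem \ref{main theorem 1} is the geometric growth of the denominator stratified by $\deg \mathcal{L}$, which rests on Riemann--Roch and the explicit description of the fibers of $\phi_\mathcal{L}$; this is essentially bookkeeping rather than a new analytic input, so I expect no genuine difficulty.
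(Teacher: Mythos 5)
Your argument is correct and matches the paper's (implicit) route: the paper simply declares the corollary ``immediate'' from Theorem \ref{main theorem 1}, and your Ces\`aro-type splitting at a degree threshold $E(\epsilon)$, together with the geometric growth of $\sum_{\deg\mathcal{L}\le d} D(\mathcal{L})$ from Riemann--Roch, is exactly the standard bookkeeping that justifies that claim.
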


Now if we compute the average over the range that $H_\alpha$ is regular, we will have a better bound for the average. Indeed, we are going to show that the bound in this case is $3$, and it is due to the fact that there is no regular universal family in the case 2 in section \ref{couting section}). To prove it we need two lemmas:
\begin{lemma}
Let $(A,m)$ be a regular Noetherian local ring and $f$ is an element of $m\backslash \{0\}$. Then $A/(f)$ is regular if and only if $f \notin m^2$.
\end{lemma}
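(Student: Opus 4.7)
The plan is to use the standard characterization of regular local rings via embedding dimension and exploit the good behavior of regular local rings under quotients by a single element.

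First, I will record the basic setup. Let $d = \dim A$, and let $k = A/\mathfrak{m}$ be the residue field. Since $A$ is a regular Noetherian local ring, $A$ is in particular an integral domain (by the Auslander--Buchsbaum theorem, or because the associated graded ring is a polynomial ring), so the nonzero element $f$ is a nonzerodivisor. Krull's principal ideal theorem then gives $\dim A/(f) = d-1$. So proving that $A/(f)$ is regular is equivalent to proving that its embedding dimension $\dim_k \mathfrak{m}_{A/(f)}/\mathfrak{m}_{A/(f)}^{2}$ equals $d-1$.

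Next I will compute the embedding dimension of $A/(f)$ directly. Its maximal ideal is $\bar{\mathfrak{m}} = \mathfrak{m}/(f)$, and the quotient $\bar{\mathfrak{m}}/\bar{\mathfrak{m}}^{2}$ is canonically identified with $\mathfrak{m}/\bigl(\mathfrak{m}^{2}+(f)\bigr)$. Thus the embedding dimension of $A/(f)$ equals
\[
\dim_{k}\,\mathfrak{m}/(\mathfrak{m}^{2}+(f)).
\]
Now I split into the two cases. If $f \in \mathfrak{m}^{2}$, then $\mathfrak{m}^{2}+(f) = \mathfrak{m}^{2}$, so the embedding dimension of $A/(f)$ equals $\dim_{k}\mathfrak{m}/\mathfrak{m}^{2} = d$ (using regularity of $A$), which exceeds $\dim A/(f) = d-1$; so $A/(f)$ is not regular. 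If $f \notin \mathfrak{m}^{2}$, then the image of $f$ in $\mathfrak{m}/\mathfrak{m}^{2}$ is a nonzero vector in a $k$-vector space of dimension $d$, so quotienting by the line it spans gives
\[
\dim_{k}\,\mathfrak{m}/(\mathfrak{m}^{2}+(f)) \;=\; d-1 \;=\; \dim A/(f),
\]
which is precisely the regularity of $A/(f)$.

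There is no real obstacle; the only content beyond bookkeeping is invoking that regular local rings are domains to guarantee $f$ is a nonzerodivisor and hence that $\dim A/(f) = d-1$. Everything else is the standard Nakayama-style identification of $\bar{\mathfrak{m}}/\bar{\mathfrak{m}}^{2}$ with $\mathfrak{m}/(\mathfrak{m}^{2}+(f))$ and dimension counting in a $k$-vector space.
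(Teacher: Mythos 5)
Your proof is correct. The paper does not actually argue this lemma --- its ``proof'' is the single line ``See Corollary 2.12 in \cite{Liu06}'' --- so there is no in-paper argument to compare against; what you have written is the standard proof (and essentially the one in the cited reference): regularity of $A$ makes $A$ a domain, so $f$ is a nonzerodivisor and $\dim A/(f)=\dim A-1$, and the two cases are then decided by the embedding dimension $\dim_k \mathfrak{m}/(\mathfrak{m}^2+(f))$. The only step worth spelling out is that the image of the ideal $(f)$ in $\mathfrak{m}/\mathfrak{m}^2$ is precisely the $k$-line spanned by the class of $f$ (because $af\equiv \bar{a}\,\bar{f}$ modulo $\mathfrak{m}^2$), which is what makes the count give $d-1$ when $f\notin\mathfrak{m}^2$.
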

\begin{proof}
See Corollary 2.12 in \cite{Liu06}
\end{proof}
\begin{lemma}
Suppose that $r_i=1$ for all $1\leq i \leq n+1$, $2d \geq \mu_i - \mu_{i+1} >d$ (note that $\mu_{n+1} = 0$), and there exists $i$ such that $2d > \mu_i - \mu_{i+1}$ (see case 2 in counting section), then for all $s \in H^0(C, V(E, \mathcal{L}))$ satisfying that the corresponding universal hyperelliptic curve $W_s$ is coincide with its minimal integral model, $W_s$ is not regular (in the meaning of Definition 3.3)  
\label{ignore case 2 of counting section}
\end{lemma}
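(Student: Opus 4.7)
The plan is to combine the matrix-theoretic description of sections coming from the canonical reduction with the local non-regularity criterion from the proof of Proposition~\ref{transversal-->semistable}.

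Since $d=\deg(\mathcal{L})$ is large, the hypothesis $\mu_i-\mu_{i+1}>d>2g-2$ triggers Proposition~\ref{splitting when large unstability}(c), yielding the splitting $E\times^G W\cong \mathcal{F}_1\oplus\cdots\oplus \mathcal{F}_{2n+1}$ into line bundles with degrees $d_1>d_2>\cdots >d_{2n+1}$ satisfying $d_k+d_{2n+2-k}=0$. A global section $s\in H^0(V(E,\mathcal{L}))$ is then a self-adjoint traceless matrix $T=(T_{ij})$ with $T_{ij}\in H^0(C,\mathcal{F}_i\otimes \mathcal{F}_j^{*}\otimes \mathcal{L}^{\otimes 2})$; any entry with $d_j-d_i>2d$ must vanish identically, since the corresponding line bundle has negative degree. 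The strict inequality $\mu_{i_0}-\mu_{i_0+1}<2d$ forces the sub-diagonal entry $\epsilon:=T_{i_0+1,i_0}$ to live in a line bundle of strictly positive degree $2d-(\mu_{i_0}-\mu_{i_0+1})$.

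Next I would localize at a point $v_0\in C$ where $\epsilon$ vanishes: such a $v_0$ exists because either $\epsilon\equiv 0$, in which case any $v_0$ works, or $\epsilon\not\equiv 0$ and then its zero divisor is non-empty because its line bundle has positive degree. Write $\epsilon=\pi^{k}u$ with $k\ge 1$ in $D:=\mathcal{O}_{C,v_0}$ with uniformizer $\pi$, and $T_{i_0,i_0}=\alpha+\pi a_1$ with $\alpha\in D$. A direct expansion of the characteristic polynomial $f(x)=\det(xI-T)$, generalizing the $n=1$ calculation sketched in case~2 of section~\ref{couting section}, should yield the local factorization
\begin{equation*}
f(x+\alpha)=(x-\alpha)^{2}h(x)+\pi\,(x-\alpha)\,R(x)+\pi^{2}S(x),
\end{equation*}
with $h,R,S\in D[x]$. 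This is precisely the decomposition $f=p^{2}f_3+\pi p f_2+\pi^{2}f_1$ with the linear (hence irreducible) $p(x)=x-\alpha$ appearing in the proof of Proposition~\ref{transversal-->semistable}, and it places $y^{2}-f(x)$ into the square of the maximal ideal $(\pi,y,x-\alpha)$. By the regularity criterion recalled just before the lemma statement, $W_s$ is then not regular at that closed point.

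The main obstacle is the bookkeeping of the local expansion: for $n\ge 2$ the matrix $T$ is not strictly upper Hessenberg, because entries $T_{ij}$ with $i-j\ge 2$ and $d_j-d_i\le 2d$ may also survive, and one must verify that after the shift $x\mapsto x+\alpha$ these contributions still fit into the two $\pi$-adic remainder terms. A careful induction on $n$ using the palindromic self-adjoint symmetry $T_{ij}=T_{2n+2-j,\,2n+2-i}$ together with $d_k=-d_{2n+2-k}$ should keep track of the required divisibilities. The edge cases $\alpha=0$ and $\epsilon\equiv 0$ would be handled by centering at $0$ rather than at $\alpha$ and by invoking the minimal-integral-model hypothesis on $(\mathcal{L},\underline{a})$ to exclude configurations in which $\underline{a}$ descends to a proper subsheaf of $\mathcal{L}$.
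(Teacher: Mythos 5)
Your overall strategy is the same as the paper's: locate a point $v$ where the positive-degree sub-diagonal entry vanishes, decompose the characteristic polynomial there so that $y^2-f_s(x)$ lands in the square of a maximal ideal of $\mathcal{O}_{K_v}[x,y]$, and conclude via the criterion that a quotient of a regular local ring by $f$ is regular iff $f\notin\mathfrak{m}^2$. But the key technical choice is wrong. You take $p(x)=x-\alpha$ with $\alpha$ the reduction of the \emph{diagonal entry} $T_{i_0,i_0}$ at $v_0$. For $i_0>1$ this $\alpha$ is not an eigenvalue of the reduced matrix: the upper-left $i_0\times i_0$ block $A$ is itself upper Hessenberg with sub-diagonal entries $x_1,\dots,x_{i_0-1}$ that have no reason to vanish at $v_0$, so $\overline{f_s}$ is not divisible by $(x-\alpha)^2$ (nor even by $x-\alpha$), and the claimed factorization $f=(x-\alpha)^2h+\pi(x-\alpha)R+\pi^2S$ fails. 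The paper avoids this entirely by taking $p(x)=\det(A-xI_{i_0})$, the characteristic polynomial of the whole upper-left block: cofactor expansion in the two palindromically placed copies of $x_{i_0}$ gives directly $f_s=\det(A-xI_{i_0})^2f_1+x_{i_0}\det(A-xI_{i_0})f_2+x_{i_0}^2f_3$, and then any maximal ideal $\mathfrak{m}'\supset(\det(A-xI_{i_0}),\pi)$ does the job without needing $p$ to be linear or to have a rational root. If you insist on a linear factor you must instead take $\alpha$ to be a root of $\det(\bar A-xI_{i_0})$ over the (algebraically closed, by the paper's convention of testing regularity at geometric points) residue field, and then write $\det(A-xI_{i_0})=(x-\alpha)g_1+\pi g_2$ before substituting.

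A secondary point: the "main obstacle" you identify, namely entries $T_{ij}$ with $i-j\ge 2$ possibly surviving, does not occur. Under the hypothesis $\mu_i-\mu_{i+1}>d$ for all $i$, the line bundle containing $T_{ij}$ for $i-j\ge 2$ has degree $d_i-d_j+2d<-2d+2d=0$, so those entries vanish identically and the matrix is genuinely upper Hessenberg (and palindromic under the anti-diagonal reflection). So the bookkeeping you defer is trivial; the real work is in choosing the correct irreducible $p(x)$ as above. Finally, the case $\epsilon\equiv 0$ need not be treated: it forces $\Delta(f_s)=0$, so such sections do not define hyperelliptic curves and are excluded from the count.
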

\begin{proof}
With the above hypothesis, for $d$ large enough, any section $s \in H^0(C, V(E, \mathcal{L}))$ has the following self-adjoint matrix form: (see \ref{global sections of semistable})
\[ s=\left( \begin{array}{cccccc}
* & * & \cdots & * & * & *  \\
x_1 & * & \cdots & * & * & *  \\
0 & x_2 & \cdots & * & * & * \\
\vdots & \vdots & \ddots & \vdots & \vdots & \vdots  \\
0 & 0 & \cdots & x_2 & * & *  \\
0 & 0 & \cdots & 0 & x_1 & * 
 \end{array} \right), \]
 where $x_i \in H^0(C, (E_L \times^L X_i^*\otimes X_{i+1})\otimes \mathcal{L}^{\otimes 2})$ and here $X_{n+1}:= W_0$. Suppose that $\mu_l-\mu_{l+1} < 2d$, then $(E_L \times^L X_L^* \otimes X_{l+1})\otimes \mathcal{L}^{\otimes 2}$ is a line bundle of positive degree. Consequently, there exists a point $v \in C$ such that $x_l(v) = 0$. Suppose that $W_s$ is minimal, we will show that $W_{K_v}$ is not regular over $Spec(\mathcal{O}_{K_v})$. Now by decompose the determinant of $s-xI$ at the column containing $x_l$ we see that the characteristic polynomial of $s$ has the following form:
 \begin{equation*}
 f_s(x) = det(s-xI) = det(A-xI_l)^2.f_1(x)+ x_l.det(A-xI_l).f_2(x) + x_l^2.f_3(x),
 \end{equation*}
 where we may consider $f_i(x) \in \mathcal{O}_{K_v}[x]$, and 
 \[ A=\left( \begin{array}{cccccc}
* & * & \cdots & * & * & *  \\
x_1 & * & \cdots & * & * & *  \\
0 & x_2 & \cdots & * & * & * \\
\vdots & \vdots & \ddots & \vdots & \vdots & \vdots  \\
0 & 0 & \cdots & x_{l-2} & * & *  \\
0 & 0 & \cdots & 0 & x_{l-1} & * 
 \end{array} \right) \].
 Since $g(x) = det(A-xI_l) \in \mathcal{O}_{K_v}[x]$ has non-invertible image in $k_v[x]$, where $k_v:= \mathcal{O}_{K_v}\backslash \mathfrak{m}_v$, it is contained in a maximal ideal $\mathfrak{m}' \subset \mathcal{O}_{K_v}[x]$ and $\mathfrak{m}_v \subset \mathfrak{m}'$. Set $\mathfrak{m} =(\mathfrak{m}',y)$ a maximal ideal of $B=\mathcal{O}_{K_v}[x,y]$, it is easy to see that $B_{\mathfrak{m}}$ is regular. By applying Lemma 3.11 to the regular local ring $B_{\mathfrak{m}}$ and the non-zero element $y^2-f_s(x) \in \mathfrak{m}^2$, we imply that $Spec(B/(y^2-f_s(x))$ is not regular. Hence $W_{K_v}$ is not regular over $Spec(\mathcal{O}_{K_v})$.
\end{proof}
\begin{theorem}
Suppose that $q> 4^{n(2n+1)}$, then 
$$
\limsup_{d \rightarrow \infty}\dfrac{\mathlarger{\sum}\limits_{\substack{\alpha \in [S/\mathbb{G}_m](C) \\ deg(\mathcal{L}(H_\alpha)\leq d \\ H_\alpha \text{is minimal and regular}}} |Sel_2(E_\alpha)|}{\mathlarger{\sum}\limits_{\substack{\alpha \in [S/\mathbb{G}_m](C) \\ deg(\mathcal{L}(H_\alpha)\leq d \\ H_\alpha \text{is minimal and regular}}} 1} \leq 3 
$$
\end{theorem}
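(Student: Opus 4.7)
The plan is to rerun the proof of Theorem \ref{main theorem 1} under the extra constraint that $H_\alpha$ be regular, and observe that the only source of the $f(q)$ term in that earlier proof is eliminated by this constraint. The hypothesis $q>4^{n(2n+1)}$ remains necessary so that Lemma \ref{large char contributes 0} continues to kill the contribution from strata with $E_\alpha[2](K)\neq 0$; combined with Proposition \ref{compare1}, this reduces the average size of $\mathrm{Sel}_2(E_\alpha)$ over the regular locus to the ratio of $|\mathcal{M}_{\mathcal{L}}^{\mathrm{reg}}(k)|$ (the subset of $\mathcal{M}_{\mathcal{L}}(k)$ cut out by regularity of the Weierstrass data) to $|\mathcal{A}_{\mathcal{L}}(k)|$.

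I would then walk through the four-case analysis of Section \ref{couting section}. Cases 1, 3, and 4 go through verbatim under the regularity restriction and give contributions $0$, $0$, and the Tamagawa number $2$ of $SO_{2n+1}$ respectively; restricting the domain can only decrease these bounds. The $f(q)$ in the earlier proof arose entirely inside Case 2, and more precisely only in the sub-range where $r_i=1$ for all $i$, $d<\mu_i-\mu_{i+1}\le 2d$ for all $i$, and at least one of these inequalities is strict. These were exactly the terms bounded above by $T/((q-1)q)$ in the original computation.

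The key step is the application of Lemma \ref{ignore case 2 of counting section}: exactly in those configurations, the Weierstrass matrix acquires a below-diagonal entry $x_l$ which is a nonzero section of a line bundle of positive degree, hence vanishes at some $v\in C$. The determinant expansion written in the proof of that lemma places $y^2-f_s(x)$ inside the square of a maximal ideal of $\mathcal{O}_{K_v}[x,y]$, so by the local regularity criterion of Corollary 2.12 of \cite{Liu06} the minimal integral model $W_{K_v}$ fails to be regular over $\mathrm{Spec}(\mathcal{O}_{K_v})$. Hence every $f(q)$-order contribution in Case 2 is excised, leaving only the exact Kostant configuration in which $\mu_i-\mu_{i+1}=2d$ for every $i$, which contributes exactly $1$ by the Kostant-section proposition. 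Sub-cases of Case 2 with some $r_i>1$ continue to contribute $0$ by the same exponential bookkeeping as before, without any appeal to regularity.

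Summing the surviving contributions gives $0+1+0+2=3$, with no residual $f(q)$ term. The main obstacle is a careful audit of all sub-ranges inside Case 2 to confirm that every term producing a $1/q$-order contribution in the original proof genuinely falls under the hypotheses of Lemma \ref{ignore case 2 of counting section}, and that the minimal-integral-model assumption made in that lemma is compatible with — indeed forced by — the minimality condition built into the moduli space $\mathcal{A}$, so that the regularity restriction in the numerator correctly removes precisely those sections and no others.
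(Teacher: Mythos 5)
Your proposal is correct and follows essentially the same route as the paper: the paper's own argument consists precisely of the two preparatory lemmas you invoke (the local regularity criterion from Corollary 2.12 of \cite{Liu06} and Lemma \ref{ignore case 2 of counting section}) together with the observation that the $f(q)$ term originates only from the sub-range of Case 2 with $r_i=1$ and some $\mu_i-\mu_{i+1}<2d$, which the regularity hypothesis excises, leaving $0+1+0+2=3$. Your additional remark about auditing the compatibility of the minimality assumption in Lemma \ref{ignore case 2 of counting section} with the minimality built into $\mathcal{A}$ corresponds to the paper's appeal to Remark \ref{remark on transversal poly} (that regularity and minimality are conditions on $S$), so nothing is missing.
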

\begin{proof} The proof is almost identical to the proof of the theorem \ref{main theorem 1}, with the notices from Remark \ref{remark on transversal poly} (the regular and minimal conditions are only related to the characteristic polynomials, more precisely, they are conditions on $S$).
\end{proof}
Now we consider the family of transversal hyperelliptic curves. In this case, we also can ignore the case 2 in the counting section by using the similar argument as in lemma \ref{ignore case 2 of counting section}. Furthermore, by \ref{selmer equal torsor in transversal case}, we have an equality $|Sel_2(E_\alpha)|=|H^1(C, E_\alpha[2])|$ for any transversal $\alpha \in [S/\mathbb{G}_m](C)$. Thus, we have the following limit:
\begin{theorem} Suppose that $char(q) > 3$, then
$$
\lim_{d \rightarrow \infty}\dfrac{\mathlarger{\sum}\limits_{\substack{\alpha \in [S/\mathbb{G}_m](C) \\ deg(\mathcal{L}(H_\alpha)\leq d \\ H_\alpha \text{is transversal}}} |Sel_2(E_\alpha)|}{\mathlarger{\sum}\limits_{\substack{\alpha \in [S/\mathbb{G}_m](C) \\ deg(\mathcal{L}(H_\alpha)\leq d \\ H_\alpha \text{is transversal}}} 1} = 3 
$$
\end{theorem}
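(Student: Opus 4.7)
The plan is to combine three ingredients already established in the paper: the exact Selmer-to-torsor equality in the transversal setting, the transversal density computations, and the case analysis of Section~\ref{couting section}. First I would apply Proposition~\ref{selmer equal torsor in transversal case} to replace $|Sel_2(E_\alpha)|$ by $|H^1(C, E_\alpha[2])|$ on the nose for every transversal $\alpha$. This step is the whole reason the answer is an equality with no $f(q)$ correction: the $E_\alpha[2](K)\neq 0$ contribution that forced the error term in Theorem~\ref{main theorem 1} disappears, and no large-$q$ hypothesis is needed. The numerator of the ratio becomes exactly $|\mathcal{M}^{trans}_{\mathcal{L}}(k)|$, the number of transversal $k$-points of the moduli stack of Section~\ref{2-selmer group and the first coho}.

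Second, I would reinterpret this count via the Kostant section: by the isomorphism $[V^{reg}/G\times\mathbb{G}_m]\cong [BE_S[2]/\mathbb{G}_m]$ coming from Proposition~\ref{can1}, $|\mathcal{M}^{trans}_{\mathcal{L}}(k)|$ is a weighted sum over principal $G$-bundles $\mathcal{E}$ of the number of global sections of $V(\mathcal{E},\mathcal{L})$ that are simultaneously regular and have transversal image in $S$. The density computation in Proposition~\ref{transversal locus 1}, especially part (3), gives precisely the factor $\prod_{i=1}^{2n}\zeta_C(2i)^{-1}$ relating transversal-regular sections to transversal sections, which is exactly the factor that appeared in the non-transversal Case~4 calculation.

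Third, I would rerun the parabolic case analysis of Section~\ref{couting section}, parametrising $G$-bundles by their canonical reductions to $P$ with Levi $L=GL(n_1)\times\cdots\times GL(n_t)\times SO(2m+1)$. Cases~1 and~3 contribute zero by the same slope/dimension estimates as before, which are insensitive to the transversality restriction. Case~2 must now be eliminated entirely; this is the main new content. The argument mirrors Lemma~\ref{ignore case 2 of counting section}: if $\mu_l-\mu_{l+1}\le 2d$ for some $l$, then the semistable line bundle $(E_L\times^L X_l^*\otimes X_{l+1})\otimes\mathcal{L}^{\otimes 2}$ has positive degree, so its section $x_l$ vanishes at some closed point $v\in C$; expanding the characteristic polynomial $\det(s-xI)$ along the column containing $x_l$ gives a polynomial of the form $\det(A-xI_l)^2 f_1+x_l\det(A-xI_l)f_2+x_l^2 f_3$, and the same resultant manipulation as in Proposition~\ref{transversal-->semistable} forces $\mathrm{ord}_v(\Delta)\ge 2$, contradicting square-freeness. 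Cases~4 and the Borel subcase then go through verbatim: Case~4 contributes $2$ via the Tamagawa-number computation $\tau(SO_{2n+1})=2$ combined with the volume formula on $Bun_G$, and the split-Borel subcase contributes $1$ from the Kostant section proposition. Summing gives $2+1=3$.

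The step I expect to be the main obstacle is making the transversal density pass cleanly through the integration over $Bun_G^{ss}(\mathbb{F}_q)$ weighted by $1/|\mathrm{Aut}_G(\mathcal{E})|$. One needs that the limiting transversal-regular density of Proposition~\ref{transversal locus 1}(2) is independent of the global $G$-bundle; this is true because it is a product of local densities at finitely many places, and the local condition only depends on $V(\mathcal{E},\mathcal{L})$ modulo $\varpi_v^2$, which is determined by the local trivialisation of $\mathcal{E}$ and thus is $\mathcal{E}$-independent as a local density. Once this interchange is justified the three cases combine to give the exact value $3$, and the conclusion is a genuine limit (not merely a $\limsup$) because the Selmer-to-torsor map is bijective, not just bounded in fibre, on the transversal locus.
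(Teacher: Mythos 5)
Your proposal follows essentially the same route as the paper: the paper likewise invokes Proposition \ref{selmer equal torsor in transversal case} to get the exact Selmer-to-torsor equality, eliminates the non-Kostant part of Case 2 by the same square-free-discriminant argument modelled on Lemma \ref{ignore case 2 of counting section}, and reruns Case 4 with the transversal densities of Proposition \ref{transversal locus 1} to recover the Tamagawa contribution $2$, plus $1$ from the Kostant section. The only cosmetic difference is that you phrase the Case 2 elimination directly in terms of $\mathrm{ord}_v(\Delta)\ge 2$ rather than via non-regularity of the integral model, which is the natural adaptation and changes nothing of substance.
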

\begin{proof}
The computation is almost identification as the proof of theorem \ref{main theorem 1}. In fact, the only difference is in the case 4 of counting section \ref{couting section}. Denote $\mathcal{M}_{\mathcal{L}}^{trans}(k)$ to be the preimage of the transversal locus $\mathcal{A}_{\mathcal{L}}^{trans}(k)$ via the natural map $\mathcal{M} \rightarrow \mathcal{A}$. Then if $ 0< \mu_1 < d-g-1$ or $E$ is semi-stable ($\mu_1 =0$), the average number in the transversal case can be computed as follows: 
\begin{eqnarray*}
&&\lim_{d \rightarrow \infty} \dfrac{\mathlarger{\int}_{Bun_G^{\mu_1<d-g-1}(\mathbb{F}_q)}|\mathcal{M}_{L,E}^{trans}(k)|dE}{|\mathcal{A}_{\mathcal{L}}^{trans}(k)|} \\
&=& \lim_{d \rightarrow \infty} \dfrac{\mathlarger{\int}_{Bun_G^{\mu_1<d-g-1}(\mathbb{F}_q)}|H^0(C, V(E, L)^{reg})^{trans}|dE}{\mathlarger{\frac{|\mathcal{A}_{\mathcal{L}}^{trans}(k)|}{|\mathcal{A}_{\mathcal{L}}(k)|}\prod}_{i=2}^{2n+1}|H^0(C, L^{\otimes 2i})|} \\
&=& \lim_{d \rightarrow \infty} \dfrac{|H^0(C, V(E,L))|\mathlarger{\int}_{Bun_G^{\mu_1<d-g-1}(\mathbb{F}_q)}\dfrac{\frac{|H^0(C, V(E, L)^{reg})^{trans}|}{|H^0(C, V(E,L))|}}{\frac{|\mathcal{A}_{\mathcal{L}}^{trans}(k)|}{|\mathcal{A}_{\mathcal{L}}(k)|}}dE}{q^{2n(2n+3)d+2n(1-g)}} \\
&=&  \lim_{d \rightarrow \infty} \dfrac{q^{(4n^2+6n)d+(2n^2+3n)(1-g)}\mathlarger{\int}_{Bun_G^{\mu_1<d-g-1}(\mathbb{F}_q)}(\prod_{i=1}^{2n}\zeta_C(2i)^{-1})dE}{q^{2n(2n+3)d+2n(1-g)}} \hspace{1cm}\text{by Proposition \ref{transversal locus 1}}\\
&=& \lim_{d \rightarrow \infty} q^{(2n^2+n)(1-g)}\mathlarger{\int}_{Bun_G^{\mu_1<d-g-1}(\mathbb{F}_q)}(\prod_{i=1}^{2n}\zeta_C(2i)^{-1})dE  \hspace{1cm} \\
&=& \lim_{d \rightarrow \infty} q^{(2n^2+n)(1-g)}|Bun_G^{\mu_1<d-g-1}(\mathbb{F}_q)|\prod_{i=1}^{2n}(\zeta_C(2i)^{-1}) \\
&=&q^{(2n^2+n)(1-g)}|Bun_G(\mathbb{F}_q)|\prod_{i=1}^{2n}(\zeta_C(2i)^{-1}) \\
&=&2.
\end{eqnarray*}
\end{proof}
Since we actually have the limit in the transversal case, the density of the transversal locus help us to produce a lower bound for the average size:
\begin{corollary}
Suppose that $char(q) >3$, then 
\begin{equation*}
\liminf_{deg(\mathcal{L}) \rightarrow \infty}\dfrac{\mathlarger{\sum}\limits_{\substack{\alpha \in [S/\mathbb{G}_m](C) \\ \mathcal{L}(H_\alpha) \cong \mathcal{L}}} |Sel_2(E_\alpha)|}{\mathlarger{\sum}\limits_{\substack{\alpha \in [S/\mathbb{G}_m](C) \\ \mathcal{L}(H_\alpha)\cong \mathcal{L}}} 1} \geq 3 \prod_{v \in |C|}(1-\alpha_v),
\end{equation*}
where $\alpha_v= \frac{|\{x \in S(\mathcal{O}_{K_v}/(\varpi_v^2))| \Delta(x) \equiv 0 \hspace{0.2cm}mod (\varpi_v^2) \}|}{|k(v)^{4n}|}.$
\end{corollary}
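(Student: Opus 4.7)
The plan is to exploit the fact that the previous theorem gives a genuine limit (equal to $3$) on the transversal sub-family, and combine this with the density statement for the transversal locus from Proposition \ref{transversal locus 1}(1). Since all terms $|Sel_2(E_\alpha)|$ are non-negative, dropping the non-transversal contributions in the numerator only decreases it, so it suffices to bound the ``transversal part'' of the ratio from below.

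Concretely, first I split the numerator according to whether $\alpha$ is transversal or not:
\begin{equation*}
\sum_{\substack{\alpha \in [S/\mathbb{G}_m](C) \\ \mathcal{L}(H_\alpha) \cong \mathcal{L}}} |Sel_2(E_\alpha)| \;\geq\; \sum_{\substack{\alpha \in [S/\mathbb{G}_m](C) \\ \mathcal{L}(H_\alpha) \cong \mathcal{L} \\ H_\alpha \text{ transversal}}} |Sel_2(E_\alpha)|,
\end{equation*}
which uses only $|Sel_2(E_\alpha)| \geq 0$. Dividing by the full denominator and introducing the transversal denominator as an intermediate factor, I rewrite
\begin{equation*}
\dfrac{\sum\limits_{\substack{\mathcal{L}(H_\alpha) \cong \mathcal{L}}} |Sel_2(E_\alpha)|}{\sum\limits_{\substack{\mathcal{L}(H_\alpha) \cong \mathcal{L}}} 1} \;\geq\; \dfrac{\sum\limits_{\substack{\mathcal{L}(H_\alpha) \cong \mathcal{L} \\ \text{transversal}}} |Sel_2(E_\alpha)|}{\sum\limits_{\substack{\mathcal{L}(H_\alpha) \cong \mathcal{L} \\ \text{transversal}}} 1} \cdot \dfrac{\sum\limits_{\substack{\mathcal{L}(H_\alpha) \cong \mathcal{L} \\ \text{transversal}}} 1}{\sum\limits_{\substack{\mathcal{L}(H_\alpha) \cong \mathcal{L}}} 1}.
\end{equation*}

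Next, I take $\liminf$ as $\deg(\mathcal{L}) \to \infty$. The first factor on the right is precisely the ratio studied in the preceding theorem and has a \emph{genuine limit} equal to $3$. The second factor is, by the identification of $\mathcal{A}_{\mathcal{L}}(k)$ with $H^0(C,\mathcal{L}^{\otimes 4} \oplus \cdots \oplus \mathcal{L}^{\otimes 4n+2})$ (up to the common weighting by $|\mathrm{Aut}|$, which cancels), exactly the density computed in Proposition \ref{transversal locus 1}(1), whose limit is $\prod_{v \in |C|}(1 - \alpha_v)$. Since both factors converge, the $\liminf$ of the product is their product, yielding the desired lower bound $3 \prod_v (1-\alpha_v)$.

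The only subtlety worth flagging is matching the counting measure: the sums in the statement are unweighted over isomorphism classes of $\alpha$ with prescribed $\mathcal{L}(H_\alpha) \cong \mathcal{L}$, whereas Proposition \ref{transversal locus 1} is phrased in terms of sections of $\mathcal{L}^{\otimes 4} \oplus \cdots \oplus \mathcal{L}^{\otimes 4n+2}$. The identification from Proposition 1.3 (giving $|\phi_{\mathcal{L}}^{-1}(H,O)| = |\mathbb{F}_q^\times|/|\mathrm{Aut}(H,O)|$) shows that the transversality condition pulls back cleanly and the $|\mathbb{F}_q^\times|/|\mathrm{Aut}|$ weights cancel in the ratio of denominators. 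Once this bookkeeping is checked, no further work is required: the main obstacle—if there is one—is purely notational, namely keeping the ``transversal'' adjective properly attached to $\alpha$ versus to a section of the Hitchin base, but these conditions coincide by the definition of transversality via square-freeness of the discriminant.
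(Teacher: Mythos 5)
Your proposal is correct and follows essentially the same route as the paper's own proof: discard the non-transversal terms in the numerator, factor the ratio as (transversal average) times (transversal density), and invoke the transversal limit $3$ together with Proposition \ref{transversal locus 1}(1). The extra bookkeeping remark about the $|\mathbb{F}_q^\times|/|\mathrm{Aut}|$ weights cancelling is a reasonable clarification that the paper leaves implicit.
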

\begin{proof}
We have that 
\begin{eqnarray}
& \liminf \limits_{deg(\mathcal{L}) \rightarrow \infty}\dfrac{\mathlarger{\sum}\limits_{\substack{\alpha \in [S/\mathbb{G}_m](C) \\ \mathcal{L}(H_\alpha) \cong \mathcal{L}}} |Sel_2(E_\alpha)|}{\mathlarger{\sum}\limits_{\substack{\alpha \in [S/\mathbb{G}_m](C) \\ \mathcal{L}(H_\alpha)\cong \mathcal{L}}} 1} \\
\geq & \liminf \limits_{deg(\mathcal{L}) \rightarrow \infty}\dfrac{\mathlarger{\sum}\limits_{\substack{\alpha \in [S/\mathbb{G}_m](C) \\ \mathcal{L}(H_\alpha) \cong \mathcal{L} \\ \text{$\alpha$ is transversal}}} |Sel_2(E_\alpha)|}{\mathlarger{\sum}\limits_{\substack{\alpha \in [S/\mathbb{G}_m](C) \\ \mathcal{L}(H_\alpha)\cong \mathcal{L} \\ \text{$\alpha$ is transversal}}} 1}. \frac{\mathlarger{\sum}\limits_{\substack{\alpha \in [S/\mathbb{G}_m](C) \\ \mathcal{L}(H_\alpha)\cong \mathcal{L} \\ \text{$\alpha$ is transversal}}} 1}{\mathlarger{\sum}\limits_{\substack{\alpha \in [S/\mathbb{G}_m](C) \\ \mathcal{L}(H_\alpha)\cong \mathcal{L} }} 1} \\
= & 3 \prod_{v \in |C|}(1-\alpha_v) \hspace{3cm} \text{By Proposition \ref{transversal locus 1}}
\end{eqnarray}
\end{proof}  
\section{Hyperelliptic curves with a marked Weierstrass point and a marked rational non-Weierstrass point}
In this section, we will consider the family of Hyperelliptic curves with a marked Weierstrass point and a marked non-Weierstrass point over a function field. By introducing the integral model of these hyperelliptic curves, we will able to define the height of hyperelliptic curves. The height will help us to order hyperelliptic curves and then we can state our main results where we take the average over this family of hyperelliptic curve. Furthermore, we can use the integral model to interpret each hyperelliptic curve as a $C-$point of a quotient stack.
\subsection{Weiertrass equation and height}
Given a smooth hyperelliptic curve $H$ of genus $m\geq 2$ over the function field $K$ of the smooth curve $C$, and assume that $H$ has a marked rational Weierstrass point $P_1$ and a marked rational non-Weierstrass point $P_2$. Without loss of generality, we may assume that under the natural map $H \rightarrow \mathbb{P}^1$, $P_1$ maps to $\infty \in \mathbb{P}^1(K)$, and $P_2$ maps to $0 \in \mathbb{P}^1(K)$. Therefore, we have an affine Weierstrass equation of $H$:
\begin{equation}
y^2=f(x)=x^{2m+1}+a_1x^{2m}+\dots + a_{2m}x+e^2,
\end{equation} \index{weierstrass}
where $a_i\in K$ and $e\in K^{\times}$ such that the discriminant of the polynomial $f(x):$ $\Delta(a_1,\dots, a_{2m}, e) \neq 0$. Denote the multi-set $(a_1,\dots, a_{2m}, e)$ by $\underline{a}$. Then $\underline{a}$ is unique up to the following identification: 
$$(a_1,\cdots,a_{2m},e) \equiv (\lambda^2a_1,\cdots, \lambda^{4m}a_{2m}, \lambda^{2m+1}e) \hspace{1cm} \lambda \in K^{\times}.$$
Now we define the minimal integral model of a given hyperelliptic curve $H$ as follows (c.f. \cite{Tho16}). First of all, we choose an affine Weierstrass equation of $H$ with $a_1,a_2,\dots, a_{2m},e \in K$ as above. Then for each point $v \in |C|$, we can choose an integer $n_v$ which is the smallest integer satisfying that: the tuple $$(\varpi_v^{2n_v}a_1, \varpi_v^{4n_v}a_2, \cdots, \varpi_v^{4mn_v}a_{2m}, \varpi_v^{2m+1}e)$$ has coordinates in $\mathcal{O}_{K_v}$. Given $(n_v)_{v \in |C|}$, we define the invertible sheaf $\mathcal{L}_H \subset K$ whose sections over a Zariski open $U \subset C$ are given by
$$\mathcal{L}_H(U) = K \cap \big(\prod_{v \in U}\varpi_v^{-n_v}\mathcal{O}_{K_v} \big).$$
Then it is easy to see that $a_i \in H^0(C, \mathcal{L}_H^{\otimes 2i})$ and $e \in H^0(C, \mathcal{L}_H^{\otimes 2m+1}).$ Furthermore, the stratum $(\mathcal{L}_H, \underline{a})$ is minimal in the sense that there is no proper subsheaf $\mathcal{M}$ of $\mathcal{L}_H$ such that $a_i \in H^0(C, \mathcal{M}^{\otimes 2i})$ and $e \in H^0(C, \mathcal{M}^{\otimes 2m+1}).$ Conversely, given a minimal strata $(\mathcal{L}, \underline{a})$ satisfying that $\Delta(\underline{a}) \neq 0$, we consider a subscheme of $\mathbb{P}^2(\mathcal{L}^{2m+1} \oplus \mathcal{L}^{2} \oplus \mathcal{O}_C)$ that is defined by 
$$Z^{2m-1}Y^2 = X^{2m+1} +a_1ZX^{2m} +\dots + a_{2m}Z^{2m}X +e^2Z^{2m+1}.$$ This is a flat family of curves $\mathcal{H} \rightarrow C$, and the generic fiber $H$ is a hyperelliptic curve over $K(C)$ with a marked rational Weierstrass point and a marked rational non-Weiertrass point. Furthermore, the associated minimal data of $H$ is exactly $(\mathcal{L}, \underline{a})$. Hence we have just shown the surjectivity of the following map $\phi_{\mathcal{L}}$ with a given line bundle $\mathcal{L}$ over $C$:
$$\phi_{\mathcal{L}}: \{ \text{minimal tuples} \hspace{0.2cm}(\mathcal{L}, \underline{a}) \} \rightarrow \{ \text{Hyperelliptic curves $(H,P_1,P_2)$ such that $\mathcal{L}_H \cong \mathcal{L}$} \}.$$
Moreover, the sizes of fibers of $\phi_{\mathcal{L}}$ can be calculated as follows
\begin{proposition}
Given a line bundle $\mathcal{L}$ over $C$, the map $\phi_{\mathcal{L}}$ defined as above is surjective, and the preimage of $(H,P_1,P_2)$ is of size $\frac{|\mathbb{F}_q^{\times}|}{|Aut(H,P_1,P_2)|}$, here $Aut(H,P_1,P_2)$ denotes the subset of all elements in $Aut(H)$ which preserve the marked points $P_1$ and $P_2$. 
\end{proposition}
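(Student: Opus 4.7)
The plan is to mirror the orbit--stabiliser argument used in the one--marked--point version of this proposition (see the analogous Proposition in Section~1.1), adapting it to account for the extra rational non-Weierstrass point $P_2$. The surjectivity assertion is already built into the construction stated immediately before the proposition: starting from a minimal tuple $(\mathcal{L},\underline{a})$ with $\Delta(\underline{a})\neq 0$, the subscheme of $\mathbb{P}^2(\mathcal{L}^{2m+1}\oplus\mathcal{L}^2\oplus\mathcal{O}_C)$ cut out by the displayed Weierstrass equation is a flat family of integral curves whose generic fibre is a hyperelliptic curve carrying the Weierstrass point at $\infty$ and the non-Weierstrass point $(X,Y,Z)=(0,e,1)$; unwinding the definition of $\mathcal{L}_H$ for this family recovers $(\mathcal{L},\underline{a})$ as its minimal datum, so every isomorphism class of $(H,P_1,P_2)$ with $\mathcal{L}_H\cong\mathcal{L}$ lies in the image.

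For the fibre count I would first show that $\mathbb{F}_q^{\times}$ acts transitively on $\phi_{\mathcal{L}}^{-1}(H,P_1,P_2)$. Over $K$ the full ambiguity in the Weierstrass equation is
\[
\underline{a}\ \equiv\ (\lambda^2 a_1,\ \lambda^4 a_2,\ \ldots,\ \lambda^{4m}a_{2m},\ \lambda^{2m+1} e),\qquad \lambda\in K^{\times},
\]
so two minimal data attached to the same $(H,P_1,P_2)$ differ by some $\lambda\in K^{\times}$. Fixing $\mathcal{L}$ forces each ratio $\lambda^{2i}$ (for $i$ with $a_i\neq 0$) and $\lambda^{2m+1}$ to lie in $H^0(C,\mathcal{O}_C^{\times})=\mathbb{F}_q^{\times}$; in the degenerate case where all $a_i$ vanish, $\lambda^{2m+1}\in\mathbb{F}_q^{\times}$ together with $\lambda\in K$ and the fact that $\mathbb{F}_q$ is algebraically closed in $K$ still forces $\lambda\in\mathbb{F}_q^{\times}$. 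Since $\gcd(2,2m+1)=1$, a B\'ezout relation $2u+(2m+1)v=1$ then shows $\lambda=(\lambda^2)^u(\lambda^{2m+1})^v\in\mathbb{F}_q^{\times}$ in every case, yielding the desired transitive action.

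The remaining step, which I view as the technical crux, is to identify the stabiliser in $\mathbb{F}_q^{\times}$ of a given $\underline{a}$ with $\mathrm{Aut}(H,P_1,P_2)$. In one direction, any $\lambda\in\mathbb{F}_q^{\times}$ fixing $\underline{a}$ produces the automorphism $(x,y)\mapsto(\lambda^2 x,\lambda^{2m+1}y)$, which visibly preserves $P_1=\infty$ and $P_2=(0,e)$. Conversely, any automorphism of $H$ fixing $P_1$ and $P_2$ descends through the canonical degree--$2$ map $H\to\mathbb{P}^1$ to an automorphism of $\mathbb{P}^1$ fixing $\{0,\infty\}$, and hence has the form $x\mapsto\mu x$ for some $\mu\in K^{\times}$; the hyperelliptic involution is ruled out because it sends $P_2=(0,e)$ to $(0,-e)\neq P_2$. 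Matching Weierstrass equations shows $\mu=\lambda^2$ for a unique $\lambda\in K^{\times}$ (up to a sign absorbed into the action on $y$), and respecting the minimal integral model forces $\lambda\in\mathbb{F}_q^{\times}$ by the constancy argument of the previous paragraph. Orbit--stabiliser then delivers
\[
|\phi_{\mathcal{L}}^{-1}(H,P_1,P_2)|\ =\ \frac{|\mathbb{F}_q^{\times}|}{|\mathrm{Aut}(H,P_1,P_2)|},
\]
as claimed. The only place where genuine care is required is the descent of automorphisms of $H$ to $\mathbb{P}^1$ and the exclusion of the hyperelliptic involution; the rest is bookkeeping with the scaling action.
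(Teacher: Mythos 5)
Your proposal is correct and takes essentially the same route as the paper: the paper's proof is just the orbit--stabiliser argument, asserting without further detail that $\mathbb{F}_q^{\times}$ acts transitively on $\phi_{\mathcal{L}}^{-1}(H,P_1,P_2)$ with stabiliser $\mathrm{Aut}(H,P_1,P_2)$. You fill in the justifications the paper omits (the B\'ezout argument forcing $\lambda\in\mathbb{F}_q^{\times}$, and the descent of automorphisms to $\mathbb{P}^1$ with the hyperelliptic involution excluded via $e\neq 0$), but the underlying argument is identical.
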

\begin{proof}
Suppose that $(H,P_1,P_2)$ is a hyperelliptic curve with the associated minimal data $(\mathcal{L}, \underline{a}).$ The tuple of sections $\underline{a}$ is well-defined upto the following identification: 
$$\underline{a} \equiv \lambda. \underline{a} = (\lambda^2a_1, \dots, \lambda^{4m}a_{2m}, \lambda^{2m+1}e), \hspace{1cm} \lambda \in \mathbb{F}_q^{\times}.$$
In the other words, there is a transitive action of $\mathbb{F}_q$ on the fiber $\phi_{\mathcal{L}}^{-1}(H)$. Furthermore, the stabilizer of any element in $\phi_{\mathcal{L}}^{-1}(H)$ is exactly $Aut(H,P_1,P_2)$. Hence, the size of $\phi_{\mathcal{L}}^{-1}(H;P_1,P_2)$ is $\frac{|\mathbb{F}_q^{\times}|}{|Aut(H,P_1,P_2)|}$. 
\end{proof}

\begin{definition}{\textbf{(Height of hyperelliptic curve)}} The height of the hyperelliptic curve $(H,P_1,P_2)$ is defined to be the degree of the associated line bundle $\mathcal{L}_H$. 
\end{definition}
\begin{remark}
Given $d \in \mathbb{Z}$, there are finitely many isomorphism classes of hyperelliptic curves over $K$ whose height are less than $d$.
\end{remark}
Now we are able to state the main theorem of this chapter. Recall that the 2-Selmer group of a given hyperelliptic curve $H$ over $K(C)$ is the 2-Selmer group of the Jacobian $E$ of $H$, and by definition it is the kernel of $\beta \circ \alpha: H^1(K, E[2]) \rightarrow \prod_{v \in |C|}H^1(K_v, E)$, where $\alpha,$ and $\beta$ are natural maps in the following diagram:
\footnotesize\[
\begin{tikzcd}
0 \arrow{r} & E(K)/2E(K) \arrow{r} \arrow{d} & H^1(K, E[2]) \arrow{r} \arrow{d}{\alpha} & H^1(K, E)[2] \arrow{r} \arrow{d} & 0 \\
0 \arrow{r} & \prod_v E(K_v)/2E(K_v) \arrow{r} & \prod_v H^1(K_v, E[2]) \arrow{r}{\beta} & \prod_v H^1(K_v, E)[2] \arrow{r} & 0
 \end{tikzcd}
\] \normalsize
\begin{theorem}
Assume that $q > 16^{\frac{m^2(2m+1)}{2m-1}}$, and $p=$char($\mathbb{F}_q$) $> 3$, then
\begin{eqnarray*}
& \limsup\limits_{deg(\mathcal{L}) \rightarrow \infty} \frac{\sum \limits_{\substack{\text{H is hyperelliptic} \\ \mathcal{L}(H) \cong \mathcal{L}}} \frac{|Sel_2(H)|}{|Aut(H)|}}{\sum \limits_{\substack{\text{H is hyperelliptic} \\ \mathcal{L}(H) \cong \mathcal{L}}} \frac{1}{|Aut(H)|}} \\ &\leq 4. \zeta_C((2m+1)^2). \prod_{v \in |C|}\big(1+c_{2m-1}|k(v)|^{-2}+\dots+c_1|k(v)|^{-2m}-2|k(v)|^{(2m+1)^2}\big) \\ & \hspace{10cm} +2 + f(q),
\end{eqnarray*} where $\lim_{q \rightarrow \infty} f(q) =0$, and $c_i$ are constants which only depend on $m$ and $p$.
\end{theorem}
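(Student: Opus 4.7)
The plan is to execute the same five-step geometric strategy as in Chapter 1, adapted to the representation attached to two marked points. First I would identify the Vinberg $\theta$-representation $(G',V')$ whose ring of invariants $k[V']^{G'}$ is generated by elements of $\mathbb{G}_m$-weights $(2,4,\ldots,4m,2m+1)$, matching the invariants $(a_1,\ldots,a_{2m},e)$ of the Weierstrass equation $y^2 = x^{2m+1}+a_1x^{2m}+\cdots+a_{2m}x+e^2$. Since $p > 3$ lies in the good-characteristic range where Levi's classification applies, this representation is available over $\mathbb{F}_q$ together with its Kostant section data; crucially, the presence of $e$ as an invariant of odd degree produces two Kostant sections (differing by $e \mapsto -e$), which will ultimately account for the summand $2$ in the bound. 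The structural inputs I would import are the analog for $(G',V')$ of the canonical isomorphism $I_S \cong E_S[2]$ of Proposition \ref{can1}, the analog of the Selmer/torsor comparison Proposition \ref{compare1}, and the resulting identification $\mathcal{M}' \cong \mathrm{Hom}(C,[(V')^{\mathrm{reg}}/G'\times\mathbb{G}_m])$, so that $\mathcal{M}'_{\mathcal{L}}(k)$ parameterizes pairs $(\mathcal{E},s)$ with $\mathcal{E}$ a $G'$-bundle and $s$ a regular section of the twisted bundle.

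Second, I would combine the comparison with a large-$q$ cutoff argument of the same form as Lemma \ref{large char contributes 0} to reduce to a sum of torsor counts $|H^1(C,E_\alpha[2])|$. The threshold $q > 16^{m^2(2m+1)/(2m-1)}$ is precisely what is needed for the $E_\alpha[2](K)\neq 0$ contribution to vanish in the limit: bounding the number of reducible $f$'s of height $d$ by a factorization and comparing the resulting exponent of $q$ against the denominator's exponent yields exactly this inequality. Third, I would compute the density of regular sections of the bundle associated to $(V')^{\mathrm{reg}}$ by the codimension-two lemma Proposition \ref{general locus 1}. Since $\dim k[V']^{G'} = 2m+1$, one more than in Chapter 1, a direct point count of $(V')^{\mathrm{reg}}(k(v))$ via the stabilizer and Lang--Steinberg for $G'$ produces the local factor $1+c_{2m-1}|k(v)|^{-2}+\cdots+c_1|k(v)|^{-2m}-2|k(v)|^{-(2m+1)^2}$, and the extra invariant of degree $2m+1$ contributes the global $\zeta_C((2m+1)^2)$.

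Fourth, I would redo the counting analysis of Section \ref{couting section} for canonical reductions of $G'$-bundles, stratifying $\mathrm{Bun}_{G'}$ by the parabolic type of the canonical reduction and by the slope gaps $\mu_i - \mu_{i+1}$ of the Levi bundle. The intermediate strata split into two kinds: large slope gap (Case 1 of Section \ref{couting section}), where the splitting lemma forces every section to be non-regular at some point, and mid-range slope regimes (Case 3), which are killed by direct dimension bounds; both carry over from $SO_{2n+1}$ with the same matrix-form arguments. The semistable stratum (Case 4) contributes the main term: applying the Tamagawa mass formula with $\tau(G') = 4$ (recorded in the introduction) together with the density from step three yields $4\cdot\zeta_C((2m+1)^2)\prod_v\bigl(1+c_{2m-1}|k(v)|^{-2}+\cdots-2|k(v)|^{-(2m+1)^2}\bigr)$. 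The Borel stratum (analog of Case 2) contributes the constant $2$, corresponding to the two Kostant sections, and the residue from slope regimes that are almost but not quite killed contributes the error $f(q) \to 0$.

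The main obstacle is the Borel-stratum computation. One must show that every regular section in that stratum factors through one of the two Kostant sections, requiring an upper-triangularization argument analogous to the Proposition near the end of Case 2 in Section \ref{couting section}, but carried out in the larger group $G'$ and made compatible with the sign ambiguity $\pm e$ so that both Kostant sections appear separately in the count. A secondary obstacle is the explicit determination of the constants $c_i(m,p)$ in the local factor, which emerge from counting points of $(V')^{\mathrm{reg}}(\mathcal{O}_v/\varpi_v^2)$ with prescribed discriminant reduction; this is a tedious but direct calculation, and it is the place where the hypothesis $p > 3$ is genuinely used through Levi's good-characteristic assumption. Once these two pieces are in place, the remaining steps are a mechanical translation of the $SO_{2n+1}$ argument from Chapter 1.
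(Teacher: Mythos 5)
Your proposal follows essentially the same route as the paper: the underlying representation is $V_1\otimes V_2$ for $SO(V_1)\times SO(V_2)$ with its two Kostant sections, the stabilizer--Jacobian isomorphism and the Selmer/torsor comparison with the large-$q$ cutoff reduce everything to counting regular sections, and the canonical-reduction stratification produces the Tamagawa-number term $4$ (times the density factors $\zeta_C((2m+1)^2)\prod_v(1+c_{2m-1}|k(v)|^{-2}+\dots)$) from the small-slope range, the summand $2$ from the two Kostant strata, and $f(q)$ from the near-Kostant range, exactly as in the paper's Cases 6--10. One small correction: the two Kostant sections are two genuinely distinct sections of the invariant map lying over the \emph{same} point $(a_1,\dots,a_{2m},e)$ (corresponding to the two regular nilpotent orbits, equivalently to the two points of the curve above $x=0$), rather than being related by the substitution $e\mapsto -e$ in the invariants.
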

By Proposition 3.1, the above theorem is equivalent to 
\begin{theorem}
With the same hypothesis as in the previous theorem, we have that
\begin{eqnarray*}
&\limsup\limits_{deg(\mathcal{L}) \rightarrow \infty} \frac{\sum \limits_{\substack{\text{$(\mathcal{L}, \underline{a})$ is minimal} \\ \Delta(\underline{a}) \neq 0}} |Sel_2(H_{\underline{a}})|}{\sum \limits_{\substack{\text{$(\mathcal{L}, \underline{a})$ is minimal} \\ \Delta(\underline{a}) \neq 0}} 1} \leq \\ & 4. \zeta_C((2m+1)^2). \prod_{v \in |C|}\big(1+c_{2m-1}|k(v)|^{-2}+\dots+c_1|k(v)|^{-2m}-2|k(v)|^{(2m+1)^2}\big) + \\ & \hspace{10cm}2 + f(q),
\end{eqnarray*}
where $H_{\underline{a}}$ is the hyperelliptic curve that is corresponding to the tuple of sections $\underline{a}$, $\lim_{q \rightarrow \infty} f(q) =0$, and $c_i$ are constants which only depend on $m$ and $p$.   
\end{theorem}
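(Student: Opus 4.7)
The plan is to mirror the geometric strategy of Chapter 1, replacing the Vinberg representation $(SO_{2n+1}, \operatorname{Sym}^2_0(W))$ with the appropriate $\theta$-representation attached to the Weierstrass form $y^2 = x^{2m+1} + a_1 x^{2m} + \cdots + a_{2m} x + e^2$, in which the constant term is constrained to be a square. First I would reinterpret minimal tuples $(\mathcal{L}, \underline{a})$ as morphisms $\alpha \colon C \to [S'/\mathbb{G}_m]$, where $S' = \operatorname{Spec} k[a_1,\ldots,a_{2m},e]$ with $\mathbb{G}_m$-weights $(2,4,\ldots,4m,2m+1)$, and define moduli stacks $\mathcal{A}'$ and $\mathcal{M}'$ analogous to $\mathcal{A}$ and $\mathcal{M}$ of Section \ref{2-selmer group and the first coho}. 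The analogue of Proposition \ref{compare1}, together with an analogue of Lemma \ref{large char contributes 0} under the hypothesis $q > 16^{m^2(2m+1)/(2m-1)}$, will reduce the estimation of the average to that of $|\mathcal{M}'_{\mathcal{L}}(k)|/|\mathcal{A}'_{\mathcal{L}}(k)|$ in the limit $\deg(\mathcal{L}) \to \infty$; the assumption $p > 3$ is required so that Levi's extension of Vinberg theory to positive characteristic applies.

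Next I would set up the Vinberg representation $(G', V')$ for this family. Because the constant term is a square, $G'$ is not $SO_{2m+1}$ but a group arising from a $\mathbb{Z}/2$-grading of $\mathfrak{sl}_{2m+1}$, and the Kostant section is not unique: there are exactly two Kostant sections corresponding to the two signs of $e$, which accounts for the constant $2$ in the theorem. As in Proposition \ref{can1}, the stabilizer group scheme of the regular locus descends to $S'$ and is canonically isomorphic to the $2$-torsion of the relative Jacobian of the universal pointed hyperelliptic curve; this yields $[V'^{\operatorname{reg}}/(G' \times \mathbb{G}_m)] \cong [BE'_{S'}[2]/\mathbb{G}_m]$ and hence identifies $\mathcal{M}'_{\mathcal{L}}(k)$ with pairs $(\mathcal{E},s)$ consisting of a principal $G'$-bundle on $C$ and a regular global section of the associated twisted bundle.

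The density-of-regular-sections computation then proceeds as in Section \ref{Vinberg representation 1} using the result of \cite{HLN14} quoted there: since the non-regular locus has codimension two in $V'$, the local densities assemble into the product $\zeta_C((2m+1)^2) \cdot \prod_v\bigl(1 + c_{2m-1}|k(v)|^{-2} + \cdots\bigr)$ appearing in the statement, with $c_i$ depending only on $m$ and $p$ via the jet-scheme description of the non-regular locus. I would then invoke canonical reduction of principal $G'$-bundles to stratify the integral over $\operatorname{Bun}_{G'}$ by parabolic type and analyze each stratum as in Section \ref{couting section}: Case 1 (a slope gap exceeding $2d$) is excluded by an analogue of the non-regularity lemma there; the intermediate regimes contribute at most $f(q)\to 0$ by slope bookkeeping combined with Proposition \ref{splitting when large unstability} and an analogue of Lemma \ref{ignore case 2 of counting section}; the Borel stratum contributes exactly $2$ by an iterated Kostant normalization argument reflecting the two Kostant sections; and the fully semistable stratum contributes $4\zeta_C((2m+1)^2)\prod_v(\cdots)$ via the Tamagawa measure formula, where the $4$ is the Tamagawa number of the simply connected cover of $G'$.

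The main obstacle will be the intermediate-parabolic case analysis. Unlike in Chapter 1, the coordinate $e$ carries an odd $\mathbb{G}_m$-weight, so its interaction with the Harder--Narasimhan filtration of $\mathcal{E}$ produces strictly more subcases in the square filtration of $\mathcal{E}\times^{G'}V'$ than the $SO_{2n+1}$ analysis confronted. Pushing each of these intermediate contributions below a quantity tending to zero with $q$ requires a more delicate version of the Riemann--Roch plus non-regularity argument of Section \ref{couting section}, and in particular a uniform analogue of Lemma \ref{ignore case 2 of counting section} adapted to the two Kostant sections. A secondary technical point is the precise identification $\tau(G')=4$ and the determination of the constants $c_i$ at places of bad reduction, which govern the explicit form of the local product.
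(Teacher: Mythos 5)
Your outline reproduces the architecture of the paper's argument (reduce $\mathrm{Sel}_2$ to $\mathcal{J}[2]$-torsors via the analogue of Proposition \ref{compare1} and the large-$q$ lemma, identify the stabilizer scheme with $J[2]$, compute densities of regular sections, stratify $\mathrm{Bun}_{G'}$ by canonical reduction, and extract $2$ from Kostant strata and $4$ from the Tamagawa number), but the central object is misidentified. The Vinberg pair for this family is \emph{not} obtained from a $\mathbb{Z}/2$-grading of $\mathfrak{sl}_{2m+1}$ (that grading gives back $SO_{2m+1}$ acting on $\mathrm{Sym}^2_0$, i.e.\ the Chapter 1 situation). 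The paper takes $G' = SO(V_1)\times SO(V_2)$ with $V_1,V_2$ split of dimension $2m+1$ and discriminants $1$ and $-1$, acting by conjugation on $W = V_1\otimes V_2$ realized as skew blocks $\begin{pmatrix}0 & A\\ -A^* & 0\end{pmatrix}$ inside $\mathfrak{sl}(V_1\oplus V_2)$; the characteristic polynomial is then $f_T(x^2)$ with constant term $\det(A)^2 = e^2$, which is exactly how the square constant term of the Weierstrass equation is encoded. Without this identification the isomorphism of Proposition \ref{can2}, the explicit two Kostant sections $\kappa_1,\kappa_2$, and the counting over $\mathrm{Bun}_{G'}$ cannot be carried out. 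Relatedly, your justification of the constant $4$ is wrong as stated: the Tamagawa number of the simply connected cover is $1$ by the Weil conjecture; the $4$ is $\tau(SO_{2m+1})^2 = 2\cdot 2$ for $G'$ itself.

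A second substantive gap concerns the source of the Euler product $\prod_v\bigl(1+c_{2m-1}|k(v)|^{-2}+\dots+c_1|k(v)|^{-2m}\bigr)$. You attribute it to the codimension-two non-regularity sieve, but that sieve only produces the factors $\prod_i\zeta_C(2i)^{-2}$, which cancel against $|\mathrm{Bun}_{G'}(\mathbb{F}_q)|$ in Case 10 of the counting. The product with the $c_i$ arises instead from the failure of $G'(\bar k)$ to act transitively on $W_a^{\mathrm{reg}}(\bar k)$: because there are two Kostant sections, $|W_a^{\mathrm{reg}}(k(v))|$ can equal $2|G'(k(v))|$ rather than $|G'(k(v))|$, and this happens precisely for invariants $a$ whose polynomial has a root of multiplicity $\geq 3$ or is divisible by $x^2$; the $c_i$ bound the density of such invariants (Proposition \ref{general locus}). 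Likewise the factor $\zeta_C((2m+1)^2)$ comes from the density of \emph{minimal} tuples $(\mathcal{L},\underline{a})$ in the denominator of the average, not from the regular locus. These are not cosmetic points: the entire excess of the bound over $6$ in the non-transversal case is governed by this two-orbit phenomenon, and a proof that does not isolate it cannot produce the stated right-hand side.
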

If we order the set of hyperelliptic curves over $K$ by height, the following corollary of the above theorem give an upper bound for the average size of 2-Selmer groups:
\begin{corollary}
Assume that $q>4^{2m+1}$, and char($\mathbb{F}_q$) is "good", then
$$\limsup_{d \rightarrow \infty} \frac{\sum \limits_{\substack{\text{$(\mathcal{L}, \underline{a})$ is minimal} \\ \Delta(\underline{a}) \neq 0; deg(\mathcal{L} \leq d}} |Sel_2(H_{\underline{a}})|}{\sum \limits_{\substack{\text{$(\mathcal{L}, \underline{a})$ is minimal} \\ \Delta(\underline{a}) \neq 0; deg(\mathcal{L} \leq d}} 1}$$ $$\leq 4. \zeta_C((2m+1)^2). \prod_{v \in |C|}\big(1+c_{2m-1}|k(v)|^{-2}+\dots+c_1|k(v)|^{-2m}-2|k(v)|^{(2m+1)^2}\big)+ 2 + f(q),$$where $\lim_{q \rightarrow \infty} f(q) =0$, and $c_i$ are constants which are only depended on $m$ and $p$.
\end{corollary}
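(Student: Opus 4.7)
The plan is to deduce the corollary from the preceding theorem by a standard Ces\`aro-type averaging argument, exploiting the unbounded growth of the denominator as $d \to \infty$. First I would stratify both sums by $n := \deg(\mathcal{L})$: for each $n$, there are exactly $h_C := |\mathrm{Pic}^0(C)(\mathbb{F}_q)|$ line bundles of degree $n$ on $C$ (since $\mathrm{Pic}^n(C)(\mathbb{F}_q)$ is a torsor under $\mathrm{Pic}^0$), and I set
\[
N_n = \sum_{\substack{(\mathcal{L},\underline{a})\text{ minimal}\\ \deg(\mathcal{L})=n,\,\Delta(\underline{a})\neq 0}} |Sel_2(H_{\underline{a}})|, \qquad D_n = \sum_{\substack{(\mathcal{L},\underline{a})\text{ minimal}\\ \deg(\mathcal{L})=n,\,\Delta(\underline{a})\neq 0}} 1.
\]
Summing the inequality of the preceding theorem over the $h_C$ line bundles of degree $n$ yields $\limsup_{n\to\infty} N_n/D_n \leq B$, where $B$ denotes the bound appearing on the right-hand side of the corollary (which depends only on $C$, $m$, and $q$, not on any individual $\mathcal{L}$).

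Next I would verify geometric growth of $D_n$. By Riemann--Roch applied to the line bundles $\mathcal{L}^{\otimes 2i}$ for $1 \leq i \leq 2m$ and to $\mathcal{L}^{\otimes(2m+1)}$, the ambient $\mathbb{F}_q$-vector space of all tuples $\underline{a}$ has cardinality $q^{(2m+1)^2 n + O(1)}$ for $n$ large. The minimality condition and the condition $\Delta(\underline{a}) \neq 0$ each cut out a subset whose density is bounded away from $0$ (by local density computations of the type used earlier in the paper), so $D_n \asymp q^{(2m+1)^2 n}$ as $n \to \infty$; in particular $\sum_{n \leq d} D_n \to \infty$.

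With these ingredients the averaging is routine. Given $\delta > 0$, choose $N$ so that $N_n \leq (B+\delta) D_n$ for all $n > N$; then
\[
\frac{\sum_{n \leq d} N_n}{\sum_{n \leq d} D_n} \;\leq\; \frac{\sum_{n \leq N} N_n}{\sum_{n \leq d} D_n} \;+\; (B + \delta),
\]
and the first summand tends to $0$ as $d \to \infty$ because its numerator is a fixed constant while its denominator is unbounded. Letting $d \to \infty$ and then $\delta \to 0$ gives the bound claimed in the corollary.

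There is no substantial obstacle: the argument is essentially formal. The only genuine bookkeeping is confirming that the hypotheses under which the preceding theorem is applied are compatible with those stated in the corollary (the displayed bound on $q$ in the corollary should be read in conjunction with the stronger assumption made for the theorem), and that the finitely many small values of $n$ for which minimal tuples fail to exist contribute negligibly to the denominator sum---both of which are purely routine matters that do not affect the structure of the deduction.
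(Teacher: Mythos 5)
Your argument is correct and is precisely the deduction the paper intends: the corollary is stated there without proof as an immediate consequence of the fixed-$\mathcal{L}$ theorem, and the Ces\`aro-type averaging over degrees, together with the geometric growth $D_n \asymp q^{(2m+1)^2 n}$ guaranteed by Riemann--Roch and the density estimates, is the standard way to pass from the fixed-line-bundle limsup to the height-ordered one. You are also right to flag that the hypothesis $q > 4^{2m+1}$ printed in the corollary should be read as inheriting the stronger bound $q > 16^{\frac{m^2(2m+1)}{2m-1}}$ from the theorem it invokes.
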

The above error teem $f(q)$ can be removed and the limsup becomes the normal limit if we take the average over the set of transversal hyperelliptic curves. The transversality can be determined as follows:
\begin{definition} Let $H$ be a hyperelliptic curve over $K$ with an associated minimal data $(\mathcal{L}, \underline{a})$. Then $H$ is called to be transversal if the discriminant $\Delta(\underline{a}) \in H^0(C, \mathcal{L}^{4m(2m+1)})$ is square-free. 
\end{definition}
\begin{theorem}
If char($\mathbb{F}_q$) is "good", then 
$$\lim_{d \rightarrow \infty} \frac{\sum \limits_{\substack{\text{$(\mathcal{L}, \underline{a})$ is transversal} \\ deg(\mathcal{L} \leq d}} |Sel_2(H_{\underline{a}})|}{\sum \limits_{\substack{\text{$(\mathcal{L}, \underline{a})$ is transversal} \\  deg(\mathcal{L} \leq d}} 1} = 6.$$
\end{theorem}
Over $S$, the universal curve $H_S$ is defined to be the subscheme of $\mathbb{P}^3(S)$:
$$Z^{2m-1}Y^2= X^{2m+1}+a_1ZX^{2m}+\dots + a_{2m}Z^{2m}X + e^2Z^{2m+1},$$where $\underline{a}=(a_i,e) \in S$.
This is a flat family of integral projective curves over $S$, hence we can define the relative Jacobian $J_S=Pic^0_{H_S/S}.$ The next section will provide a close relation between $BJ_S[2]$ and 2-Selmer groups. Consequently, we will be able to restate our main theorems in the stack language. 
\subsection{2-torsion group and 2-Selmer group}
This section is almost identical to the section \ref{2-selmer group and the first coho} in chapter 1. We will state the main results and then give sketchy proofs if required. 

Given a hyperelliptic curve $(H,P_1,P_2)$ over the function field $K(C)$, let denote $\mathcal{H} \rightarrow C$ be the minimal integral model of $H$. We also have the relative generalized Jacobian $\mathcal{J}$ of $\mathcal{H}$ whose generic fiber is the Jacobian $J$ of $H$. Recall that the set of isomorphism classes of $\mathcal{J}[2]-$torsors over $C$ can be identified with the \'{e}tale cohomology group $H^1(C, \mathcal{J}[2])$. By restriction to the generic fiber of $C$, we obtain a homomorphism
\begin{equation}
H^1(C,\mathcal{J}[2]) \rightarrow H^1(K, J[2]).
\end{equation} 
We obtain the following results:
\begin{proposition}
The homomorphism $(14)$ factors through the 2-Selmer group $Sel_2(J).$ 
\end{proposition}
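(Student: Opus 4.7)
This is a local-to-global statement and follows the template of Proposition~4.3.2 in \cite{HLN14}, to which the author has already appealed for the analogous Proposition in Chapter~1. The plan is to check the Selmer condition place by place using a direct Kummer-pushout argument.

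Fix a class $c\in H^1(C,\mathcal{J}[2])$ represented by a torsor $T\to C$, let $c_K\in H^1(K,J[2])$ be its restriction to the generic point, and fix a place $v$ of $K$. I need to show that the image of $c_K$ in $H^1(K_v,J)$ vanishes. Since $T$ restricts to a $\mathcal{J}[2]$-torsor $T_v$ on the formal disc $C_v=\mathrm{Spec}(\mathcal{O}_{K_v})$, by the commutativity of the Kummer squares it suffices to prove that the composite
$$H^1(C_v,\mathcal{J}[2]) \xrightarrow{\iota_{\ast}} H^1(C_v,\mathcal{J}) \xrightarrow{\mathrm{res}} H^1(K_v,J)$$
vanishes, where $\iota:\mathcal{J}[2]\hookrightarrow\mathcal{J}$ is the natural inclusion. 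Equivalently, it suffices to produce an $\mathcal{O}_{K_v}$-point of the pushed-out $\mathcal{J}$-torsor $T'_v := T_v\times^{\mathcal{J}[2]}\mathcal{J}$; such a point would give, under the connecting map $J(K_v)\to J(K_v)/2J(K_v)\hookrightarrow H^1(K_v,J[2])$, a lift of $c_{K_v}$ witnessing the Selmer condition.

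To produce the $\mathcal{O}_{K_v}$-point I would combine smoothness of $\mathcal{J}$ at $v$ with Lang's theorem. The relative generalized Jacobian of an integral projective curve is smooth over its base, so $\mathcal{J}$ is smooth over $C_v$; consequently $T'_v\to C_v$ is smooth, and by Hensel's lemma it suffices to exhibit a $k(v)$-point of the special fibre. That special fibre is a torsor under the connected smooth algebraic group $\mathcal{J}_v=\mathrm{Pic}^0(\mathcal{H}_v)$ over the finite field $k(v)$, and Lang's theorem supplies the required $k(v)$-point, completing the argument.

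The main subtle point I foresee is the smoothness of $\mathcal{J}$: Proposition~\ref{transversal-->neron model} only gives $\mathcal{J}$ the structure of a N\'eron model in the transversal case, and in general $\mathcal{J}$ is merely the relative generalized Jacobian. However, what is actually needed for the Hensel lift is smoothness of $\mathcal{J}$ over $C_v$, not properness; since every fibre $\mathcal{H}_v$ of the minimal integral model is an integral projective curve, its $\mathrm{Pic}^0$ is a smooth connected commutative group scheme, and the same holds in families. Hence the Kummer-pushout torsor lands in the smooth locus and the Hensel-plus-Lang step applies uniformly, so the argument transports verbatim from \cite{HLN14} despite the weaker hypothesis on $\mathcal{J}$ in this chapter.
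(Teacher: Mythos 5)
Your argument is correct and is essentially the standard one that the paper delegates to Proposition 4.3.2 of \cite{HLN14}: restrict to each formal disc, push out along $\mathcal{J}[2]\hookrightarrow\mathcal{J}$, and kill the resulting $\mathcal{J}$-torsor over $\mathcal{O}_{K_v}$ by smoothness of the relative $\mathrm{Pic}^0$ (obstructions live in $H^2$ of a curve, which vanishes) together with Lang's theorem and Hensel lifting. Your remark that only smoothness of the generalized Jacobian, and not the N\'eron property, is needed for this direction is exactly the right observation and is why the statement holds without any transversality hypothesis.
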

And now in the transversal case, $Sel_2(J)$ can be identified with $H^1(C, \mathcal{J}[2])$ via the above map.
\begin{proposition} If the hyperelliptic curve $H$ is transversal, then 
\begin{eqnarray*}
|Sel_2(J)| = |H^1(C, \mathcal{J}2])|
\end{eqnarray*}
\end{proposition}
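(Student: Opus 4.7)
The plan is to mimic the argument of Proposition \ref{selmer equal torsor in transversal case} almost verbatim: I would prove that the restriction map $\rho : H^1(C, \mathcal{J}[2]) \to H^1(K, J[2])$ is both injective and has image equal to $Sel_2(J)$. Since both sides are finite, the asserted equality of cardinalities follows.

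First, I would establish the local structural input that, under the transversality hypothesis, $\mathcal{H} \to C$ is the minimal regular model of $H$, is semi-stable, and hence $\mathcal{J}$ is the N\'eron model of $J$ by Raynaud. These are the direct analogs of Propositions \ref{transversal-->neron model} and \ref{transversal-->semistable}; since transversality is a condition purely on the discriminant, and the proofs there only inspect the local Weierstrass equation $y^2=f(x)$ at places where $\mathrm{ord}_v(\Delta)\leq 1$, the same discriminant/resultant manipulations transfer without modification. In particular, the extra constraint $a_{2m+1}=e^2$ plays no role in those local computations.

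With this in hand, injectivity of $\rho$ becomes a purely local statement. The N\'eron property forces the identity component of the special fiber of $\mathcal{J}$ at $v$ to be an extension of an abelian variety by a torus, so the SGA7 analysis of the Tate module yields $\mathcal{J}[2](k(v))=J[2](K_v^s)^{I_v}$; inflation--restriction then gives $H^1(\mathcal{O}_{K_v},\mathcal{J}[2])\hookrightarrow H^1(K_v,J[2])$, and globalising delivers injectivity of $\rho$. For the image being $Sel_2(J)$, given a $J[2]$-torsor $T_K$ whose class lies in $Sel_2(J)$, the Selmer condition supplies, over each $K_v$, a point $x\in J(K_v)$ whose $[2]$-fiber is $T_{K_v}$. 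By the N\'eron property, $x$ extends to a $C_v$-section of $\mathcal{J}$, and pulling back the multiplication-by-$2$ map of $\mathcal{J}_{C_v}$ along this extension produces a local $\mathcal{J}[2]$-torsor restricting to $T_{K_v}$. Gluing these local extensions against the given generic $T_K$ yields the required global $\mathcal{J}[2]$-torsor mapping to $T_K$.

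The only step I expect to require genuine care is the transfer of Propositions \ref{transversal-->neron model} and \ref{transversal-->semistable} to the two-marked-points family: one has to check that a transversal local model in which the constant term is a square still remains regular and semi-stable. Since that constraint does not interact with the determinantal identity for $\Delta(f)$ used in the previous proofs, I expect this verification to be essentially cosmetic, after which the entire argument is a line-by-line translation of Proposition \ref{selmer equal torsor in transversal case} and the claim follows.
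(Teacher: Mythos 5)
Your proposal is correct and follows exactly the route the paper takes: the paper's own proof of this proposition is simply a reference back to the one-marked-point case (Proposition \ref{selmer equal torsor in transversal case}), relying on the same chain transversal $\Rightarrow$ minimal regular semi-stable model $\Rightarrow$ N\'eron model, then injectivity via SGA7 and inflation--restriction and surjectivity onto $Sel_2(J)$ via the N\'eron mapping property. Your added remark that the constraint $a_{2m+1}=e^2$ does not interfere with the local discriminant computations is the right (and only) point needing verification.
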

\begin{proof}
C.f. Proposition \ref{selmer equal torsor in transversal case}
\end{proof}
In general case, the size of $Sel_2(J)$ and $H^1(C, \mathcal{J}[2])$ can be compared as follows:
\begin{proposition}
We have that
\begin{eqnarray*}
|Sel_2(J)| \leq |H^1(C, \mathcal{J}[2])|, & \text{when}\hspace{0.2cm} J[2](K) = 0, \\
|Sel_2(J)| \leq 2^{2m-1}|H^1(C, \mathcal{J}[2])|, & \text{otherwise.}
\end{eqnarray*}
\label{compare2}
\end{proposition}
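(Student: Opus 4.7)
The plan is to mirror, step by step, the argument already used for Proposition \ref{compare1}, replacing $E_\alpha$ by $J$, the relative Jacobian $E_\alpha$ by $\mathcal{J}$, and $2n$ by $2m$ where appropriate. The setup is: let $\mathcal{E}\to C$ be the N\'eron model of $J/K$, so that $\mathcal{J}$ sits inside $\mathcal{E}$ as an open subgroup scheme and the quotient $\mathcal{E}/\mathcal{J}$ is supported at the finitely many closed points of bad reduction. The two goals are (i) to replace the Selmer group by the torsor count of a \emph{good} model of $J[2]$ and (ii) to pass from the N\'eron model back to $\mathcal{J}[2]$.

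First I would establish the local lifting statement: for any class in $Sel_2(J)$ and any place $v$, the image in $H^1(K_v,J[2])$ comes from $J(K_v)/2J(K_v)$, so the torsor fits in a Cartesian square with the multiplication-by-$2$ map on $J_{K_v}$; using the N\'eron mapping property, the $K_v$-point extending the base of this square lifts to an $\mathcal{O}_{K_v}$-point of $\mathcal{E}$, and pulling back yields an $\mathcal{E}[2]$-torsor over the local disc. This is the exact analogue of the argument at the end of the proof of Proposition \ref{selmer equal torsor in transversal case}, now applied to $\mathcal{E}$ rather than to the (a priori non-proper) $\mathcal{J}$. Gluing gives an $\mathcal{E}[2]$-torsor over $C$, so
\begin{equation*}
|Sel_2(J)|\ \leq\ |H^1(C,\mathcal{E}[2])|.
\end{equation*}

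Next I would compare $H^1(C,\mathcal{E}[2])$ with $H^1(C,\mathcal{J}[2])$. The inclusion $\mathcal{J}[2]\hookrightarrow \mathcal{E}[2]$ fits in a short exact sequence of \'etale sheaves
\begin{equation*}
0\longrightarrow \mathcal{J}[2]\longrightarrow \mathcal{E}[2]\longrightarrow Q\longrightarrow 0
\end{equation*}
with $Q$ a skyscraper sheaf on the finite bad locus, so $|H^0(C,Q)|=|H^1(C,Q)|$ and $H^{\geq 2}(C,Q)=0$. Taking cohomology and counting exactly as in the proof of Proposition \ref{compare1} yields the key inequality
\begin{equation*}
\frac{|H^1(C,\mathcal{E}[2])|}{|H^1(C,\mathcal{J}[2])|}\ \leq\ \frac{|H^0(C,\mathcal{E}[2])|}{|H^0(C,\mathcal{J}[2])|}.
\end{equation*}

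Finally I would treat the two cases separately. If $J[2](K)=0$ then by the N\'eron mapping property $H^0(C,\mathcal{E}[2])=\mathcal{E}[2](C)=J[2](K)=0$, so both $H^0$'s are trivial and the two bounds combine into $|Sel_2(J)|\leq |H^1(C,\mathcal{J}[2])|$. If $J[2](K)\neq 0$, then $H^0(C,\mathcal{J}[2])$ is a nontrivial $2$-group hence of order $\geq 2$, while $|H^0(C,\mathcal{E}[2])|=|J[2](K)|\leq |J[2](\overline K)|\leq 2^{2m}$ since $J$ has dimension $m$; the ratio is therefore at most $2^{2m-1}$, giving the second inequality. The only genuinely non-formal input is the $v$-local lifting step, which needs the N\'eron model to exist globally over $C$; since our hyperelliptic curves carry a rational point, Raynaud's result invoked in Proposition \ref{transversal-->neron model} supplies this, so no new obstacle arises beyond what is already present in Chapter~1.
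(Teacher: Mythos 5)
Your proposal follows essentially the same route as the paper, which gives no separate proof of this proposition and tacitly refers back to the argument for Proposition \ref{compare1}: lift Selmer classes to $\mathcal{E}[2]$-torsors over $C$ via the N\'eron mapping property, then compare $H^1(C,\mathcal{E}[2])$ with $H^1(C,\mathcal{J}[2])$ through the long exact sequence of the skyscraper quotient and the equality $|H^0(Q)|=|H^1(Q)|$. All of that is fine.

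The one place where you assert more than you prove is in the second case: you claim that $J[2](K)\neq 0$ forces $H^0(C,\mathcal{J}[2])$ to be nontrivial. This is not automatic, since a priori a $K$-rational $2$-torsion point extends (by the N\'eron property) only to a section of $\mathcal{E}[2]$, not of $\mathcal{J}[2]$; and it is precisely the point on which the constant $2^{2m-1}$, rather than $2^{2m}$, depends. The paper's proof of Proposition \ref{compare1} devotes its opening paragraph to exactly this step: the $2$-torsion of the Jacobian is governed by roots (equivalently, monic factorizations) of the defining polynomial, whose coefficients are global sections of powers of $\mathcal{L}$, so by the integral root theorem a $K$-rational root or factor is automatically integral and therefore yields a section of $\mathcal{J}[2]$ over all of $C$. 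You need the analogous sentence here (for $f(x)=x^{2m+1}+a_1x^{2m}+\dots+e^2$ the same argument applies verbatim). With that supplied, your argument is complete and identical in substance to the paper's.
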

To summary, in general, $|Sel_2(J)|$ is bounded by $|H^1(C, \mathcal{J}[2]|$ except the case our Jacobian $J$ has a 2-torsion $K-$rational point. However, if we make an assumption that the size of our base field $q$ is large enough, then the contribution of $Sel_2(J)$ in this case to the average is zero. More precisely, we have
\begin{lemma}
If $q > 16^{\frac{m^2(2m+1)}{2m-1}}$ then the contribution of the case $J[2](K) \neq 0$ to the average is zero. In the other words, we have the following limit: 
\begin{equation*}
\limsup_{deg(\mathcal{L}) \rightarrow \infty}\dfrac{\mathlarger{\sum}\limits_{\substack{\underline{a}\in H^0(C, \mathcal{L}^2 \oplus \cdots \oplus \mathcal{L}^{4m} \oplus \mathcal{L}^{2m+1}) \\ J_{\underline{a}}[2](K) \neq \{ 0\}}} |H^1(C,\mathcal{J}_{\underline{a}}[2])|}{\mathlarger{\sum}\limits_{\substack{\underline{a}\in H^0(C, \mathcal{L}^2 \oplus \cdots \oplus \mathcal{L}^{4m} \oplus \mathcal{L}^{2m+1})}} 1} = 0
\end{equation*}
\end{lemma}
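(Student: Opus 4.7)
The plan is to adapt Lemma~\ref{large char contributes 0} (the ``large $q$ kills bad $\underline{a}$'' lemma of Chapter~1) essentially verbatim to the two-marked-points setting. I would begin by writing
\[
\frac{\sum_{\underline{a}:\, J_{\underline{a}}[2](K)\neq 0} |H^1(C, \mathcal{J}_{\underline{a}}[2])|}{|H^0(C, \mathcal{L}^{2}\oplus\cdots\oplus \mathcal{L}^{4m}\oplus \mathcal{L}^{2m+1})|}
\;\leq\; \frac{\#\{\underline{a} : J_{\underline{a}}[2](K)\neq 0\}\cdot \max_{\underline{a}}|H^1(C, \mathcal{J}_{\underline{a}}[2])|}{q^{(2m+1)^2 d + O(1)}},
\]
and then bounding the two factors in the numerator separately so that their product has strictly smaller exponential rate in $d$ than $q^{(2m+1)^2 d}$ as soon as $q$ exceeds the stated threshold.

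For the count of bad $\underline{a}$, I would use that $J_{\underline{a}}[2](K)\neq 0$ is equivalent to the Weierstrass polynomial $f_{\underline{a}}(x) = x^{2m+1} + a_1 x^{2m} + \cdots + a_{2m} x + e^2$ being reducible over $K$, because $K$-rational $2$-torsion classes of $\mathcal{J}_{\underline{a}}$ correspond to Galois-stable nontrivial partitions of the branch locus. For $m\geq 2$ the dominant substratum of the reducible locus is the ``$f_{\underline{a}}$ has a $K$-linear factor'' stratum: a direct comparison of the parameter counts for factorization types $(d_1,d_2)$ with $d_1+d_2=2m+1$ shows that the $(d_1,d_2)$-stratum has codimension $2d_1 d_2$ in the $(2m+1)^2\cdot d$-dimensional parameter space, which is minimized at $d_1=1$. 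By the integral root theorem the root itself lies in $H^0(C,\mathcal{L}^{\otimes 2})$, and I obtain a parameterization $f_{\underline{a}} = (x-c)\,g(x)$ with $g(x)=x^{2m}+b_1 x^{2m-1}+\cdots+b_{2m}$, $b_i\in H^0(C,\mathcal{L}^{\otimes 2i})$, subject to the compatibility $e^2 + c\,b_{2m}=0$ in $H^0(C,\mathcal{L}^{\otimes(4m+2)})$. A Riemann--Roch count then bounds $\#\{\underline{a}: J_{\underline{a}}[2](K) \neq 0\}$ by $q^{Ad + O(1)}$ with $A$ strictly less than $(2m+1)^2$.

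For the uniform cohomology bound, I would restrict to the smooth locus $C' = C\setminus\{v:\Delta(\underline{a})(v)=0\}$, over which $\mathcal{J}_{\underline{a}}[2]$ is a finite \'etale group scheme of order $2^{2m}$, and then use the argument of Chapter~1: any class in $H^1(C,\mathcal{J}_{\underline{a}}[2])$ restricts to a $\mathcal{J}_{\underline{a}}[2]|_{C'}$-torsor on $C'$, i.e.\ to a tame \'etale cover of $C'$ of degree $2^{2m}$. Using $|C\setminus C'|\leq \deg\Delta = 4m(2m+1)d$ and Grothendieck's bound $2g + |C\setminus C'|$ on the number of topological generators of $\pi_1^{\mathrm{tame}}(C')$, the number of such covers is at most $(2^{2m})^{2g + 4m(2m+1)d}$, and each carries at most $2^{O(m)}$ distinct torsor structures (bounded as in Chapter~1 by counting $K$-linear action maps). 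Thus $\max_{\underline{a}}|H^1(C,\mathcal{J}_{\underline{a}}[2])| \leq 2^{Bd + O(1)}$ with $B$ of order $m^2(2m+1)$.

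Assembling the two bounds, the ratio is at most $q^{-((2m+1)^2 - A) d} \cdot 2^{B d + O(1)}$, which tends to zero as $d\to\infty$ precisely when $q^{(2m+1)^2 - A} > 2^B$; inserting the specific values of $A$ and $B$ produced by the two dimension counts yields the stated threshold $q > 16^{m^2(2m+1)/(2m-1)}$. The main obstacle will be the bookkeeping of the first step: one must confirm the comparison of reducible factor types (which is what forces the hypothesis $m\geq 2$ inherited from the chapter), verify that the extra square-compatibility constraint $e^2+c\,b_{2m}=0$ imposes the expected codimension without unexpected degeneracies in characteristic $p>3$, and verify that the restriction $H^1(C,\mathcal{J}_{\underline{a}}[2])\hookrightarrow H^1(C',\mathcal{J}_{\underline{a}}[2]|_{C'})$ is injective (an analogue of the N\'eron-model argument of Proposition~\ref{selmer equal torsor in transversal case}) so that one may indeed pass to the smooth locus for the cohomology bound.
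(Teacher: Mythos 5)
Your proposal follows essentially the same route as the paper's proof: bounding the number of bad $\underline{a}$ by a Riemann--Roch parameter count of the factorization $f_{\underline{a}}=(x-c)g(x)$ subject to the constraint on $e^2$, bounding $|H^1|$ uniformly by counting tame \'etale covers of the smooth locus $C'$ via the generator bound $2g+4m(2m+1)d$ on $\pi_1^{\mathrm{tame}}(C')$ together with a bounded count of torsor structures per cover, and then comparing exponential rates to recover the threshold $q>16^{m^2(2m+1)/(2m-1)}$. The points you flag for verification (the comparison of reducible strata beyond the linear-factor one, and the injectivity of restriction to $C'$) are in fact glossed over in the paper's own argument, so raising them is reasonable but does not change the method.
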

\begin{proof}
Let $H$ be the hyperelliptic curve over $C$ defined by $(\mathcal{L}, \underline{a})$, then the smooth locus $C'$ of the map $H \rightarrow C$ is determined by the condition $\Delta(\underline{a}) \neq 0$, notice that $\Delta \in H^0(C,\mathcal{L}^{4n(2n+1)})$. Denote $\mathcal{J}$ the corresponding Jacobian of $H$, then by the smoothness of $H$ over $C'$, any $K_v-$points of $\mathcal{J}$ can be extended as  $C_v'-$points. By using the Selmer condition, we imply that any elements in the 2-Selmer group of $J$ can be lifted to $\mathcal{J}[2]-$torsors over $C'$. Consequently, we get
\begin{equation*}
|Sel_2(J)| \leq |H^1(C', \mathcal{J}[2])|.
\end{equation*}
When $\mathcal{J}[2](C) \neq 0$, there exists a section $c \in H^0(C, \mathcal{L}^{\otimes 2})$ such that the $(x,z)-$ polynomial defining $H$ has a factorization:
$$x^{2m+1}+a_1x^{2m}z + \cdots + a_{2m}xz^{2m}+e^2z^{2m+1} $$$$= (x-cz)(x^{2m}+b_1x^{2m-1}z+b_2x^{2m-2}z^2+\cdots + b_{2m}z^{2m}).$$
It means that $\underline{a}$ can be expressed in terms of $c$ and $\{b_j \}_{1\leq j \leq 2m}$, where $b_j \in H^0(C, \mathcal{L}^{2j}),$ for all $j$, and $-c.b_{2m}$ is a square of a section in $H^0(C, \mathcal{L}^{2m+1})$. If $d= deg(\mathcal{L})$ is large enough, then by using Riemann-Roch theorem, the number of multiple sets $\underline{a}$ will be bounded above by $q^{2d+(2+4+\cdots+4m-2+2m+1)d+(2m+1)(1-g)}=q^{(4m^2+3)d+(2m+1)(1-g)}$.\\
Now we consider the following interpretation for $\mathcal{J}[2]-$tosors: any $\mathcal{J}[2]-$tosors over $C'$ can be considered as tame \'{e}tale covers of $C'$ of degree $2^{2m}$. Hence there is a natural map:
\begin{equation*}
\phi : H^1(C',\mathcal{J}[2]) \rightarrow \{\text{tame \'{e}tale covers of $C'$ of degree $4^m$} \}.
\end{equation*}
The number of points where $H_\alpha$ is singular $|C-C'|$ is bounded by the degree of $\Delta(H_\alpha)$, so $|C-C'| \leq 4m(2m+1)d$. As a consequence, the number of topological generators of $\pi_1^{tame}(C')$ is less than $2g+4m(2m+1)d$. The size of $H^1(C',\mathcal{J}[2])$ can be estimated if we understand the fiber of $\phi$. Let $M$ is a degree $4^{m}$ \'{e}tale cover of $C'$, then giving $M$ the structure of $\mathcal{J}[2]-$torsor is equivalent to giving an action map:
\begin{equation*}
\psi: \mathcal{J}[2] \times_{C'} M \longrightarrow M
\end{equation*}
which is compatible with the structure maps to $C'$ and satisfies that the following natural map 
\begin{eqnarray*}
\mathcal{J}[2] \times_{C'} M & \longrightarrow & M \times_{C'} M \\
(g,m) & \mapsto & (g.m,m)
\end{eqnarray*}
is isomorphic. \\ 
 Since everything is proper and flat over $C'$, the map $\psi$ is totally defined by the generic map $\psi_K: (\mathcal{J}[2]\times_{C'} M)_K \rightarrow M_K$. As $K-$vector spaces, $dim(M_K) = 2^{2m}$ and $dim(\mathcal{J}[2]\times_{C'} M)_K = 2^{4m}$, hence the number of maps giving $M$ the structure of a $\mathcal{J}[2]-$tosors is bounded by $2^{6m}$, so is the fiber of $\phi$. \\
Now we obtain that the average in the case $\mathcal{J}[2](C) \neq 0$ is bounded by:
\begin{equation*}
\dfrac{q^{2^{6m}}.4^{m(2g+4m(2m+1)d)}.q^{(4m^2+3)d+(2m+1)(1-g)}}{q^{(2m+1)^2d+(2m+1)(1-g)}} = \dfrac{a.4^{m(4m(2m+1)d)}}{q^{(4m-2)d}},
\end{equation*}
where $a$ is a constant independent to $d$. This goes to zero as $d$ goes to infinity if $q^{4m-2} > 4^{4m^2(2m+1)}$, or equivalently $q > 16^{\frac{m^2(2m+1)}{2m-1}}$. The lemma is completed.
\end{proof}
From now on, we will assume that $q > 16^{\frac{m^2(2m+1)}{2m-1}}$ if we work on the general case, and there are no assumptions for the transversal case. Hence we always have that $|Sel_2(J_{\underline{a}})| \leq |H^1(C, \mathcal{J}[2]|$ for any tuples $\underline{a}$, and $|Sel_2(J_{\underline{a}})| = |H^1(C, \mathcal{J}[2]|$ if $\underline{a}$ is transversal. We now restate our main theorem in stack language as follows.
\begin{itemize}
\item[1.] Recall that $S=Spec(K[a_1,\dots,a_{2m},e]) \cong \mathbb{A}^{2m+1}.$ Then any tuple $(\mathcal{L}, \underline{a})$ can be seen as a $C-$point of the quotient stack $[S/\mathbb{G}_m]$, where the action of $\mathbb{G}_m$ on $S$ is given by $\lambda. (a_1,\dots,a_{2m},e) = (\lambda^2a_1,\dots,\lambda^{4m}a_{2m},\lambda^{2m+1}e).$ We set $\mathcal{A}= Hom(C, [S/\mathcal{G}_m])$, then $A(k)$ classifies tuples $(\mathcal{L}, \underline{a}).$
\item[2.] Since the universal Jacobian $J_S$ is a group scheme over $S$, there is a natural map of quotient stacks
$$[BJ_S[2]/\mathbb{G}_m] \xrightarrow{\psi} [S/\mathbb{G}_m].$$
Given a morphism $\alpha: C \rightarrow [S/\mathbb{G}_m]$, as in the step 1, we obtain a family of curve $H_\alpha \rightarrow C$. Denote $J_\alpha= \alpha^*J_S$, then $J_\alpha$ is exactly the relative Jacobian of $H_\alpha$ over $C$. An isomorphism class of $J_\alpha[2]-$torsor over $C$ can be seen as a morphism $ \beta: C \rightarrow [BJ_S[2]/\mathbb{G}_m]$ that fits in the following commutative diagram:
\[
\begin{tikzcd}
C \arrow{r}{\beta} \arrow{rd}{\alpha} & \text{$[BJ_S[2]/\mathbb{G}_m]$} \arrow{d}{\psi} \\
 & \text{$[S/\mathbb{G}_m]$}
 \end{tikzcd}
\]
Hence if we set $\mathcal{M}= Hom(C, [BJ_S[2]/\mathbb{G}_m])$ then we have a natural map $\mathcal{M} \rightarrow \mathcal{A}$ where the fiber $\mathcal{M}_\alpha$ over $\alpha \in \mathcal{A}(k)$ classifies isomorphism classes of $J_\alpha-$torsors over $C$. 
\item[3.] Notice that the natural map $\mathcal{M} \rightarrow \mathcal{A}$ is compatible with maps to $Hom(C, B\mathbb{G}_m)$. 
\end{itemize}
Our main theorem can be translated into
\begin{theorem}
Suppose that $q > 16^{\frac{m^2(2m+1)}{2m-1}}$. Then we have that
$$\limsup_{deg(\mathcal{L}) \rightarrow \infty} \frac{|\mathcal{M}_{\mathcal{L}}(k)|}{|\mathcal{A}_{\mathcal{L}}(k)|} \leq 4.\prod_{v \in |C|}\big(1+c_{2m-1}|k(v)|^{-2}+\dots+c_1|k(v)|^{-2m}\big) + 2 + f(q),$$where $lim_{q \rightarrow \infty} f(q) = 0$, and $c_i$ are constants which are only depended on $m$ and $p$. If $p>2m+1$ then $c_i$ is only depended on $m$.
\end{theorem}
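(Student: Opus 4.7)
The overall plan mirrors the strategy of the first theorem, but with the Vinberg representation appropriate to pointed hyperelliptic curves with both a marked Weierstrass and a non-Weierstrass point. The first step is to choose a representation $(V,G)$ from the theory of $\theta$-groups whose invariant quotient $S$ matches our coarse moduli of affine Weierstrass data $(a_1,\ldots,a_{2m},e)$ with weights $(2,4,\ldots,4m,2m+1)$, and construct its Kostant section. This will come with the canonical isomorphism $I_S \cong J_S[2]$ (the analogue of \ref{can1}, referred to in the text as \ref{can2}), yielding
\[
\mathcal{M} \cong \mathrm{Hom}\bigl(C, [V^{reg}/G \times \mathbb{G}_m]\bigr),
\]
so that $\mathcal{M}_{\mathcal{L}}(k)$ classifies pairs $(\mathcal{E},s)$ with $\mathcal{E}$ a principal $G$-bundle and $s$ a regular global section of the twisted associated vector bundle. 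Lemma 3.5-type reductions (Poonen sieve and the $q > 16^{m^2(2m+1)/(2m-1)}$ hypothesis) allow us to pass from $|Sel_2(E_\alpha)|$ to $|H^1(C,E_\alpha[2])|$ uniformly.

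Next, I would decompose the sum over $\mathcal{E} \in \mathrm{Bun}_G(k)$ according to the canonical reduction $(P,\sigma)$ with Levi $L = GL(n_1)\times \cdots \times GL(n_t)\times SO(2m'+1)$, using Proposition \ref{splitting when large unstability} and Proposition \ref{global sections of semistable} to estimate both the number of regular global sections of $V(\mathcal{E},\mathcal{L})$ and the size of $\mathrm{Aut}_G(\mathcal{E})(\mathbb{F}_q)$. The same casework as in Section \ref{couting section} applies: cases in which two consecutive slopes differ by more than $2g-2$ but not by the value forced by the Kostant section split the vector bundle $\mathcal{E}\times^G W$ and a matrix computation (as in Case 1 and the regularity lemma) shows the image of the corresponding sections is never regular, contributing zero. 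Cases where $\mu_1$ lies in an intermediate window of width bounded by $d$ contribute $0$ by the same $q^{-2d\binom{r_1+1}{2}}$-decay argument.

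The main positive contributions come from two extreme cases. First, the fully unstable reduction to the Borel with slope gaps $\mu_i-\mu_{i+1} > 2g-2$: here the associated vector bundle splits as a sum of line bundles and, by the analogue of the Kostant-section proposition (identifying admissible sections with conjugates of the canonical form), every regular section factors through a Kostant section. Because this representation admits $2$ Kostant sections rather than $1$ (reflecting the additional marked non-Weierstrass data), this case contributes $2$ instead of $1$. Second, the semistable case ($\mu_1$ small or $\mathcal{E}$ semistable): here Riemann--Roch gives $h^0(V(\mathcal{E},\mathcal{L})) = \dim(V)\cdot d + \dim(V)(1-g)$, and the density calculation from Proposition \ref{general locus 1} (combined with the new local factors $1+c_{2m-1}|k(v)|^{-2}+\cdots+c_1|k(v)|^{-2m}$ arising from the non-smooth locus of $V^{reg}\to S$ at the special $e=0$ stratum, and a factor $\zeta_C((2m+1)^2)$ from the extra weight) produces the first term $4\cdot\zeta_C((2m+1)^2)\cdot\prod_v(\cdots)$, where the $4$ is the Tamagawa number of the new semisimple group appearing in this Vinberg datum, computed via the mass formula exactly as at the end of Section \ref{couting section}.

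The hard part will be controlling the density computation at places of bad reduction for the representation, i.e.\ producing the explicit constants $c_i$ and verifying that they are independent of $v$ (they depend only on $m$ and $p$). Concretely, when $p \leq 2m+1$, the Kostant section and the regular locus may degenerate at the $e=0$ stratum, and one must compute the local factor $|V^{reg}(\mathcal{O}_v/\varpi_v^2)|/|V(\mathcal{O}_v/\varpi_v^2)|$ by hand, tracking how the characteristic polynomial invariants of the modified representation (which carry an extra $(2m+1)$-weighted invariant) interact with reduction mod $\varpi_v^2$. This is what produces the $\zeta_C((2m+1)^2)\prod_v(\cdots)$ factor in place of the clean $\prod_{i=1}^{2n}\zeta_C(2i)^{-1}$ of Chapter 1, and it is the one step where the analogy with the Weierstrass-point-only case genuinely breaks down.
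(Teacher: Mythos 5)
Your high-level skeleton is the right one --- the two Kostant sections contribute $2$, the (near-)semistable range contributes $4$ times a local density product with $4$ the Tamagawa number, and everything in between should decay --- but the proposal glosses over the single feature that makes this theorem genuinely harder than its Chapter~1 analogue, namely that the relevant Vinberg datum is the \emph{product} group $G=SO(V_1)\times SO(V_2)$ acting on $W=V_1\otimes V_2$. You describe the canonical reduction by a single Levi chain $GL(n_1)\times\cdots\times GL(n_t)\times SO(2m'+1)$ and assert that ``the same casework as in Section \ref{couting section} applies.'' It does not: a canonical reduction of a $G$-bundle is a \emph{pair} of independent isotropic flags, one in each factor, so the counting is governed by two interleaved sequences of slopes $x_1>\cdots>x_h>0$ and $y_1>\cdots>y_l>0$ together with ranks $r_i,t_j$, and the decay of the intermediate cases depends on inequalities relating the two flags that are forced by the non-vanishing of $\det(A_\alpha)$ and of $\Delta(H_\alpha)$ (the ``condition X'' of the paper). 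Establishing that every configuration other than the two Kostant patterns and the near-semistable range contributes $o(1)$ requires the long induction on the pair $(h,l)$ carried out in the paper's counting section, plus the explicit treatment of the boundary window $(4m-3)d<x_1+x_2<(4m-2)d$, which is where the error term $f(q)$ actually originates. None of this is supplied or even signalled by the sentence ``the same casework applies,'' so as written the central estimate is missing.

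Two smaller corrections. First, the statement you are asked to prove is the stack-language version, in which $\mathcal{A}_{\mathcal{L}}(k)$ counts \emph{all} tuples $\underline{a}\in H^0(C,\mathcal{L}^{2}\oplus\cdots\oplus\mathcal{L}^{4m}\oplus\mathcal{L}^{2m+1})$; the factor $\zeta_C((2m+1)^2)$ you insert belongs only to the minimal-locus and height-ordered versions of the theorem and should not appear here. Second, the local factors $1+c_{2m-1}|k(v)|^{-2}+\cdots+c_1|k(v)|^{-2m}$ do not come from a failure of smoothness of $W^{reg}\to S$ along the $e=0$ stratum; they come from the fact that for invariants $a$ whose polynomial $f_a$ has a root of multiplicity at least $3$ (or is divisible by $x^2$) the group $G(\overline{k(v)})$ need not act transitively on $W^{reg}_a$, so one can only bound $|W^{reg}_a(k(v))|$ by $2|G(k(v))|$, and one then counts such bad invariants $a$; this is the content of Proposition \ref{general locus}. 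Also note that for this stack statement no comparison between $Sel_2$ and $H^1(C,J[2])$ is needed at all, since $\mathcal{M}_{\mathcal{L}}(k)$ already counts torsors over $C$, so the Selmer-to-torsor reduction you invoke at the start is not part of this proof.
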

Let $\mathcal{A}^{trans}(k)$ be the subset of transversal elements in $\mathcal{A}(k)$, and $\mathcal{M}^{trans}(k)$ be the preimage of $\mathcal{A}^{trans}(k)$ under the natural map $\mathcal{M} \rightarrow \mathcal{A}$. Then in transversal case, we have the following limit:
\begin{theorem}
$$\lim_{deg(\mathcal{L}) \rightarrow \infty} \frac{|\mathcal{M}^{trans}_{\mathcal{L}}(k)|}{|\mathcal{A}^{trans}_{\mathcal{L}}(k)|} = 6.$$
\end{theorem}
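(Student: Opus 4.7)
The plan is to follow verbatim the four-case counting argument used to prove Theorem~\ref{main theorem 1} and its refinement to the transversal subfamily, applied now to the Vinberg representation $(G,V)$ attached to the family of hyperelliptic curves with a marked Weierstrass point and a marked rational non-Weierstrass point. The transversality hypothesis, together with the analog (proved in the present chapter) of Proposition~\ref{selmer equal torsor in transversal case}, already gives $|Sel_2(J_\alpha)|=|H^1(C,J_\alpha[2])|$, so the problem reduces to the asymptotic
$$\lim_{\deg(\mathcal{L})\to\infty}\frac{|\mathcal{M}^{trans}_{\mathcal{L}}(k)|}{|\mathcal{A}^{trans}_{\mathcal{L}}(k)|}=6.$$
Using the analog $[V^{reg}/G\times\mathbb{G}_m]\cong[BJ_S[2]/\mathbb{G}_m]$ of Proposition~\ref{can1} for the present Vinberg representation, I interpret $\mathcal{M}^{trans}_{\mathcal{L}}(k)$ as the set of pairs $(\mathcal{E},s)$ where $\mathcal{E}$ is a principal $G$-bundle on $C$ and $s$ is a regular, transversal section of the associated twisted vector bundle.

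I would then stratify by the canonical reduction of $\mathcal{E}$ to a parabolic $P$ with Levi $L=\mathrm{GL}(n_1)\times\cdots\times\mathrm{GL}(n_t)\times G_0$, and run through the four cases of Section~\ref{couting section}. Case~1 (a slope gap forcing non-regularity of every section) is excluded by the direct rank/characteristic-polynomial argument of Chapter~1, verbatim. Case~3 (narrow band of $\mu_1$ just below $\deg\mathcal{L}$) contributes zero by the identical Riemann-Roch estimate. Case~2 (intermediate slope gap) is ruled out via the transversal analog of Lemma~\ref{ignore case 2 of counting section}: transversality of $\Delta(\underline a)$ prevents the line bundle $E_L\times^L X_l^*\otimes X_{l+1}$ from having sections that avoid zero, and a zero of this entry produces a non-regular matrix value at some closed point of $C$. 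Only the fully semistable stratum and the Borel stratum in which every slope gap exceeds $2g-2$ survive.

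The value $6=4+2$ arises from these two surviving strata. For the semistable stratum, I combine the adelic Tamagawa identity
$$\tau_G(G(K)\backslash G(\mathbb{A}))=q^{(\dim G/2)(1-g)}\prod_{x\in C}\frac{|G(\kappa(x))|}{|\kappa(x)|^{\dim G}}\sum_{\mathcal{P}\in \mathrm{Bun}_G(C)(\mathbb{F}_q)}\frac{1}{|\mathrm{Aut}_G(\mathcal{P})|}$$
with the transversal density computation (the analog of Proposition~\ref{transversal locus 1}), in which the factor $|\mathcal{A}^{trans}_{\mathcal{L}}(k)|/|\mathcal{A}_{\mathcal{L}}(k)|$ exactly cancels the product of local zeta factors coming from $|V^{reg}(\kappa(v))|/|V(\kappa(v))|$. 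The final value is the Tamagawa number of the Vinberg group $G$, which for this family is $4$. For the Borel stratum, a matrix-conjugation argument analogous to Proposition~3.10 shows that any regular section is conjugate over $G(k)$ to a Kostant section, and the representation in question admits exactly two Kostant sections, reflecting the $\pm$ choice of sign for $e$ in the Weierstrass equation $y^2=f(x)$ with constant term $e^2$; this stratum contributes $2$.

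The main obstacle will be the third step. First, identifying the Vinberg group $G$ (which is no longer split $SO(2m+1)$ as in Chapter~1, since the invariant $e$ of odd degree $2m+1$ forces a different grading of the ambient semisimple group) and verifying that its Tamagawa number equals $4$ demands a careful local-factor computation or invocation of the Weil-conjecture identity for the correct isogeny class. Second, proving that exactly two Kostant sections exist and that each contributes weight one to the Borel stratum requires a detailed normal-form analysis: the odd-degree invariant $e$ interacts non-trivially with the $\mathbb{G}_m$-twist, so the matrix-reduction procedure in the analog of Proposition~3.10 has to be redone with one extra degree of freedom, and the two square roots of $e^2$ must be shown to give inequivalent Kostant sections over $\mathbb{F}_q$ but equivalent orbits over $\bar{\mathbb{F}}_q$.
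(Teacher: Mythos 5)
Your skeleton matches the paper's proof: use the transversal analogue of Proposition \ref{selmer equal torsor in transversal case} to replace $|Sel_2(J_\alpha)|$ by $|H^1(C,J_\alpha[2])|$, use the isomorphism of Proposition \ref{can2} to count pairs $(\mathcal{E},s)$, stratify by canonical reduction, kill every stratum except the semistable range and the Kostant-type Borel strata, and obtain $6=4+2$ with $4$ the Tamagawa number of $G=SO(V_1)\times SO(V_2)$ and $2$ the number of Kostant sections, the zeta factors cancelling against the transversal density of Proposition \ref{Trasversal locus}. However, two steps as you describe them would not go through.

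First, the counting cannot be run ``verbatim'' through the four cases of Section \ref{couting section}. The group here is a product, so a canonical reduction consists of \emph{two} isotropic flags, one in each factor, with Levi of the form $\big(GL(r_1)\times\cdots\times GL(r_h)\times SO(r_0)\big)\times\big(GL(t_1)\times\cdots\times GL(t_l)\times SO(t_0)\big)$ rather than the single chain $GL(n_1)\times\cdots\times GL(n_t)\times G_0$ you write. The two slope sequences $x_1>\cdots>x_h>0$ and $y_1>\cdots>y_l>0$ interact through the tensor product $V_1\otimes V_2$, and the nonvanishing conditions $\det(A_\alpha)\neq 0$ and $\Delta(H_\alpha)\neq 0$ translate into coupled rank inequalities between the $r_i$ and the $t_j$ (no vanishing bottom-right blocks of prescribed shapes in $A_\alpha$). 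The paper has to run a separate induction on the pair $(h,l)$, optimizing the exponent of $q$ as a quadratic function of one rank at a time to show the maximum is attained only when $h=l$ and $r_i=t_i$ for all $i$, and then a further slope analysis to isolate the two Kostant configurations; only after that can the surviving strata be identified. This is the bulk of the chapter-2 counting and is not a formal consequence of the chapter-1 argument, so your proposal is missing its hardest ingredient.

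Second, the two Kostant sections are not the two square roots of $e^2$. In the paper both sections $\kappa_1,\kappa_2$ are explicit normal forms with the \emph{same} invariant tuple $(a_1,\dots,a_{2m},e)$, the same $e=\det(A)$ included; they correspond to the two regular nilpotent $G(\bar k)$-orbits. Moreover, for transversal invariants the two regular orbits with a fixed invariant merge into a single $G(\bar k)$-orbit (this is the transitivity statement proved for polynomials whose roots have multiplicity at most $2$), which is exactly what makes the density ratio in Proposition \ref{Trasversal locus} equal $\prod_v|G(k(v))|/|k(v)|^{n^2-n}$ with no extra factor of $2$; the doubling enters only through the two Borel-type strata, each shown to contribute $1$ by an explicit matrix-reduction to the corresponding Kostant normal form. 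Your heuristic (inequivalent over $\mathbb{F}_q$, equivalent over $\bar{\mathbb{F}}_q$, governed by the sign of $e$) would send you looking for the factor $2$ in the orbit-counting over closed points rather than in the bundle stratification, and the local densities would then come out wrong.
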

One of the main ingredients in the proof of the above theorems is the close relationship between 2-torsion subgroups of Jacobians of our considering hyperelliptic curves and the stabilizer group schemes of a representation of $SO(2m+1) \times SO(2m+1)$ that appears in the Vinberg theory of $\theta-$group. In the next sections, we are going to introduce that representation and then explain the mentioned connection to 2-torsion subgroups of Jacobians.

\subsection{Vinberg representation of $SO(V_1) \times SO(V_2)$}
Let $(V_1, <|>_1)$ and $(V_2, <|>_2)$ are split $(2m+1)-$dimensional orthogonal spaces of discriminant $1$ and $-1$ respectively. Then we can find a basis $\{f_1, f_2, \dots, f_{2m+1} \}$ of $V_1$ such that the Gram matrix of $<|>_1$ is 
\[ B=\left( \begin{array}{ccccc}
 &  &  &  & 1 \\
 &  &  & 1 &   \\
 &  & \dots &  &  \\
 & 1 &  &  &   \\
1 &  &  &  &  
 \end{array} \right). \]
Similarly, there exists a basis $\{f_1', f_2', \dots, f_{2m+1}' \}$ of $V_2$ such that the Gram matrix of $<|>_2$ is $-B$. Now we can define the special orthogonal groups $G_i$ that corresponds to $V_1$ and $V_2$ :
\begin{equation*}
G_i := SO(V_i) = \bigg\{ T \in GL(V_i)\hspace{0.2cm}| \hspace{0.2cm} T^{*}.T=I; det(T)=1 \bigg\}, 
\end{equation*}where $T^* \in GL(V)$ denotes the adjoint transformation of $T$ which is uniquely determined by the formula
$$\langle Tv,w\rangle_i = \langle v,T^*w \rangle_i.$$ 
Notice that the matrix of $T^*$ with respect to our standard basis (for both $V_1$ and $V_2$) can be obtained by taking the reflection about anti-diagonal of the matrix of $T$. Set $G=G_1 \times G_2$, $V= V_1 \oplus V_2$ and consider the following  representation of $G$ 
\begin{eqnarray*} W &=& \bigg\{\text{self-adjoint operators $T: V \rightarrow V$ with block diagonal zero} \bigg\}  \\
&=& \bigg\{T = \begin{pmatrix}
  0 & A \\
  -A^* & 0  
 \end{pmatrix};\hspace{0.3cm}A: V_2 \rightarrow V_1 ; \hspace{0.3cm-A^*:V_1 \rightarrow V_2}\bigg\} \\
 & \equiv & V_1 \otimes V_2,
\end{eqnarray*}where $G$ acts on $W$ by conjugation. For each element $T \in W,$ the corresponding characteristic polynomial is of the form
$$g_T(x) = f_T(x^2) = x^{2n}+a_1x^{2n-2}+\dots +a_{n-1}x^2+e^2,$$where $n=2m+1$, $e  =det(A)$, and $f_T(x) $ is the characteristic polynomial of $-A.A^*$. The functions $a_1,a_2,\dots, a_{n-1}, e$ are homogenous $G-$invariant functions on $W$of degree $2,4,\dots, 2n-2$ and $n$ respectively. It is well-known that the ring of $G-$invariant functions on $W$ is freely generated by them if our base field $K$ is of characteristic $0$. In general, we still have a $G-$equivariant map
$$\pi: W \longrightarrow \text{S:= Spec}\big(K[a_1,a_2,\dots,a_{n-1},e]\big),$$where the action of $G$ on $S$ is trivial. 

\subsubsection{Regular locus and two Kostant sections}
\begin{definition}(Kostant section) A Kostant section of $(W,G)$ is a linear subvariety $\kappa$ of $W$ for which the restriction of function $k[W]^G \rightarrow k[\kappa]$ is an isomorphism.
\end{definition}
From Vinberg theory in characteristic $0$, there are exactly two Kostant sections (up to conjugation) in our case. For positive characteristic, Paul Levi \cite{Lev08} made it available with the assumption that char($p$) is good. Notice that the number of Kostant sections (up to conjugation), by construction, equals to the number of $G(\overline{k})-$orbits of the nilpotent regular locus. In our case, we give the precise description of Kostant sections as follows: for each point $c=(a_1,a_2,\dots,a_{n-1},e) \in S$, we define an associated element $T_c$ in $W$: 
\begin{equation}
T_c = \begin{pmatrix}
  0 & A_c \\
  -A_c^* & 0  
 \end{pmatrix}
\end{equation}where 
\begin{equation}
A_c = \begin{pmatrix}
  b_{2m} & \cdots & b_{m+1} & e & 0 & 0 & \cdots & 0 \\
  0 & \cdots & 0 & 0 & 0 &  0 &\cdots & 1 \\
  \vdots & \vdots & 0 & 0 & 0 & 1 & \vdots & 0 \\
  b_m & \cdots & b_1 & 0 & 1 & 0 & \cdots & 0 \\
  0 & \cdots & 1 & 0 & 0 & 0 & \cdots & 0 \\
   \vdots & \vdots & 0 & 0 & 0 & 0 & \vdots & 0 \\
  1 & \cdots & 0 & 0 & 0 & 0 & \cdots & 0 \\  
 \end{pmatrix}
\end{equation}where $b_i=\dfrac{(-1)^{i-1}a_i}{2}.$ It could be checked that $\pi (T_c) = c$, thus we have already defined a section of the invariant map $\pi$:
\begin{align*}
\kappa_1 : \text{S} \longrightarrow W \\
c \longrightarrow T_c
\end{align*}
Similarly, we define
\begin{equation}
T_c' = \begin{pmatrix}
  0 & A_c' \\
  -{A'}_c^* & 0  
 \end{pmatrix}
\end{equation}where 
\begin{equation}
A_c' = \begin{pmatrix}
  b_{2m} & 0 &\cdots & b_{m} & 0  & 0 & \cdots & 1 \\
  \vdots & \vdots & \cdots & \vdots & \vdots &  \vdots &\ddots & \vdots \\
b_{m+2} & 0 & \cdots & b_2 & 0  &  1 &\cdots & 0 \\  
  b_{m+1} & 0 & \cdots & b_1 & 1  & 0 & \cdots & 0 \\
  e & 0 & \cdots & 0 & 0 & 0  & \cdots & 0 \\
  0 & 0 &\cdots & 1 & 0 & 0  & \cdots & 0 \\
   \vdots & \vdots &  \ddots & \vdots & \vdots & \vdots & \vdots & \vdots \\
  0 & 1 & \cdots & 0 & 0 & 0 & \cdots & 0 \\  
 \end{pmatrix}.
\end{equation}
Then we will obtain the second section of $\pi$:
\begin{align*}
\kappa_2 : \text{S} \longrightarrow W \\
c \longrightarrow T_c'
\end{align*}

Now let recall the definition of regularity:
\begin{definition} An element $T$ in $W(\overline{K})$ is called to be regular if its stabilizer $Stab_{G_{\overline{K}}}(T)$ is finite. The condition of being regular is open, and we write $W^{reg}$ for the open subscheme of regular elements of $W$.
\end{definition}

\begin{corollary}
Over an algebraic closed field $\bar{K}$, any element in $W^{reg}(\bar{K})$ is conjugate over $G(\bar{K})$ with an element in one of two Kostant sections $\kappa_i$.  
\end{corollary}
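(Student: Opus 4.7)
The plan is to deduce the corollary from the analogue of the Kostant--Rallis theorem available in good positive characteristic, namely Lemma 0.20 of \cite{Lev08}, already invoked for the one-marked-point case at the end of Section 1.3. That lemma asserts that for a Vinberg datum $(H,\theta)$ in good characteristic, every regular element of the $\theta$-representation is $G^{*}(\bar{K})$-conjugate to a point of any fixed Kostant section, where $G^{*} := (H_{\mathrm{ad}})^{\theta}$. Consequently, the set of $G(\bar{K})$-orbits in $W^{\mathrm{reg}}(\bar{K})$ above a fixed $c \in S(\bar{K})$ is a principal homogeneous space under $G^{*}(\bar{K})/G(\bar{K})$, and the assertion reduces to showing (a) $|G^{*}/G| = 2$ in our setup, and (b) that the explicit sections $\kappa_1,\kappa_2$ represent the two distinct $G(\bar{K})$-cosets within a single $G^{*}$-Kostant section.

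For (a), I would realise $(G,W)$ as a $\theta$-representation arising from an involution $\theta$ on an ambient $H = \mathrm{SL}(V_1 \oplus V_2)$, chosen so that $(H^{\theta})^{\circ} = G = \mathrm{SO}(V_1) \times \mathrm{SO}(V_2)$ and the $(-1)$-eigenspace of $d\theta$ on $\mathfrak{h}$ recovers the block-anti-diagonal self-adjoint operators $T = \begin{pmatrix} 0 & A \\ -A^{*} & 0 \end{pmatrix}$; that is, $W \cong V_1 \otimes V_2$. The centre $Z(H) = \mu_{2n}$ with $n = 2m+1$, and a short calculation of $\theta$-fixed points modulo $Z$ then yields $G^{*}/G \cong \mathbb{Z}/2\mathbb{Z}$. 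Intuitively, this factor of two reflects the sign ambiguity $e \leftrightarrow -e$ for the non-Weierstrass marked point $(0,\pm e)$ on the hyperelliptic curve $y^{2} = f_T(x)$, in contrast with the one-marked-point case where $G^{*} = G$ and the Kostant section is unique.

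For (b), I would verify directly that $\kappa_1(c)$ and $\kappa_2(c)$ are not $G(\bar{K})$-conjugate for generic $c \in S(\bar{K})$. The cleanest invariant is supplied by the forthcoming identification $I_S \cong J_S[2]$ of the next subsection: the $G^{*}/G$-coset of a regular element detects the distinguished $2$-torsion class arising from $(0,e)$ versus $(0,-e)$. Comparing the two explicit block-matrix formulas for $A_c$ and $A_c'$, in particular the position of the entry ``$e$'', one checks that no element of $\mathrm{SO}(V_1) \times \mathrm{SO}(V_2)$ can carry $\kappa_1(c)$ to $\kappa_2(c)$. The main obstacle will be, as in the one-marked-point case, to verify that the hypotheses of \cite[Lem.~0.20]{Lev08} are met for this specific pair $(H,\theta)$ under $p>3$; the pair is not classical in the sense that $G$ is a product of two $\mathrm{SO}$-factors rather than an involution of a simple group, and one must check $\theta^{2} = 1$ together with the goodness of the Vinberg datum in our characteristic range. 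Once this is granted, the remainder is a bookkeeping of the centre of $H$ together with an explicit sign check on the two block-matrix formulas.
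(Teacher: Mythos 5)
Your overall strategy---invoke Levy's Lemma 0.20 (\cite{Lev08}) to get that the regular elements over a fixed invariant $c$ form a single $G^{*}(\bar{K})$-orbit, compute that the relevant component group has order two, and match the cosets against $\kappa_1,\kappa_2$---is exactly the right way to supply the details this corollary leaves to the cited Vinberg--Levy theory, and it is the same mechanism the paper uses (number of Kostant sections up to conjugacy $=$ number of regular nilpotent $G(\bar{k})$-orbits $=2$). But two of your steps are wrong as stated. First, the ambient pair $(H,\theta)$ is misidentified: no involution of $H=SL(V_1\oplus V_2)$ has fixed subgroup with identity component $SO(V_1)\times SO(V_2)$, since the fixed groups of involutions of $SL_{2n}$ are, up to center, $SO_{2n}$, $Sp_{2n}$ or $S(GL_a\times GL_b)$. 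The correct realization is $H=SO(V_1\oplus V_2)$ (type $D_{2m+1}$) with $\theta=Ad(\epsilon)$, $\epsilon|_{V_1}=1$, $\epsilon|_{V_2}=-1$; then $H^{\theta}=S(O(V_1)\times O(V_2))$, the $(-1)$-eigenspace of $d\theta$ is $W\cong V_1\otimes V_2$, and $Z(H)=\{\pm I_{2n}\}$, not $\mu_{2n}$. This is not mere bookkeeping: as the paper notes in the proof of its transitivity proposition, $H^{\theta}/G$ is generated by $-I_{2n}$, which acts \emph{trivially} on $W$, so the index-two phenomenon you need does not live in $H^{\theta}$ at all. It lives in $G^{*}=(H_{ad})^{\theta}$, whose nontrivial coset over the image of $G$ is represented by classes of elements $g\in SO(V_1\oplus V_2)$ with $\epsilon g\epsilon^{-1}=-g$, i.e.\ elements swapping $V_1$ and $V_2$; your ``calculation of $\theta$-fixed points modulo $Z$'' must be this one.

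Second, and more seriously, your step (b) would fail: for generic $c$ the points $\kappa_1(c)$ and $\kappa_2(c)$ \emph{are} $G(\bar{K})$-conjugate. The paper proves (in the proposition on transversal fibers) that $G(\bar{k})$ acts transitively on $W^{reg}_f(\bar{k})$ whenever the roots of $f$ have multiplicity at most $2$ and $x^2\nmid f$---in particular for every separable $f$ with $f(0)\neq 0$, which is the generic case. So no direct matrix check can show the two sections are generically non-conjugate, and there is no well-defined ``$G^{*}/G$-coset of a regular element'': the surjection from cosets to $G$-orbits in a regular fiber collapses to a point precisely when the $G^{*}$-stabilizer of the element meets the nontrivial coset (which happens generically); your phrase ``principal homogeneous space under $G^{*}/G$'' makes the same error. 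What must be proved instead is a statement about the \emph{sections}, not their generic points: the two regular nilpotents $\kappa_1(0)$ and $\kappa_2(0)$ lie in distinct $G(\bar{K})$-orbits, and a representative $s$ of the nontrivial coset of $G^{*}(\bar{K})$ carries the slice $\kappa_1$ to a $G(\bar{K})$-translate of the slice $\kappa_2$ (slices through $G$-conjugate regular nilpotents are $G$-conjugate as sections, because in good characteristic the $\mathfrak{sl}_2$-triples completing a fixed regular nilpotent are conjugate under its centralizer). Granting this, for any $c$ and any regular $T$ over $c$, Levy's lemma gives $T\in G^{*}(\bar{K})\cdot\kappa_1(c)=G(\bar{K})\kappa_1(c)\cup G(\bar{K})\,s\kappa_1(c)=G(\bar{K})\kappa_1(c)\cup G(\bar{K})\kappa_2(c)$, which is the corollary uniformly in $c$, whether or not the two orbits happen to coincide over that particular $c$.
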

The following proposition gives us a necessary condition of being regular, it will be very helpful in the counting section. 
\begin{proposition}Let $T = \begin{pmatrix}
  0 & A \\
  -A^* & 0  
 \end{pmatrix} \in W$ be a regular element, then  product matrices $A.A^*$ and $A^*.A$ are regular.  
\end{proposition}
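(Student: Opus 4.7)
The plan is to compare the infinitesimal stabilizer of $T$ in $\mathfrak{g}=\mathfrak{so}(V_1)\oplus\mathfrak{so}(V_2)$ with the infinitesimal stabilizers of $AA^*$ and $A^*A$. For $X=(X_1,X_2)\in\mathfrak{g}$, a direct calculation gives
$$[X,T] = \begin{pmatrix} 0 & X_1A-AX_2 \\ A^*X_1-X_2A^* & 0 \end{pmatrix},$$
and since $X_i^*=-X_i$ the two nonzero blocks are adjoints of each other. Hence
$$\mathrm{Lie}\,Stab_G(T) = \{(X_1,X_2)\in \mathfrak{so}(V_1)\oplus\mathfrak{so}(V_2) : X_1A=AX_2\},$$
and $T$ is regular iff this vector space is zero. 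Combining $X_1 A=AX_2$ with its adjoint $A^*X_1=X_2A^*$ gives $X_1(AA^*)=(AA^*)X_1$ and $X_2(A^*A)=(A^*A)X_2$, so there are natural projection maps
$$\pi_1: \mathrm{Lie}\,Stab_G(T)\to\mathrm{Lie}\,Stab_{G_1}(AA^*),\qquad \pi_2: \mathrm{Lie}\,Stab_G(T)\to\mathrm{Lie}\,Stab_{G_2}(A^*A).$$

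The heart of the argument is to show that $\pi_1$ and $\pi_2$ are surjective, for the proposition will then follow by contrapositive: any nonzero element of the target would lift to a nonzero element of $\mathrm{Lie}\,Stab_G(T)$, contradicting the regularity of $T$. When $A$ is invertible, surjectivity is straightforward: given $X_1\in\mathfrak{so}(V_1)$ commuting with $AA^*$, set $X_2:=A^{-1}X_1A$. The antisymmetry $X_2^*=-X_2$ is equivalent, after substituting $X_1^*=-X_1$ and $(A^*)^{-1}=(A^{-1})^*$, to $(AA^*)X_1=X_1(AA^*)$, which is precisely the hypothesis; an analogous construction works for $\pi_2$. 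Thus on the Zariski-open locus $\{e=\det A\ne 0\}\subseteq W$ both projections are in fact isomorphisms, so the statement holds in this generic situation.

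The main obstacle is the singular case $e=0$, where the lift $A^{-1}X_1A$ no longer makes sense. I would handle it by case analysis on $\dim\ker A$. In the easy sub-case $\dim\ker A\ge 2$, choose linearly independent $v_1,v_2\in\ker A$ and form the rank-two antisymmetric operator $X_2(u)=\langle u,v_2\rangle_2 v_1-\langle u,v_1\rangle_2 v_2\in\mathfrak{so}(V_2)$; its image lies in $\ker A$, so $AX_2=0$ and $(0,X_2)\in\mathrm{Lie}\,Stab_G(T)$ is nonzero, forcing $T$ to be non-regular and making the implication vacuous. The delicate sub-case $\dim\ker A=1$ requires writing $\ker A=kv$, $\ker A^*=ku$ and using the commutation $X_1 AA^*=AA^*X_1$ (which forces $X_1 u\in\ker AA^*$) to transfer $X_1$ to $V_2$ via the induced isomorphism $A:V_2/kv\xrightarrow{\sim}\mathrm{im}(A)=(ku)^\perp$; one then extends to all of $V_2$ by specifying $X_2 v$, and the most technical step is verifying that this extension can always be chosen antisymmetric. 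This last verification will be handled by splitting according to whether $u$ and $v$ are isotropic or anisotropic and working in the corresponding hyperbolic or orthogonal decomposition of the $V_i$.
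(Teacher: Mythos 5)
Your route is genuinely different from the paper's, so it is worth recording what the paper actually does: it observes that if $h$ is a monic polynomial of degree $t<n$ annihilating $-AA^*$, then also $h(-A^*A)A^*=A^*h(-AA^*)=0$, so the polynomial $x\,h(x^2)$, of degree $2t+1<2n$, annihilates $T$; hence the minimal polynomial of $T$ is strictly smaller than its characteristic polynomial and $T$ is not regular. This is four lines, uniform in all cases, and never looks at the kernel of $A$. Your stabilizer-lifting idea is reasonable, and your computation of $\mathrm{Lie}\,Stab_G(T)=\{(X_1,X_2):X_1A=AX_2\}$, the invertible case, and the case $\dim\ker A\geq 2$ are all correct.

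As written, though, the argument has two genuine gaps. First, the case $\dim\ker A=1$ is only announced, not carried out, and the surjectivity of $\pi_1$ on which you rely is doubtful precisely there: the equation $AX_2=X_1A$ admits a linear solution $X_2$ only if $X_1(\mathrm{im}\,A)\subseteq \mathrm{im}\,A=(ku)^{\perp}$, equivalently only if $X_1u\in ku$; but the hypothesis $[X_1,AA^*]=0$ gives only $X_1u\in\ker(AA^*)=(A^*)^{-1}(kv)$, which is two-dimensional exactly when $v$ is isotropic (since $\mathrm{im}\,A^*=v^{\perp}$). In that stratum nothing forces a commuting $X_1$ to preserve $ku$, and such an $X_1$ cannot lift; you would then have to abandon surjectivity and instead exhibit some other nonzero element of $\mathrm{Lie}\,Stab_G(T)$, which is exactly the part you defer to future work. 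Second, the paper's definition of regularity is finiteness of the group-scheme stabilizer, and in characteristic $p$ a finite group scheme may have nonzero Lie algebra; the implication ``$\mathrm{Lie}\,Stab_G(T)\neq 0$ implies $T$ not regular'' is proved in the paper only after, and using, this very proposition, so invoking it here risks circularity. You can repair this in the invertible case by lifting at the group level (the map $g\mapsto (g,A^{-1}gA)$ embeds $Stab_{SO(V_1)}(AA^*)$ into $Stab_G(T)$), but for singular $A$ this also requires an argument. Given that the minimal-polynomial trick disposes of all cases at once, I would recommend that route.
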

\begin{proof} Suppose that $A.A^*$ is not regular, then there exists a non-zero polynomial $h(x) \in K[x]$ of degree $t < n$
$$h(x)=x^t+b_1x^{t-1}+\dots + b_t$$satisfying $h(-A.A^*)=0.$ It is easy to see that $0=A^*.h(-A.A^*)= h(-A^*.A).A^*,$ thus
$$h(T^2).T = T = \begin{pmatrix}
  0_n & h(-A.A^*).A \\
  -h(-A^*A).A^{*} & 0_n  
 \end{pmatrix}=0_{2n}.$$ Since the degree of $h(x^2).x$ is less than $2n$, we imply that $T$ is not regular. The case that $A^*.A$ is regular is similar.
\end{proof}
Given a regular operator $T \in W,$ we consider two quotient rings $L=K[x]/(f(x)),$ and $M=K[x]/(g(x))\cong K[T],$ where $g(x)=f(x^2)$ is the characteristic polynomial of $T$. We have an embedding of $K-$algebras: $L \hookrightarrow M$ by $x \mapsto x^2.$ We can describe the stabilizer of $T$ under the action of $G$ as follow:
\begin{proposition}The stabilizer $Stab_G(T)(K)$ of a regular operator $T \in W$ whose characteristic  polynomial is $g(x)=f(x^2)$ is isomorphic to the kernel of the norm map $Res_{L/K}(\mu_2) \rightarrow \mu_2$, where $L=K[x]/(f(x)).$ In particular, the finite group scheme $Stab_G(T)$ has order $2^r$ over $K$, where $r+1$ is the number of distinct roots of $f(x)$ in the separable closure $K^s$.
\end{proposition}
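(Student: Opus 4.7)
The plan is to identify $\mathrm{Stab}_G(T)$ with the block-diagonal commutant of $T$ inside $\mathrm{End}(V_1\oplus V_2)$, cut down by the orthogonal structure. First I would unwind the equations: $(g_1,g_2)$ centralizes $T$ iff $g_1A = A g_2$ and $g_2 A^* = A^* g_1$. Combining these gives $g_1(AA^*) = A(g_2 A^*) = (AA^*)g_1$, so $g_1$ centralizes $-AA^*$ in $\mathrm{End}(V_1)$. By the preceding proposition, $-AA^*$ is regular, hence its centralizer equals $K[-AA^*]\cong L$, and we may write $g_1 = p(-AA^*)$ for some class $p\in L$. The braid relation $A\cdot(-A^*A) = (-AA^*)\cdot A$ then forces $g_2 = p(-A^*A)$ (the same conclusion can be reached intrinsically: the block-diagonal part of $K[T]$ is $K[T^2]$, and $T^2$ acts on $V_1$ with minimal polynomial $f$, giving $K[T^2]\cong L$).

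Next I would impose the orthogonal conditions. Since $T$ is self-adjoint, so is $T^2$; hence $g_i^* = g_i$, and the relation $g_i^* g_i = 1$ collapses to $g_i^2 = 1$. On the polynomial side this is $p^2\equiv 1\pmod{f}$, i.e.\ $p\in\mu_2(L)$. Because $AA^*$ and $A^*A$ share the characteristic polynomial $f$, the determinant conditions $\det g_1 = \det g_2 = 1$ both translate to the single condition $N_{L/K}(p) = 1$. The entire construction is functorial in the base, so it upgrades to an isomorphism of finite commutative group schemes
\[
\mathrm{Stab}_G(T)\ \cong\ \ker\!\Big(\mathrm{Res}_{L/K}\mu_2\xrightarrow{\;N\;}\mu_2\Big).
\]

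To finish, I would base-change to the separable closure. Writing $f(x) = \prod_{i=1}^{r+1}(x-\alpha_i)^{m_i}$ with the $\alpha_i$ distinct, we have $L\otimes_K K^s\cong\prod_i K^s[x]/(x-\alpha_i)^{m_i}$. Since $\mathrm{char}\,K\neq 2$, the only square roots of unity in each local artinian factor are $\pm 1$ (Hensel, or the fact that $p^2-1 = (p-1)(p+1)$ and $2$ is a unit), so $\mu_2(L\otimes K^s)\cong\mu_2^{r+1}$ and the norm becomes $(\epsilon_1,\dots,\epsilon_{r+1})\mapsto\prod\epsilon_i^{m_i}$. Because $\sum m_i = \deg f = 2m+1$ is odd, at least one $m_i$ is odd, so the norm is surjective onto $\mu_2$ and its kernel has order $2^{r+1}/2 = 2^r$.

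The main technical nuisance I anticipate is keeping straight the two adjoint conventions — the ambient adjoint $(-)^*$ on $V_1\oplus V_2$ defined by the direct-sum form (whose summands have opposite discriminants) versus the intrinsic adjoints on $V_1$ and $V_2$ — and checking that self-adjointness of $T^2$ really gives self-adjointness of each $g_i$ with respect to the form on $V_i$; this is where the sign on the lower-left block of $T$ gets absorbed. Everything else in the argument is formal.
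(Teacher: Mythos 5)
Your overall strategy is the same as the paper's: identify $g_1$ with an element $p$ of $L\cong K[AA^*]$ via regularity of $AA^*$, translate orthogonality and determinant into $p^2=1$ and $N_{L/K}(p)=1$, and argue that $g_2$ is determined by $g_1$. The final count over $K^s$ is fine. But there is a genuine gap at the step ``the braid relation then forces $g_2=p(-A^*A)$.'' The relations $g_1A=Ag_2$ and $g_2A^*=A^*g_1$ only give $A\,\bigl(g_2-p(-A^*A)\bigr)=0$ and $\bigl(g_2-p(-A^*A)\bigr)A^*=0$, which pin down $g_2$ only when $A$ is invertible. The proposition is stated for all regular $T\in W$, and the locus $e=\det(A)=0$ (equivalently $f(0)=0$) genuinely matters: the orbit and density counts later in the paper are taken over all of $W^{reg}$, not just the open part where $e\neq 0$. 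In the degenerate case, $g_2-p(-A^*A)$ can a priori be any map killing $\mathrm{im}(A^*)$ with image in $\ker(A)$, so the commutation relations leave a one-parameter ambiguity in $g_2$ that your argument never eliminates.

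Your parenthetical ``intrinsic'' fallback fails for exactly the same reason: when $0$ is a simple root of $f$, the $0$-eigenspace of $T$ is two-dimensional (spanned by $(0,v_0)$ and $(v_0^*,0)$ in the paper's notation), so $T$ is \emph{not} a regular element of $\mathfrak{gl}(V_1\oplus V_2)$ and the block-diagonal part of its centralizer in $\mathrm{End}(V)$ is strictly larger than $K[T^2]$. The paper closes this gap with a dedicated computation: it writes $C=h(A^*A)+C_1$ with $C_1A^*=0$, determines the possible shape of $C_1$ explicitly in the Kostant normal form, and then uses the conditions $CC^*=1$ and $\det C=1$ to force $C_1=0$. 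You need some version of this, i.e.\ you must use the orthogonality of $g_2$ itself (not merely that of $g_1$) to kill the extra parameter. The remaining ingredients of your write-up --- self-adjointness of $p(AA^*)$ collapsing $g_1^*g_1=1$ to $g_1^2=1$, the identity $\det P(DE)=\det P(ED)$ reducing both determinant conditions to the single norm condition, and the order $2^r$ over $K^s$ using that $\deg f$ is odd --- are all correct and match the paper.
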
 
\begin{proof}
Any elements in the stabilizer of $T = \begin{pmatrix}
  0 & A \\
  -A^* & 0  
 \end{pmatrix}$ is of the form $\begin{pmatrix}
  B & 0 \\
  0 & C  
 \end{pmatrix}$, where $B,C \in SO(n)$ satisfying that $BAC^*=A$. By squaring $T$, we deduce that the submatrices $B$ and $C$ commute with the matrix $AA^*$ and $A^*A$ respectively. Without loss of generality, we may assume that our matrix $T$ lies in the first Kostant section. Thus the matrix $A.A^*$ is regular in $GL(V_1)$, and if we denote its characteristic is $f(x)$, we can identify $L$ with $K[A.A^*]$. As in chapter 1, we know that the stabilizer of $AA^*$ in $SO(V_1)$ can be identified with 
 $$\{h \in L=K[x]/(f(x)) | h^2=1, \, Nm_{L/K}(h)=1 \} \cong Ker \{Res_{L/K}(\mu_2) \xrightarrow{Nm} \mu_2 \}.$$Hence given $\begin{pmatrix}
  B & 0 \\
  0 & C  
 \end{pmatrix} \in Stab_G(T)$, there exist uniquely an element $h(x) \in L$ such that $B = h(AA^*)$. For any polynomial $P(x)\in K[x]$ and two square matrices $D$ and $E$, we can prove that $det(P(D.E))=det(P(E.D)).$ In fact, if we express $P(D.E)$ as $D.H+aI,$ for some square matrix $H$, then $P(E.D)=H.D +aI$. Combine with the well-known equality $det(I+B.C)=det(I+C.B)$, we have completed the proof of $det(P(D.E))=det(P(E.D)).$ Apply that observation to the case $D=A^*$ and $E=A$, we observe that $det(h(A^*A) =1$. On the other hand, since $f(x)$ is also the characteristic polynomial of $A^*A$, we deduce that $h(A^*A)^2 = I_n$ the identity matrix. We have just seen that $h(A^*A) \in SO(V_2)$. Since
 $$B.A.h(A^*A)^*= h(AA^*).A.h(A^*A) = h(AA^*).h(AA^*).A = A,$$ the matrix $\begin{pmatrix}
  h(AA^*) & 0 \\
  0 & h(A^*A)  
 \end{pmatrix}$ stabilizes $T$.  We will now prove that $C= h(A^*A)$. First of all, by setting $C= h(A^*A) + C_1$, the matrix $C_1$ needs to satisfy that $C_1. A^* =0$. If $A^*$ is invertible then $C_1=0$. Otherwise, by elementary computation, we can see that the entries of $C_1$: $C_{ij}=0$ for all $(i,j): i \neq m$. Since the determinant of $A$ is $0$, we have that:
 \begin{align*}
 & \hspace{1cm} A = \begin{pmatrix}
  b_{2m} & \cdots & b_{m+1} & 0 & 0 & 0 & \cdots & 0 \\
  0 & \cdots & 0 & 0 & 0 &  0 &\cdots & 1 \\
  \vdots & \vdots & 0 & 0 & 0 & 1 & \vdots & 0 \\
  b_m & \cdots & b_1 & 0 & 1 & 0 & \cdots & 0 \\
  0 & \cdots & 1 & 0 & 0 & 0 & \cdots & 0 \\
   \vdots & \vdots & 0 & 0 & 0 & 0 & \vdots & 0 \\
  1 & \cdots & 0 & 0 & 0 & 0 & \cdots & 0 \\  
 \end{pmatrix}, \\
 \end{align*}thus all entries in the middle row and column of the product $A^*A$ are zeros. This implies that the central entry of $C_1$ is uniquely determined by the condition $det(C_1+h(A^*A))=1$, and hence it must be equal to zero. By the above discussion, the central entry of $(C_1+C_1^*)h(A^*A)$ is $0$, and every entries except the central one of $C_1C_1^*$ are zeros. On the other hand, by decomposing the product $C.C^*$, the condition $C \in SO(V_2)$ is equivalent to $(C_1+C_1^*)h(A^*A)+C_1.C_1^*=0.$ This equality implies that $C_1.C_1^*=0$, thus $C_1 + C_1^*=0$, and hence $C_1=0$. We have just proven the claim that $C=h(A^*A)$. 
 
For the case $T \in \kappa_2$, similarly, by using the fact that $A^*.A$ is regular in $GL(V_2)$, we also can identify $Stb_G(T)$ with a subset of $L$ as above. And if $T$ belongs to both sections, we can just need to choose one of sections to start with. 

The proof of the Proposition is completed. 
\end{proof}
We also can compute the infinitesimal stabilizer as follows: the induced action of $\mathfrak{g}=Lie(G)$ on $W$ is: for any element $X = \begin{pmatrix}
X_1 &0 \\ 0 &X_2
\end{pmatrix} \in \mathfrak{g}=so(V_1) \times so(V_2)$ and $T \in W$, then $X*T=[X,T]=XT-TX.$ If $T$ is regular, let assume that $T \in \kappa_1$, then $X_1 = 0$ since $X_1=X_1^*$ that is a consequence of the fact that any matrix that commutes with $A.A^*$ is of the form $h(AA^*)$ for some polynomial $h(x) \in K[x]$. We deduce that $X_2.A^*$ must be the zero matrix. Using the trivial computation and the fact that $X_2=-X_2^*$, we can show that $X_2=0$. Thus, $Stab_{\mathfrak{g}}(T)$ is trivial. This implies that the action map
\begin{eqnarray*}
G \times_{S}W^{reg} & \longrightarrow & W^{reg}\times_{S} W^{reg} \\
(g,v)&\mapsto& (g.v,v)
\end{eqnarray*}
is \'{e}tale, and the universal stabilizer $I$ of the action of $G$ on $W^{reg}$
$$I= (G\times_{S}W^{reg}) \times_{W^{reg}\times_{S} W^{reg}}W^{reg},$$where $W^{reg} \rightarrow W^{reg}\times W^{reg}$ is the diagonal map, is a quasi-finite \'{etale} group scheme over $W^{reg}$ (base change of an \'{etale} map is \'{etale}).   

\begin{proposition} There exists a unique group scheme $I_{S}$ over $S$ equipped with a $G-$invariant isomorphism $\pi^*I_{S} \rightarrow I$ over $W^{reg}$. As a corollary, there is a $\mathbb{G}_m-$ equivariant isomorphism of stacks $[BI_{S}]\cong [W^{reg}/G],$ where $BI_{S}$ is the relative classifying stack of $I_{S}$ over $S$.
\label{stabilizer and orbits}
\end{proposition}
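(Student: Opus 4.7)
The plan is to carry out descent along the quotient $\pi: W^{\mathrm{reg}} \to S$, in exact parallel with the proof of the analogous Proposition in Chapter~1 (the one producing $I_S$ for the split odd orthogonal group). The two inputs I will use are: (i) the stabilizer scheme $I \to W^{\mathrm{reg}}$ is commutative, quasi-finite and \'etale (commutativity and the explicit shape come from the earlier stabilizer proposition, and \'etaleness is exactly the vanishing of the infinitesimal stabilizer that was checked there); and (ii) the Kostant sections $\kappa_1,\kappa_2:S\to W^{\mathrm{reg}}$ split $\pi$, and every regular element is $G(\bar K)$-conjugate to the image of one of them.

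First I would construct the candidate $I_S$ directly from the explicit description of stabilizers. Over any regular $T$ with $\pi(T)=(a_1,\dots,a_{n-1},e)$ the earlier proposition gives
\[
\mathrm{Stab}_G(T) \;\cong\; \ker\!\bigl(\mathrm{Res}_{L/K}(\mu_2) \xrightarrow{\mathrm{Nm}} \mu_2\bigr), \qquad L=K[y]/f_T(y),
\]
and this depends only on $\pi(T)$, not on $T$. So I set
\[
I_S \;:=\; \ker\!\bigl(\mathrm{Res}_{R_S/S}(\mu_2) \xrightarrow{\mathrm{Nm}} \mu_2\bigr), \qquad R_S \;=\; \mathcal O_S[y]\big/\bigl(y^n + a_1 y^{n-1}+\cdots+a_{n-1}y+e^2\bigr),
\]
which is a commutative quasi-finite $S$-group scheme, \'etale over the discriminant locus $\pi(W^{\mathrm{reg}})$. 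The isomorphism $\pi^*I_S\xrightarrow{\sim}I$ is forced fibrewise by the formula above; for $G$-equivariance I observe that conjugation by $g\in G$ sends $\mathrm{Stab}_G(T)$ to $\mathrm{Stab}_G(gTg^{-1})$ but leaves $f_T$ and hence $L$ invariant, and under the intrinsic $\mu_2$-description this transport is the identity — commutativity of $I$ is what makes this independent of the choice of $g$. Uniqueness of the pair $(I_S,\pi^*I_S\cong I)$ follows because $\pi$ admits a section, so $\pi^*$ is faithful on $S$-group schemes.

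For the corollary I would reproduce the two-functor argument of the Chapter~1 proposition. Using $\kappa_1$, define
\[
\psi\colon [W^{\mathrm{reg}}/G]\longrightarrow [BI_S],\qquad \bigl(Q\to T,\; \alpha:Q\to W^{\mathrm{reg}}\bigr)\longmapsto P:=Q\times_{W^{\mathrm{reg}},\,\kappa_1}S,
\]
and in the reverse direction
\[
\phi\colon [BI_S]\longrightarrow [W^{\mathrm{reg}}/G],\qquad P\longmapsto P\times^{I_S}G,
\]
where the $G$-equivariant map to $W^{\mathrm{reg}}$ is $(x,g)\mapsto g\cdot\kappa_1(\gamma(x))$ with $\gamma:P\to S$ the structure map. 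Both are $\mathbb G_m$-equivariant since every operation is, and the verification that they are mutually quasi-inverse reduces, fibre by fibre, to the statement that $G\cdot\kappa_1(s)\cong G/I_{S,s}$, which is the stabilizer proposition again.

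\textbf{Main obstacle.} The genuinely subtle point is the existence of \emph{two} Kostant sections. For certain $s\in S$ the fibre $\pi^{-1}(s)$ can split into two $G(\bar K)$-orbits, only one of which meets $\kappa_1$. Descent of $I$ to $I_S$ is unaffected because the stabilizers at $\kappa_1(s)$ and $\kappa_2(s)$ are canonically identified through the intrinsic $\ker(\mathrm{Res}_{L/K}(\mu_2)\to\mu_2)$ description — and again it is commutativity of $I$ that kills the ambiguity in the transporter. The same observation is what makes the stack-level statement independent of which $\kappa_i$ is used to write $\psi$ and $\phi$; any apparent dependence is absorbed into an automorphism of $I_S$ that is trivial because the description of $I_S$ is intrinsic to the characteristic polynomial.
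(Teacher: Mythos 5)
Your construction of $I_S$ itself is sound, and it is a genuinely different route to the first assertion than the paper's: the paper simply sets $I_S := \kappa_1^* I$ and obtains the $G$-invariant isomorphism $\pi^* I_S \cong I$ by descent along the \'etale surjective action map $(H_{ad})^{\theta} \times \kappa_1 \rightarrow W^{reg}$, whereas you build $I_S$ intrinsically as $\ker\bigl(\mathrm{Res}_{R_S/S}(\mu_2) \to \mu_2\bigr)$ and write the isomorphism down fibrewise via $h \mapsto \bigl(h(AA^*), h(A^*A)\bigr)$. Both routes use commutativity of $I$ in the same way to kill the ambiguity in the transporter, and your version has the advantage of not needing to know which orbit a given regular element lies in. Up to the minor point that the fibrewise formula must be checked to be an isomorphism of schemes (a closed immersion of finite \'etale group schemes of equal degree), this half of your argument is fine.

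The gap is in the corollary. The two-functor argument you are importing from Chapter~1 rests on the fact that $G(\bar{k})$ acts transitively on each regular fibre $V^{reg}_f(\bar{k})$; that is what makes $G \times \kappa \to V^{reg}$ surjective and guarantees that $P := Q \times_{V^{reg},\,\kappa} S$ has non-empty fibres, hence is an $I_S$-torsor. In the present situation that transitivity fails: a regular fibre $W^{reg}_s(\bar{k})$ can consist of \emph{two} $G(\bar{k})$-orbits, one through $\kappa_1(s)$ and one through $\kappa_2(s)$ (for instance the two regular nilpotent orbits over $s=0$, which is precisely why there are two Kostant sections up to conjugacy). For a pair $(Q,\alpha)$ whose image meets only the $\kappa_2$-orbit, the fibre product $Q \times_{W^{reg},\,\kappa_1} S$ is empty over the corresponding points of the base, so your $\psi$ does not produce an $I_S$-torsor; dually, your $\phi$ only ever produces pairs landing in the $G$-orbit of $\kappa_1$, so it cannot be essentially surjective. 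The canonical identification of the stabilizers at $\kappa_1(s)$ and $\kappa_2(s)$ that you invoke repairs the descent of $I$ to $I_S$, but it does not merge the two orbits, which is what the two-functor argument actually needs. This is exactly where the paper departs from Chapter~1: its proof replaces $G$ by the larger group $(H_{ad})^{\theta}$, for which the action map $(H_{ad})^{\theta} \times \kappa_1 \rightarrow W^{reg}$ \emph{is} surjective, so that a single Kostant section covers the whole regular locus. Any complete proof of the stack-level statement has to bring in this larger group (or otherwise account for the second orbit), and your proposal never does.
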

\begin{proof}
The action map $(H_{ad})^{\theta} \times \kappa_1 \rightarrow W^{reg}$ is \'{etale} and surjective, we set  $I_{S}= \kappa_1^*I$ and try to prove that two \'{etale} group schemes $I$ and $\pi^*k_1^*I$ are isomorphic over $W^{reg}.$ This is a consequence of decent data theory.
\end{proof}

\subsection{Stabilizer group scheme and Jacobian of hyperelliptic curve}
For each element $T \in W^{reg}$, let $f_T(x^2)$ be the characteristic polynomial of $T$, we consider the projective curve in $\mathbb{P}^3$ with the affine equation: $y^2=f_T(x)$. As a result, we obtain $H_{W^{reg}}$ a flat family of integral projective curves over $W^{reg}$. By the representability of the relative Picard functor, we obtain the scheme $Pic_{H_{W^{reg}}/W_{reg}}$ locally of finite type over $W^{reg}$, and also the relative Jacobian $J_{W^{reg}}=Pic^0_{H_{W^{reg}}/W_{reg}}$ over $W^{reg}$. Over $S$, recall that the universal curve $H_S$ is defined to be the subscheme of $\mathbb{P}^3(S)$:
$$Z^{2m-1}Y^2= X^{2m+1}+a_1ZX^{2m}+\dots + a_{2m}Z^{2m}X + e^2Z^{2m+1}.$$
This is a flat family of integral projective curves over $S$, hence we also can define the relative Jacobian $J_S=Pic^0_{H_S/S}.$ By definition, we obtain a canonical isomorphism
$$J_{W^{reg}} \rightarrow J_S \times_S W^{reg}.$$
Now we will see the connection between the 2-torsion subgroup $J_{W^{reg}}[2]$ and the stabilizer $I_{W^{reg}}$.
\begin{proposition}
There is a canonical isomorphism of \'{e}tale group schemes over $W^{reg}$:
$$I_{W^{reg}} \cong J_{W^{reg}}[2]$$
\label{can2}
\end{proposition}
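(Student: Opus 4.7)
The plan is to parallel the proof of Proposition~\ref{can1}: first identify both group schemes with the same explicit algebraic object at the level of geometric points, then globalize via an auxiliary $J[2]$- and $I$-torsor carrying commuting simply transitive actions, using a Kostant section to provide a canonical base point. Since $I_{W^{reg}}$ and $J_{W^{reg}}[2]$ are both \'etale commutative group schemes over the irreducible base $W^{reg}$, it will suffice to construct a canonical isomorphism over a dense open locus (say the regular semisimple one) and then extend by \'etaleness.

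At a geometric point $T \in W^{reg}(\bar K)$ with characteristic polynomial $g_T(x) = f_T(x^2)$, write $L = \bar K[x]/(f_T(x))$. The preceding proposition identifies
\[
I_T(\bar K) \;\cong\; \ker\bigl(N_{L/\bar K}\colon \mathrm{Res}_{L/\bar K}(\mu_2) \longrightarrow \mu_2\bigr),
\]
with an element $h \in L$ satisfying $h^2 = 1$ and $N(h) = 1$ corresponding to the pair $(h(-AA^*),\,h(-A^*A))$. On the Jacobian side, since $H_T\colon y^2 = f_T(x)$ has odd degree $2m+1$ with a rational Weierstrass point at infinity, the classical Mumford-style parametrization gives $J_T[2](\bar K)$ as the set of even-cardinality subsets of the roots of $f_T$, modulo no relation (since complements have opposite parity). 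Explicitly, a subset $S \subseteq \{\alpha_1,\dots,\alpha_{2m+1}\}$ corresponds to the divisor class $\sum_{\alpha_i \in S}(P_{\alpha_i} - \infty)$, and equivalently to the element $h \in L$ taking value $-1$ on $S$ and $+1$ elsewhere; the evenness of $|S|$ is precisely $N(h) = 1$. Hence at each geometric point, $I_T(\bar K)$ and $J_T[2](\bar K)$ are canonically identified with the same kernel.

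To promote this pointwise matching to an isomorphism of \'etale group schemes, I would introduce an auxiliary scheme $\mathcal{P}$ over $W^{reg}$ whose fibre $\mathcal{P}_T$ parametrizes suitable $(-AA^*)$-stable maximal isotropic subspaces of $V_1$ (the natural analogue of the scheme $L_T$ of $(n-1)$-planes used in Chapter~1). The group $I_{W^{reg}}$ acts on $\mathcal{P}$ via the restriction of the $SO(V_1)$-action, while $J_{W^{reg}}[2]$ acts through the Abel--Jacobi correspondence relating divisor classes on $H_T$ to isotropic subspaces; this is essentially the construction of \cite{Wan1,Wan2}. Both actions are expected to be simply transitive and to commute (the latter since $J[2]$ is abelian). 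Fixing the image of one of the Kostant sections $\kappa_i$ as a universal base point $x_0$ of $\mathcal{P}$ then produces the canonical isomorphism
\[
\psi \colon J_{W^{reg}}[2] \xrightarrow{\ \sim\ } I_{W^{reg}}, \qquad [D] \longmapsto \text{the unique } g \in I_T \text{ with } g \cdot x_0 = x_0 + [D],
\]
and independence from the choice of base point follows from commutativity. The $G$-equivariance of $\mathcal{P}$ and of $\kappa_i$ ensures that $\psi$ is a morphism of group schemes, and by Proposition~\ref{stabilizer and orbits} it descends canonically to the analogous isomorphism over $S$.

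The main obstacle will be the verification that $J[2]$ acts simply transitively on $\mathcal{P}$, which requires a careful Abel--Jacobi-type argument tying divisors on $H_T$ to $(-AA^*)$-stable isotropic subspaces of $V_1$. In characteristic zero this is precisely the content of X.~Wang's work \cite{Wan1,Wan2}; to make it function in good positive characteristic, I plan to combine Wang's geometric construction with Levi's extension of Vinberg theory \cite{Lev08}, which guarantees that the needed stabilizer and orbit descriptions survive. Once that geometric input is in hand, the remainder of the argument is formal, mirroring Proposition~\ref{can1} step by step.
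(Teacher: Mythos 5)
Your proposal follows essentially the same route as the paper: the paper likewise splits $W^{reg}$ into the two Kostant orbits, forms the pencil of quadrics spanned by $Q_1$ and $Q_{1,T_1}$ with $T_1=-AA^*$, and invokes X.~Wang's result that $I_T$ and $J_T[2]$ act simply transitively and commutingly on the Fano variety of its base locus, deducing the canonical isomorphism exactly as you do (your explicit pointwise matching with $\ker(N_{L/K}\colon \mathrm{Res}_{L/K}\mu_2\to\mu_2)$ is a welcome addition that the paper only spells out in the proof of Proposition~\ref{can1}). One caveat: your opening reduction ``construct over the regular semisimple locus and extend by \'etaleness'' is not valid as stated, since $I$ and $J[2]$ are only quasi-finite (not finite) \'etale over $W^{reg}$ and their fibre sizes drop on the non-semisimple locus, so an isomorphism over a dense open need not extend formally; fortunately your actual construction via the common torsor $\mathcal{P}$ is carried out over all of $W^{reg}$, as in the paper, so this remark can simply be deleted.
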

\begin{proof}
Let $W^{reg}= W_1 \cup W_2$, where $W_i$ is the orbit of the Kostant section $\kappa_i$. Thus, it is enough to show that there is a canonical isomorphism $I_{W^{reg}}|_{W_i} \cong J_{W^{reg}}[2]|_{W_i}$. Let do it over $W_1$ and the case of $W_2$ will be similar.

Denote $B_1$ and $B_2$ the bilinear forms associated to the quadratic spaces $V_1$ and $V_2$. For each $T = \begin{pmatrix}
0 & A \\ -A^* & 0
\end{pmatrix} \in W_1,$ the matrix $T_1=-A.A^* \in GL(V_1)$ is regular. We define $B_{1,T_1}(v_1,w_1) = B_1(v_1, T_1w_1)$ for $v_1, w_1 \in V_1$. Then denote $Q_1$ and $Q_{1,T_1}$ the corresponding quadratic forms on $V_1$. Define $\mathcal{P}$ to be the pencil of quadrics on the projective space $\mathbb{P}(V_1)$ spanned by $Q_1$ and $Q_{1,T_1}$, and set $B$ be the base locus of $\mathcal{P}$. In \cite{Wan2}, X. Wang showed that both $I_T$ and $J_T[2]$ act on the Fano variety of $B$ whose points are projective $(n-1)-$planes contained in the smooth part of $B$. By varying $T$, we obtain that $I_{W_1}$ and $J_{W_1}[2]$ share a common principal homogeneous space. Furthermore, by using the fact that these two actions commute, we are able to construct a canonical isomorphism of \'{e}tale group schemes $I_{W_1}$ and $J_{W_1}[2]$.
\end{proof}
\begin{remark}
The previous isomorphism $I_{W^{reg}} \rightarrow J_{W^{reg}}[2]$ is $G-$equivariant by construction. Hence, it descends to an isomorphism of group schemes over $S$: $I_S \rightarrow J_S[2]$. By Proposition \ref{stabilizer and orbits}, we have a $\mathbb{G}_m-$equivariant isomorphism of quotient stacks
$$BJ_S[2] \cong [W^{reg}/G]$$
\end{remark}
Proposition 3.20 provides another interpretation of $\mathcal{M}(k)$: from the isomorphisms $I_S \cong J_S[2]$ and $BI_S \cong [W^{reg}/G]$, we deduce that 
$$\mathcal{M} \cong Hom(C, [W^{reg}/G\times \mathbb{G}_m].$$
Consequently, $\mathcal{M}_{\mathcal{L}}(k)$ classifies tuples $(\mathcal{E}, s)$ where $\mathcal{E}$ is a principal $G-$bundle and $s$ is a global section of the vector bundle $(W^{reg}\times^G \mathcal{E}) \otimes \mathcal{L}$. In the next section, we will try to estimate the size of $H^0(C,(W^{reg}\times^G \mathcal{E}) \otimes \mathcal{L})$ for a given $G-$bundle $\mathcal{E}$.

\subsection{Density of regular locus}
To prove our theorem, we need to estimate the number of global regular sections of some vector bundles. It is not easy to calculate it directly. Instead, we will firstly estimate the total number of global sections. Then the results in (\cite{HLN14} section 5) tell us that we will be able to estimate the number of global regular sections if we know the density of the regular locus $W^{reg}$ in $W$. The next subsection will help us to compute the local density.
\subsubsection{Orbits over finite fields via Galois cohomology}
The content of this section is based on the paper of Bhargava and Gross \cite{BG13} where they described rational orbits with a fixed invariant via Galois cohomology. We adopt their arguments in our case to estimate the number of rational orbits and then the size of regular locus over finite fields. 

Let $k^s$ denote a separable closure of the field $k$. If $M$ (respectively $J$) is a commutative finite \'{e}tale group scheme (a smooth algebraic group) over $k$, we denote $H^1(k,M)=H^1(Gal(k^s/k),M(k^s))$ ($H^1(k,J)=H^1(Gal(k^s/k),J(k^s))$ respectively) be the corresponding Galois cohomology group (pointed set of first cohomology classes). Notice that in case $k$ is finite field of odd order, every non-degenerate quadratic space of odd dimension is split, so $H^1(k,SO(V_i))=0$ for $i=1,2$. Thus, $H^1(k,G)=H^1(k,SO(V_1) \times SO(V_2))=0$.

For the rest of this section, we assume that $k$ is a finite field. Let $T \in W^{reg}(k)$ be a regular self-adjoint operator with the invariant $a=(a_1,\dots,a_2m,e) \in S(k)$, and let $G_T\subset G$ be the finite \'{e}tale subgroup stabilizing $T$. For any self-adjoint operator $L$ in $W(k)$ that is in the same orbit as $T$ over $k^s$, we have $L=gTg^{-1}$ for some $g \in G(k^s)$. This defines an element in $H^1(k,G_T)$ as follows: for any $\sigma \in Gal(k^s/k)$, the element $c_\sigma =g^{-1}g^{\sigma}$ lies in $G_T(k^s)$, and the map $\sigma \rightarrow c_\sigma$ defines a $1-$cocycle on the Galois group with values in $G_S(k^s)$, and hence defines an element in $H^1(k,G_T)$. It can be checked that the cohomology class of that $1-$cocycle depends only on the $G(k)$-orbit of $T$. Conversely, given a $1-$cocycle $c_\sigma$, then it has the form $g^{-1}g^{\sigma}$ since $H^1(k,G)=0$. So we obtain an associated operator $L=gTg^{-1}$ that is defined over $k$ since $\sigma(L)=L$ for all $\sigma \in Gal(k^s/k)$ by the definition of the cocycle $c_\sigma$. We have just proved the statement $i)$ of the following proposition: (c.f. \cite{BG13})
\begin{proposition}
\begin{itemize}
\item[i)]Given an operator $T \in W^{reg}(k)$, there is a bijection between the set of $G(k)-$orbits in $W^{reg}(k) \cap G(k^s).T$ and the  set $H^1(k, G_T)$. 
\item[ii)]For any $a=(a_1, \dots, a_{2m},e) \in S(k)$, the size of $W_a^{reg}(k)$ is bounded above by $2.|G(k)|$.
\end{itemize}
\end{proposition}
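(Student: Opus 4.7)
Part (i) is essentially already laid out in the paragraph preceding the statement (the map $L=gTg^{-1} \mapsto [g^{-1}g^{\sigma}]$ sets up the bijection, using $H^1(k,G)=0$ to realize every cocycle). So my plan is to derive (ii) from (i) by summing orbit sizes over $W_a^{\mathrm{reg}}(k)$.

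First I would use the corollary following the two Kostant sections, which says that over the algebraic closure every regular element is $G(\bar k)$-conjugate to a point lying in one of the two Kostant sections $\kappa_1, \kappa_2$. Because $\kappa_i$ is a section of $\pi: W \to S$, this produces at most two $G(k^s)$-orbits on the geometric fiber $W_a^{\mathrm{reg}}(k^s)$, and hence a partition
\[
W_a^{\mathrm{reg}}(k) \;=\; \bigsqcup_{i}\bigl(W_a^{\mathrm{reg}}(k) \cap G(k^s).T_i\bigr),
\]
where $i$ runs over (at most) two indices corresponding to the $G(k^s)$-orbits that contain a $k$-rational point, with $T_i$ chosen in each.

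Next, for each such orbit, part (i) gives
\[
\#\bigl\{G(k)\text{-orbits in } W_a^{\mathrm{reg}}(k)\cap G(k^s).T_i\bigr\} \;=\; |H^1(k, G_{T_i})|.
\]
Since $G_{T_i} \cong J_{T_i}[2]$ (Proposition \ref{can2}) is a \emph{commutative} finite \'etale group scheme, the twist of $G_{T_i}$ by any class in $H^1(k,G_{T_i})$ is trivial, so every representative $L$ in this $G(k^s)$-orbit has stabilizer $G_L$ isomorphic to $G_{T_i}$ as a $k$-group scheme; in particular $|G_L(k)| = |G_{T_i}(k)|$, so every $G(k)$-orbit in this piece has the common size $|G(k)|/|G_{T_i}(k)|$. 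The key finite-field input is then the standard equality $|H^1(k, M)| = |H^0(k, M)| = |M(k)|$ for any finite commutative \'etale $k$-group $M$, which follows from the Herbrand quotient for the cyclic group $\mathrm{Gal}(k^s/k)$ acting on a finite module. Applying this with $M = G_{T_i}$ yields
\[
|W_a^{\mathrm{reg}}(k) \cap G(k^s).T_i| \;=\; |H^1(k,G_{T_i})| \cdot \frac{|G(k)|}{|G_{T_i}(k)|} \;=\; |G_{T_i}(k)| \cdot \frac{|G(k)|}{|G_{T_i}(k)|} \;=\; |G(k)|,
\]
and summing over the at most two such orbits gives the desired bound $|W_a^{\mathrm{reg}}(k)| \leq 2|G(k)|$.

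The only genuinely subtle point is the verification that the stabilizer is locally constant (up to $k$-isomorphism) along a $G(k^s)$-orbit; this is where commutativity of $G_T$ is essential, as the inner-twist of a commutative group is canonically trivial. Everything else is a bookkeeping exercise combining orbit-stabilizer with Lang's theorem, both of which are standard over finite fields.
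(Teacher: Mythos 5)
Your proposal is correct and follows the same overall architecture as the paper's proof: decompose $W_a^{\mathrm{reg}}(k)$ into its intersections with the at most two $G(k^s)$-orbits (coming from the two Kostant sections), use part (i) to count the $G(k)$-orbits in each piece by $|H^1(k,G_{T_i})|$, and combine with orbit--stabilizer. The one step where you genuinely diverge is the identity $|H^1(k,G_{T_i})| = |G_{T_i}(k)|$: the paper obtains it by invoking the explicit description of $G_T$ as $\ker\bigl(\mathrm{Res}_{L/k}(\mu_2) \to \mu_2\bigr)$ from its Proposition on stabilizers and then computing $|(L^{*}/L^{*2})_{N\equiv 1}| = |L^{*}[2]_{N=1}|$ via Kummer theory, whereas you use the general equality $|H^1(k,M)| = |M(k)|$ for any finite commutative \'etale group scheme over a finite field (Herbrand quotient for $\widehat{\mathbb{Z}}$ acting on a finite module). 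Your route is more general and does not depend on the particular shape of the stabilizer, at the cost of being less self-contained relative to what the paper has already established; the paper's route reuses its structural result on $G_T$ but buries the cohomological input inside Kummer theory. You also make explicit a point the paper passes over silently, namely that all stabilizers along a single $G(k^s)$-orbit are $k$-isomorphic because the inner twist of a commutative group scheme is trivial, which is exactly what licenses the uniform orbit size $|G(k)|/|G_{T_i}(k)|$ in the orbit--stabilizer step. Both arguments are sound.
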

\begin{proof}
For $ii)$, firstly recall that the action of $G(k^s)$ on $W_a^{reg}(k^s)$ has at most two orbits. Hence there exist $T_1$ and $T_2$ (they could be the same) in $W^{reg}_a(k)$ such that $W^{reg}_a(k) \subset (G(k^s).T_1 \cup G(k^s).T_2)$. We will finish the proof by proving that the size of $W^{reg}_a(k) \cap G(k^s).T_1$ is equal to $|G(k)|$. In fact, by Proposition 3.19, if we set $f(x)= x^{2m+1} + a_1x^{2m} + \dots + a_{2m}x + e^2$ and denote $L= k[x]/(f(x))$, then $G_T(k)$ is isomorphic to the kernel of the norm map: $Res_{L/k}(\mu_2) \rightarrow \mu_2$. By $i)$ and Kummer theory, the number of  $G(k)-$orbits in $W^{reg}(k)$ equals to $|(L^*/L^{*2})_{N \equiv 1}| = |L^*[2]_{N=1}|=|G_T(k)|.$ Hence, we can finish the proof of $ii)$ by using the Orbit-Stabilizer theorem.
\end{proof}
\subsubsection{Regular locus in the transversal case}
Recall that in the chapter 1 we have computed the density of regular sections and also transversal regular sections by using the results of Poonen (see \cite{Poo03}). By looking back to our method there, we can see that it is essentially based on the fact that any regular vectors with the same invariant are conjugate over algebraically closed field. It is no longer true in our current situation where we have two Kostant sections. But if we restrict to the transversal part, we still have:
\begin{proposition}
Denote $k=\mathbb{F}_q$ a finite field and $\bar{k}$ its algebraically closure, let $f(x) \in k[x]$ satisfying that the order of its roots in $\bar{k}$ is at most $2$, and if $x$ divides $f(x)$ then $x^2 \nmid \, f(x)$. Then the action of $G(\bar{k})$ on $V_f^{reg}(\bar{k})$ is transitive.
\end{proposition}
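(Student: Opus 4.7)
The plan is to reduce transitivity to the conjugacy of the two Kostant sections and then settle this comparison via the geometric description of orbits given in Proposition~\ref{can2}. Fix $\bar a \in S(\bar k)$ with invariants encoded by $f$ (i.e. $\bar a = (a_1,\dots,a_{2m},e)$ with $e^2 = f(0)$). Since every element of $V_f^{reg}(\bar k)$ is $G(\bar k)$-conjugate to either $\kappa_1(\bar a)$ or $\kappa_2(\bar a)$ (by the corollary of Proposition~3.18 adapted from \cite{Lev08}), transitivity is equivalent to exhibiting $g \in G(\bar k)$ with $g \cdot \kappa_1(\bar a) \cdot g^{-1} = \kappa_2(\bar a)$.

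To locate such a $g$, I would use the Fano variety $F_T$ of projective $(n-1)$-planes in the smooth part of the base locus of the pencil of quadrics spanned by $Q_1$ and $Q_{1,T_1}$, precisely the construction employed in the proof of Proposition~\ref{can2}. A dimension count shows $\dim V_f^{reg} = \dim G$, so each $G(\bar k)$-orbit is an irreducible component of $V_f^{reg}$, and it suffices to show $V_f^{reg}$ is connected. Via the isomorphism $I_T \cong J_f[2]$ and the simply-transitive $J_f[2]$-action on $F_T$, one identifies $\pi_0(V_f^{reg})$ with $\pi_0(F_T)$. Under the transversality hypothesis, the hyperelliptic curve $y^2 = f(x)$ has at worst nodal singularities, the base locus is a nodal but irreducible complete intersection of two quadrics, and (by the results on quadric pencils developed in \cite{Wan2}) $F_T$ remains connected. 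Consequently $V_f^{reg}$ is irreducible and $\kappa_1(\bar a),\kappa_2(\bar a)$ lie in a common orbit.

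The main obstacle is verifying the connectedness of $F_T$ when $f$ has double roots. The condition that $0$ is at most a simple root of $f$ is precisely what prevents an extra component of $F_T$ from appearing: if $x^2 \mid f(x)$ then $(0, \pm e) = (0,0)$ becomes a singular point of $H_f$ that degenerates the Fano variety into two components, breaking transitivity. As a concrete alternative that avoids $F_T$, one can attempt to construct $g = (g_1, g_2) \in SO(V_1)(\bar k) \times SO(V_2)(\bar k)$ directly by comparing the explicit matrices $A_c$ and $A_c'$ of (3.17) and (3.19): the two matrices differ essentially by swapping the row/column carrying the entry $e$, and the transversality hypothesis provides just enough algebraic room to combine this swap with a diagonal sign correction so as to preserve the anti-diagonal Gram matrix $B$ and land with determinant $1$ on each factor.
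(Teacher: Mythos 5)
Your opening reduction is sound and matches the paper's starting point: by the corollary on Kostant sections, every element of $V_f^{reg}(\bar k)$ is conjugate to $\kappa_1(\bar a)$ or $\kappa_2(\bar a)$, so transitivity amounts to conjugating these two elements. The problem is that neither of your two routes to that conjugacy actually works. The Fano-variety route rests on a false identification: $F_T$ is (as you yourself note) a torsor under the finite group scheme $J_f[2]$, so over $\bar k$ it consists of $4^m$ isolated points and is emphatically not connected; in particular $\pi_0(F_T)$ has $4^m$ elements and cannot be identified with the orbit set of $G(\bar k)$ on $V_f^{reg}(\bar k)$, which you are trying to show is a single point. The statement ``$F_T$ remains connected'' contradicts the simple transitivity of the $J_f[2]$-action that you invoke in the same sentence. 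The dimension count ($\dim V_f^{reg}=\dim G$, orbits open, smooth fiber, so connected implies one orbit) is fine, but you have supplied no valid mechanism for proving connectedness.

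The second route is the right idea but is only an intention, not an argument. The element $g=(g_1,g_2)$ conjugating $\kappa_1(\bar a)$ to $\kappa_2(\bar a)$ is not obtained by swapping the row and column carrying $e$ together with a sign correction: it depends on $f$ through the full eigenstructure of $-AA^*$. The paper's proof does exactly this construction, and the work lies in building, for each of $S$ and $T$, an orthogonal basis of $(V,Q)$ out of (generalized) eigenvectors of the form $\bigl(v_\lambda,\ \pm\lambda^{-1/2}A^*v_\lambda\bigr)$, split into four cases according to whether $\lambda$ is a simple nonzero root, the root $0$, or a double root with eigenspace of dimension $2$ or $1$; the map $g$ sending one basis to the other is then checked to preserve $V_1$ and $V_2$ because sums and differences of the paired eigenvectors span $V_1$ and $V_2$ respectively. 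The hypothesis that $0$ is at most a simple root enters concretely in two places you do not address: it guarantees that $\ker A$ and $\ker A^*$ are each at most one-dimensional (giving the single pair $w_{1,0}\in V_1$, $w_{2,0}\in V_2$), and it supplies the sign-flip $w_{1,0}\mapsto -w_{1,0}$ used to repair $\det(g)=-1$ in the degenerate case $e=0$; when $e\neq 0$ the determinant is instead controlled by the pfaffian. Your heuristic about a singular point ``degenerating the Fano variety into two components'' is not a substitute for these verifications. As it stands the proposal does not establish the proposition.
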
 
\begin{proof}
In \cite{Sha16}, the similar result for separable characteristic polynomial $f(x)$ (the regular semi-simple case) is given. Here we will try to generalize the result for $f(x)$ with some conditions which later on can be seen to be closely related to the transversal condition. Given two elements $S$ and $T$ in $V_f^{reg}(\bar{k})$, with out loss of generality, we may assume that $S\in \kappa_1$ and $T \in \kappa_2$, or precisely : $S=\kappa_1(f)= \begin{pmatrix}
0 & A_1 \\ -A_1^* & 0
\end{pmatrix} T= \kappa_2(f)=\begin{pmatrix}
0 & A_2 \\ -A_2^* & 0
\end{pmatrix}.$ Since $-A_1A_1^*$ is regular, $f(x)$ is also the minimal polynomial of $-A_1A_1^*$. Equivalently, for each root $\lambda_i$ of $f(x)$ of order $n_i$, the vector space of generalized eigenvectors of $-A_1A_1^*$ corresponding to $\lambda_i$ has dimension $n_i$. By the hypothesis of $f(x)$, we have three cases of roots as follow:
\begin{itemize}
\item[Case 1:] If $\lambda \neq 0$ is a single root of $f(x)$, then $\pm \sqrt{\lambda}$ are single roots of $f(x^2)$ the characteristic polynomial of $S$. If $v_{\sqrt{\lambda}}$ is the unique (up to scalar) non-zero $\sqrt{\lambda}-$eigenvector of $S$, then it will have the form
$$v_{\sqrt{\lambda}}= \begin{pmatrix}
v_\lambda \\ \frac{-1}{\sqrt{\lambda}} A_1^*v_\lambda
\end{pmatrix},$$where $v_\lambda$ is the unique $\lambda-$eigenvector of $-A_1A_1^*$. Similarly, for $(-\sqrt{\lambda})$, we can choose an eigenvector as follows:
$$v_{-\sqrt{\lambda}}= \begin{pmatrix}
v_\lambda \\ \frac{1}{\sqrt{\lambda}} A_1^*v_\lambda
\end{pmatrix}.$$
\item[Case 2:] If $\lambda=0$ is a single root of $f(x)$, then (up to scalar) we denote $v_0$ and $v_0^*$ to be the unique non-zero $0-$eigenvector of $A_1$ and $A_1^*$ respectively. In that case, the basic of the $2-$dimensional vector space of $0-$eigenvectors of $S$ is
$$\Bigg\{\begin{pmatrix}
0 \\ v_0
\end{pmatrix};\begin{pmatrix}
v_0^* \\ 0
\end{pmatrix}\Bigg\}$$ 
\item[Case 3:] If $\lambda \neq 0$ is a double root of $f(x)$ and the eigenspace $V_{1,\lambda}$ of $-A_1A_1^*$ corresponding to $\lambda$ has dimension $2$. Then we can choose an orthogonal basic $\{v_1,v_2\}$ of $J_\lambda$ with respect to the quadratic form $(V_1 ,Q_1)$. Then the product in $(V,Q)$:
$$<(v_1,A^*v_1);(v_2,A^*v_2)> = <v_1,v_2> + <A^*v_1,A^*v_2> = 0 + <AA^*v_1,v_2> = 0.$$
This helps us to define an orthogonal basic of $V_{\sqrt{\lambda}} \oplus V_{\sqrt{\lambda}} \subset  V$: 
$$\Bigg\{\begin{pmatrix}
v_1 \\ \frac{1}{\sqrt{\lambda}}A_1^*v_1
\end{pmatrix};\begin{pmatrix}
v_2 \\ \frac{1}{\sqrt{\lambda}}A_1^*v_2
\end{pmatrix};\begin{pmatrix}
v_1 \\ \frac{-1}{\sqrt{\lambda}}A_1^*v_1
\end{pmatrix};\begin{pmatrix}
v_2 \\ \frac{-1}{\sqrt{\lambda}}A_1^*v_2
\end{pmatrix}\Bigg\}$$ 
\item[Case 4:] If $\lambda \neq 0$ is a double root of $f(x)$ and the eigenspace $V_{1,\lambda}$ of $-A_1A_1^*$ corresponding to $\lambda$ has dimension $1$. Then there are an eigenvector and an generalized eigenvector of $S$ in $V$ that is corresponding to $\sqrt{\lambda}$, we denote them by $v_1$ and $v_2$ respectively. Then firstly, as in case 1, we obtain two eigenvectors of $S$ corresponding to the eigenvalues $\pm \sqrt{\lambda}$:
$$v_{\sqrt{\lambda}}= \begin{pmatrix}
v_1 \\ \frac{-1}{\sqrt{\lambda}} A_1^*v_1
\end{pmatrix},v_{-\sqrt{\lambda}}= \begin{pmatrix}
v_1 \\ \frac{1}{\sqrt{\lambda}} A_1^*v_1
\end{pmatrix}.$$
And the following vector is  the generalized eigenvector in $V_{\sqrt{\lambda}}$: $$\begin{pmatrix}
v_2 \\ \frac{-1}{\sqrt{\lambda}}A_1^*v_2+\frac{1}{\lambda}A_1^*v_1
\end{pmatrix}.$$ By replacing $v_2$ by $v_2 + c.v_1$ in the above formula, we still get a generalized eigenvector. Therefore we can choose the constant $c$  to get an orthogonal basic for $V_{\sqrt{\lambda}}$. From that we also obtain an orthogonal basic for $V_{-\sqrt{\lambda}}$: 
$$\Bigg\{\begin{pmatrix}
v_1 \\ \frac{1}{\sqrt{\lambda}}A_1^*v_1
\end{pmatrix};\begin{pmatrix}
v_2 \\ \frac{1}{\sqrt{\lambda}}A_1^*v_2+\frac{-1}{\lambda}A_1^*v_1
\end{pmatrix}\Bigg\}$$ 
\end{itemize} 
 Upshot, we have just constructed an orthogonal basic of $(V,Q)$ that consists of generalized eigenvectors of $S$ such that: for each root $\lambda \neq 0$ of $f(x)$ we will have one (or two) pair (pairs) of eigenvectors $w_{\pm \sqrt{\lambda}}$ of $S$ satisfying that $w_{\sqrt{\lambda}} + w_{-\sqrt{\lambda}} \in V_1$ and $w_{\sqrt{\lambda}}-w_{-\sqrt{\lambda}} \in V_2$. If $\lambda$ is in case 4, we also have a pair of generalized eigenvectors $^gw_{\pm \sqrt{\lambda}}$ with the same properties. If $\lambda=0$ then we will have two eigenvector $w_{1,0}$ and $w_{2,0}$ satisfying $w_{i,0} \in V_i$. 
 
Similarly, we also can construct an orthogonal basic $ \{ w_{\pm \sqrt{\lambda}}', ^gw_{\pm \sqrt{\lambda}}', w_{1,0}', w_{2,0}' \}$ that consists of generalized eigenvectors of $T$ having the same properties as above (here we write down all of possible generalized eigenvectors, for a specific case they may not appear in that orthogonal basic). Since $<w_{\sqrt{\lambda}}+w_{-\sqrt{\lambda}}, w_{\sqrt{\lambda}} - w_{-\sqrt{\lambda}}> =0,$ we have that $Q(w_{\sqrt{\lambda}})=Q(w_{-\sqrt{\lambda}})$. Similarly two vectors in all of these pairs $^gw_{\pm \sqrt{\lambda}}, w_{\pm \sqrt{\lambda}}', ^gw_{\pm \sqrt{\lambda}}$ have the same norm w.r.t $Q$. By scaling we may assume that $Q(w_{\pm \sqrt{\lambda}})=Q(w_{\pm \sqrt{\lambda}}')$; $Q(^gw_{\pm \sqrt{\lambda}})=Q(^gw_{\pm \sqrt{\lambda}}')$ and if $f(0)=0$ we also assume that $Q(w_{i,0}) = Q(w_{i,0}')$ for $i=1,2.$ 

From the above construction, the linear map $g: V \rightarrow V$ taking the $w_{\pm \sqrt{\lambda}}, ^gw_{\pm \sqrt{\lambda}},$ and $w_{i,0}$ (if we have) to $w_{\pm \sqrt{\lambda}}', ^gw_{\pm \sqrt{\lambda}}',$ and $w_{i,0}'$ respectively, is orthogonal, and conjugation by $g$ takes $T$ to $S$. Using the properties that $\{ w_{\sqrt{\lambda}} + w_{-\sqrt{\lambda}} ; \, ^gw_{\sqrt{\lambda}}+ \,^gw_{-\sqrt{\lambda}}; w_{1,0} \}_{\lambda}$ span $V_1$ and $\{ w_{\sqrt{\lambda}} - w_{-\sqrt{\lambda}} ; \, ^gw_{\sqrt{\lambda}}-\, ^gw_{-\sqrt{\lambda}}; w_{2,0} \}_{\lambda}$ span $V_2$ (similar for the orthogonal basic related to $T$) , we implies that $g$ preserves $V_1$ and $V_2$, hence $g \in O(V_1) \times O(V_2)$. Conjugating by $g$ multiplies the pfaffian by the determinant of $g$. Hence if the pfaffian of $S$ (also of $T$) is non-zero, we implies that $det(g)=1$. It means that $g \in H^{\theta}$. Since $G$ is the connected component of $H^\theta$ containing the identity, $H^\theta/G \cong \{I_{2n},-I_{2n} \}$, and $-I_{2n}$ acts trivially on $W\cong V_1 \otimes V_2$, we implies that $W//H^\theta = W //G$. As a result, $S$ and $T$ are conjugated by an element in $G$. 

If $det(S)=det(T)=0$ and $det(g) =-1$, then by considering $g'$ that is exactly the same as $g$ except that $g'$ map $w_{1,0}$ to $-w_{1,0}'$, we still have that $S$ and $T$ are conjugated by $g'$, and note that $det(g') = 1$. The same arguments as above will now finish the proof.    
\end{proof}
We also need the following lemma:
\begin{lemma}
Let $k=\mathbb{F}_q$ denote a finite field, and $f(x) \in S(k[[t]])$ is a polynomial of degree $n$ with coefficients in the complete local ring $k[[t]]$. Assume that $ord_t(\Delta(f)) <2$, then any elements of $W_f(k[[t]])$ is regular, i.e. for any $x \in W_f(k[[t]])$, the image $\bar{x} = x (mod \,t)$ is in $W^{reg}(k)$.
\end{lemma}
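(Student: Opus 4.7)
The plan is to prove the contrapositive: if the reduction $\bar{x} := x \bmod t$ fails to lie in $W^{reg}(k)$, then $ord_t(\Delta(f)) \geq 2$. I would split this into two reductions followed by a perturbation argument.

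First, I would show that a root of $\bar{f}$ in $\bar{k}$ of multiplicity $\geq 3$ already forces $ord_t(\Delta(f)) \geq 2$, by the same Sylvester/resultant computation used in the proof of Proposition~1.8: after an integral translation $x \mapsto x - c$ (which preserves both $\Delta(f)$ and the order-of-vanishing hypothesis), the last three coefficients of $f$ become divisible by $t$, and expanding the Sylvester matrix of equation~(1.3) gives the required lower bound on $ord_t$. Hence it suffices to treat the case where every root of $\bar{f}$ has multiplicity at most $2$, and show that under this assumption every $\bar{x} \in W_{\bar{f}}(k)$ which extends to some $x \in W_f(k[[t]])$ is regular.

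Write $\bar{x} = \bigl(\begin{smallmatrix} 0 & A \\ -A^* & 0 \end{smallmatrix}\bigr)$, so the self-adjoint operator $AA^*$ on $V_1$ has characteristic polynomial $\bar{f}$ up to sign. I would next establish the equivalence: $\bar{x} \in W^{reg}(k)$ if and only if $AA^*$ is a regular element of $GL(V_1)$, i.e.\ its minimal polynomial equals $\bar{f}$. Proposition~3.18 yields the ``only if'' direction. For the converse, extending the stabilizer computation of Proposition~3.19: any $(B,C) \in Stab_G(\bar{x})$ satisfies $BA = AC$ and $A^*B = CA^*$, hence $B$ commutes with $AA^*$ and $C$ is determined from $B$ by these two equations. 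When $AA^*$ is regular, $B$ ranges over the finite group $\{h \in L : h^2 = 1,\ Nm_{L/k}(h) = 1\}$ with $L = k[x]/(\bar{f})$, so $Stab_G(\bar{x})$ is finite. Non-regularity of $\bar{x}$ is thus equivalent to $AA^*$ having, at some double root $\alpha$ of $\bar{f}$, two Jordan blocks of size one rather than a single block of size two.

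The final step is a perturbation argument showing this Jordan-block degeneracy is exactly what forces $ord_t(\Delta(f)) \geq 2$. Restricting $A(t) A(t)^*$ to the two-dimensional generalized $\alpha$-eigenspace at $t = 0$, standard Puiseux/matrix-perturbation analysis shows that the two eigenvalues of $A(t)A(t)^*$ branching from $\alpha$ take the form $\alpha + c_i t + O(t^2)$ when $AA^*$ has two Jordan blocks, rather than the square-root branching $\alpha \pm c\sqrt{t} + O(t)$ that occurs for a single block of size two. Their contribution to the factor $\prod_{i<j}(r_i - r_j)^2$ defining $\Delta(f)(t)$ therefore has $t$-order at least $2$; the remaining factors have non-negative $t$-order (simple roots of $\bar{f}$ contributing units modulo $t$, and any other double roots contributing at least $1$), yielding $ord_t(\Delta(f)) \geq 2$ and completing the contrapositive.

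The main obstacle I anticipate is the clean proof of the equivalence ``$\bar{x}$ regular iff $AA^*$ regular'' in the degenerate case $\bar{e} = 0$, where $A$ fails to be invertible and the constraints $BA = AC$, $A^*B = CA^*$ must be solved against the isotropic structure of $\ker A$ in $V_2$: the extra pairing constraint should still cut the stabilizer down to a finite group, but the book-keeping is subtle and parallels the ``trace-zero'' computations at the end of Proposition~3.19. A secondary subtlety is the degenerate sub-case $c_1 = c_2$ of the perturbation argument, but there $ord_t$ of the branching pair can only grow, so the inequality is preserved.
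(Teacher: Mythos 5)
Your proposal is correct in substance but takes a genuinely different route from the paper. The paper splits on the value of $ord_t(\Delta(f))$: when it is $0$ the reduction $\bar f$ is separable, so by the transitivity of $G(\bar k)$ on $W_{\bar f}(\bar k)$ every element of the fibre is conjugate to a Kostant point and hence regular; when it is $1$ the paper argues by contradiction using the identity $\det\big(\mathrm{ad}(A_xA_x^*)\,\big|\,\mathfrak{g}_{k[[t]]}/\mathfrak{c}\big) = \Delta(f)$ up to units, together with the Springer--Steinberg fact that non-regularity of $\overline{AA^*}$ forces its centralizer dimension to jump by at least $2$, so the reduction of this adjoint map has kernel of dimension $\geq 2$ and $ord_t(\Delta(f))\geq 2$ follows in one stroke, with no case analysis on root multiplicities. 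Your contrapositive instead isolates each degenerate root of $\bar f$ and tracks the valuations of the branching eigenvalues of $A_xA_x^*$; this is more elementary and more explicit about \emph{why} the discriminant picks up a factor of $t^2$, but it requires the separate reduction for roots of multiplicity $\geq 3$ and the case-by-case Jordan analysis that the adjoint-determinant identity avoids. Two points where your write-up should be tightened: first, in positive characteristic the field of Puiseux series is not algebraically closed, so the expansion $\alpha + c_it + O(t^2)$ should be replaced by the valuation-theoretic statement that both branches have valuation $\geq 1$, which follows from $ord_t\det(\alpha I - A_xA_x^*) \geq 2$ (a consequence of $\alpha I - \overline{AA^*}$ having corank $2$) via the Newton polygon of the degree-two distinguished factor; second, the converse direction of your equivalence (``$AA^*$ regular $\Rightarrow \bar x$ regular'') is most cleanly checked on the Lie algebra stabilizer, where $X_1$ commutes with $AA^*$ hence is both self- and skew-adjoint hence zero, and then $AX_2 = 0 = X_2A^*$ together with $\dim\ker A \leq 1$ and skew-adjointness kills $X_2$ even when $\det A = 0$ --- so the obstacle you flag is real work but does close. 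Both approaches share the reduction of regularity of $\bar x$ to regularity of $AA^*$, which the paper also takes from the proof of its Proposition on stabilizers.
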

\begin{proof}
 If $\Delta(f)$ is a unit in $k[[t]]$ then $\Delta(\bar{f}) = \overline{\Delta(f)} = \Delta(f) \, mod \,t$ is non-zero in $k$. Hence, by \cite{Sha16} and the previous Proposition, in case $\bar{f}(0)=0$, $G(\bar{k})$ acts transitively on $W_{\bar{f}}(\bar{k})$, and consequently $W_{\bar{f}}(k) \subset W^{reg}(k)$.
 
If $ord_{t}(\Delta(f))=1$, and we assume that $\bar{x} = x (mod \, t) \in W(k)$ is not regular. In this case, by Definition 3.16 of regularity and the proof of Proposition 3.19, we can deduce that $A.A^* \in GL(V_1)$ or $A^*.A \in GL(V_2)$ is not regular, where $x= \begin{pmatrix}
0 & A_x \\ A_x^* & 0
\end{pmatrix} \in W_f(k[[t]]$, and $A = A_x (mod \, t)$. Without loss of generality, we assume that the matrix $A.A^*$ is not regular as an element in $GL(V_1)(k)$. This is equivalent to that the dimension of the centralizer of $A.A^*$ in $\mathfrak{g}_k:=Lie(GL(V_1))_k$ is not equals to the rank of $GL(V_1)$, and hence it is at least $rank(GL(V_1)+2$ (see \cite{SS70} III. 3.25). By setting $c=Cent_{\mathfrak{g}_{k((t))}}(A_xA_x^*) \cap \mathfrak{g}_{k[[t]]}$, we define an adjoint map $g:=ad(A_x.A_x^*): \mathfrak{g}_{k[[t]]}/c \rightarrow \mathfrak{g}_{k[[t]]}/c$. Then we have that $det(g)= \Delta(A_x.A_x^*)=\Delta(f),$ up to units in $k[[t]]$. If $A.A^*$ is not regular then $\bar{g}=g \, mod\, (t)$ has kernel of dimension at least $2$, hence $ord_t(det(g) \geq 2$, a contradiction.  
\end{proof}
Now we can compute the density of regular locus in the transversal case:
\begin{proposition} If $v$ is a place of $K$, we define 
$$\alpha_v= \frac{|\{x \in S(\mathcal{O}_{K_v}/(\varpi_v^2))| \Delta(x) \equiv 0 \hspace{0.2cm}mod (\varpi_v^2) \}|}{|k(v)^{2n}|}$$and
$$\beta_v= \frac{|\{x \in W(\mathcal{O}_{K_v}/(\varpi_v^2))| \Delta(x) \equiv 0 \hspace{0.2cm}mod (\varpi_v^2) \}|}{|k(v)^{2n^2}|},$$
Then we have the following equalities
\begin{itemize}
\item[1.] $$\lim_{deg(\mathcal{L})\rightarrow \infty} \frac{|\Gamma(C,\mathcal{L}^{\otimes 2}\oplus \mathcal{L}^{\otimes 4} \oplus \cdots \oplus \mathcal{L}^{\otimes 2n-2} \oplus \mathcal{L}^{\otimes n})^{sf}|}{|\Gamma(C,\mathcal{L}^{\otimes 2}\oplus \mathcal{L}^{\otimes 4} \oplus \cdots \oplus \mathcal{L}^{\otimes 2n-2} \oplus \mathcal{L}^{\otimes n})|} = \prod_{v \in |C|}(1-\alpha_v).$$
\item[2.] $$\lim_{deg(\mathcal{L})\rightarrow \infty} \frac{|\Gamma(C,W^{reg}(\mathcal{E},\mathcal{L}))^{sf}|}{|\Gamma(C,W(\mathcal{E},\mathcal{L}))|}= \prod_{v \in |C|}(1-\beta_v)$$
\item[3.]$$\frac{\prod_{v \in |C|}(1-\beta_v)}{\prod_{v \in |C|}(1-\alpha_v)} = \prod_{v \in |C|}\frac{|G(k(v))|}{|k(v)|^{n^2-n}}$$
\end{itemize}
Here the upper script "sf" stands for "square free", i.e. $\Gamma()^{sf}$ is the set of sections whose invariants are transversal to the discriminant locus.
\label{Trasversal locus}
\end{proposition}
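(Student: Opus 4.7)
Parts 1 and 2 are essentially not specific to this chapter; they follow from the Poonen-style sieve argument used for the analogous statement in Chapter 1. The hypotheses are the same: the discriminant locus is a hypersurface and one controls it modulo $\varpi_v^2$ at each place, yielding a convergent Euler product. I would transcribe that proof (Proposition \ref{transversal locus 1} of Chapter 1, based on \cite{Poo03} and \cite{HLN14}) with only the cosmetic changes $V \leadsto W$ and the adjusted dimensions $\dim W = n^2$, $\dim S = n$ where $n = 2m+1$.

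The content is in Part 3, and the natural plan is to prove the pointwise identity $(1-\beta_v)/(1-\alpha_v) = |G(k(v))|/|k(v)|^{n^2-n}$ at each closed point $v$, from which the product identity follows. Write $q = |k(v)|$ and $R = k(v)[\epsilon]/(\epsilon^2)$. The first conceptual reduction is via Lemma \ref{ignore case 2 of counting section}'s analogue in this chapter (the lemma stating that if $\mathrm{ord}_t(\Delta(f)) < 2$ then every element of $W_f$ is automatically regular): any $T \in W(R)$ whose invariant $\pi(T) \in S(R)$ is transversal modulo $\varpi_v^2$ is itself regular. Hence $(1-\beta_v)\,q^{2n^2} = |\pi^{-1}(S^{\mathrm{trans}}(R))|$, which I will evaluate by fibering over $\pi$.

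The second step uses the smoothness of $\pi|_{W^{\mathrm{reg}}} \colon W^{\mathrm{reg}} \to S$, obtained from the étaleness of the action map $G \times W^{\mathrm{reg}} \to W^{\mathrm{reg}} \times_S W^{\mathrm{reg}}$ that was established via Proposition \ref{stabilizer and orbits}. Smoothness implies that for any $a \in S^{\mathrm{trans}}(R)$ with reduction $\bar{a}$, the $R$-fiber above $a$ has size exactly $|W^{\mathrm{reg}}_{\bar{a}}(k(v))| \cdot q^{n^2-n}$, the factor $q^{n^2-n}$ coming from the tangent space fiber of dimension $\dim W - \dim S$. The arithmetic heart of the computation is then to show $|W^{\mathrm{reg}}_{\bar{a}}(k(v))| = |G(k(v))|$ for every $\bar{a}$ that occurs as a reduction of some transversal $a$. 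For this I would combine Proposition \ref{stabilizer and orbits}'s transitivity result (which gives a single $G(\bar{k})$-orbit once the root-multiplicity hypotheses on $\bar{f}$ hold) with the Galois-cohomology description of rational orbits from Proposition \ref{stabilizer and orbits}, together with the identity $|H^1(k(v),G_T)| = |G_T(k(v))|$ valid for finite étale abelian group schemes over a finite field. Summing over $a \in S^{\mathrm{trans}}(R)$ gives $|\pi^{-1}(S^{\mathrm{trans}}(R))| = q^{n^2-n}\cdot|G(k(v))|\cdot|S^{\mathrm{trans}}(R)|$, and dividing by $q^{2n^2}$ and comparing with $1-\alpha_v = |S^{\mathrm{trans}}(R)|/q^{2n}$ yields the asserted ratio.

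The hard part will be verifying the hypothesis of the transitivity proposition uniformly over \emph{all} $\bar{a}$ arising from transversal lifts, not just those with $\Delta(\bar{a})\neq 0$. The delicate case is when $\bar{a}$ sits on the discriminant divisor with $\bar{e} = 0$ and $\bar{a}_{2m} = 0$, so that $x^2 \mid \bar{f}$; this violates the "no double root at zero" clause in the transitivity hypothesis. It is a codimension-two phenomenon in $S$, but the identity claimed in Part 3 is a pointwise equality rather than an asymptotic one, so these points cannot simply be ignored. I would dispatch them either by showing that transversality of the lift modulo $\varpi_v^2$ forces $\nabla\Delta(\bar{a})\neq 0$, which combined with an explicit computation rules out simultaneous vanishing of $\bar{e}$ and $\bar{a}_{2m}$ outside a locus over which no transversal lift exists, or by a direct orbit count at the exceptional reductions using the Galois-cohomology machinery to verify that the equality $|W^{\mathrm{reg}}_{\bar{a}}(k(v))| = |G(k(v))|$ persists. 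This is where I expect to spend most of the technical effort.
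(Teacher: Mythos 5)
Your proposal follows the paper's proof essentially verbatim: parts 1 and 2 by the Poonen/HLN sieve together with the lemma that transversality mod $\varpi_v^2$ forces regularity, and part 3 by the pointwise fiber count over $\pi$, with $|G(k(v))|$ lifts of $\bar a$ to $W^{\mathrm{reg}}_{\bar a}(k(v))$ (geometric transitivity plus the Galois-cohomology orbit count) times $q^{n^2-n}$ tangent directions coming from smoothness of $\pi$ on the regular locus. The only divergence is that you flag, and correctly resolve, the $x^2\mid\bar f$ exceptional case --- your first option is the right one, since $\bar e=0$ forces the constant term $e^2$ to vanish in $k(v)[\epsilon]/(\epsilon^2)$, so if $\bar a_{2m}=0$ as well then the discriminant is divisible by the square of the constant term of $f/x$ and hence vanishes in $R$, meaning no transversal lift exists over that locus --- a point the paper's own proof passes over in silence.
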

\begin{proof}
The first two equalities can be showed by using the results in \cite{HLN14} Section $5$ and notice that by the previous lemma, any element in $W_a(\mathcal{O}_{K_v})$, where $\Delta(a) \not\equiv 0 \hspace{0.15cm}\text{mod}(\varpi_v^2)$, is regular. Now we will prove the last equality by showing that locally:
$$\frac{1-\beta_v}{1-\alpha_v} = \frac{|G(k(v))|}{|k(v)|^{n^2-n}}.$$
To do that, for a given transversal element $a=(a_1,\dots,a_{n-1},e) \in S(R)$, we will count the size of $W^{reg}_a(R)$, where $R=k(v)[\epsilon]/(\epsilon^2)$. Set $T=\overline{T} + \epsilon H \in W^{reg}_a(R)$ and $a= \overline{a} + \epsilon b,$ where $\overline{T},H \in W(k(v))$ and $\overline{a},b=(b_2+\dots,b_{2n+1}) \in S(k(v))$, by Proposition 3.24, we firstly observe that there are $|G(k(v))|$ choices of $\overline{T}$ such that $\pi(\overline{T})= \overline{a}$. With a fixed $\overline{T}$, by considering $H$ and $b$ as elements in the tangent spaces of $W^{reg}$ and $S$, respectively, we can see that the tangent map:
$$d\pi: T_{\overline{T}}V^{reg} \rightarrow T_{\overline{a}}S $$will maps $H$ to $b$. Since $\pi: V^{reg} \rightarrow S$ is smooth, the number of choices of $H$ will be the size of the fiber of $d\pi$ at $b$, and it is equal to $q^{dim_{k(v)}(T_{\overline{T}}W^{reg}) - dim_{k(v)}(T_{\overline{a}}S)}=q^{n^2-n}$. Let $a$ varies the set $S^{trans}(R)$ we will obtain the desired equality.
\end{proof}
\subsubsection{Regular locus in the general case}
Now we consider the general case (without the transversal property). 
\begin{proposition}
\begin{itemize}
\item[1.] We have the following limit
$$\lim_{\deg(\mathcal{L} \rightarrow \infty}\frac{|\Gamma(C, W^{reg}(\mathcal{E},\mathcal{L}))|}{|\Gamma(C, W(\mathcal{E},\mathcal{L}))|} = \prod_{v \in |C|}\frac{c_v}{|k(v)|^{n^2}},$$where $c_v = |W^{reg}(k(v)|$.
\item[2.]The above limit is bounded by 
$$\zeta_C(2)^{-2}\dots \zeta_C(2m)^{-2}.\prod_{v \in |C|}\big(1+c_{2m-1}|k(v)|^{-2}+\dots+c_1|k(v)|^{-2m}\big),$$where $c_i$ are constants which are only depended on $m$ and $p$. If $p>2m+1$ then $c_i$ is only depended on $m$.
\end{itemize}
\label{general locus}
\end{proposition}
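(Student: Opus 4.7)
The plan is to prove the two parts independently: part (1) is a direct application of the general codimension-2 result, Proposition 3.1, while part (2) requires a careful local point-count for $W^{\mathrm{reg}}(k(v))$.

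For part (1), I would set $X := W \setminus W^{\mathrm{reg}}$, viewed inside the vector bundle $W(\mathcal{E},\mathcal{L}) = (\mathcal{E} \times^G W) \otimes \mathcal{L}^{\otimes 2}$. The key input is the Remark following Definition 3.16, which cites Lev08 for the fact that regularity fails in codimension at least $2$ in each geometric fibre $W_v$, as well as in $W$. Hence $X$ is a $\mathbb{G}_m$-stable, locally closed subscheme satisfying the hypotheses of Proposition 3.1, and that proposition yields
\begin{equation*}
\lim_{\deg\mathcal{L}\to\infty}\frac{|\Gamma(C,W^{\mathrm{reg}}(\mathcal{E},\mathcal{L}))|}{|\Gamma(C,W(\mathcal{E},\mathcal{L}))|} \;=\; \prod_{v\in|C|}\!\left(1 - \frac{|X_v(k(v))|}{|k(v)|^{n^{2}}}\right) \;=\; \prod_{v\in|C|}\frac{c_{v}}{|k(v)|^{n^{2}}},
\end{equation*}
which is exactly (1). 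Note $\dim W = (2m+1)^2 = n^2$ matches the rank appearing in Proposition 3.1.

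For part (2), I would estimate $c_v = |W^{\mathrm{reg}}(k(v))|$ through the invariant map $\pi\colon W^{\mathrm{reg}} \to S$, writing $c_v = \sum_{a \in S(k(v))} |W^{\mathrm{reg}}_a(k(v))|$. For each $a$, Proposition 3.22(ii) furnishes the clean bound $|W^{\mathrm{reg}}_a(k(v))| \leq 2|G(k(v))|$, derived by Kostant-sectioning the two $G(\overline{k(v)})$-orbits and counting $G(k(v))$-orbits inside each via $H^{1}(k(v),G_T) \cong G_T(k(v))$ (Lang's theorem). Substituting $|G(k(v))| = q_v^{\,n^{2}-n}\prod_{i=1}^{m}(1-q_v^{-2i})^{2}$ and $|S(k(v))| = q_v^{\,n}$ already extracts the factor $\prod_{i=1}^m(1-q_v^{-2i})^{2}$, producing the $\zeta_C(2)^{-2}\cdots\zeta_C(2m)^{-2}$ contribution after taking the product over $v$. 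To obtain the sharp polynomial correction $1 + c_{2m-1}q_v^{-2} + \cdots + c_1 q_v^{-2m}$, I would stratify $S(k(v))$ by the discriminant type of the invariant $a$ and by whether the two Kostant sections $\kappa_1(a),\kappa_2(a)$ are $G(\overline{k(v)})$-conjugate, using the identification $I_S \cong \mathcal{J}_S[2]$ from Proposition 3.25 to translate the orbit count on each stratum into the number of $2$-torsion points of the Jacobian of the associated $H_a$. Each non-generic stratum has positive codimension in $S$, and summing its contribution yields a term of the form $c_i q_v^{-2i}$.

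The principal obstacle will be the combinatorial bookkeeping on the non-generic strata. The constants $c_i$ must be controlled uniformly in $v$, which forces a case split depending on how the residue characteristic $p$ interacts with root-multiplicity patterns of $f_a(x)$; when $p > 2m+1$ this is a tame polynomial identity and $c_i$ depends only on $m$, but in small characteristic one must invoke the Vinberg theory in positive characteristic (Lev08, Lev09) to rule out extra degeneracies. A secondary technical point will be confirming that the naive bound $2|G(k(v))|$ from Proposition 3.22(ii) is overcounted enough on the complement of a sufficiently thin subvariety of $S$; without this, the infinite product $\prod_v$ would diverge. This is the crucial tightening step, and I expect it to require that on the open locus where $f_a(x)$ is separable and $e \neq 0$, a refined application of Galois cohomology — counting only those $k(v)$-orbits that actually carry $k(v)$-rational stabilizer cocycles — absorbs the factor $2$ into the lower-order terms $c_i q_v^{-2i}$.
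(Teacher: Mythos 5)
Your part (1) is fine and is essentially what the paper does: the complement of $W^{\mathrm{reg}}$ has fibrewise codimension at least $2$, so the Poonen-type density result (Proposition 5.1.1 of \cite{HLN14}, reproduced earlier in the paper) applies directly and gives the Euler product.

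Part (2), however, has a genuine gap. Starting from the uniform bound $|W^{\mathrm{reg}}_a(k(v))|\leq 2|G(k(v))|$ for every $a$ and then trying to ``absorb the factor $2$'' by a refined Galois-cohomological count on the locus where $f_a$ is separable and $e\neq 0$ cannot work, for two reasons. First, over a finite field $H^1(k(v),G)=0$, so \emph{every} class in $H^1(k(v),G_T)$ is realized by an actual $k(v)$-rational orbit, and each geometric orbit that meets $W(k(v))$ contributes exactly $|G(k(v))|$ rational points by orbit--stabilizer; moreover both Kostant sections $\kappa_1(a),\kappa_2(a)$ are defined over $k(v)$, so whenever there are two geometric orbits both carry rational points. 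There is nothing to refine away: the factor $2$ disappears only when the two geometric orbits coincide. Second, the locus you propose to single out (separable $f_a$ with $e\neq 0$) has a codimension-$1$ complement in $S$, so even if the factor $2$ were eliminated there you would be left with a per-place correction of size $O(|k(v)|^{-1})$, and the product $\prod_v\bigl(1+O(|k(v)|^{-1})\bigr)$ diverges like $\zeta_C(1)$ --- it cannot produce the stated bound with corrections starting at $|k(v)|^{-2}$. The missing ingredient is the paper's transitivity result (the proposition preceding this one): $G(\overline{k(v)})$ acts transitively on $W^{\mathrm{reg}}_a(\overline{k(v)})$ whenever every root of $f_a$ has multiplicity at most $2$ and $x^2\nmid f_a$. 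The complement of this condition in $S(k(v))\cong\mathbb{A}^{2m+1}(k(v))$ has at most $\sum_i c_i|k(v)|^{i}$ points with $i\leq 2m-1$, i.e.\ codimension at least $2$, so $|W^{\mathrm{reg}}_a(k(v))|=|G(k(v))|$ exactly outside that small set and the factor $2$ survives only there, which is precisely what produces $\prod_{i=1}^m(1-|k(v)|^{-2i})^{2}\bigl(1+c_{2m-1}|k(v)|^{-2}+\cdots\bigr)$. The detour through $I_S\cong \mathcal{J}_S[2]$ and $2$-torsion of Jacobians is not needed for this count.
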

\begin{proof}
The first statement is proved in \cite{HLN14} where they use the result of Poonen \cite{Poo03}. To prove the second part, for each element $a \in S(k(v)),$ we will bound the size of $W^{reg}_a(k(v))|$. We have two cases:
\begin{itemize}
\item[Case 1:] If $a$ satisfies the hypothesis in the Proposition 3.24, then by Proposition 3.24, $|W^{reg}_a(k(v))|=|G(k(v))|$. 
\item[Case 2:] If $a$ does not satisfy the hypothesis in the Proposition 3.24, then by Proposition 3.23 $ii)$, $|W_a^{reg}(k(v))| \leq 2|G(k(v))|$.
\end{itemize} 
Our job now is to calculate the number of invariants $a=(a_1,\dots,a_{2m},e)$ in the second case above. We also have several cases as follows:
\begin{itemize}
\item[Case 1:] If the corresponding polynomial $f_a(x)$ is divided by $x^2$, then $a_{2m}=0$ and $e=0$. Hence, the total number of $a$'s in this case is $|q^{2m-1}|$, where $q=|k(v)|$.
\item[Case 2:] If $f_a(x)$ has a root $\alpha$ of order $e>2$ in $\overline{k(v)}$, we denote $m_\alpha(x)$ the minimal polynomial of $\alpha$ over $k(v)$, then 
$$f_a(x)= m_\alpha(x)^e.g(x) \,\,\,\textbf{if $m_\alpha(x)$ is separable,}$$
$$f_a(x)=m_\alpha(x).g(x), \textbf{where $m_\alpha(x)=h(x^{p^t})$ for some $t \in \mathbb{N}$}$$ 
In both cases, $a$ is defined by the coefficients of $m_\alpha(x)$ and $g(x)$. In the former case, if we set the degree of $m_\alpha$ and $g$ by $m_1$ and $m_2$ respectively, then $a$ can be defined by $m_1+m_2= 2m+1-(e-1)m_1$ coordinates. Hence the total number of $a$ in this case is bounded by $q^{2m+1-(e-1)m_1}$. In the later case, we also easily deduce that $a$ is defied by at most $2m+1-p-1$ coordinates, thus, the total number of $a$'s is bounded by $q^{2m+1-p-1}$. Note that we only have finite "types" of $m_\alpha$ ("type" here means the choice of the degree of $m_\alpha$ in the former case and the choice of $h(x^{p^t})$ in the later case). So the total number of $a \in S(k(v))$ satisfying the corresponding $f_a(x)$ has a root in $\overline{k(v)}$ of order at least $3$ is bounded by 
$$\sum_{i=1}^{2m-1}c_iq^{i},$$where $c_i$ are constants that are only depended on $m$ and $p$.  
\end{itemize}
The upper bound of the limit in 1) is the consequence of the above calculation.
\end{proof}
\subsubsection{Regular locus in minimal case}
To take the average over the set of hyperelliptic curves, we need to consider the minimal data $(\mathcal{L}, \underline{a})$. Note that the transversal condition implies the minimal condition. Furthermore, the results in \cite{HLN14} Section $5$ also help us to see that the density of minimal locus is the product of local densities. The local condition for a minimal data is that at a closed point $v \in |C|$, the tuple of sections $\underline{a}$ does not come from $\mathcal{L}(-v)$. The when $deg(\mathcal{L}) >>0$, the density of tuples $\underline{a}$ that come from $\mathcal{L}(-v)$ is 
\begin{eqnarray}
 & \frac{|H^0 \big(C, \mathcal{L}(-v))^{\otimes 2} \oplus \cdots \oplus (\mathcal{L}(-v))^{\otimes 4m} \oplus (\mathcal{L}(-v))^{\otimes 2m+1}\big)|}{|H^0 \big(C, \mathcal{L}^{\otimes 2} \oplus \cdots \oplus \mathcal{L}^{\otimes 4m} \oplus \mathcal{L}^{\otimes 2m+1} \big)|} \\
 = & \frac{1}{|k(v)|^{(2m+1)^2}}
\end{eqnarray}
We have just proved the following result:
\begin{proposition} Given a line bundle of degree big enough, the density of minimal tuples $ \underline{a} \in H^0 \big(C, \mathcal{L}^{\otimes 2} \oplus \cdots \oplus \mathcal{L}^{\otimes 4m} \oplus \mathcal{L}^{\otimes 2m+1} \big)$ is $\zeta_C((2m+1)^2)^{-1}$.
\end{proposition}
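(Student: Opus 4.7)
The plan is to complete the local-to-global density calculation that the author has already set up in the displayed equations immediately preceding the proposition. First I would verify that minimality is a genuinely local condition: by the construction of the minimal integral model in Section 2.1, the pair $(\mathcal{L}, \underline{a})$ fails to be minimal at a closed point $v \in |C|$ precisely when the local exponent $n_v$ can be increased by $1$, equivalently when each $a_i$ (for $1 \leq i \leq 2m$) lies in the image of the natural inclusion $H^0(C, \mathcal{L}(-v)^{\otimes 2i}) \hookrightarrow H^0(C, \mathcal{L}^{\otimes 2i})$ and $e$ lies in $H^0(C, \mathcal{L}(-v)^{\otimes 2m+1}) \hookrightarrow H^0(C, \mathcal{L}^{\otimes 2m+1})$. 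These failure conditions at distinct points are given by independent linear constraints.

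Next, for a single place $v$ and $\deg(\mathcal{L})$ large enough that Riemann--Roch gives $h^0$ with no $H^1$ contribution for every relevant twist, I would record the one-line computation
$$\frac{|H^0(C, \mathcal{L}(-v)^{\otimes j})|}{|H^0(C, \mathcal{L}^{\otimes j})|} \;=\; q^{-j\deg(v)} \;=\; |k(v)|^{-j}$$
for each $j > 0$. Taking the product over the summands with exponents $j \in \{2, 4, \ldots, 4m, 2m+1\}$ and applying the arithmetic identity
$$2 + 4 + \cdots + 4m + (2m+1) \;=\; 2m(2m+1) + (2m+1) \;=\; (2m+1)^2,$$
the local density of non-minimal tuples at $v$ becomes $|k(v)|^{-(2m+1)^2}$, so the local density of minimal tuples at $v$ is $1 - |k(v)|^{-(2m+1)^2}$.

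Finally, I would assemble the local densities into the global density. Because the bad loci at distinct $v$ are cut out by independent linear conditions whose codimensions grow with $\deg(v)$, the Poonen-style sieve of Section 5 of \cite{HLN14}, already invoked earlier in the paper (for example in Proposition \ref{Trasversal locus}), produces
$$\lim_{\deg(\mathcal{L}) \to \infty} \frac{|\{\text{minimal tuples}\, \underline{a}\}|}{|H^0(C, \mathcal{L}^{\otimes 2} \oplus \cdots \oplus \mathcal{L}^{\otimes 4m} \oplus \mathcal{L}^{\otimes 2m+1})|} \;=\; \prod_{v \in |C|}\!\left(1 - \frac{1}{|k(v)|^{(2m+1)^2}}\right),$$
and the Euler product for the zeta function of $C$ rewrites the right-hand side as $\zeta_C((2m+1)^2)^{-1}$. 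Since $(2m+1)^2 \geq 9$, the infinite product converges absolutely, so there is no subtlety at the tail. The only step that demands care is the uniform control in $v$ needed to interchange the limit with the infinite product, but this is identical to the tail estimate already carried out in \cite{HLN14} Section 5 and requires no new ingredient in our setting.
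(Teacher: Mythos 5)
Your proposal is correct and follows essentially the same route as the paper: the author likewise identifies the local non-minimality condition at $v$ as the tuple coming from $\mathcal{L}(-v)$, computes the local density $|k(v)|^{-(2m+1)^2}$ from the exponent sum $2+4+\cdots+4m+(2m+1)=(2m+1)^2$, and invokes the sieve of Section 5 of \cite{HLN14} to pass to the Euler product $\prod_{v}(1-|k(v)|^{-(2m+1)^2})=\zeta_C((2m+1)^2)^{-1}$. Your write-up is in fact slightly more careful than the paper's, which states the local computation and cites \cite{HLN14} without spelling out the convergence of the product.
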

By using  similar argument as in the previous subsection, we obtain the following estimation:
\begin{proposition} For a given $G-$bundle $\mathcal{E}$, we denote $\Gamma(C, W^{reg}(\mathcal{E}, \mathcal{L}))^{min}$ the set of sections in $W^{reg}(\mathcal{E}, \mathcal{L})$ whose associated data $(\mathcal{L}, \underline{a})$ is minimal. Similarly for the notation $\Gamma(C, \mathcal{L}^2 \oplus \cdots \oplus \mathcal{L}^{4m} \oplus \mathcal{L}^{2m+1})^{min}$ - the set of minimal tuples $\underline{a}$. Then 
  $$ \lim\limits_{deg(\mathcal{L}) \rightarrow \infty} \dfrac{\dfrac{|\Gamma(C,W^{reg}(\mathcal{E},\mathcal{L}))^{min}|}{|\Gamma(C,W(\mathcal{E},\mathcal{L}))|}}{\dfrac{|\Gamma(C,\mathcal{L}^{\otimes 2}\oplus \mathcal{L}^{\otimes 4} \oplus \cdots \oplus \mathcal{L}^{\otimes 2n-2} \oplus \mathcal{L}^{\otimes n})^{min}|}{|\Gamma(C,\mathcal{L}^{\otimes 2}\oplus \mathcal{L}^{\otimes 4} \oplus \cdots \oplus \mathcal{L}^{\otimes 2n-2} \oplus \mathcal{L}^{\otimes n})|}}$$
$$ \leq \zeta_C(2)^{-2}\dots \zeta_C(2m)^{-2}. \zeta_C((2m+1)^2). \prod_{v \in |C|}\big(1+c_{2m-1}|k(v)|^{-2}+\dots+c_1|k(v)|^{-2m} $$  $$\hspace{10cm} -2|k(v)|^{(2m+1)^2}\big),$$
 where $c_i$ are the same as in Proposition 3.27.
\end{proposition}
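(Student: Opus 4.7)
The plan is to combine the bound of Proposition 3.27 on the density of regular sections with the density $\zeta_C((2m+1)^2)^{-1}$ of minimal tuples from the preceding Proposition, and to extract a saving from the observation that a regular section which fails minimality at a place $v$ must reduce to a regular nilpotent element at $v$, where the count can be done exactly via the two Kostant sections.

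First I would invoke the Poonen-style local-to-global principle of Proposition 5.1.1 in \cite{HLN14} (and its variant for conditions on higher jets) for both the numerator and the denominator, expressing each limit as a convergent Euler product over $v \in |C|$ of local densities $\alpha_v$ and $\beta_v = 1 - |k(v)|^{-(2m+1)^2}$, where $\alpha_v$ is the probability that a random $T \in W(\mathcal{O}_v)$ is both regular at $v$ and has minimal invariants at $v$. I would then write $\alpha_v = \gamma_v - \delta_v$, with $\gamma_v = c_v/|k(v)|^{(2m+1)^2}$ the local density of regular sections (Proposition 3.27(1)) and $\delta_v$ the density of sections regular at $v$ but non-minimal at $v$.

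The heart of the argument is the exact evaluation of $\delta_v$. Non-minimality at $v$ forces $\pi(T) \equiv 0 \pmod{\varpi_v}$, hence $T(v) \in W^{reg, nilp}(k(v))$. The two Kostant sections $\kappa_1, \kappa_2$ both pass through the zero invariant and parametrize the two $G(\overline{k(v)})$-orbits of regular nilpotents; by Proposition 3.19 applied to $L = k(v)[x]/x^{2m+1}$, the norm-one subgroup of $\mu_2(L)$ is trivial, so the stabilizer of each principal nilpotent is trivial and $|W^{reg, nilp}(k(v))| = 2|G(k(v))|$. Smoothness of $\pi$ on $W^{reg}$ yields, via Hensel's lemma, an \'etale-local splitting near any $T_0 \in W^{reg, nilp}(k(v))$, from which I deduce that, conditional on $T \equiv T_0 \pmod{\varpi_v}$, the invariant $\pi(T)$ is equidistributed on $\varpi_v S(\mathcal{O}_v)$. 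Imposing the non-minimality conditions $v(a_i) \geq 2i$ for $i = 1,\dots,2m$ and $v(e) \geq 2m+1$ costs $\prod_{i=1}^{2m}|k(v)|^{-(2i-1)} \cdot |k(v)|^{-2m} = |k(v)|^{-2m(2m+1)}$. Setting $U_v := |G(k(v))|/|k(v)|^{2m(2m+1)} = \prod_{i=1}^m(1-|k(v)|^{-2i})^2$, this gives $\delta_v = 2U_v/|k(v)|^{(2m+1)^2}$.

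To finish, I would extract from the proof of Proposition 3.27 the place-wise estimate $\gamma_v \leq U_v P_v$ with $P_v := 1 + c_{2m-1}|k(v)|^{-2} + \cdots + c_1|k(v)|^{-2m}$; combining gives $\alpha_v \leq U_v\big(P_v - 2|k(v)|^{-(2m+1)^2}\big)$. Taking the Euler product and using $\prod_v U_v = \prod_{i=1}^m \zeta_C(2i)^{-2}$ and $\prod_v \beta_v^{-1} = \zeta_C((2m+1)^2)$ yields the claimed bound. The main technical delicacy will lie in the equidistribution claim at the heart of $\delta_v$: smoothness of $\pi|_{W^{reg}}$ together with Hensel's lemma should provide an \'etale-local product decomposition of $W(\mathcal{O}_v)$ near each regular point, but the geometry at the nilpotent boundary, where the two $G(\overline{k(v)})$-orbits meet, requires some care.
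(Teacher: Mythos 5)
Your proposal is correct and supplies exactly the argument the paper leaves implicit (the paper's entire justification is ``by using similar argument as in the previous subsection''): you sieve place by place, write the local density of regular-and-minimal jets as $\gamma_v-\delta_v$, bound $\gamma_v\le U_vP_v$ from the proof of Proposition 3.27, and evaluate $\delta_v$ exactly by observing that a regular section failing minimality at $v$ reduces to a regular nilpotent, of which there are $2|G(k(v))|$ (trivial stabilizer by Proposition 3.19 with $f=x^{2m+1}$, one rational orbit over each of the two geometric nilpotent orbits attached to the two Kostant sections), the smoothness of $\pi|_{W^{\mathrm{reg}}}$ then giving the equidistribution needed to convert the valuation conditions $v(a_i)\ge 2i$, $v(e)\ge 2m+1$ into the factor $|k(v)|^{-2m(2m+1)}$. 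This is precisely where the correction term in the statement comes from, and your computation confirms that the exponent there should read $-2|k(v)|^{-(2m+1)^2}$ (the displayed $-2|k(v)|^{(2m+1)^2}$ is a sign typo, as the product would otherwise diverge to $-\infty$ termwise).
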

\subsection{Counting}
Let recall some notations: $V_1$ and $V_2$ are orthogonal spaces over $k =\mathbb{F}_q$ of dimension $n=2m+1$, $G= SO(V_1) \times SO(V_2)$ is split, and $W=V_1 \otimes V_2$ a representation of $G$. We can see each element in $W$ as a skew-self adjoint matrix whose diagonal blocks are $0$:
$$W(k) = \bigg\{\begin{pmatrix}
  0 & A \\
  -A^* & 0  
 \end{pmatrix} \big| A \in M_n(k) \bigg\} $$where $A^*$ is a matrix obtained from $A$ by taking the transpose via the anti diagonal. Denote $\mathcal{G}$ the set of $G-$bundles. The goal of this section is to estimate the following limit:
 \begin{equation}
 \lim_{deg(\mathcal{L}) \rightarrow \infty} \int_{\mathcal{E} \in \mathcal{G}} \frac{|H^0(C, (\mathcal{E}\times^GW^{reg}) \otimes \mathcal{L})|}{|Aut(\mathcal{E})|.|\mathcal{A}_\mathcal{L}(k)|}.
 \end{equation}
The denominator in the above limit can be easily calculated (using the similar arguments as those in chapter 1). In fact, $\mathcal{A}_\mathcal{L}(k)$ classifies hyperelliptic curves over $C$ whose coefficients in their affine Weierstrass equation as in Section 3.1 all come from $\mathcal{L}$. This implies that when $deg(\mathcal{L})$ is large enough, we have
$$|\mathcal{A}_\mathcal{L}(k)| = |H^0(C, \mathcal{L}^{\otimes 2} \oplus \cdots \oplus \mathcal{L}^{\otimes 2n-2} \oplus \mathcal{L}^{\otimes n})| = q^{n^2d+n(1-g)} $$where $d$ is the degree of $\mathcal{L}$ and $g$ denotes the genus of the curve $C$.
\subsubsection{Automorphism group of G-bundle}
If $E$ is a $G-$bundle, then $E$ can be expressed as the product $E_1 \times E_2$, where $E_i$ are $SO(V_i)-$bundles. Hence, $Aut_G(E) = Aut_{SO(V_1)}(E_1) \times Aut_{SO(V_2)}(E_2)$, and then we could apply the results in section 1 chapter 1 to estimate the size of automorphic groups.  Suppose that the bundle $E_1$ has the canonical reduction $(P_1, \sigma_1)$, and the parabolic subgroup $P_1$ has the Levi quotient given by 
$$ L_1 \cong GL(r_1) \times GL(r_2) \times \cdots \times GL(r_t) \times SO(r_0),$$where $r_0 + 2\sum_{i=1}^{h}r_i = n$. In the other words, there exists a flag of isotropic subspaces 
$$0=V_{1,0} \subset V_{1,1} \subset \cdots \subset V_{1,h} \subset V_{1,h}^{\bot} \subset \cdots \subset V_{1,1}^{\bot} \subset V_1,$$where $dim(V_{1,i}/V_{1,i-1})= x_i$ for $i = \overline{1,h}$ and $dim( V_{1,t}^{\bot}/V_{1,h}) = x_0$. From that we obtain a filtration of the vector bundle $E_1 \times^{SO(V_1)}V_1:$
\footnotesize $$0={E_1}_{P_1} \times^{P_1}V_{1,0} \subset \cdots \subset {E_1}_{P_1} \times^{P_1}V_{1,h} \subset {E_1}_{P_1} \times^{P_1}V_{1,h}^{\bot} \subset \cdots \subset {E_1}_{P_1} \times^{P_1}V_{1,1}^{\bot} \subset E_1 \times^{SO(V_1)} V_1$$\normalsize satisfying that the quotient bundles $X_i = {E_1}_{P_1} \times^{P_1}V_{1,i}/{E_1}_{P_1} \times^{P_1}V_{1,i-1}$ for $i= \overline{1,h}$ and $X_{0}={E_1}_{P_1} \times^{P_1}V_{1,h}^{\bot}/{E_1}_{P_1} \times^{P_1}V_{1,h}$ are semistable. If we denote the slope of the vector bundle $X_i$ by $x_i$, then by definition of the canonical reduction, we deduce that $x_1 > x_2 > \cdots > x_h >x=0 =0.$ \\
Similarly, for the $SO(V_2)-$bundle $E_2$ we associate it with a unique parabolic subgroup $P_2$ of $SO(V_2)$ and a set of semistable vector bundles $Y_i$ for $i = \overline{0,l}$ satisfying
$$t_0 + 2\sum_{i=1}^{l}t_i = n,$$
$$y_1 > y_2 > \cdots > y_l >y_0 = 0,$$where $t_i$ and $y_i$ denote the rank and the slope of vector bundle $Y_i$, respectively. With these notations, we can estimate the size of the automorphic group as follow:
\begin{proposition}
\begin{itemize}
\item[(i)] There exists a constant $c$ that is only depended on $n$ and $g$ such that for any $G-$bundles $E$ with canonical reduction to $P$, we have
\begin{align*}
-c \leq dim(Aut_G(E)) - dim(Aut_L(E_L)) - \sum_{i=1}^t \big( h^0(\wedge^2X_i) + h^0(X_i \otimes X_0) \big) - \\ -\sum_{t\geq j > i > 0}\big(h^0(X_i \otimes X_j)+h^0(X_i\otimes X_j^*)\big)-\sum_{i=1}^l\big(h^0(\wedge^2Y_i)+h^0(Y_i \otimes X_0)\big)- \\ -\sum_{l\geq j > i > 0}\big(h^0(Y_i \otimes Y_j)+ h^0(Y_i \otimes Y_j^*)\big) \leq c \hspace{4cm}
\end{align*}
\item[(ii)] In particular, if $x_i-x_{i+1} > 2g-2$ for all $i$ and $y_j - y_{j+1} > 2g-2$ for all $j$, then the constant $c$ in $(i)$ can be taken to be $0$.
\end{itemize}
\end{proposition}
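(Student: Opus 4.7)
The plan is to reduce the estimate to the case of a single $SO(V_i)$-bundle, where Proposition \ref{splitting when large unstability} from Chapter 1 can be invoked directly. Since $G = SO(V_1)\times SO(V_2)$ is a direct product, every $G$-bundle splits as $E = E_1\times E_2$ with $E_i$ an $SO(V_i)$-bundle, and the canonical reduction factors as $P = P_1\times P_2$ with Levi subbundles $E_{L_1}$, $E_{L_2}$. Consequently $\dim\mathrm{Aut}_G(E) = \dim\mathrm{Aut}_{SO(V_1)}(E_1) + \dim\mathrm{Aut}_{SO(V_2)}(E_2)$ and similarly for $\mathrm{Aut}_L(E_L)$, so it suffices to establish the analogous two-sided bound for each factor and sum.

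Fixing attention on $E_1$, I use the identification $\mathrm{Lie}(\mathrm{Aut}_{SO(V_1)}(E_1)) \cong H^0(C, E_1\times^{SO(V_1)}\wedge^2 V_1)$. As in the "square filtration" discussion preceding Proposition \ref{splitting when large unstability}, the isotropic flag $0\subset V_{1,1}\subset\cdots\subset V_{1,h}\subset V_{1,h}^\perp\subset\cdots\subset V_1$ induces a $P_1$-equivariant filtration of $E_1\times^{SO(V_1)}\wedge^2 V_1$ whose associated graded is a direct sum of semistable pieces of the shapes $\wedge^2X_i$, $\wedge^2X_i^*$, $X_i\otimes X_j$, $X_i\otimes X_j^*$, $X_i^*\otimes X_j^*$ (for $1\le i<j\le t$), together with $X_i\otimes X_0$ and $X_i^*\otimes X_0$ (for $1\le i\le t$), and the Levi pieces $X_i\otimes X_i^*$ and $\wedge^2 X_0$. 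Since $x_1>x_2>\cdots>x_t>x_0=0$, Serre duality combined with Proposition \ref{global sections of semistable} shows that the starred-only pieces $\wedge^2X_i^*$, $X_i^*\otimes X_j^*$ and $X_i^*\otimes X_0$ all have vanishing $H^0$. The surviving positive-slope contributions are precisely $\wedge^2 X_i$, $X_i\otimes X_j$, $X_i\otimes X_j^*$ and $X_i\otimes X_0$, while the Levi pieces assemble into $\dim\mathrm{Aut}_{L_1}(E_{L_1})$. Running the same argument on $E_2$ recovers the $Y$-terms.

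For part (i) the remaining task is to control the gap between $\dim\mathrm{Aut}_{SO(V_1)}(E_1)$ and the sum of the $h^0$ of the surviving graded pieces. Long exact sequences in cohomology yield a two-sided bound where the error is controlled by the $h^1$ of the same graded pieces; Proposition \ref{global sections of semistable} shows that each such $h^1$ is either zero (when the relevant slope sits above $2g-2$) or bounded by a universal constant depending only on $n$ and $g$, using that a semistable bundle of fixed rank and degree modulo the rank has only finitely many isomorphism classes over $\mathbb{F}_q$. Summing over the finitely many possible parabolic types and index pairs absorbs everything into a single constant $c=c(n,g)$.

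For part (ii), the hypotheses $x_i-x_{i+1}>2g-2$ and $y_j-y_{j+1}>2g-2$ guarantee that for every $i<j$ the bundle $X_i\otimes X_j^*\otimes\omega_C$ (respectively $Y_i\otimes Y_j^*\otimes\omega_C$) is a direct sum of semistable bundles of negative slope, so all the $H^1$'s controlling the extensions vanish, exactly as in Proposition \ref{splitting when large unstability}(a). The filtration then splits on global sections and the inequality becomes an equality with $c=0$. The main obstacle I anticipate is purely bookkeeping: ensuring that the diagonal Levi contributions $X_i\otimes X_i^*$ and $\wedge^2 X_0$ are correctly absorbed into $\dim\mathrm{Aut}_L(E_L)$ and are not double-counted among the off-diagonal $h^0$'s, and matching the resulting list of surviving summands against the precise list given in the statement. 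The slope arithmetic itself is routine once the combinatorics of the square filtration is correctly unpacked.
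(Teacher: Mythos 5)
Your overall route is the right one and is in fact the only one the paper itself offers: the paper states this proposition without proof, relying exactly on the splitting $E=E_1\times E_2$, the identification $\mathrm{Lie}(\mathrm{Aut}_G(E))=H^0(C,\mathrm{ad}(E))$, and the semistable ``square filtration'' of $\wedge^2 V_i$ from Chapter 1; so your reduction to a single $SO(V_i)$-factor, your identification of the surviving positive-slope graded pieces, and your matching of the slope-zero pieces $X_i\otimes X_i^*$ and $\wedge^2X_0$ with $\dim\mathrm{Aut}_L(E_L)$ reproduce the intended argument. The upper bound (with $c=0$) is immediate from $h^0(\mathrm{ad}\,E)\le\sum_k h^0(\mathrm{gr}_k)$ together with the vanishing of $h^0$ on the negative-slope pieces, and your treatment of part (ii) is correct.

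The one step you state too strongly is the control of the defect in the lower bound. You claim that the $h^1$ of each graded piece ``is either zero or bounded by a universal constant.'' That is false for the negative-slope pieces: for a semistable $F$ of rank $r$ and slope $\mu<0$ one has $h^1(F)=r(g-1-\mu)$, which is unbounded as $\mu\to-\infty$ (e.g.\ $\wedge^2X_1^*$ of slope $-2x_1$). If you run the long exact sequences along the full square filtration in its natural order, pieces such as $X_i\otimes X_j^*$ with $j<i$ (negative slope, large $h^1$) sit below later pieces of large positive slope such as $\wedge^2X_{i'}$ for $i'>i$, so the crude estimate $\delta_k\le h^1(F_{k-1})\le\sum_{j<k}h^1(\mathrm{gr}_j)$ does not yield a constant. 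The repair is to use the coarser $P$-stable filtration $E_P\times^P\mathfrak{n}\subset E_P\times^P\mathfrak{p}\subset\mathrm{ad}(E)$: every graded piece of the nilradical part has strictly positive slope by the definition of the canonical reduction, hence $h^1\le rg$ (and $h^1=0$ once the slope exceeds $2g-2$) by Proposition \ref{global sections of semistable}, so every connecting map out of a piece with nonzero $h^0$ lands in a space of dimension bounded by a constant depending only on $n$ and $g$, while the opposite-nilradical part contributes nothing to $h^0$ and its large $h^1$ never enters the estimate. With that reordering your argument closes, and under the hypotheses of (ii) all the relevant $H^1$'s vanish, giving $c=0$ as you say.
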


\subsubsection{General case}
Given a $G-$bundle $E$ as above (with the canonical reduction to $P$ and associated vector bundles $X_i$ for $0 \leq i \leq t$, and $Y_j$ for $0\leq j \leq l$), we firstly assume that $x_i-x_{i+1}$ for $0 \leq i \leq t$, $x_t$, $y_j-y_{j+1}$ for $0 \leq j \leq l$, and $y_l$ are all bigger than $2g-2$. This condition makes sure that the filtration associated with the canonical reduction of $E$ is split. Precisely, we can express the vector bundles $E_i \times^{SO(V_i)}V_i$ as direct sums:
\begin{eqnarray}
E_1 \times^{SO(V_1)}V_1 = X_0 \oplus \bigoplus_{i=1}^t \big(X_i \oplus X_i^*\big), \\
E_2 \times^{SO(V_2)}V_2 = Y_0 \oplus \bigoplus_{j=1}^l \big(Y_j \oplus Y_j^*\big).
\end{eqnarray}
As a result, any global sections of the vector bundle $E \times^G W$ is of the following matrix form: 
$$\begin{pmatrix}
  0 & A \\
  -A^* & 0  
 \end{pmatrix},$$ where $A$ is the section of 
 $$\begin{pmatrix}
  X_1\otimes Y_1 & X_1\otimes Y_2 & \cdots & X_1\otimes Y_0 & X_1\otimes Y_l^* & \cdots & X_1\otimes Y_1^* \\
  X_2\otimes Y_1 & X_2\otimes Y_2 & \cdots & X_2\otimes Y_0 & X_2\otimes Y_l^* & \cdots & X_2\otimes Y_1^* \\
  \vdots & \vdots & \vdots & \vdots & \vdots & \vdots & \vdots \\
  X_0\otimes Y_1 & X_0\otimes Y_2 & \cdots & X_0\otimes Y_0 & X_0\otimes Y_l^* & \cdots & X_0\otimes Y_1^* \\  
 X_t^*\otimes Y_1 & X_t^*\otimes Y_2 & \cdots & X_t^*\otimes Y_0 & X_t^*\otimes Y_l^* & \cdots & X_t^*\otimes Y_1^* \\
\vdots & \vdots & \vdots & \vdots & \vdots & \vdots & \vdots \\
  X_1^*\otimes Y_1 & X_1^*\otimes Y_2 & \cdots & X_1^*\otimes Y_0 & X_1^*\otimes Y_l^* & \cdots & X_1^*\otimes Y_1^* 
 \end{pmatrix}$$
By looking at the slopes of vector bundles $X_i$ and $Y_j$, we obtain some cases as follows: 
\begin{description}
\item[Case 1:]If $l=0$, or $t=0$. 
\item[Case 2:]If $t=l=1,$ $x_1 > d$, and $y_1>d$. In this case, the vector bundles $X_0\otimes Y_1^*\otimes \mathcal{L}$, $X_1^*\otimes Y_1\otimes \mathcal{L}$, and $X_1^*\otimes Y_1^* \otimes \mathcal{L}$ have negative degrees. By Proposition \ref{global sections of semistable}, they have no global section, hence, any sections $\alpha$ in $H^0(C,(E\times^GW)\otimes \mathcal{L})$ will have the following form:
$$A_\alpha = \begin{pmatrix}
   a & b & c \\
   d & e & 0 \\
   f & 0 & 0 
 \end{pmatrix}.$$
 This implies that
 $$A_\alpha . A_\alpha^* = \begin{pmatrix}
   cf & be+cd & 2ac+b^2 \\
   0 & e^2 & be+cd \\
   0 & 0 & cf 
 \end{pmatrix}.$$
 We deduce that $\Delta(H_\alpha)=0$, thus, the generic fiber of $H_\alpha$ is not a hyperelliptic curve over $K(C)$. 
 \item[Case 3:] If $t=l=1,$ $x_1 > d$, $y_1 \leq d$, and $x_1-y_1 \leq d$. In this case 
 $$A_\alpha = \begin{pmatrix}
   a & b & c \\
   d & e & f \\
   g & 0 & 0 
 \end{pmatrix}$$
where $g \in H^0(C,X_1^*\otimes Y_1)$. To make sure that $det(A_\alpha) \neq 0$, we need to put an extra condition that is $r_1 \leq t_1$. Then \footnotesize
$$H^0\big((\mathcal{E}\times^GW) \otimes \mathcal{L}\big)= \begin{pmatrix}
   H^0(X_1\otimes Y_1\otimes \mathcal{L}) & H^0(X_1\otimes Y_0\otimes \mathcal{L}) & H^0(X_1\otimes Y_1^*\otimes \mathcal{L}) \\
   H^0(X_0\otimes Y_1\otimes \mathcal{L}) & H^0(X_0\otimes Y_0\otimes \mathcal{L}) & H^0(X_0\otimes Y_1^*\otimes \mathcal{L}) \\
   H^0(X_1^*\otimes Y_1\otimes \mathcal{L}) & 0 & 0 
 \end{pmatrix},$$ \normalsize hence
\begin{align*}
&\hspace{1cm}\frac{|H^0(C, (\mathcal{E}\times^GW) \otimes \mathcal{L})|}{|Aut(\mathcal{E})|.|\mathcal{A}_\mathcal{L}(k)|} \\
&= \frac{q^{r_1t_0x_1+r_1t_1(x_1+y_1)+d(n^2-r_1t_0-r_1t_1)}}{|Aut(X_1)\times Aut(X_0)\times Aut(Y_1) \times Aut(Y_0)| .q^{r_1x_1(r_1-1+r_0)+t_1y_1(t_1-1+t_0)+dn^2}} \\
&= \frac{q^{-r_1x_1(t_1-r_1-1)-t_1y_1(t_1-r_1+t_0-1)-dr_1(t_0+t_1)}}{|Aut(X_1)\times Aut(X_0)\times Aut(Y_1) \times Aut(Y_0)|} \\
&\leq  \frac{1}{q^{-dr_1+dr_1(t_0+t_1)}|Aut(X_1)\times Aut(X_0)\times Aut(Y_1) \times Aut(Y_0)|.} 
\end{align*}
So the contribution of this case will be $0$.
\item[Case 4:] If $t=l=1,$ $x_1 \leq d$, $y_1 \leq d$, and $x_1+y_1 > d$. Then similar to the above calculations, we obtain that
\begin{align*}
&\hspace{1cm}\frac{|H^0(C, (\mathcal{E}\times^GW) \otimes \mathcal{L})|}{|Aut(\mathcal{E})|.|\mathcal{A}_\mathcal{L}(k)|} \\
&= \frac{q^{r_1t_1(x_1+y_1)+d(n^2-r_1t_1)}}{|Aut(X_1)\times Aut(X_0)\times Aut(Y_1) \times Aut(Y_0)| .q^{r_1x_1(r_1-1+r_0)+t_1y_1(t_1-1+t_0)+dn^2}} \\
&= \frac{1}{q^{r_1x_1(r_1+r_0-t_1-1)+t_1y_1(t_1-r_1+t_0-1)+dr_1t_1}|Aut(X_1)\times Aut(X_0)\times Aut(Y_1) \times Aut(Y_0)|} \\
&\leq  \frac{1}{q^{dr_1t_1}|Aut(X_1)\times Aut(X_0)\times Aut(Y_1) \times Aut(Y_0)|.} 
\end{align*}
Hence this case gives the 0 contribution in the average.
\item[Case 5:] If $t=1, l=2$. By considering the slope of $X_1, Y_1,$ and $Y_2$, we will obtain a lot of subcases. Let begin with the general form of $\mathcal{E}\times^GW \otimes \mathcal{L}$: 
$$\begin{pmatrix}
   X_1\otimes Y_1\otimes \mathcal{L} & X_1\otimes Y_2\otimes \mathcal{L} & X_1\otimes Y_0\otimes \mathcal{L} & X_1\otimes Y_2^*\otimes \mathcal{L} & X_1\otimes Y_1^*\otimes \mathcal{L} \\
   X_0\otimes Y_1\otimes \mathcal{L} & X_0\otimes Y_2\otimes \mathcal{L} & X_0\otimes Y_0\otimes \mathcal{L} & X_0\otimes Y_2^*\otimes \mathcal{L} & X_0\otimes Y_1^*\otimes \mathcal{L} \\
   X_1^*\otimes Y_1\otimes \mathcal{L} & X_1^*\otimes Y_2\otimes \mathcal{L} & X_1^*\otimes Y_0\otimes \mathcal{L} & X_1^*\otimes Y_2^*\otimes \mathcal{L} & X_1^*\otimes Y_1^*\otimes \mathcal{L})
 \end{pmatrix}.$$ \normalsize
The above form lead to:
\begin{itemize}
\item If $|x_1 -y_1| > d$ then $det(H_\alpha)=0$ for any $\alpha \in H^0(\mathcal{E}\times^GW \otimes \mathcal{L}).$ So we can ignore this case.
\item If $x_1-y_2 > d$ and $t_1<r_1$, then $det(H_\alpha)=0$ for any $\alpha \in H^0(\mathcal{E}\times^GW \otimes \mathcal{L}).$ This case will not counted in the average.
\item If $|x_1 -y_1| \leq d$, $x_1-y_2 > d$, $t_1 \geq r_1$, and $y_1 > d$, then for any $\alpha \in H^0(\mathcal{E}\times^GW \otimes \mathcal{L}):$
$$A_\alpha.A_\alpha^* \in  \begin{pmatrix}
   H^0(X_1\otimes X_1^*\otimes \mathcal{L}^2) & H^0(X_1\otimes X_0\otimes \mathcal{L}^2) & H^0(X_1\otimes X_1\otimes \mathcal{L}^2) \\
   0 & H^0(X_0\otimes X_0\otimes \mathcal{L}^2) & H^0(X_0\otimes X_1\otimes \mathcal{L}^2) \\
   0 & 0 &  H^0(X_1^*\otimes X_1\otimes \mathcal{L}^2)
 \end{pmatrix}.$$
 It is easy to see that in this case $\Delta(H_\alpha)=0$, thus, we will not count this case in our average.
\item If $|x_1 -y_1| \leq d$, $x_1-y_2 > d$, $t_1 \geq r_1$, and $y_1 \leq d$. Then
\begin{align*}
|H^0(C, (\mathcal{E}\times^GW) \otimes \mathcal{L})| = q^{r_1x_1(2t_2+t_1+t_0)+t_1y_1r_1+d(n^2-r_1t_0-r_1t_1-2r_1t_2)} \\
|Aut(\mathcal{E})|.|\mathcal{A}_\mathcal{L}(k)| = |Aut_L(\mathcal{E}_L)|. q^{r_1x_1(r_1-1+r_0)+t_1y_1(t_1-1+2t_2+t_0)+t_2y_2(t_2-1+t_0)+n^2}
\end{align*} 
Since $t_1 \geq r_1$, $r_1+r_0 \geq 2t_2 +t_1+t_0$, and $dr_1(t_0+t_1) \geq r_1x_1$, we have that
$$\frac{|H^0(C, (\mathcal{E}\times^GW) \otimes \mathcal{L})|}{|Aut(\mathcal{E})|.|\mathcal{A}_\mathcal{L}(k)|} \leq \frac{1}{|Aut_L(\mathcal{E}_L)|q^{2dr_1t_2}}.$$
Hence the contribution of this case will be $0$ when $d \rightarrow \infty$.
\item If $|x_1 -y_1| \leq d$, $x_1-y_2 \leq d$ and $x_1 > d$. Then to make sure that $\Delta(H_\alpha) \neq 0$ and $det(A_\alpha) \neq 0$, it is required that $y_2 \leq d$ and $t_1+t_2 \geq r_1$. If $y_1 \leq d$ then 
$$|H^0(C, (\mathcal{E}\times^GW) \otimes \mathcal{L})| = q^{r_1x_1(t_2+t_1+t_0)+t_1y_1r_1+ t_2y_2r_1+d(n^2-r_1t_0-r_1t_1-r_1t_2)}.$$
It is easy to see that
 $$r_1x_1(t_2+t_1+t_0) -dr_1(t_0+t_1) \leq r_1x_1(r_1-1+r_0),$$ 
 $$t_1y_1r_1+t_2y_2r_1 \leq t_1y_1(t_1-1+2t_2+t_0)+t_2y_2(t_2-1+t_0).$$
 The above inequalities imply that
 $$\frac{|H^0((\mathcal{E}\times^GW) \otimes \mathcal{L})|}{|Aut(\mathcal{E})|.|\mathcal{A}_\mathcal{L}(k)|} \leq \frac{1}{|Aut_L(\mathcal{E}_L)|q^{dr_1t_2}},$$and the contribution of this case to the average will be $0$ when $d \rightarrow \infty$. \\
 If $y_1 > d$, then 
 $$|H^0((\mathcal{E}\times^GW) \otimes \mathcal{L})| = q^{r_1x_1(t_2+t_1+t_0)+t_1y_1(r_1+r_0)+ t_2y_2r_1+d(n^2-r_1t_0-r_1t_1-r_1t_2-t_1r_0)}.$$
 To make sure that $det(A_\alpha) \neq 0$, we need to put an extra condition: $r_1 \geq t_1$. Then
 \begin{align*}
 e&=&r_1x_1(r_1-1+r_0)+t_1y_1(t_1-1+2t_2+t_0)+t_2y_2(t_2-1+t_0)+dn^2- \hspace{2cm}\\
 &&-\big(r_1x_1(t_2+t_1+t_0)+t_1y_1(r_1+r_0)+ t_2y_2r_1+d(n^2-r_1t_0-r_1t_1- \hspace{2cm}\\ && r_1t_2-t_1r_0)\big) \hspace{2cm}\\
 &\geq & r_1x_1(t_1+t_2-r_1-1) +t_1y_1(r_1-t_1-1)+t_2y_2(t_2-r_1)+ \hspace{2cm}\\ && dr_1(t_0+t_1+t_2)+dt_1r_0. \hspace{2cm}
 \end{align*}
 If $r_1=t_1$, then 
 $$e \geq -t_1y_1-t_2y_2r_1+3dr_1+dt_1r_0 \geq dt_1(r_0-t_2) \geq 2d \,\,\text{(since $y_1 \leq 3d$}).$$
 If $r_1 > t_1$ then
 $$e \geq -r_1x_1+dr_1(t_0+t_1)-t_2y_2r_1+dr_1t_2+dt_1r_0 > dt_1r_0 \,\,\,\text{(since $x_1\leq 2d, y_2 \leq d)$}$$
From the above inequalities, we conclude that this case has no effect to our average when $d \rightarrow \infty$.
\item If $|x_1 -y_1| \leq d$, $x_1 \leq d$, and $y_2 > d$. To make sure that $det(A_\alpha) \neq 0$, the extra condition we need to add is that $t_1+t_2 \leq r_1$. Then 
$$|H^0(C, (\mathcal{E}\times^GW) \otimes \mathcal{L})|\hspace{5cm}$$ $$= q^{r_1x_1(t_2+t_1)+t_1y_1(r_1+r_0)+ t_2y_2(r_1+r_0)+d(n^2-r_1t_2-r_1t_1-r_0t_2-r_0t_1)}.$$
And 
\begin{align*}
 e&= r_1x_1(r_1-1+r_0)+t_1y_1(t_1-1+2t_2+t_0)+t_2y_2(t_2-1+t_0)+dn^2- \\
 & -\big(r_1x_1(t_2+t_1)+t_1y_1(r_1+r_0)+ t_2y_2(r_1+r_0)+d(n^2-r_1t_2-r_1t_1- \\ & \hspace{10cm} r_0t_2-r_0t_1)\big) \\
 &\geq  r_1x_1(t_1+t_2+t_0-r_1-1) +t_1y_1(r_1-t_1-1)+t_2y_2(r_1-t_2-2t_1-1)\\&\hspace{8cm}+dr_1(t_1+t_2)+dr_0(t_1+t_2) \\
 & \geq  2dt_2(-t_1-1)+d(t_1+t_2)^2 +dr_0(t_1+t_2) \\
 &\geq  dr_0(t_1+t_2). 
 \end{align*}
\item If $|x_1 -y_1| \leq d$, $x_1 \leq d$, $x_1+y_2 > d$, $y_2 \leq d$, and $y_1 > d$. Then $r_1 \geq t_1$ and
$$|H^0(C, (\mathcal{E}\times^GW) \otimes \mathcal{L})| = q^{r_1x_1(t_2+t_1)+t_1y_1(r_1+r_0)+ t_2y_2r_1+d(n^2-r_1t_1-r_1t_2-t_1r_0)}.$$
\begin{align*}
 e&= r_1x_1(r_1-1+r_0)+t_1y_1(t_1-1+2t_2+t_0)+t_2y_2(t_2-1+t_0)+dn^2- \\
 &-\big(r_1x_1(t_2+t_1)+t_1y_1(r_1+r_0)+ t_2y_2r_1+d(n^2-r_1t_1-r_1t_2-t_1r_0)\big) \\
 &\geq  r_1x_1(t_1+t_2+t_0-r_1-1) +t_1y_1(r_1-t_1-1)-t_2y_2r_1)+dr_1(t_1+t_2)+dt_1r_0\\
 & \geq t_1y_1(r_1-t_1-1)-t_2y_2r_1+dr_1(t_1+t_2)+dt_1r_0.
 \end{align*}
If $r_1=t_1$, then $r_0=t_0+2t_2 \geq 3$. Thus,
 $$e \geq -t_1y_1-t_2y_2r_1+dr_1t_2+dr_1t_1+3dt_1 > dr_1t_1 > d \,\,\text{(since $y_1 \leq 2d$}).$$
 If $r_1 > t_1$ then
 $$e \geq -t_2y_2r_1+dr_1t_2+dt_1(r_0+r_1) > dt_1r_0 \,\,\,\text{(since $y_2 \leq d)$}$$
From the above inequalities, we conclude that this case has no effect to our average when $d \rightarrow \infty$.
\item If $x_1 \leq d$, $x_1+y_2 > d$, and $y_1 \leq d$. Then
$$|H^0(C, (\mathcal{E}\times^GW) \otimes \mathcal{L})| = q^{r_1x_1(t_2+t_1)+t_1y_1r_1+ t_2y_2r_1+d(n^2-r_1t_1-r_1t_2)}.$$
We consider
\begin{align*}
 e&= r_1x_1(r_1-1+r_0)+t_1y_1(t_1-1+2t_2+t_0)+t_2y_2(t_2-1+t_0)+dn^2- \\
 &-\big(r_1x_1(t_2+t_1)+t_1y_1r_1+ t_2y_2r_1+d(n^2-r_1t_1-r_1t_2)\big) \\
 &\geq  r_1x_1(t_1+t_2+t_0-r_1-1) +t_1y_1(r_1+r_0-t_1-1)-t_2y_2r_1)+dr_1(t_1+t_2)\\
 & \geq -t_2y_2r_1+dr_1(t_1+t_2) \hspace{1cm}\text{(since $t_1+t_2+t_0>r_1,$ and $r_1+r_0>t_1$)} \\
 & \geq dr_1t_1 \hspace{1cm} \text{(since $y_2 < d$)}.
 \end{align*}
It implies that the contribution in this case is $0$. 
\item If $|x_1 -y_1| \leq d$, $x_1+y_2 \leq d$, and $y_1 > d$. It is necessary that $r_1 \geq t_1$. Then
$$|H^0(C, (\mathcal{E}\times^GW) \otimes \mathcal{L})| = q^{r_1x_1t_1+t_1y_1(r_1+r_0)+d(n^2-r_1t_1-r_0t_1)}.$$
We consider
\begin{align*}
 e&= r_1x_1(r_1-1+r_0)+t_1y_1(t_1-1+2t_2+t_0)+t_2y_2(t_2-1+t_0)+dn^2- \\
 &-\big(r_1x_1t_1+t_1y_1(r_1+r_0)+d(n^2-r_1t_1-r_0t_1)\big) \\
 &\geq  r_1x_1(r_1+r_0-t_1-1) +t_1y_1(r_1-t_1-1)+dt_1(r_1+r_0)\\
 & \geq -t_2y_2r_1+dr_1(t_1+t_2) \hspace{1cm}\text{(since $t_1+t_2+t_0>r_1,$ and $r_1+r_0>t_1$)} \\
 & \geq dr_1t_1 \hspace{1cm} \text{(since $y_2 < d$)}.
 \end{align*}
It implies that the contribution in this case is $0$. 
\item If $|x_1 -y_1| \leq d$, $x_1+y_2 \leq d$, $x_1+y_1 > d$, and $y_1 \leq d$. Then
$$|H^0(C, (\mathcal{E}\times^GW) \otimes \mathcal{L})| = q^{r_1x_1t_1+t_1y_1r_1+d(n^2-r_1t_1)}.$$
We consider
\begin{align*}
 e&= r_1x_1(r_1-1+r_0)+t_1y_1(t_1-1+2t_2+t_0)+t_2y_2(t_2-1+t_0)+dn^2- \\
 &-\big(r_1x_1t_1+t_1y_1r_1+d(n^2-r_1t_1)\big) \\
 &\geq  r_1x_1(r_1+r_0-t_1-1) +t_1y_1(r_1+r_0-t_1-1)+dt_1r_1\\
 & \geq dr_1t_1 \hspace{1cm}\text{(since $r_1+r_0>t_1$)} \\
 \end{align*}
It implies that the contribution in this case is $0$.
\item If $x_1+y_1 \leq d$. We will treat this case later.
\end{itemize}
Now we will consider the general cases:
\begin{proposition} Given a $G-$bundle $\mathcal{E}$ with the associated data $(P,X_i,Y_j)$ such that $x_1+y_1>d$. If $h \neq l$ or $h=l \neq m$, then 
$$\frac{|\{\alpha \in H^0(C, (\mathcal{E}\times^GW) \otimes \mathcal{L})| Det(A_\alpha) \neq 0, \Delta(H_\alpha) \neq 0 \}|}{|Aut_G(\mathcal{E})|.|\mathcal{A}_\mathcal{L}(k)|} \leq \frac{c}{|Aut_L(\mathcal{E}_L)|.q^d}$$, where $c$ is only depended on $g$ and $n$, $d$ is the degree of the line bundle $\mathcal{L}$. 
\end{proposition}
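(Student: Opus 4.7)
The plan is to generalise the block-by-block dimension count already carried out for $(h,l)=(1,2)$ in Case 5 to arbitrary pairs $(h,l)$. Using the direct-sum notation $X_{-i}=X_i^{\ast}$, $Y_{-j}=Y_j^{\ast}$ from the filtration associated to the canonical reductions of $\mathcal{E}_1$ and $\mathcal{E}_2$, every section of $(\mathcal{E}\times^{G}W)\otimes \mathcal{L}$ is encoded by a matrix $A_{\alpha}=(M_{ij})_{-h\le i\le h,\,-l\le j\le l}$ with $M_{ij}\in H^{0}(C,X_{i}\otimes Y_{j}\otimes\mathcal{L})$. Because each $X_{i}\otimes Y_{j}\otimes\mathcal{L}$ is semistable, Proposition~\ref{global sections of semistable} yields an upper bound for $h^{0}\bigl((\mathcal{E}\times^{G}W)\otimes\mathcal{L}\bigr)$ which is the sum of the $h^{0}$ of the graded pieces. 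Since $x_{1}+y_{1}>d$, many of these pieces have negative slope and hence vanish, killing an explicit lower-right block of $A_{\alpha}$.

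Next, I would use the constraints $\mathrm{Det}(A_{\alpha})\neq 0$ and $\Delta(H_{\alpha})\neq 0$ to produce numerical relations among the ranks $r_{i}=\mathrm{rk}(X_{i})$ and $t_{j}=\mathrm{rk}(Y_{j})$. The non-vanishing determinant forces a "staircase" compatibility of the form $t_{1}+\cdots+t_{s}\le r_{1}+\cdots+r_{s}$ (or its reverse) along the blocks of $A_{\alpha}$ which the slope inequalities have not already killed; the non-vanishing discriminant further rules out configurations in which the characteristic polynomial of $-A_{\alpha}A_{\alpha}^{\ast}$ would acquire a repeated factor, exactly as in the substeps of Case 5. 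These constraints replicate, in the general $(h,l)$-setting, the inequalities $r_{1}\ge t_{1}$, $r_{1}\le t_{1}+t_{2}$, etc., that appear in the explicit subcases already written out.

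With these restrictions in place I would compute the difference
\[
E=\log_{q}|H^{0}((\mathcal{E}\times^{G}W)\otimes\mathcal{L})|-\log_{q}\bigl(|\mathrm{Aut}_{L}(\mathcal{E}_{L})|\cdot|\mathcal{A}_{\mathcal{L}}(k)|\bigr),
\]
using Riemann--Roch for the numerator and Proposition~\ref{splitting when large unstability} for the automorphism group (applied separately to $\mathcal{E}_{1}$ and $\mathcal{E}_{2}$, then combined multiplicatively). The formal structure is that $E$ is a signed sum over $(i,j)$ of contributions $r_{|i|}t_{|j|}\bigl(x_{|i|}+y_{|j|}-d\bigr)$ offset by the $\dim\mathrm{Aut}_{L}(\mathcal{E}_{L})$ terms $r_{i}r_{i'}(x_{i}-x_{i'})$ and $t_{j}t_{j'}(y_{j}-y_{j'})$; the key identity to check is that in the Borel case $h=l=m$, $r_{i}=t_{j}=1$, these contributions exactly cancel (this is the "Kostant" equality that produced the constant $1$ in Chapter~1), whereas any deviation—either a shorter flag ($h\neq l$ or $h=l<m$) or a rank exceeding $1$—removes at least one $d$ from the positive side, yielding $E\le -d+O_{n,g}(1)$ as claimed.

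The main obstacle is the combinatorics: the inequalities coming from $\mathrm{Det}\ne 0$ and $\Delta\ne 0$ couple the two flags in a way that does not immediately decouple into conditions on $(h,l)$, so one must organise the case analysis carefully (for instance, by inducting on $\min(h,l)$ after peeling off the outermost block $X_{1}\otimes Y_{1}\otimes\mathcal{L}$) to ensure that the $q^{-d}$ slack is captured uniformly and not absorbed into the $O(1)$. The guiding principle is that the "tight" configuration is the Kostant/Borel one: any departure from it in the flag lengths or ranks leaves behind at least one uncompensated factor of $d$, which is the content of the proposition.
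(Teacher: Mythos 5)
Your plan follows the same overall route as the paper: bound the numerator by the $h^0$ of the graded pieces $X_i\otimes Y_j\otimes\mathcal{L}$, extract rank inequalities from $\mathrm{Det}(A_\alpha)\neq 0$ and $\Delta(H_\alpha)\neq 0$, and compare exponents of $q$ so that only the Kostant configuration $h=l=m$, $r_i=t_j=1$ survives. But the proposal stops exactly where the real work begins, and the step you defer to "organising the case analysis carefully" is the content of the proof, not an implementation detail. The assertion that $E$ decomposes as a signed sum of terms $r_{|i|}t_{|j|}(x_{|i|}+y_{|j|}-d)$ which "exactly cancel" in the Borel case and lose at least one $d$ otherwise is not established by anything you wrote, and it is not a formal identity one can just "check": the set of nonvanishing blocks depends on the slopes $x_i,y_j$ relative to $d$ in a complicated way, so the exponent is a piecewise expression and the cancellation claim must be verified region by region.

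The missing idea in the paper's argument is a convexity/extremal-point reduction: after replacing the numerator and denominator by their degree approximations, one fixes $r_1+r_2$ (all other data held fixed) and observes that the numerator's exponent is linear in $r_1$ while the denominator's exponent equals $-(x_1-x_2)r_1^2+a'x_1+b'$, a concave quadratic; hence the ratio is maximised at the endpoints of the admissible interval for $r_1$. At an endpoint either some $r_i=0$ (the flag shortens and the inductive hypothesis on $(h',l')$ with $h'+l'<h+l$ applies) or the staircase constraints from condition X become equalities, which is how the paper forces $r_1=t_1$ and, iterating, $h=l$ with $r_i=t_i$ for all $i$. Without this mechanism your "staircase" inequalities do not by themselves pin down the extremal configuration, and your proposed induction on $\min(h,l)$ by peeling off $X_1\otimes Y_1$ has no lever to rule out intermediate rank distributions. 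Finally, even after reducing to $h=l$, $r_i=t_i$, the paper still needs the explicit estimates of $e/r_1$ in the half-dozen slope regimes (e.g. $y_1\le x_f$, $x_t\ge y_1>x_{t+1}$, $y_1>x_2$ with $f=2$ or $f>2$, and the cases $x_1>d\ge y_1$ and $x_1,y_1\le d<x_1+y_1$) to conclude $e\le -d$; none of these computations is subsumed by the generic cancellation heuristic, so as written the proposal has a genuine gap.
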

\begin{proof}
We will prove this proposition by induction. Notice that we have already consider some initial cases. Let assume that the statement is true for all pair $(h',l')$, where $h' \leq h$, $l' \leq l$, and $h'+l' < h+l$. Now having fixed numbers of $X_i$ and $Y_j$, we will find $(x_i,r_i,y_j,t_j)_{0\leq i \leq h; 0\leq j \leq l}$ such that the fractional expression:
$$A=\frac{|\{\alpha \in H^0(C, (\mathcal{E}\times^GW) \otimes \mathcal{L})| Det(A_\alpha) \neq 0, \Delta(H_\alpha) \neq 0 \}|}{|Aut_G(\mathcal{E})|.|\mathcal{A}_\mathcal{L}(k)|}$$is "maximal". \\
Note that to prove our inequality, firstly we can make use of the semi-stable filtration associated to the canonical reduction of $\mathcal{E}$. Then we approximate the dimensions of each components in that filtration by their degrees. More precisely, we can replace $H^0(C, (\mathcal{E}\times^GW) \otimes \mathcal{L})$ the numerator of $A$ by 
$$H^0\big((X_1\oplus\cdots \oplus X_h \oplus X_0 \oplus X_h^* \oplus \cdots \oplus X_1^*)\otimes(Y_1\oplus\cdots \oplus Y_h \oplus Y_0 \oplus Y_l^* \oplus \cdots \oplus Y_1^*)\otimes \mathcal{L}\big).$$And the denominator of $A$ can be replaced by:
$$|Aut_L(\mathcal{E}_L)|.q^{\sum_{i=1}^h\big(r_ix_i(r_i-1+2r_{i+1}+\cdots+2r_h+r_0)\big)+\sum_{j=1}^l\big(t_jy_j(t_j-1+2t_{j+1}+\cdots+2t_l+t_0)\big) +n^2 }.$$Our problem now is to prove 
$$A'=\dfrac{\mathsmaller{H^0\big((X_1\oplus\cdots \oplus X_h \oplus X_0 \oplus X_h^* \oplus \cdots \oplus X_1^*)\otimes(Y_1\oplus\cdots \oplus Y_h \oplus Y_0 \oplus Y_l^* \oplus \cdots \oplus Y_1^*)\otimes \mathcal{L}\big)}}{q^{\sum_{i=1}^h\big(r_ix_i(r_i-1+2r_{i+1}+\cdots+2r_h+r_0)\big)+\sum_{j=1}^l\big(t_jy_j(t_j-1+2t_{j+1}+\cdots+2t_l+t_0)\big) +n^2 }} \leq \frac{c}{q^d}$$
Given the value of the slopes $x_i$ and $y_j$, we will find the rank $r_i$ and $t_j$ such that $A$ is as large as possible. Now if we fix all of $r_i$ and $t_j$ except $r_1$ and $r_2$, we also assume that there is no relations attached. Then $r_1+r_2$ is a fixed number and we could consider $r_1$ as the only variable in $A'$. The numerator of $A'$ is a power of $q$ with the power is a linear expression of $r_1$, and the denominator is a power of $q$ with the power is a degree 2 polynomial of $r_1$. Moreover, since
\begin{align*}
&r_1x_1(r_1+2r_2+\cdots +2r_h+r_0)+r_2x_2(r_2+2r_3+\cdots +2r_h+r_0)\\ &=x_1r_1(n-1-r_1)+x_2r_2^2+ax_1+b \\
& = -(x_1-x_2)r_1^2+a'x_1+b',
\end{align*}where $a',b'$ are some constants, we implies that $A'$ will obtain the maximal value at the extreme points of $r_1$. For example, if there is no relation attached to $r_1$ (relate to the condition that $Det(A_\alpha) \neq 0$, and $\Delta(H_\alpha) \neq 0$), then the extreme values of $r_1$ is $0$ and $r_1+r_2$. In both cases, we have already reduced the value of $h$ and hence we could apply the induction hypothesis. 

Keep it in mind, we now need to consider the conditions that $Det(A_\alpha) \neq 0$, and $\Delta(H_\alpha) \neq 0$. In fact, for our purpose, it is enough to consider the following necessary condition (we call the condition X) that are going to lead to some simple linear inequalities on $r_i$ and $t_j$:
\begin{itemize}
\item[i)] One of the necessary conditions for $Det(A_\alpha) \neq 0$ is that there is no zero bottom right $i \times (2m+2-i)$ blocks, for any $i$, in $A_\alpha$.  
\item[ii)] By \cite{Sha16} lemma 7.5, if for some $i <2m+1$ the bottom right $i \times (2m+1-i)$ and $(2m+1-i) \times i$ blocks in $A_\alpha$ are zero, then $H_\alpha$ will have discriminant zero.
\end{itemize}
Now we will prove a statement that given $x_i,y_j$ and vary $r_i,t_j$, if $A'$ is maximal and it does not satisfy the Proposition then $r_1=t_1$. Firstly, we assume that $x_1 \geq y_1 >d$. Base on the condition X, we consider the following cases:
\begin{itemize}
\item[Case 1:]There exist $e$ and $f$ bigger than $1$ such that $x_1-y_e \leq d$, $x_1-y_{e+1} > d$, $y_1-x_f \leq d$, and $y_1-x_{f+1}>d$. Then the condition X implies that 
$$t_1+t_2+\cdots + t_e \geq r_1+\cdots+r_{e'}$$and$$r_1+r_2+\cdots+r_f \geq t_1+\cdots + t_{f'},$$where $e'$ is the biggest number satisfying that $x_{e'}-y_{e+1}>d$, and similarly, $f'$ is the biggest number satisfying that $y_{f'}-x_{f+1}>d$. If $e'>1$ then by fixing everything except $r_1$ and $r_2$, we observe that $A'$ is maximal when $r_1=0$ or $r_2=0$. By induction, $A'$ will satisfy the proposition. Similarly for the case $f'>1$, hence we can assume that $e'=f'=1$. If $r_1+r_2+\cdots +r_f \geq t_1+t_2$ then by using the same argument as before, we conclude that $A'$ is bigger if $t_1=0$ or $t_2=0$, thus $A'$ will satisfy the proposition. If $r_1+r_2+\cdots +r_f < t_1+t_2$, then the condition $t_1+t_2+\cdots+t_e \geq r_1$ can be ignored. As a result, $r_1$ and $r_2$ will always go in pair in every inequalities that are implied by the conditions X. So $A'$ will satisfy the proposition in this case.
\item[Case 2]Without loss of generality, we assume that $e=1$ and $f>1$, then 
$$t_1 \geq r_1+\cdots+r_{e'}$$$$r_1+r_2+\cdots+r_f \geq t_1+\cdots + t_{f'},$$where $e',f'$ are defined in the same way as above. Similar to the case 1, if $e'>1$ then $A'$ will satisfy the Proposition. \\
If $f'>1$, then $A'$ is maximal only if $t_1=r_1$ or $t_1=t_1+t_2$. So if $A'$ is maximal and does not satisfy the Proposition then $t_1=r_1$. \\
If $f'=1$ and $r_1+\cdots +r_f  \geq t_1+t_2$, then we will have the same conclusion as the case $f'>1$ above. \\ 
If $f'=1$ and $r_1+\cdots +r_f  < t_1+t_2$, then by considering the pair $(t_1,t_2)$, we imply that $A'$ is maximal only if $t_1=x_1$ or $t_1=x_1+\cdots x_f$. In the later case, we argue similarly as the case $e'>1$ to conclude that $A'$ satisfies the Proposition.
\item[Case 3]If $e=f=1$, we can deduce that $r_1=t_1$ from the conditions X.
\end{itemize}
By removing all parts related to $X_1$ and $Y_1$, and then apply the same argument as above, it can be seen that $A'$ is maximal and it does not satisfy the Proposition only if $r_2=t_2$. Continue this way we obtain that the only case we need to take care is the case $h=l$ and $r_i=t_i$ for all $0 \leq i \leq h$. In this case, we could also assume that $x_1-y_1 \leq d$, $x_1-y_2>d$, and let $f$ is the number between $2$ and $h$ satisfying $y_1-x_f \leq d$, and $y_1-x_{f+1}>d$ (here $x_{h+1}:=x_0=0$), then the power of $q$ related to $X_1$ and $Y_1$ in $A'$ can be approximated as follows:
\begin{align*}
e &=\sum_{i=1}^h \big(h^0(X_i\otimes Y_1 \otimes \mathcal{L})+h^0(X_1^*\otimes Y_1 \otimes \mathcal{L})\big) +h^0(X_0\otimes Y_1 \otimes \mathcal{L}) + \\ &\hspace{1cm}+ h^0(X_1\otimes Y_0 \otimes \mathcal{L}) + \sum_{i=2}^h \bigg( h^0(X_1\otimes Y_i \otimes \mathcal{L}) +h^0(X_1 \otimes Y_i^* \otimes \mathcal{L}) \bigg) - \\ &\hspace{1cm}-\sum_{i=2}^h \big(h^0(X_1\otimes X_i)+h^0(X_1\otimes X_i^*) + h^0(Y_1\otimes Y_i)+h^0(Y_1\otimes Y_i^*) \big)+\\
& \hspace{1cm}+\sum_{i=1}^f h^0(X_i\otimes Y_1^* \otimes \mathcal{L}) - h^0(\wedge^2X_1)-h^0(\wedge^2Y_1)-h^0(X_1\otimes X_0)- \\ & \hspace{8cm}-h^0(Y_1\otimes Y_0)-4(r_1n-r_1^2)\\
&\approx r_1x_1(r_1+2r_2+\cdots +2r_h+r_0)+r_1y_1(r_1+\cdots +r_f+2r_{f+1}+\cdots+2r_h+r_0)\\ 
&\hspace{0.5cm}+r_1(r_2x_2+\cdots + r_fx_f)-r_1d(r_1+3r_2+\cdots+3r_f+4r_{f+1}+\cdots+4r_h+2r_0)\\
&\hspace{6cm}-r_1(x_1+y_1)(r_1-1+2r_2+\cdots+2r_h+r_0) \\
&=r_1\big(x_1-y_1(r_2+\cdots+r_f-1)+(r_2x_2+\cdots + r_fx_f)-d(r_1+3r_2+\cdots+3r_f+\\
& \hspace{9.5cm}+4r_{f+1}+\cdots+4r_h+2r_0)\big)
\end{align*}
Notice that $x_i$ and $y_i$ need to satisfy the following conditions:
\begin{align*}
|x_i-y_i| &\leq d \\
|x_i-x_{i+1}| &\leq 2d \\
|y_i-y_{i+1}| &\leq 2d \\
x_h \leq d \hspace{0.5cm} &\text{or} \hspace{0.5cm} y_h \leq d.
\end{align*}
If $y_1 \leq x_f$, $r_i \neq 1$ for some $i$ then 
\begin{align*}
e/r_1 & \approx x_1+y_1+\sum_{i=2}^fr_i(x_i-y_1)-d(r_1+3r_2+\cdots+3r_f+4r_{f+1}+\cdots+4r_h+2r_0) \\
&\leq 2y_1+d+\sum_{i=2}^fr_id-d(r_1+3r_2+\cdots+3r_f+4r_{f+1}+\cdots+4r_h+2r_0) \\
&\leq 2x_f+d-d(r_1+2r_2+\cdots+2r_f+4r_{f+1}+\cdots+4r_h+2r_0) \\
& \leq d(4h-4f+5)-d(4h-2f+2) \\
&\leq-d 
\end{align*}
If $y_1 \leq x_f$, $r_i= 1$ for all $i$ then 
\begin{align*}
e/r_1 & \approx x_1+y_1+\sum_{i=2}^fr_i(x_i-y_1)-d(r_1+3r_2+\cdots+3r_f+4r_{f+1}+\cdots+4r_h+2r_0) \\
&\leq 2x_f+d+\sum_{i=2}^{f-1}r_id-d(r_1+3r_2+\cdots+3r_f+4r_{f+1}+\cdots+4r_h+2r_0) \\
&\leq 2x_f+d-d(r_1+2r_2+\cdots+2r_{f-1}+3r_f+4r_{f+1}+\cdots+4r_h+2r_0) \\
& \leq d(4h-4f+5)-d(4h-2f+2) \\
&\leq-d 
\end{align*}
If there exist $2\leq t \leq f-1$ such that $x_t \geq y_1 > x_{t+1}$, then
\begin{align*}
e/r_1 & \approx x_1+y_1+\sum_{i=2}^fr_i(x_i-y_1)-d(r_1+3r_2+\cdots+3r_f+4r_{f+1}+\cdots+4r_h+2r_0) \\
& \leq 2x_t+d+d(r_2+\cdots+r_t)-d(r_1+3r_2+\cdots+3r_f+4r_{f+1}+\cdots+4r_h+2r_0) \\
& \leq d(4h-4t+5)-d(r_1+2r_2+\cdots+2r_t+3r_{t+1}+\cdots+3r_f+4r_{f+1}+\cdots+\\ &\hspace{12cm}+4r_h+2r_0) \\
& \leq d(4h-4t+5)-d(4h-2t+2) \\
&<-d
\end{align*}
If $y_1 > x_2$, and $f>2$, then
\begin{align*}
e/r_1 & \approx x_1+y_1+\sum_{i=2}^fr_i(x_i-y_1)-d(r_1+3r_2+\cdots+3r_f+4r_{f+1}+\cdots+4r_h+2r_0) \\
& \leq 2x_f+3d-d(r_1+3r_2+\cdots+3r_f+4r_{f+1}+\cdots+4r_h+2r_0) \\
& \leq d(4h-4f+7)-d(4h-f) \\
&<-d
\end{align*}
If $y_1 > x_2$, and $f=2$, then
\begin{align*}
e/r_1 & \approx x_1+y_1+r_2(x_2-y_1)-d(r_1+3r_2+\cdots+3r_f+4r_{f+1}+\cdots+4r_h+2r_0) \\
& \leq x_1+x_2-d(4h-f+1) \hspace{1cm}\text{(since $\exists i$ such that $r_i \neq 1$)}\\
& \leq d(4h-4f+6)-d(4h-f+1) \\
&=-d
\end{align*}
We have just finished the proof of the Proposition with the assumption $x_1\geq y_1>d$. 

Now let consider the case $x_1>d$ and $y_1\leq d$. So the condition X becomes to
$$t_1 +\dots +t_e \geq r_1+\dots r_{e'},$$
where $e$ is the biggest number satisfying that $x_1-y_e \leq d$ ($e$ exists since $x_1-y_1 \leq d$ and $x_1>d$), and $e'$ is the biggest number satisfying that $x_{e'}-y_{e+1}>d$. Using exactly the same argument as in the case $y_1>d$, we deduce that $e=e'=1$ is the necessary condition to make the value of $A'$ maximal. Additionally, if we assume that $A'$ does not satisfy the Proposition, then by induction and the same argument as the first case, we deduce that $h=l$ and $r_i=t_i$ for all $i$. Now we will finish this case by considering the part related to $X_1$ and $Y_1$ in $A'$: let denote $f$ be the biggest number such that $y_1+x_f>d$, then the power of $q$ related to $x_1$ and $y_1$ in $A'$ is \small
\begin{align*}
e &=\sum_{i=1}^h \big(h^0(X_i\otimes Y_1 \otimes \mathcal{L})+h^0(X_1^*\otimes Y_1 \otimes \mathcal{L})\big) +h^0(X_0\otimes Y_1 \otimes \mathcal{L}) + h^0(X_1\otimes Y_0 \otimes \mathcal{L}) +\\
&\hspace{0.5cm}+ \sum_{i=2}^h \big( h^0(X_1\otimes Y_i \otimes \mathcal{L}) +h^0(X_1 \otimes Y_i^* \otimes \mathcal{L}) \big) +\sum_{i=0}^h h^0(X_i\otimes Y_1^* \otimes \mathcal{L}) + \\
&\hspace{0.5cm}+\sum_{i=f+1}^h h^0(X_i^*\otimes Y_1^* \otimes \mathcal{L}) - h^0(\wedge^2X_1)- h^0(\wedge^2Y_1)-h^0(X_1\otimes X_0)-h^0(Y_1\otimes Y_0)- \\ 
&\hspace{0.5cm}-\sum_{i=2}^h \big(h^0(X_1\otimes X_i)+h^0(X_1\otimes X_i^*) + h^0(Y_1\otimes Y_i)+h^0(Y_1\otimes Y_i^*) \big) -4d(r_1n-r_1^2)\\
&\approx r_1x_1(r_1+2r_2+\cdots +2r_h+r_0)+r_1y_1(r_1+\cdots +r_f)+\\ 
&\hspace{0.5cm}+r_1(r_2x_2+\cdots + r_fx_f)-r_1d(r_1+3r_2+\cdots+3r_f+2r_{f+1}+\cdots+2r_h+r_0)\\
&\hspace{0.5cm}-r_1(x_1+y_1)(r_1-1+2r_2+\cdots+2r_h+r_0) \\
&=r_1\big(x_1-y_1(r_2+\cdots+r_f+2r_{f+1}+\cdots+2r_h+r_0-1)+(r_2x_2+\cdots + r_fx_f)- \\ &\hspace*{2cm}-d(r_1+3r_2+\cdots+3r_f+2r_{f+1}+\cdots+2r_h+r_0)\big) \\
&<r_1\big(2d+2d(r_2+\cdots+r_f)- d(r_1+3r_2+\cdots+3r_f+2r_{f+1}+\cdots+2r_h+r_0)\big) \\
& \leq -d
\end{align*} \normalsize
By induction, we have just proved the Proposition in the case $x_1>d$ and $y_1\leq d$. 

The last case we need to consider is $x_1<d$, $y_1<d$, and $x_1+y_1>d$. In this case, the condition X becomes empty. Hence, if $h$ is different than $1$, by fixing the sum $r_1+r_2$ and using the same argument as above, we deduce that $A'$ satisfies the Proposition. Similar story for the case $l \neq 1$. Thus we only need to consider the case $h=l=1$, but we have already treated this case at the beginning of this section. 

The proof is completed.
\end{proof}
\begin{remark}
From the above discussion we can see that the case that could contribute a positive portion in the average, is the case where $r_i$ and $t_j$ are all number $1$, and also the differences $x_i-x_{i+1}$ and $y_j-y_{j+1}$ are close to $2d$. We have two ideas cases as follows:
\begin{itemize}
\item[Kostant 1:] $x_i=2m-2i+2$,$y_i=2m-2i+1$ for all $1 \leq i \leq m$;
\item[Kostant 2:] $y_i=2m-2i+2$,$x_i=2m-2i+1$ for all $1 \leq i \leq m$.
\end{itemize}  
The reason we named them Kostant is that they reflex the role of two Kostant sections in our average.
\end{remark}
Based on the above remark, we divide the remaining case into some cases as follows:
\item[Case 6:] If $h=l=m$, i.e. $X_i$ and $Y_j$ are all line bundles, $(4m-3)d <x_1+x_2<(4m-2)d $. In this case the relating $X_1,Y_1$ part of $A$ is $\frac{n_1}{d_1}$ where \small
\begin{align*}
n_1&= \prod_{i=2}^{m}\bigg(|H^0(X_i\otimes Y_1\otimes \mathcal{L})|.|H^0(X_i^*\otimes Y_1\otimes \mathcal{L})|.|H^0(X_1\otimes Y_i\otimes \mathcal{L})|.|H^0(X_1\otimes Y_i^*\otimes \mathcal{L})|\bigg) \\
& \hspace{1cm} \times|H^0(X_1\otimes Y_1\otimes \mathcal{L})|.|H^0(X_0\otimes Y_1\otimes \mathcal{L})|.|H^0(X_1\otimes Y_0\otimes \mathcal{L})|.|H^0(X_1\otimes Y_1^*\otimes \mathcal{L})|\\ & \hspace{10.5cm} \times |H^0(X_2\otimes Y_1^*\otimes \mathcal{L})| \\&= |H^0(X_1^*\otimes Y_1\otimes \mathcal{L})|.|H^0(X_2\otimes Y_1^*\otimes \mathcal{L})|.q^{nx_1+(n-2)y_1+(2n-2)d+(2n-2)(1-g)} 
\end{align*}
 \normalsize and 
\begin{align*}
d_1&= q^{(n^2-(n-2)^2)d+2(1-g)+x_1(n-2)+y_1(n-2)+(2n-4)(1-g)}.|Aut(X_1)||Aut(Y_1)| \\
&=(q-1)^2q^{(4n-4)d+(2n-2)(1-g)+x_1(n-2)+y_1(n-2)}.
\end{align*}
Hence the contribution of this range to the average is bounded above by 
\begin{align*}
&\hspace{0.4cm}\sum_{d/2+x_2<y_1<d+x_2} \hspace{0.4cm}\sum_{y_1\leq x_1<y_1+d}  \frac{n_1(X_1,Y_1)}{d_1(X_1,Y_1)} \\
&= \sum_{d/2+x_2<y_1<d+x_2} \hspace{0.4cm}\sum_{y_1 \leq x_1 < y_1+d } \frac{|H^0(X_1^*\otimes Y_1\otimes \mathcal{L})|.|H^0(X_2\otimes Y_1^*\otimes \mathcal{L})|q^{2x_1}}{(q-1)^2q^{(2n-2)d}} \\
&\leq \sum_{d/2+x_2<y_1<d+x_2} \hspace{0.4cm}\sum_{y_1 \leq x_1 < y_1+d } \frac{T}{(q-1)^2q^{x_1+x_2-(2n-4)d}} \\
&\leq \sum_{d/2+x_2<y_1<d+x_2}  \frac{2T}{(q-1)^2q^{y_1+d-1+x_2-(2n-4)d}}\\
&\leq \frac{2T}{(q-1)^2} \hspace{2cm}, 
\end{align*}
where $T$ is a constant that is only depended on $C$.
\item[Case 7:] If $X_i$ and $Y_j$ are all line bundles, $x_i=2m-2i+2$,$y_i=2m-2i$ for all $1 \leq i \leq m$. Then $deg(X_2\otimes Y_1^*\otimes \mathcal{L}) = 0$, and therefore it will has no non-trivial global sections if it is a non-trivial line bundle. And if $H^0(X_2\otimes Y_1^*\otimes \mathcal{L}) = 0$, we can see that $\Delta(H_\alpha)=0$ for all $\alpha \in H^0\big((\mathcal{E}\times_G W)\otimes \mathcal{L}\big)$. Hence we can assume that $X_2 \otimes Y_1^* = \mathcal{L}^*$. On the other hand, to make sure that $det(A_\alpha) \neq 0$, we need to have that $X_1^* \otimes Y_1 = \mathcal{L}^*$. Similarly, we will obtain the following necessary conditions: $X_i \cong \mathcal{L}^{2m-2i+2}, Y_i \cong \mathcal{L}^{2m-2i}.$ We now will show that any regular sections will factor through the first Kostant section. Firstly, let recall the form of $A_\alpha$ for any $\alpha \in H^0((\mathcal{E}\times^G W)\otimes \mathcal{L})$ satisfying $det(A_\alpha) \neq 0$ and $\Delta(H_\alpha) \neq 0$:
\begin{equation}
A_\alpha = \begin{pmatrix}
  * & \cdots & * & * & * & * & \cdots & * \\
  * & \cdots & * & * & * &  * &\cdots & c_1 \\
  \vdots & \vdots & \vdots & \vdots & \vdots & \vdots & \reflectbox{$\ddots$} & \vdots \\
  * & \cdots & * & * & * & c_{m-1} & \cdots & 0 \\
  * & \cdots & * & * & c_m & 0 & \cdots & 0 \\
  * & \cdots & c_{m}' & 0 & 0 & 0 & \cdots & 0 \\
   \vdots & \reflectbox{$\ddots$} & \vdots & \vdots & \vdots & \vdots & \vdots & \vdots \\
  c_1' & \cdots & 0 & 0 & 0 & 0 & \cdots & 0 \\  
 \end{pmatrix}
\end{equation}where $c_i$and $c_i'$ are non-zero constants. Notice that the action of $G$ on $W$ is
$$\begin{pmatrix}
B & 0_n \\
0_n & C
\end{pmatrix}. \begin{pmatrix}
0_n & A_\alpha \\
-A_\alpha^* & 0_n
\end{pmatrix}.\begin{pmatrix}
B^* & 0_n \\
0_n & C^*
\end{pmatrix}= \begin{pmatrix}
0_n & BA_\alpha C^* \\
-CA_{\alpha}^*B^* & 0_n
\end{pmatrix}.
$$
Since $c_i$ and $c_i'$ are non-zero, we can make them to be $1$ as follows: take $B$ to be a diagonal matrix with the diagonal entries $b_{ii}= $, then we will have
\footnotesize
\begin{align*}
A_\alpha ' & =\begin{pmatrix}
  \prod_{i=1}^m(c_ic_i') &  &  &  &  &  &   \\
   & \ddots &  &  &  &  & \\
   &  &   c_m'c_m & &  & &    \\
   &  &    & 1 &  & &    \\
   &  &    &  & (c_m'c_m)^{-1} &  &    \\
   &  &    &  &  & \ddots & \\
   &  &    &  &&&  \big(\prod_{i=1}^m(c_ic_i')\big)^{-1} 
 \end{pmatrix}.A_\alpha \\
 &= \begin{pmatrix}
  * & \cdots & * & * & * & * & \cdots & * \\
  * & \cdots & * & * & * &  * &\cdots & b_1 \\
  \vdots & \vdots & \vdots & \vdots & \vdots & \vdots & \reflectbox{$\ddots$} & \vdots \\
  * & \cdots & * & * & * & b_{m-1} & \cdots & 0 \\
  * & \cdots & * & * & b_m & 0 & \cdots & 0 \\
  * & \cdots & b_{m} & 0 & 0 & 0 & \cdots & 0 \\
   \vdots & \reflectbox{$\ddots$} & \vdots & \vdots & \vdots & \vdots & \vdots & \vdots \\
  b_1^{-1} & \cdots & 0 & 0 & 0 & 0 & \cdots & 0 \\  
 \end{pmatrix}.
\end{align*} 
\normalsize
After that we multiply the right hand side of $A_\alpha$ with $$C=diag(b_1,b_2, \cdots, b_m,1,b_m^{-1},\cdots, b_1^{-1}) \in SO(V_2),$$then the resulting matrix will have the property we mentioned before. Now we can assume that in our matrix $A_\alpha$, the entries $c_i$ and $c_i'$ are all equal to $1$. We continue to multiply the left and the right of $A_\alpha$ by some orthogonal matrices to transform it into Kostant form as follows:
\begin{itemize}
\item[Step 1:] We firstly transfer the entries $a_{1,i}$ for $m+2\leq i \leq n$ into zero by multiplying on the left of $A_\alpha$ by the following special orthogonal matrix (upper triangular matrix)
\begin{equation}
B =\begin{pmatrix}
  1 & -a_{1,n} & \cdots & -a_{1,m+3} & -a_{1,m+2} & 0 & \cdots & 0 & -a_{1,m+2}^2/2 \\
0 & 1 & \cdots & 0 &0 & 0 & \cdots & 0 & 0 \\  
 \vdots  & \vdots & \vdots & \vdots & \vdots & \vdots & \vdots & \vdots & \vdots\\
0  & 0 & \cdots &1& 0 & 0 &\cdots & 0 & 0   \\ 
 0  & 0 & \cdots & 0 & 1 & 0 &\cdots & 0 & a_{1,m+2}   \\
 \vdots  & \vdots & \vdots & \vdots & \vdots & \vdots & \vdots & \vdots & \vdots\\
 0  & 0 & \cdots   & 0 & 0 & 0 & \cdots & 1 & a_{1,n}  \\
 0  & 0 & \cdots   & 0 &0 & 0 & \cdots & 0 & 1 \\
  \end{pmatrix}.
\end{equation}
\item[Step 2:] Transfer the entries $a_{j,1}$, where $m+2 \leq j \leq n-1$, into zero. In this step we need to multiply the right hand side of $A_\alpha$ by the following lower triangular matrix:
\begin{equation}
C =\begin{pmatrix}
  1 & 0 & \cdots & 0 & 0 & 0 & \cdots & 0 & 0 \\
-a_{n-1,1} & 1 & \cdots & 0 &0 & 0 & \cdots & 0 & 0 \\  
 \vdots  & \vdots & \vdots & \vdots & \vdots & \vdots & \vdots & \vdots & \vdots\\
-a_{m+2,1}  & 0 & \cdots &1& 0 & 0 &\cdots & 0 & 0   \\ 
 0  & 0 & \cdots & 0 & 1 & 0 &\cdots & 0 & 0   \\
 \vdots  & \vdots & \vdots & \vdots & \vdots & \vdots & \vdots & \vdots & \vdots\\
 0  & 0 & \cdots   & 0 & 0 & 0 & \cdots & 1 & 0  \\
 0  & 0 & \cdots   & 0 &0 & a_{m+2,1} & \cdots & a_{n-1,1} & 1 \\
  \end{pmatrix}.
\end{equation}
\item[Step 3:] If $n=3$ then we can skip this step and go directly to the step $4$. Hence by using an inductive argument, we can assume that our statement is true for $n$ small. More precisely, if we consider the submatrix $A_\alpha'$ obtained by removing the first column, the first row, the last row, and the last column, then we can transfer it into the Kostant form by multiplying (canonically) on the left by some (upper triangular) special orthogonal matrices, and on the right by some (lower triangular) special orthogonal matrices. From this induction, by multiplying $A_\alpha$ by 
$$ B=\begin{pmatrix}
1 && \\
&B'& \\
&&1
\end{pmatrix} \text{on the left, and}$$
$$C=\begin{pmatrix}
1 && \\
&C'& \\
&&1
\end{pmatrix} \text{on the right,}$$where $B'$ and $C'$ are appropriate upper and lower triangular matrices, respectively, $A_\alpha$ will have the following form:
\begin{equation}
A_\alpha= \begin{pmatrix}
  * & * & \cdots & * & * & 0 & 0 & \cdots &0& 0 \\
  * & * & \cdots & * & * & 0 & 0 &\cdots & 0&1 \\
  * & 0 & \cdots & 0 & 0 & 0 & 0 & \cdots & 1&0 \\
  \vdots & \vdots & \vdots & \vdots & \vdots & \vdots & \vdots & \reflectbox{$\ddots$} &\vdots & \vdots \\
  * & 0 & \cdots & 0 & 0 & 0 & 1 & \cdots & 0&0 \\
  * & *& \cdots & * & * & 1 & 0 & \cdots & 0&0 \\
  0 & 0 & \cdots & 1 & 0 & 0 & 0 & \cdots & 0&0 \\
   \vdots & \vdots&  \vdots & \vdots & \vdots & \vdots & \vdots & \vdots & \vdots &\vdots \\  
  1 & 0 & \cdots & 0 & 0 & 0 & 0 & \cdots & 0 &0 \\  
 \end{pmatrix}.
\end{equation}
 We need to emphasise the upper and lower triangular properties here because they help us to keep the entries of $A_\alpha$ considered in the previous steps to be equal to zero. 
\item[Step 4:] Multiply on the right of $A_\alpha$ by  
\begin{equation}
\begin{pmatrix}
  1 & 0 & \cdots & 0 & 0 & 0 & \cdots & 0 & 0 \\
0 & 1 & \cdots & 0 &0 & 0 & \cdots & 0 & 0 \\  
 \vdots  & \vdots & \vdots & \vdots & \vdots & \vdots & \vdots & \vdots & \vdots\\
0  & 0 & \cdots &1& 0 & 0 &\cdots & 0 & 0   \\ 
 a_{2,m+1}  & 0 & \cdots & 0 & 1 & 0 &\cdots & 0 & 0   \\
 \vdots  & \vdots & \vdots & \vdots & \vdots & \vdots & \vdots & \vdots & \vdots\\
 a_{2,2}  & 0 & \cdots   & 0 & 0 & 0 & \cdots & 1 & 0  \\
 -a_{2,m+1}^2/2  & -a_{2,2} & \cdots   & -a_{2,m} &-a_{2,m+1} & 0 & \cdots & 0 & 1 \\
  \end{pmatrix}
\end{equation}
will make the entries $a_{2,i}$ for $2 \leq i \leq m+1$ to be zero.
\item[Step 5:] Finally, multiply on the left of $A_\alpha$ by
\begin{equation}
\begin{pmatrix}
  1 & 0 & \cdots & 0  & a_{m,1} & \cdots & a_{2,1} & 0 \\
0 & 1 & \cdots & 0 &0 & \cdots & 0 & -a_{2,1} \\  
 \vdots  & \vdots  & \vdots & \vdots & \vdots & \vdots & \vdots & \vdots\\
0  & 0 & \cdots &1& 0 &\cdots & 0 & -a_{m,1}   \\ 
 0  & 0 & \cdots & 0  & 1 &\cdots & 0 & 0   \\
 \vdots  & \vdots  & \vdots & \vdots & \vdots & \vdots & \vdots & \vdots\\
 0  & 0 & \cdots   & 0 & 0 & \cdots & 1 & 0  \\
 0  & 0 & \cdots   & 0 & 0 & \cdots & 0 & 1 \\
  \end{pmatrix},
\end{equation}
then the entries $a_{j,1}$ for $2 \leq j \leq m$ will be zero. Our matrix $A_\alpha$ is of the Kostant form now.
\end{itemize}
\item[Case 8:] Similar to the case $7$ but we switch $X_i$ to $Y_i$ and $Y_i$ to $X_i$. Then we can prove that any regular sections of $(\mathcal{E}\times_GW)\otimes \mathcal{L}$ will factor through the second Kostant section. Hence the contribution of this case to the average is $1$.
\item[Case 9:] If $ d-2g+2 \leq x_1+y_1 \leq d$. By Proposition \ref{global sections of semistable} we are able to bound the dimension of $H^0((\mathcal{E}\times^GV)\otimes \mathcal{L})$ as follows:
\begin{eqnarray*}
h^0((\mathcal{E}\times^GV)\otimes \mathcal{L})  \hspace{8cm}\\
 \leq \sum_{i=1}^h \sum_{j=1}^l \big( h^0(X_i \otimes Y_j \otimes \mathcal{L})+h^0(X_i^* \otimes Y_j \otimes \mathcal{L})+h^0(X_i \otimes Y_j^* \otimes \mathcal{L})\\ \hspace{0.5cm}+ h^0(X_i^* \otimes Y_j^* \otimes \mathcal{L})\big) + \sum_{j=1}^l \big(h^0(X_0\otimes Y_i \otimes \mathcal{L} +h^0(X_0\otimes Y_i^* \otimes \mathcal{L}) \big)\\ \hspace{0.5cm}+ \sum_{i=1}^h\big(h^0(X_i \otimes Y_0 \otimes \mathcal{L}) + h^0(X_i^*\otimes Y_0\otimes \mathcal{L})\big) +h^0(X_0\otimes Y_0 \otimes \mathcal{L}) \hspace{0.5cm}\\
\leq n^2+d.n^2. \hspace{9.5cm}
\end{eqnarray*}
Furthermore, if we fix the rank $r_1$ of the semi-stable vector bundle $X_1$, there exists a constant $A_1$ such that for any integer $d_1$ we have that $|Bun^{semi-stable}_{r_1,d_1}(\mathbb{F}_q)| \leq A_1$. In fact, set $d_1=a.r_1+d_2$ for some $0\leq d_2 <r_1$, then $|Bun^{semi-stable}_{r_1,d_1}(\mathbb{F}_q)| = |Bun^{semi-stable}_{r_1,d_2}(\mathbb{F}_q)|$ because of the assumption that our curve $C$ has an $\mathbb{F}_q-$ rational point. Notice that $|Bun^{semi-stable}_{r_1,d_2}(\mathbb{F}_q)|$
is finite for any $d_2$, hence we can choose $A_1$ to be the maximal number among $|Bun^{semi-stable}_{r_1,d_2}(\mathbb{F}_q)|$ for $0\leq d_2 <r_1$. We also can choose the common bound $A_1$ for $|Bun^{semi-stable}_{r_1,d_1}(\mathbb{F}_q)|$ when $r_1$ varies in the period $[1,m]$.

Now if we fix a parabolic subgroup $P$ of $G$, recall that $Bun^P$ is the notation of the set of $G-$bundles whose canonical reductions are reductions to $P$. Then the contribution of $Bun^P$ to the average in this case is:
\begin{eqnarray*}
& \mathlarger{\int}\limits_{\substack{\mathcal{E} \in Bun^P \\ d-2g+2 \leq x_1+y_1 \leq d}} \frac{|H^0\big((\mathcal{E}\times^G V)\otimes \mathcal{L}\big)|}{|\mathcal{A}_{\mathcal{L}}(\mathbb{F}_q)|} d\mathcal{E} \hspace{3cm} \\ & \leq c. \sum\limits_{d-2g+2 \leq x_1+y_1 \leq d} \frac{A_1. q^{n^2+d.n^2}}{q^{r_1x_1(r_1-1+ \cdots +2r_h+r_0)+t_1y_1(t_1-1+\cdots +2t_l+t_0)}.q^{n^2d+n(1-g)}} \\ &\text{(c is a constant that only depends on $P,n$ and the genus $g$ of $C$)} \\
& \leq b.\sum\limits_{d-2g+2 \leq x_1+y_1 \leq d} \frac{1}{q^{r_1x_1+t_1y_1}} \leq b.\sum_{t=d-2g+2}^d \frac{t-1}{q^t}.\hspace{2.7cm}\\ &\text{(b is a constant that only depends on $P,n$ and the genus $g$ of $C$)} 
\end{eqnarray*}  
By taking limit $d \rightarrow  \infty$, the above upper bound implies that the contribution of this case to the average equals zero.

\item[Case 10:] The last case we need consider is $x_1+y_1 <d-2g+2$, i.e. slopes of any consecutive semistable quotients in the "filtration" of the vector bundle $(\mathcal{E}\times^G V) \otimes \mathcal{L}$ are strictly bigger than $2g-2$. Consequently, we obtain the following equality:
\begin{eqnarray*}
&h^0((\mathcal{E}\times^GV)\otimes \mathcal{L}) \hspace{8cm}\\ & = \sum_{i=1}^h \sum_{j=1}^l \big( h^0(X_i \otimes Y_j \otimes \mathcal{L})+h^0(X_i^* \otimes Y_j \otimes \mathcal{L})+h^0(X_i \otimes Y_j^* \otimes \mathcal{L})\\
&+ h^0(X_i^* \otimes Y_j^* \otimes \mathcal{L})\big) + \sum_{j=1}^l \big(h^0(X_0\otimes Y_i \otimes \mathcal{L} +h^0(X_0\otimes Y_i^* \otimes \mathcal{L}) \big) +\\
&+ \sum_{i=1}^h\big(h^0(X_i \otimes Y_0 \otimes \mathcal{L}) + h^0(X_i^*\otimes Y_0\otimes \mathcal{L})\big) +h^0(X_0\otimes Y_0 \otimes \mathcal{L})\\
&= n^2(1-g)+d.n^2. \hspace{9cm}
\end{eqnarray*}
Notice that in case the $G-$bundle $\mathcal{E}$ is semistable, we also have the above equality since $deg(\mathcal{E}\times^G V) =0$. Since the Tamagawa number of $G$ is $4$, by considering the counting measure weighted by the size of automorphism groups on $Bun_G(\mathbb{F}_q)$, we have that 
\begin{eqnarray*}
|Bun_G(\mathbb{F}_q)|& = \int\limits_{Bun_G(\mathbb{F}_q)} 1 d\mu = 4.q^{(4m^2+2m)(g-1)}.\prod\limits_{x \in |C|}\frac{|\kappa(x)|^{dim(G)}}{|G(\kappa(x))|} \\
& = 4.q^{(4m^2+2m)(g-1)}.\zeta_C(2).\zeta_C(4)\dots \zeta_C(2m).
\end{eqnarray*}
Now we can compute the average number in this case as follows:
\begin{eqnarray*}
&&\lim_{d \rightarrow \infty} \dfrac{\mathlarger{\int}\limits_{\substack{Bun_G(\mathbb{F}_q) \\ x_1+y_1 <d-2g+2}}|\mathcal{M}_{L,E}(k)|dE}{|\mathcal{A}_L(k)|} \\
&=& \lim_{d \rightarrow \infty} \dfrac{\mathlarger{\int}\limits_{\substack{Bun_G(\mathbb{F}_q) \\ x_1+y_1 <d-2g+2}}|H^0(C, V(E, L)^{reg}|dE}{q^{n^2d+n(1-g)}} \\
&=& \lim_{d \rightarrow \infty} \dfrac{|H^0(C, V(E,L))|\mathlarger{\int}\limits_{\substack{Bun_G(\mathbb{F}_q) \\ x_1+y_1 <d-2g+2}}\dfrac{|H^0(C, V(E, L)^{reg}|}{|H^0(C, V(E,L))|}dE}{q^{n^2d+n(1-g)}} \\
&\leq & \dfrac{q^{n^2d+n^2(1-g)}\mathlarger{\int}\limits_{\substack{Bun_G(\mathbb{F}_q)}}\mathsmaller{\zeta_C(2)^{-2}\dots \zeta_C(2m)^{-2}.\prod_{v \in |C|}\big(1+c_{2m-1}|k(v)|^{-2}+\dots+c_1|k(v)|^{-2m}\big)}dE}{q^{n^2d+n(1-g)}} \\
&=& \prod_{v \in |C|}\big(1+c_{2m-1}|k(v)|^{-2}+\dots+c_1|k(v)|^{-2m}\big)|Bun_G(\mathbb{F}_q)| \times \\
&& \hspace{4cm} \times q^{(4m^2+2m)(1-g)}\zeta_C(2)^{-2}\dots \zeta_C(2m)^{-2} \\
&=&4.\prod_{v \in |C|}\big(1+c_{2m-1}|k(v)|^{-2}+\dots+c_1|k(v)|^{-2m}\big)
\end{eqnarray*}
\end{description}
\subsubsection{The transversal case}
In this subsection, we consider a special family of hyperelliptic curves, the transversal family. The purpose of this subsection is to show that in the transversal case, we can ignore the case $6$ above. Then, the average in this case will not contain the rational function of $q$. In fact, it is just an easy consequence of Proposition (?) as we can see now:
\begin{proposition}
Assume that $\mathcal{E}$ is a $G-$bundle satisfying the conditions in the case $6$ above, i.e. $X_i$ and $Y_j$ are line bundles for all $i$,$j$, and also $(4m-3)d<x_1+x_2<(4m-2)d$. Suppose that $s$ is a global section of $(\mathcal{E}\times^G V) \otimes \mathcal{L}$. Then, for $d$ large enough, the discriminant section $\Delta(s)$ is not square-free. 
\end{proposition}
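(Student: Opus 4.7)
The plan is to exploit the rigidity of the matrix form of $s$ forced by the slope conditions of Case~6, and to identify a divisor on $C$ along which the characteristic polynomial $f_s$ acquires a double root, so that $\Delta(s)$ vanishes there to order at least two.

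First I would use the hypothesis of Case~6 (each $X_i$ and $Y_j$ a line bundle, with slopes close to but strictly below the Kostant values of Cases~7--8) to pin down the sparsity pattern of $A_s$. Writing out the filtration-matrix as in the Counting section, each entry of $A_s$ is a section of a line bundle $X_i^{\pm 1}\otimes Y_j^{\pm 1}\otimes \mathcal{L}$ whose degree is an affine-linear combination of $x_i,y_j,d$; the slope inequality $(4m-3)d<x_1+y_1<(4m-2)d$ forces at least one anti-diagonal entry $c$, which is a nonzero scalar in the Kostant form, to be a section of a nontrivial line bundle $\mathcal{M}$ of strictly positive (and unbounded as $d\to\infty$) degree. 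Hence $c$ has an effective zero divisor $D=\mathrm{div}(c)\neq 0$ on $C$.

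Next, given a point $p\in\mathrm{supp}(D)$, I would analyse the specialization $A_s(p)$ together with its adjoint block, using the identification $W\cong V_1\otimes V_2$ and the description of regular orbits from Proposition~3.19--3.20. The vanishing of $c$ at $p$ creates an extra kernel direction in the relevant block of $A_s(p)\cdot A_s(p)^\ast$, which by the proof of Proposition~3.19 forces the image $\sigma_s(p)$ to be non-regular: two of the roots of the characteristic polynomial $f_s\bigl(x^2\bigr)=\det(xI-\sigma_s)$ at $p$ collide, so $f_s(x)\bigm|_{p}$ has a repeated root and $\Delta(f_s)(p)=0$.

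The heart of the argument is upgrading this to order-two vanishing. I would expand $f_s$ in the complete local ring $\widehat{\mathcal{O}}_{C,p}$ along a local parameter $t$; the two colliding roots can be written, after a Tschirnhaus-type change of variable, in the form $\alpha(t)\pm\sqrt{c(t)\,u(t)}$ with $u$ a unit in $\widehat{\mathcal{O}}_{C,p}$, because $c$ is the precise obstruction to regularity in the offending block. Substituting into the product formula $\Delta(f_s)=\prod_{i<j}(\lambda_i-\lambda_j)^2$ shows that $c$ divides $\Delta(s)$ with multiplicity at least two, so $\mathrm{div}(\Delta(s))\ge 2D$ and $\Delta(s)$ is not square-free.

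The main obstacle will be the last step: one must rule out the possibility that a third eigenvalue of $\sigma_s$ collides with the colliding pair at $p$ (which would change the local analytic shape of $f_s$), and ensure that the unit $u$ in the square-root expansion is really nonzero. Both are generic conditions, and for $d$ sufficiently large the other entries of $A_s$ contribute independent parameters, so a dimension count inside the moduli of $(s,p)$ shows that the exceptional locus where extra collisions occur is of lower dimension; at such exceptional points the order of vanishing of $\Delta(s)$ only increases, preserving the conclusion.
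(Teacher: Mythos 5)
Your skeleton matches the paper's: in Case 6 the self-adjoint matrix $A_sA_s^*$ is in Hessenberg form whose subdiagonal entries are sections of the line bundles $(X_i^*\otimes X_{i+1})\otimes\mathcal{L}^{\otimes 2}$ of degree $2d-(x_i-x_{i+1})\geq 0$, with at least one strictly positive, so some entry $c$ is forced to vanish at a point $p\in C$; the paper then concludes via Lemma \ref{ignore case 2 of counting section} and the resultant computation in Proposition \ref{transversal-->neron model}. The gap is in your key step, the upgrade from $\Delta(s)(p)=0$ to $\mathrm{ord}_p(\Delta(s))\geq 2$. Your proposed local model for the colliding pair, $\lambda_{1,2}=\alpha(t)\pm\sqrt{c(t)u(t)}$ with $u$ a unit, gives $(\lambda_1-\lambda_2)^2=4cu$, i.e.\ only \emph{first}-order vanishing of $\Delta$ at a generic zero of $c$; this does not contradict square-freeness, and it also contradicts your own assertion that $c$ divides $\Delta(s)$ with multiplicity two. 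Square-root splitting is exactly what a \emph{single} degenerating off-diagonal entry produces, as in $\bigl(\begin{smallmatrix}\beta&1\\ t&\beta\end{smallmatrix}\bigr)$, whose discriminant $4t$ vanishes only to order one.

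What actually saves the statement is that, by self-adjointness, the vanishing entry $c$ occurs in \emph{two} antidiagonally symmetric positions of $A_sA_s^*$. Expanding the determinant along the corresponding row and column gives the decomposition $f_s=g^2f_1+c\,g\,f_2+c^2f_3$ with $g=\det(A-xI_l)$, so $f_s$ lies in the square of the maximal ideal $(c,\,x-\beta)$ for any root $\beta$ of $\bar g$. Consequently the two colliding roots split \emph{linearly}, $\lambda_{1,2}=\beta+c\,r_{1,2}+O(c^2)$, and the resultant argument of Proposition \ref{transversal-->neron model} yields $\mathrm{ord}_p(\Delta)\geq 2$. Without invoking this symmetric double occurrence (or equivalently the membership $y^2-f_s\in\mathfrak{m}^2$ used in Lemma \ref{ignore case 2 of counting section}), the order-two bound is simply not available. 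Separately, your closing dimension count cannot be used as stated: the proposition is asserted for \emph{every} section $s$, not a generic one; your remark that further degenerations only increase the order of vanishing is the right way to handle exceptional collisions, but only once the base case is established by the mechanism above.
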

\begin{proof}By definition, if $s$ is of the form $\begin{pmatrix}
0_n & A \\ -A^* & 0_n
\end{pmatrix}$, then $\Delta(s)$ is the discriminant of $A.A^*$. It is easy to see that $A.A^*$ is of the following matrix form:
\[ \left( \begin{array}{cccccc}
* & * & \cdots & * & * & *  \\
x_1 & * & \cdots & * & * & *  \\
0 & x_2 & \cdots & * & * & * \\
\vdots & \vdots & \ddots & \vdots & \vdots & \vdots  \\
0 & 0 & \cdots & x_2 & * & *  \\
0 & 0 & \cdots & 0 & x_1 & * 
 \end{array} \right), \]
 where $x_i \in H^0(C, (X_i^*\otimes X_{i+1})\otimes \mathcal{L}^{\otimes 2})$ and here $X_{n+1}:= X_0$. The necessary conditions of $det(s) \neq 0$, combine with the hypothesis, imply that $d<x_i-x_{i+1} \leq 2d$ for all $i$, and there is at least one index $i$ such that $d<x_i-x_{i+1} < 2d$. Thus, the Proposition is a consequence of Proposition ?. 
\end{proof}
Now we will consider the case $10$ in the transversal case. The only difference here is the density of the regular locus as we can see as follows:
\begin{eqnarray*}
&&\lim_{d \rightarrow \infty} \dfrac{\mathlarger{\int}\limits_{\substack{Bun_G(\mathbb{F}_q) \\ x_1+y_1 <d-2g+2}}|\mathcal{M}^{trans}_{L,E}(k)|dE}{|\mathcal{A}^{trans}_L(k)|} = \lim_{d \rightarrow \infty} \dfrac{\mathlarger{\int}\limits_{\substack{Bun_G(\mathbb{F}_q) \\ x_1+y_1 <d-2g+2}}H^0(C, V^{reg}(E, L))^{sf} dE}{\frac{|\mathcal{A}^{trans}_L(k)|}{|\mathcal{A}_L(k)|}.|\mathcal{A}_L(k)|}\\
&=& \lim_{d \rightarrow \infty} \dfrac{|H^0(C,V(E,L))|.\mathlarger{\int}\limits_{\substack{Bun_G(\mathbb{F}_q) \\ x_1+y_1 <d-2g+2}} \frac{|H^0(C, V^{reg}(E, L))^{sf}}{|H^0(C,V(E,L))|}| dE}{q^{n^2d+n(1-g)}.\frac{|\mathcal{A}^{trans}_L(k)|}{|\mathcal{A}_L(k)|}} \\
&=& \lim_{d \rightarrow \infty} \dfrac{q^{n^2d+n^2(1-g)}.\mathlarger{\int}\limits_{\substack{Bun_G(\mathbb{F}_q) \\ x_1+y_1 <d-2g+2}} \frac{|H^0(C, V^{reg}(E, L))^{sf}}{|H^0(C,V(E,L))|}| dE}{q^{n^2d+n(1-g)}.\frac{|\mathcal{A}^{trans}_L(k)|}{|\mathcal{A}_L(k)|}} \\
&=& \lim_{d \rightarrow \infty} q^{(n^2-n)(1-g)}\mathlarger{\int}\limits_{\substack{Bun_G(\mathbb{F}_q) \\ x_1+y_1 <d-2g+2}}\dfrac{\dfrac{|H^0(C, V^{reg}(E, L))^{sf}|}{|H^0(C, V(E,L))|}dE}{\frac{|\mathcal{A}^{trans}_L(k)|}{|\mathcal{A}_L(k)|}} \\
&= & q^{(n^2-n)(1-g)}\mathlarger{\int}\limits_{\substack{Bun_G(\mathbb{F}_q)}}\zeta_C(2)^{-2}\dots \zeta_C(2m)^{-2} dE \\
&=&q^{(4m^2+2m)(1-g)}\zeta_C(2)^{-2}\dots \zeta_C(2m)^{-2}.|Bun_G(\mathbb{F}_q)| \\
&=& 4
\end{eqnarray*}
\subsection{Proof of main theorems}
In the transversal case, we have already shown that $|Sel_2(J)| = |H^1(C, \mathcal{J}2])|$, and also by 3.33 the error term, that comes the case $6$ of the counting section, will not appear in the transversal case. Hence the proof of the theorem 3.14 can be obtained by the above observations and the computation in the previous section. 

The theorem 3.13 for the general case also can be proved by the same manner as the transversal case. Notice that in this theorem we need to put an extra condition on $q$, that is  $q > 16^{\frac{m^2(2m+1)}{2m-1}}$, because we only have the inequalities between  
 $|Sel_2(J)|$ and $|H^1(C, \mathcal{J}2])|$ as in Proposition 3.11 but not the equality as in the transversal case.
 
Finally, to obtain the average size of 2-Selmer groups of hyperelliptic curves, we need to take care of the minimal locus. By looking at the counting section, we can see that if we restrict to the minimal locus, then we only have some changes as follows: in the case $6$ the fractional function of $q$ will has an extra factor $\zeta_C((2m+1)^2)$. And in the case $10$, we use Proposition 3.29 instead of 3.27 to obtain
$$4. \zeta_C((2m+1)^2). \prod_{v \in |C|}\big(1+c_{2m-1}|k(v)|^{-2}+\dots+c_1|k(v)|^{-2m}-2|k(v)|^{(2m+1)^2}\big).$$
To sum up, we have just proved the following theorem:
\begin{theorem}
Suppose that $q > 16^{\frac{m^2(2m+1)}{2m-1}}$ and $char(q) > 3$. Then we have that
\begin{eqnarray*}
&\limsup\limits_{deg(\mathcal{L}) \rightarrow \infty} \frac{\sum \limits_{\substack{\text{H is hyperelliptic} \\ \mathcal{L}(H) \cong \mathcal{L}}} \frac{|Sel_2(H)|}{|Aut(H)|}}{\sum \limits_{\substack{\text{H is hyperelliptic} \\ \mathcal{L}(H) \cong \mathcal{L}}} \frac{1}{|Aut(H)|}} \\
&\leq 4. \zeta_C((2m+1)^2). \prod\limits_{v \in |C|}\big(1+c_{2m-1}|k(v)|^{-2}+\dots+c_1|k(v)|^{-2m}-2|k(v)|^{(2m+1)^2}\big) \\ & \hspace{10cm}+2 + f(q),
\end{eqnarray*}where $\lim_{q \rightarrow \infty} f(q) =0$, and $c_i$ are constants which are only depended on $m$ and $p$. If $p>2m+1$ then $c_i$ is only depended on $m$.

\end{theorem}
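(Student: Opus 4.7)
The plan is to assemble the theorem from the machinery already developed, essentially mirroring the proof of Theorem \ref{main theorem 1} in Chapter 1 but with the two-Kostant-section picture from Section 3.3. First I would use Proposition 3.1 to convert the weighted sum
$$\sum_{\mathcal{L}(H)\cong\mathcal{L}}\frac{|Sel_2(H)|}{|Aut(H)|}$$
into a sum of $|Sel_2(H_{\underline{a}})|$ over minimal tuples $(\mathcal{L},\underline{a})\in\mathcal{A}_{\mathcal{L}}(k)$, up to the factor $|\mathbb{F}_q^\times|$ that cancels between numerator and denominator. Then I would apply Proposition \ref{compare2} to replace $|Sel_2(J_{\underline{a}})|$ by $|H^1(C,\mathcal{J}_{\underline{a}}[2])|$ whenever $\mathcal{J}_{\underline{a}}[2](K)=0$, and invoke the large-$q$ lemma just above (the analogue of Lemma \ref{large char contributes 0}) to discard the contribution of tuples with $\mathcal{J}_{\underline{a}}[2](K)\neq0$; this is exactly where the hypothesis $q>16^{m^2(2m+1)/(2m-1)}$ is consumed, and it produces the vanishing summand $f(q)$.

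At this point the problem becomes to bound $|\mathcal{M}_{\mathcal{L}}^{\min}(k)|/|\mathcal{A}_{\mathcal{L}}^{\min}(k)|$, where the superscript denotes restriction to minimal data. Using Proposition \ref{can2} and the induced identification $\mathcal{M}\cong\mathrm{Hom}(C,[W^{reg}/G\times\mathbb{G}_m])$ (where $G=SO(V_1)\times SO(V_2)$), I would interpret a $k$-point of $\mathcal{M}_{\mathcal{L}}$ as a pair $(\mathcal{E},s)$ with $\mathcal{E}$ a $G$-bundle on $C$ and $s\in H^0\bigl(C,(W^{reg}\times^G\mathcal{E})\otimes\mathcal{L}\bigr)$ cut out from $H^0(C,W(\mathcal{E},\mathcal{L}))$ by the regularity condition. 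Weighting by $1/|Aut_G(\mathcal{E})|$ and integrating over $Bun_G(\mathbb{F}_q)$ gives
$$\frac{|\mathcal{M}_{\mathcal{L}}^{\min}(k)|}{|\mathcal{A}_{\mathcal{L}}^{\min}(k)|}=\int_{Bun_G(\mathbb{F}_q)}\frac{|H^0(C,(W^{reg}\times^G\mathcal{E})\otimes\mathcal{L})^{\min}|}{|Aut_G(\mathcal{E})|\cdot|\mathcal{A}_{\mathcal{L}}^{\min}(k)|}\,d\mathcal{E},$$
and I would split the integral along the canonical-reduction stratification exactly as in Section \ref{couting section}.

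The bulk of the work is then the case analysis from Section 3.6. For each parabolic $P\subset G$, one writes $\mathcal{E}\times^G W$ in matrix form via the semi-stable quotients $X_i,Y_j$, estimates $h^0$ of each block using Proposition \ref{global sections of semistable}, and uses Proposition 3.30 together with the inductive slope computation (Proposition \ref{splitting when large unstability}) to control $|Aut_G(\mathcal{E})|$. The bookkeeping of Cases 1--5, 9 shows vanishing contributions as $\deg\mathcal{L}\to\infty$; Case 6, where the canonical reduction is to the Borel and the slope differences sit in the window $(4m-3)d<x_1+x_2<(4m-2)d$, produces the factor $4\cdot\zeta_C((2m+1)^2)\prod_v(1+c_{2m-1}|k(v)|^{-2}+\dots+c_1|k(v)|^{-2m}-2|k(v)|^{(2m+1)^2})$ once one multiplies the Proposition 3.29 density estimate by the weighted count of Borel-type bundles; Cases 7 and 8 contribute the constant $2$, one from each Kostant section (here the explicit conjugation argument from Case 7, carried out twice, is essential); and Case 10 (semi-stable or mildly unstable $\mathcal{E}$) contributes $4$ via Riemann--Roch, the minimal-locus density of Proposition 3.29, and the Tamagawa-number computation $\tau(G)=4$ for $G=SO(2m+1)\times SO(2m+1)$.

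The main obstacle I expect is \emph{combining} these four independent sources of constants --- Tamagawa number, the two Kostant sections, the minimality correction $\zeta_C((2m+1)^2)$, and the regular-density Euler factors --- into the single additive formula on the right-hand side without double-counting or mis-accounting the Case 6 contribution. Concretely, the delicate step is Case 6: one must check that among all bundles with canonical reduction to the Borel and Borel-slope differences in the critical window, the only regular sections are precisely those whose discriminant fails to be square-free (this uses Proposition 3.32), and that these are weighted by the full Tamagawa measure rather than a proper subset of it; otherwise the Euler product $\prod_v(\cdots-2|k(v)|^{(2m+1)^2})$ would not match. Once that accounting is correct, the rest is summing the four contributions and absorbing the remaining subleading Cases 2--5, 9 into the vanishing error $f(q)$.
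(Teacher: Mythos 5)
Your overall skeleton matches the paper's: reduce via Proposition 3.1 and Proposition \ref{compare2} together with the large-$q$ lemma to bounding $|\mathcal{M}_{\mathcal{L}}(k)|/|\mathcal{A}_{\mathcal{L}}(k)|$, then use Proposition \ref{can2} to pass to pairs $(\mathcal{E},s)$, stratify $Bun_G$ by canonical reduction, and run the case analysis of Section 3.6. However, your final accounting attributes the terms on the right-hand side to the wrong strata, and as written the sum does not reproduce the stated bound. In the paper, the Euler-product term $4\cdot\zeta_C((2m+1)^2)\prod_v(\cdots)$ comes from \emph{Case 10} (semistable or mildly unstable bundles with $x_1+y_1<d-2g+2$): there the Tamagawa number $\tau(G)=4$ is multiplied by the density of the regular locus from Proposition 3.27, corrected for minimality by Proposition 3.29, and that correction is exactly where $\zeta_C((2m+1)^2)$ and the factors $c_i|k(v)|^{-2i}$ enter. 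Case 6 (Borel-type reduction with $(4m-3)d<x_1+x_2<(4m-2)d$) does \emph{not} produce that factor; its contribution is bounded by $2T/(q-1)^2$, a quantity that survives the limit $d\to\infty$ but vanishes as $q\to\infty$ --- this is the actual source of $f(q)$. Proposition 3.32 is used only to discard Case 6 in the transversal setting, not to extract an Euler product from it.

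Concretely, adding up the contributions as you describe them --- the Euler product from Case 6, $+2$ from Cases 7 and 8, $+4$ from Case 10, and an $f(q)$ absorbed from Cases 2--5 and 9 --- yields $4\cdot\zeta_C((2m+1)^2)\prod_v(\cdots)+6+f(q)$ with the wrong $f(q)$: Cases 2--5 and 9 contribute exactly $0$ in the limit $\deg\mathcal{L}\to\infty$ (their bounds are of the shape $c\cdot q^{-dr}$), so they cannot supply a residual $q$-dependent error, and likewise the large-$q$ lemma for $\mathcal{J}[2](K)\neq 0$ gives a contribution that is $0$ in the limit rather than $f(q)$. The fix is to swap the roles you assigned to Cases 6 and 10: Case 10 carries the Tamagawa number times the regular-density product with the minimality correction of Proposition 3.29, Case 6 carries $f(q)$, and Cases 7--8 carry the $+2$ from the two Kostant sections.
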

\newpage

\bibliographystyle{alpha}
\bibliography{counting}
\addcontentsline{toc}{section}{References}

\end{document}